   \pdfoutput=1

\documentclass{article}
\usepackage{excludeonly}

\usepackage{savesym}
\usepackage{scrextend}  % \begin{labeling}{longest} \item   ..  \end{labeling}
\usepackage{amsthm,amsmath}
\usepackage{amssymb,latexsym,graphicx}
\usepackage{amscd}
 \usepackage[all,cmtip]{xy}
\usepackage{tikz-cd}
  \usetikzlibrary{arrows}
\tikzset{
  commutative diagrams/.cd,
  arrow style=tikz
  }
\usepackage{tikz}

\usepackage{accents}
\usepackage{cite}
\usepackage{mathtools}
\usepackage{stmaryrd} % provides \llbracket (formal power series)

\usepackage{calligra}
\DeclareMathAlphabet{\mathcalligra}{T1}{calligra}{m}{n}
\DeclareMathAlphabet{\mathpzc}{OT1}{pzc}{m}{it}
\usepackage{hycolor}
\usepackage{xcolor}
\usepackage[
                       colorlinks=true,
                       linkcolor=black, %red,
                       citecolor=black, %green,
                       urlcolor=blue,
                     ]{hyperref}
\usepackage{soul} %provides strikethrough-command st{.. text ..}

\usepackage{stackengine} 
\usepackage{booktabs} % tables

\newtheorem{theoremABC}{Theorem}

\newtheorem{theorem}{Theorem}[section]

\newtheorem{corollary}[theorem]{Corollary}

\newtheorem{lemma}[theorem]{Lemma}
\newtheorem{proposition}[theorem]{Proposition}

\theoremstyle{definition}
\newtheorem{definition}[theorem]{Definition}

\newtheorem{remark}[theorem]{Remark}

\newtheorem{example}[theorem]{Example}

\theoremstyle{remark}

\newcommand{\C}{{\mathbb{C}}}

\newcommand{\LL}{{\mathbb{L}}}

\newcommand{\R}{{\mathbb{R}}}
\renewcommand{\SS}{{\mathbb{S}}}

\newcommand{\Z}{{\mathbb{Z}}}
\newcommand{\Aa}{{\mathcal{A}}}   % connections
\newcommand{\Bb}{{\mathcal{B}}}
\newcommand{\Hh}{{\mathcal{H}}}
\newcommand{\Ii}{{\mathcal{I}}}

\newcommand{\Kk}{{\mathcal{K}}}
\newcommand{\Ll}{{\mathcal{L}}}   % Lagrangian planes
\newcommand{\Mm}{{\mathcal{M}}}   % moduli space

\newcommand{\Qq}{{\mathcal{Q}}}

\newcommand{\Ss}{{\mathcal{S}}}
\newcommand{\Tt}{{\mathcal{T}}}
\newcommand{\Uu}{{\mathcal{U}}}
\newcommand{\Vv}{{\mathcal{V}}}

\newcommand{\Xx}{{\mathcal{X}}}
\newcommand{\Yy}{{\mathcal{Y}}}
\newcommand{\Zz}{{\mathcal{Z}}}
\newcommand{\AAA}{\mbf{A}}       % magnetic vector potential
\newcommand{\aaa}{\mbf{a}}       % 
\newcommand{\AAAdot}{\mbf{\dot A}} % dot magnetic vector potential
\newcommand{\BBB}{\mbf{B}}       % magnetic field B=rot A=\nabla x A
\newcommand{\rot}{\mathrm{rot}}  % rot
\newcommand{\rrr}{\mbf{r}}       %
\newcommand{\im}{{\rm im\, }}             % image
\newcommand{\id}{{\rm id}}                % identity
\newcommand{\Id}{{\rm Id}}
\newcommand{\grad}{\mathop{\mathrm{grad}}}    % gradient
\newcommand{\cgraph}[1]{\Gamma_{\kern-.5ex{}#1}}     % contact graph map
\renewcommand{\Re}{{\rm Re}}       % real part
\renewcommand{\Im}{{\rm Im}}       % imaginary part
\newcommand{\Diff}{{\rm Diff}}        % Diffeomorphisms
\newcommand{\Fix}{{\rm Fix}}            % Fixed points
\newcommand{\Crit}{{\rm Crit}}        % Critical points
\newcommand{\Omegacan}{\Omega_{\rm can} }  % T^*M+TM: can.sympl.form

\newcommand{\norm}{{\rm norm}}

\newcommand{\eps}{{\varepsilon}}

\newcommand{\Qfrak}{{\mathfrak Q}}

\newcommand{\zfrak}{{\mathfrak z}}
\newcommand{\Zfrak}{{\mathfrak Z}}
\newcommand{\inner}[2]{\langle #1, #2\rangle}   % inner product < , >
\newcommand{\INNER}[2]{\left\langle #1, #2\right\rangle}

\newcommand{\mbf}[1]{\text{\boldmath $#1$}}  % mbf=mathboldface

\def\Nablatop#1{\nabla^{#1}\kern-.5ex{}}

\def\NABLA#1{{\mathop{\nabla\kern-.5ex\lower1ex\hbox{$#1$}}}}
\def\Nabla#1{\nabla\kern-.5ex{}_{#1}}
\def\Tabla#1{\Tilde\nabla\kern-.5ex{}_{#1}}
\def\Babla#1{\widebar\nabla\kern-.5ex{}_{#1}}
\def\abs#1{\mathopen|#1\mathclose|}   
\def\Abs#1{\left|#1\right|}            
\def\norm#1{\mathopen\|#1\mathclose\|}
\def\Norm#1{\left\|#1\right\|}

\renewcommand{\Tilde}{\widetilde}

\newcommand{\p}{{\partial}}

\renewcommand{\1}{{{\mathchoice {\rm 1\mskip-4mu l} {\rm 1\mskip-4mu l}
{\rm 1\mskip-4.5mu l} {\rm 1\mskip-5mu l}}}}
\newcommand{\pretop}[1]{{^#1\mspace{-1mu}}}

\newlength\eqshift
\renewcommand\theequation{\thesection.\arabic{equation}}
\let\savetheequation\theequation

\makeatletter
\renewcommand*\env@matrix[1][\arraystretch]{%
  \edef\arraystretch{#1}%
  \hskip -\arraycolsep
  \let\@ifnextchar\new@ifnextchar
  \array{*\c@MaxMatrixCols c}}
\makeatother

\makeatletter
\let\save@mathaccent\mathaccent
\newcommand*\if@single[3]{%
  \setbox0\hbox{${\mathaccent"0362{#1}}^H$}%
  \setbox2\hbox{${\mathaccent"0362{\kern0pt#1}}^H$}%
  \ifdim\ht0=\ht2 #3\else #2\fi
  }
\newcommand*\rel@kern[1]{\kern#1\dimexpr\macc@kerna}
\newcommand*\widebar[1]{\@ifnextchar^{{\wide@bar{#1}{0}}}{\wide@bar{#1}{1}}}
\newcommand*\wide@bar[2]{\if@single{#1}{\wide@bar@{#1}{#2}{1}}{\wide@bar@{#1}{#2}{2}}}
\newcommand*\wide@bar@[3]{%
  \begingroup
  \def\mathaccent##1##2{%
    \let\mathaccent\save@mathaccent
    \if#32 \let\macc@nucleus\first@char \fi
    \setbox\z@\hbox{$\macc@style{\macc@nucleus}_{}$}%
    \setbox\tw@\hbox{$\macc@style{\macc@nucleus}{}_{}$}%
    \dimen@\wd\tw@
    \advance\dimen@-\wd\z@
    \divide\dimen@ 3
    \@tempdima\wd\tw@
    \advance\@tempdima-\scriptspace
    \divide\@tempdima 10
    \advance\dimen@-\@tempdima
    \ifdim\dimen@>\z@ \dimen@0pt\fi
    \rel@kern{0.6}\kern-\dimen@
    \if#31
      \overline{\rel@kern{-0.6}\kern\dimen@\macc@nucleus\rel@kern{0.4}\kern\dimen@}%
      \advance\dimen@0.4\dimexpr\macc@kerna
      \let\final@kern#2%
      \ifdim\dimen@<\z@ \let\final@kern1\fi
      \if\final@kern1 \kern-\dimen@\fi
    \else
      \overline{\rel@kern{-0.6}\kern\dimen@#1}%
    \fi
  }%
  \macc@depth\@ne
  \let\math@bgroup\@empty \let\math@egroup\macc@set@skewchar
  \mathsurround\z@ \frozen@everymath{\mathgroup\macc@group\relax}%
  \macc@set@skewchar\relax
  \let\mathaccentV\macc@nested@a
  \if#31
    \macc@nested@a\relax111{#1}%
  \else
    \def\gobble@till@marker##1\endmarker{}%
    \futurelet\first@char\gobble@till@marker#1\endmarker
    \ifcat\noexpand\first@char A\else
      \def\first@char{}%
    \fi
    \macc@nested@a\relax111{\first@char}%
  \fi
  \endgroup
}
\makeatother

\def\XXint#1#2#3{{\setbox0=\hbox{$#1{#2#3}{\int}$}
     \vcenter{\hbox{$#2#3$}}\kern-.5\wd0}}

\long\def\symbolfootnote[#1]#2{\begingroup%
\def\thefootnote{\fnsymbol{footnote}}\footnote[#1]{#2}\endgroup}
\tikzset{
  symbol/.style={
    draw=none,
    every to/.append style={
      edge node={node [sloped, allow upside down, auto=false]{$#1$}}}
  }
}

\begin{document}
\sloppy
\author{\quad Urs Frauenfelder \quad \qquad\qquad
             Joa Weber\footnote{
  Email: urs.frauenfelder@math.uni-augsburg.de
  \hfill
  joa@unicamp.br
  }
    \\
    Universit\"at Augsburg \qquad\qquad%\quad
    UNICAMP
}

\title{Towards a Floer theory for Mars \\ 
         \Large I - Twisted Zeeman systems}

\date{\today}

%\begin{titlepage}
\maketitle %(to set the title page and copyright page; see note)
                 %\include files (e.g., preface, introduction)
%\thispagestyle{empty}
%\newpage

%\end{titlepage}

%%%%%%%%%%%%%%%%%%%%%%%%%%%%%%%%%%%
%%%%%%% Abstract %%%%%%%%%%%%%%%%%%%%%
%%%%%%%%%%%%%%%%%%%%%%%%%%%%%%%%%%%
\begin{abstract}
In this article we study periodic orbits of an electron attracted by a
proton subject to Lorentz, electric, and Euler forces
where each of them is allowed to depend periodically on time.
This setup is motivated by the elliptic restricted three-body-problem
where the Lorentz force corresponds to Coriolis force, 
the Coulomb force is replaced by the gravitational force,
and the electric force of an external source is a combination of
centrifugal forces and gravitational forces of other bodies.
This is a singular version of a Euler-Hamilton system as discussed
in~\cite{Frauenfelder:2026a}.
The singularity is due to collisions of the electron with the proton,
respectively of two masses.
Due to the possibility of collisions this problem has to be
regularized.

We show how periodic collisional solutions of this problem
can be detected variationally in a non-local Lagrangian setup
as well as in a non-local Hamiltonian setup.
\end{abstract}

\tableofcontents

\newpage 
%%%%%%%%%%%%%%%%%%%%%%%%%%%%%%%%%%%
%%%%%%%%%%%%%%%%%%%%%%%%%%%%%%%%%%%
%%%%%%% Section  %%%%%%%%%%%%%%%%%%%%%%
%%%%%%%%%%%%%%%%%%%%%%%%%%%%%%%%%%%
%%%%%%%%%%%%%%%%%%%%%%%%%%%%%%%%%%%
\section{Introduction}

In 1609 the emperor of the Holy Roman Empire of the German Nation
% (962–1806) 
Rudolf~II received a dedication of a rather special book.
In it the author claimed that he had taken the God of War captive.
This author was Johannes Kepler and the book was the 
\emph{Astronomia Nova}~\cite{Kepler:1609a}\footnote{
  Deutsche \"Ubersetzung~\cite{Kepler:1937a}.
  English translation~\cite{Kepler:1992a}.
  }
in which Kepler showed that the orbit of Mars is an ellipse.
In fact, Mars was the Roman god of war
and in the \emph{Astronomia Nova} Kepler 
overcame the Aristotelian Worldview that on heaven
everything is moving on perfect circles.

\medskip
\textsc{Mars and Floer theory.}
Studying periodic orbits in the vicinity of Mars
also leads to new structures in Floer theory
which we describe in this paper.
In view of the high eccentricity of Mars
the system consisting of sun, Mars and a satellite
cannot well be described by the circular restricted\footnote{
  ``Restricted'' means that the satellite is considered massless and
  does not attract the sun and mars, but on the other hand is
  attracted by sun and Mars.
  }
three body problem, but has
to be attacked by the elliptic restricted three body problem.
Because of the eccentric movement of Mars, the Coriolis force in the
elliptic restricted three body problem is not constant, but
\emph{depends periodically on time}.
\newline
A basic system where the Coriolis force is time-dependent is a
merry-go-round which accelerates and decelerates.
How such systems can be described using time-dependent symplectic
forms was the subject of the recent work~\cite{Frauenfelder:2026a}.
Like the Lorentz force of a magnetic field, the Coriolis force depends
linearly on the velocity and therefore can be modeled by twisting the
standard symplectic form on the cotangent bundle.
In the case where the magnetic field is time-dependent, a new force
shows up, the Euler force,
which in contrast to the Lorentz force depends on the choice of
time-dependent primitive of the time-dependent symplectic form.
\newline
In the case of a satellite around Mars a new issue shows up
and these are collisions with Mars.
Although one does not want to put a satellite on a collisional orbit
with Mars, to obtain the global picture of periodic orbits
one has to take into account these collisional orbits as well.
In fact, considering a homotopy periodic orbits appear in families
and such families can go through collisions.

\medskip
\textsc{Variational approach to collisional orbits.}
For global theories like Floer theory
the crucial ingredient is a variational approach
to periodic orbits.
In order to obtain a variational approach 
as well to collisional orbits,
we apply in this paper the new regularization technique of
Barutello, Ortega, and  Verzini~\cite{Barutello:2021b}
of blowing up the loop space, see also~\cite{Frauenfelder:2025b},
to the case of time-dependent magnetic fields.

\smallskip
To treat these kind of problems we develop a general setup
which we refer to as twisted Zeeman systems.\footnote{
  A \textbf{planar twisted Zeeman system} describes the motion of an
  electron in the plane attracted by a proton in a time-dependent  
  magnetic field which admits a primitive depending twisted periodically
  on time.
  Such a magnetic field is necessarily time periodic, see
  Remark~\ref{rem:magn-2-form}.
  }
A \textbf{Stark-Zeeman system}, as introduced in~\cite{Cieliebak:2017c},
describes the motion of an electron attracted by a proton and subject
to a magnetic and electric field.
In a \textbf{twisted Zeeman system} the magnetic field is allowed to depend on
time and, in contrast to a time-independent case, the dynamics also
depends on the choice of the time-dependent primitive of the
time-dependent symplectic form.
While the magnetic field is assumed to depend periodically on time,
the time-dependence of the primitive is not necessarily periodic,
but is allowed to be twisted-periodic.
The additional freedom of allowing this twist makes it possible
to incorporate the electric field in the twist as well, as observed
in~\cite[\S 3.3]{Frauenfelder:2026a}.
Therefore the contribution of Stark, i.e.~the electric field,
is not needed any more, so that every Stark-Zeeman system can
be re-interpreted as a twisted Zeeman system.

\smallskip
Blowing up the loop space requires a reparametrization
of the loop which depends on the loop.
Therefore in the regularized system the symplectic form
on the loop space becomes non-local.
In the present article we introduce a Lagrangian and a Hamiltonian action
functional which are related by a non-local Legendre transformation,
see also~\cite{Frauenfelder:2021e,Cieliebak:2023a},
and show that the critical points correspond to periodic solutions of
twisted Zeeman systems allowing collisions of the electron with the proton.
In part II~\cite{Frauenfelder:2026d} we then show that the linearized
$L^2$-gradient flow equation of the non-local Hamiltonian action functional for
this non-local symplectic form is a Fredholm operator by showing
that its Hessian field almost extends by applying our general
Fredholm result from the recent article~\cite{Frauenfelder:2025g}.

%%%%%%%%%%%%%%%%%%%%%%%%%%%%%%%%%%%
%%%%%%% Subsection  %%%%%%%%%%%%%%%%%%%
%%%%%%%%%%%%%%%%%%%%%%%%%%%%%%%%%%%
\subsection{Main results}

We twist the Lagrangian of the Kepler problem with a $1$-form
on the plane $\C$ depending periodically on time
$\theta_{t+1}=\theta_t$.
The physical interpretation of this $1$-form is the following.
The exterior derivative with respect to space
gives rise to a time-dependent magnetic field $d\theta_t$.
The time derivative $\dot\theta_t$ of the $1$-form gives rise to a force
known as Euler force which can for example be felt on
an accelerating merry-go-round~\cite{Frauenfelder:2026a}.
We, more generally, consider $1$-forms which are only twisted-periodic
in time, and not periodic.
This additional twist allows to model electric forces as
well~\cite[\S 3.3]{Frauenfelder:2026a}.
In this introduction, for simplicity of exposition,
we do not consider this additional twist 
and also do not consider the case where the $1$-form
is defined on open subsets of the plane.

Periodic orbits of this problem can be detected variationally
as the critical points of the classical Lagrangian action functional
on loop space
\begin{equation*}%\label{eq:class-action-intro}
\begin{gathered}
   \Ss
   \colon \Ll\C^\times 
   \to\R
   ,\quad
   q\mapsto
   \int_0^1
   \Bigl(
   \tfrac12\Abs{\dot q_t}^2+\frac{1}{\abs{q_t}}
   +\theta_t|_{q_t}\dot q_t
   \Bigr)
   dt
   .
\end{gathered}
\end{equation*}
Since we have to exclude collisions with the singularity of
the Kepler potential $-1/\abs{q_t}$ at the origin,
this functional is only defined on loops that avoid
the origin, i.e. loops in $\C^\times:=\C\setminus\{0\}$.
In order to also allow collisional solutions variationally
we regularize this functional.
For that purpose we consider a non-local map
$\Qq\colon \Ll\C^\times\to \Ll\C^\times$.
This map $\Qq$ re-parametrizes the complex squaring map
by a circle diffeomorphism $\tau_z\colon\SS^1_t\to\SS^1_\tau$
which depends on the loop $z$ itself and is therefore not local.
The map $\Qq$ was discovered by
Barutello, Ortega, and  Verzini~\cite{Barutello:2021b}.
It is not smooth in the usual sense, but scale-smooth
in the sense of Hofer-Wysocki-Zehnder\cite{Frauenfelder:2026c}.
Pulling back the functional $\Ss$ under $\Qq$ we obtain the following
sum of three terms $\Bb=\Kk-\Uu+\Mm\colon \Ll\C^\times\to\R$
where
\begin{equation*}\label{eq:Bb-intro}
   \Bb(z)
   =
   2\norm{z}^2\norm{z^\prime}^2
   +\frac{1}{\norm{z}^2}
   +\int_0^1 \vartheta_{t_z(\tau)}|_{z(\tau)} 
   z^\prime(\tau)\, d\tau
   .
\end{equation*}
Here $t_z$ is the inverse of $\tau_z$ and
$\inner{\cdot}{\cdot}$ is the $L^2$-inner product with associated
norm $\norm{\cdot}$ and $\vartheta$ is the pull-back of $\theta$ by
the complex squaring map.
Because the map $\Qq$ is non-local, the periodic orbits pulled back
by $\Qq$ do not satisfy a second order ODE~(\ref{eq:q}) any more, but
a second order delay differential equation, the DDE~(\ref{eq:z})
which actually characterizes the critical points of $\Bb$.

The functional $\Bb$ naturally smoothly extends,
via the formula provided by $\Qq^*\Ss$
as illustrated in Figure~\ref{fig:fig-Qq-intro},
to the space $\Ll^\times\C=\Ll\C\setminus\{0\}$, i.e. the loop is
allowed to cross the origin, the only thing that is forbidden is that the
loop stays for all times at the origin.
Times where the loop crosses the origin
are interpreted as collision times.
After the extension, critical points of the functional consist of two
kinds, namely 
non-collisional solutions (Section~\ref{sec:reg-sol}),
i.e. solutions which never cross the origin,
and solutions admitting collisions (Section~\ref{sec:collisions}),
i.e. solutions which cross the origin.
\\
Non-collisional critical points 
are in 1-to-1 correspondence with critical points of $\Ss$, i.e.
periodic orbits of a particle subject to
firstly Newton's, respectively Coulomb's, force 
secondly a time-dependent Lorentz, respectively Coriolis, force
as well as thirdly the Euler force.
If the $1$-form is even allowed to be only twisted-periodic in time,
we can twist the $1$-form with an additional electric potential
and therefore the particle can be subject additionally to an electric force.

\begin{figure}%[b]
  \centering
\begin{equation*}
\begin{tikzcd} [row sep=tiny] %[column sep=small] %
{\color{gray} \text{extension}}
  &{\color{gray} \text{regularized}}
  &&
    {\color{gray} \text{classical}}
\\
\R
  &\R
  &&\R
\\ \mbox{} \\ \mbox{} \\
{\color{red}
   \Ll\C\setminus\{0\}
}
\arrow[uuu, "\Bb:=", "\Qq^*\Ss"']
\arrow[r, phantom, "\supset"]
  &
{\color{cyan}
   \Ll\C^\times
}
    \arrow[rr, shift left=2, "\Qq", "1:1"']
    \arrow[uuu, dashed, "\Qq^*\Ss"]
  &&{\color{cyan}\Ll \C^\times}
      \arrow[uuu, "\Ss"' ]
      \arrow[ll, shift left=2, "\Zz"]
\\
  &z
    \arrow[rr, mapsto]
  &&
    q_z:=z^2\circ \tau_z
\\
  &z_q:=q^\frac12\circ \tau_{1/\sqrt{q}}
  &&
    q
    \arrow[ll, mapsto]
\\
  &{\color{gray} \text{time $\tau\in\SS^1_\tau$}}
  &&
    {\color{gray} \text{$\SS^1_t\ni t$ time}}
    \arrow[ll, darkgray, "\tau_z=t_z^{-1}"',"\cong"]
\end{tikzcd}
\end{equation*}
  \caption{Regularization $\Bb$ on blow-up of loop space $\Ll\C^\times$}
   \label{fig:fig-Qq-intro}
\end{figure}

In Section~\ref{sec:correspondence}
we explain what precisely collisional solutions are on the $q$-side
and show the following.

\begin{theoremABC}\label{thm:A}
Collisional critical points $z$ of $\Bb$
are in $1$-to-$1$ correspondence to classical collisional solutions $q$.
\end{theoremABC}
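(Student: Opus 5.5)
We will use Levi-Civita regularization together with the Barutello--Ortega--Verzini time change. The starting point is the definition of $\Qq$ and of the blown-up functional $\Bb=\Kk-\Uu+\Mm$ on $\Ll\C\setminus\{0\}$. First we fix what a classical collisional solution is. It is a continuous $1$-periodic loop $q$ whose collision set $C_q=q^{-1}(0)$ is a closed set of measure zero, in fact a finite set of isolated times. On each component of $\SS^1_t\setminus C_q$ the loop $q$ solves the twisted Kepler equation~(\ref{eq:q}). At each collision time the orbit arrives and leaves along the same ray, so that the orbit is ``bounced'' as in the Levi-Civita picture. Moreover the energy $E_t=\tfrac12|\dot q|^2-1/|q|$, suitably corrected by the Euler term $\dot\theta_t$, extends continuously across the collision times.

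On the $z$-side, the DDE~(\ref{eq:z}) together with the Euler--Lagrange equation of $\Bb$ shows that a critical point $z$ is smooth. Its zeros are isolated and transversal, because a zero with $z'=0$ would force $z\equiv0$ by uniqueness for the (delayed) linear ODE near the zero. This rules out $z\equiv 0$, which is excluded from $\Ll\C\setminus\{0\}$ anyway.

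\emph{From $z$ to $q$.} Given a collisional critical point $z$, we define $\tau_z$ by the usual formula
\begin{equation*}
t_z(\tau)=\frac{\int_0^\tau|z|^2}{\norm{z}^2}.
\end{equation*}
This is still a homeomorphism, since $|z|^2>0$ almost everywhere; it is a diffeomorphism away from the finitely many zeros. We then set $q_z=z^2\circ\tau_z$. On each non-collision interval the computation from Section~\ref{sec:reg-sol} applies verbatim, because it is local in $\tau$ up to the global constants $\norm{z}^2$ and $\norm{z'}^2$. Hence $q_z$ solves~(\ref{eq:q}) there. At a zero $\tau_0$ of $z$ we Taylor expand $z(\tau)=z'(\tau_0)(\tau-\tau_0)+O((\tau-\tau_0)^2)$. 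Then $q_z\approx z'(\tau_0)^2(\tau-\tau_0)^2$, so $q_z$ approaches and leaves the origin along the direction $z'(\tau_0)^2$, which gives the bounce condition. Conservation of the regularized energy, obtained from the autonomous-in-$\tau$ part of the DDE together with the Euler term, gives the energy condition across collisions.

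\emph{From $q$ to $z$.} Given a classical collisional solution $q$, we take square roots $q^{1/2}$ on each arc between collisions. At every collision we choose the sign of the root so that it switches, so that $\sqrt{q}$ passes through $0$ linearly rather than bouncing. This produces a loop $z_q=q^{1/2}\circ\tau_{1/\sqrt q}$. It closes up to a global sign $\pm$, and either choice gives a critical point, which accounts for the $\Z_2$ ambiguity already present in the non-collisional case. We then check that $z_q$ is $C^1$, and hence smooth by bootstrapping through the DDE, and that it is critical for $\Bb$. Injectivity and surjectivity then follow because the two constructions are mutually inverse on the complement of the collision sets and everything is continuous.

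The main obstacle is the regularity of $z_q$ at collision times. We must show that $z_q$ is differentiable with nonzero derivative at the zeros and satisfies the DDE there, and not just on the open arcs. For this we will use the asymptotics of $q$ near a collision coming from the energy condition, namely $|q(t)|\sim c|t-t_0|^{2/3}$. This yields a $\tau$-velocity $z'$ that is bounded, with a limit independent of the side from which the zero is approached. Since the DDE is then satisfied off a finite set by a $C^1$ function whose right-hand side is continuous, it holds everywhere.
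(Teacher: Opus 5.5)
There is a genuine gap in the direction $z\mapsto q_z$. You say the computation of Section~\ref{sec:reg-sol} applies verbatim on each collision-free arc because it is local in $\tau$. Only Step~1 there is local: it gives the intermediate DDE~(\ref{eq:q-DDE}) on each arc $I_j$.

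Step~2 integrates $\dot\beta/\beta=-\tfrac32\,\partial_t\abs{q}^2/\abs{q}^2$. On an arc bounded by collisions this only yields $\ddot q=\mu_j q/\abs{q}^3-B_t(q)j_0\dot q-\AAAdot_t(q)$, with a constant $\mu_j$ that may a priori depend on the arc. The value $-1$ is obtained in~(\ref{eq:mu=-1}) by integrating over the whole period, which presupposes a single constant on all arcs.

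This matters. Combining~(\ref{eq:q-DDE}) with $\beta=\mu_j/\abs{q}^3$ gives, on $I_j$, $e_q(t)=\tfrac12\abs{\dot q_t}^2-\tfrac{1}{\abs{q_t}}=-\tfrac{\mu_j+1}{\abs{q_t}}+O(1)$. So the classical energy (with or without the continuous Euler correction) extends across a collision if and only if $\mu_j=-1$, i.e.\ if and only if the collision speed is $\abs{z'(\tau_*)}=1/(\sqrt2\norm{z}^2)$. A conserved regularized energy can at best show that the $\mu_j$ agree; it cannot fix their value.

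The missing argument, as in the paper, goes as follows. Write $\mu_j=\abs{q_t}\bar q_t\bigl(\ddot q_t+B_t(q_t)j_0\dot q_t+\AAAdot_t(q_t)\bigr)$, insert~(\ref{eq:q-DDE}), and let $t$ tend to either endpoint of $I_j$. The bracket of global constants is killed by the factor $\abs{q_t}\to0$, and~(\ref{eq:q-qdot}) gives $\mu_j=-2\norm{z}^4\abs{z'(\tau_j)}^2=-2\norm{z}^4\abs{z'(\tau_{j+1})}^2$. Hence adjacent arcs share their constant and all $\mu_j$ coincide. Integrating over $\SS^1$ (finite collision set, $\int_0^1\abs{q_t}^{-1}dt=\norm{z}^{-2}$) then gives $\mu=-1$. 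Only after this can the energy condition hold. The paper verifies it via the Taylor expansion $z_\tau=z'_{\tau_*}(\tau-\tau_*)+O((\tau-\tau_*)^3)$, which also uses $z''(\tau_*)=0$, a consequence of~(\ref{eq:z}) and $\rot\,\aaa_t|_0=0$.

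In the converse direction your strategy is sound: prove $z_q\in C^1$ at collisions, then note that~(\ref{eq:z}) holds off a finite set with a right hand side that is continuous for $C^1$ loops, so $z_q\in C^2$ and~(\ref{eq:z}) holds everywhere. It is slightly more economical than the paper's separate proof that $z_q''\to0$ (Lemma~\ref{le:coll-7887}). Two ingredients are, however, not supplied.

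First, to see that $z_q$ solves~(\ref{eq:z}) on the arcs, you must recover the global constants of~(\ref{eq:z}) from the local ODE. This is the energy identity of Lemma~\ref{le:energy-id}, obtained by integrating $\tfrac{d}{dt}e_q=-\inner{\dot q}{\AAAdot_t|_q}_0$ by parts over a full period. That is precisely where continuity of $e_q$ across collisions is used.

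Second, your definition has no $W^{1,2}$ hypothesis and no control of the angular term $\beta_q=\abs{q}\dot\alpha_q$. You need the lower bound $\abs{q_t}\gtrsim\abs{t-t_0}^{2/3}$ to get $1/\abs{q}\in L^1$, and hence for $\tau_{1/\sqrt q}$ and $\norm{z_q}$ to make sense. Under your definition this bound does not follow from the energy condition alone: energy only bounds $\dot r^2+r^2\dot\vartheta^2$, and you would also have to show $r^3\dot\vartheta^2\to0$. The paper handles this by building $q\in W^{1,2}$, and continuity of $\beta_q$ with $\beta_q|_{T_q}=0$, into the definition of $\mathrm{Coll}\,\Ss$.
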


Theorem~\ref{thm:A} is proved in Theorem~\ref{thm:bijection} 
by constructing a parcial inverse map
$\Zz$ to $\Qq$ which is illustrated in Figures~\ref{fig:fig-Qq-intro}
and~\ref{fig:fig-Qq-ext}.

\medskip
The second contribution of this paper is that,
in Section~\ref{sec:non-loc-Ham},
we provide as well a Hamiltonian formulation of the regularized solutions.
For this purpose we apply a non-local Legendre transformation to the
non-local functional $\Bb$ which is motivated by~\cite{Frauenfelder:2021e}.
The functional we obtain is given by
$$
   \Aa =i_\Vv\left(\Lambda +\pi^*\Theta\right) -\Hh
   \colon T^*\Ll^\times\C\to\R
   ,\qquad
   T^*\Ll^\times\C
   =\Ll^\times\C\times \Ll\C
   .
$$
The ingredients on the right hand side of $\Aa$ are the following.
Firstly there is the canonical vector field $\Vv=\p_\tau$ along
$T^*\Ll^\times\C$, namely $\Vv(z,\eta)=(z^\prime,\eta^\prime)$.
Secondly $\Lambda$ is a $1$-form on $T^*\Ll^\times\C$
obtained by integrating the Liouville form $\lambda$ on $T^*\C$.
Thirdly, in contrast to $\Lambda$, the $1$-from $\Theta$ on
$\Ll^\times\C$ is non-local.
If $z\in\Ll^\times\C$, then for a tangent vector
$\xi\in T_z\Ll^\times\C=\Ll\C$ we define
$$
   \Theta_z\xi
   :=\int_0^1 \vartheta_{t_z(\tau)}|_{z(\tau)} 
   \xi(\tau)\, d\tau
   .
$$
In particular, the last term $\Mm$ of $\Bb$
is given by $\Mm(z)=\Theta_z z^\prime$.
The base point projection and a
canonical injection are given by
\begin{equation}\label{eq:proj-inj}
\begin{aligned}
   \pi\colon T^*\Ll^\times\C&\to \Ll^\times\C\qquad&
   \iota\colon \Ll^\times\C&\to T^*\Ll^\times\C
   \\
   (z,\eta)&\mapsto z\qquad&
   z&\mapsto\left( z,{\color{brown} 4\norm{z}^2} z^\prime\right)
\end{aligned}
\end{equation}

The non-local Hamiltonian $\Hh\colon T^*\Ll^\times\C\to\R$
can be thought of as the non-local Legendre transform
of the first two terms $\Kk-\Uu$ of $\Bb$ and reads
$$
   \Hh(z,\eta)
   =\frac{\norm{\eta}^2}{8\norm{z}^2}-\frac{1}{\norm{z}^2}
   .
$$
Our second main result is the following.

\begin{theoremABC}\label{thm:B}
There is a bijection given by
\begin{equation*}
\begin{tikzcd} [row sep=tiny] %[column sep=small] %
\Crit\,\Aa
\arrow[rr, shift left=2,"\pi", "1:1"']
  &&\Crit\,\Bb
      \arrow[ll, shift left=2,"\iota"]
\end{tikzcd} 
.
\end{equation*}
\end{theoremABC}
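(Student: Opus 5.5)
Compute the differential of $\Aa$ at a point $(z,\eta)\in T^*\Ll^\times\C=\Ll^\times\C\times\Ll\C$ in the two directions $(\xi,0)$ and $(0,\hat\eta)$, and read off the critical point equations. With $\Lambda$ the integrated Liouville form, $i_\Vv\Lambda(z,\eta)=\int_0^1\inner{\eta}{z^\prime}\,d\tau=\langle\eta,z^\prime\rangle$ (the $L^2$ pairing). Since $\pi_*\Vv=z^\prime$, also $i_\Vv\pi^*\Theta(z,\eta)=\Theta_z z^\prime=\Mm(z)$. Hence
\[
   \Aa(z,\eta)=\langle \eta,z^\prime\rangle+\Mm(z)-\frac{\norm{\eta}^2}{8\norm{z}^2}+\frac{1}{\norm{z}^2}.
\]

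\emph{Fiber direction.} The derivative in $\hat\eta$ is $\langle\hat\eta, z^\prime-\eta/(4\norm{z}^2)\rangle$. It vanishes for all $\hat\eta\in\Ll\C$ if and only if $\eta=4\norm{z}^2 z^\prime$, i.e.\ $(z,\eta)=\iota(z)$. So every critical point of $\Aa$ lies in the image of $\iota$, which already gives injectivity of $\pi$ on $\Crit\,\Aa$, and $\iota\circ\pi=\id$ there.

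\emph{Base direction.} For $\eta$ fixed, $\Aa$ is smooth in $z$ on $\Ll^\times\C$ because $\norm{z}>0$. We then evaluate at $\eta=4\norm{z}^2z^\prime$. A direct computation shows
\[
   \Aa(\iota(z))=4\norm{z}^2\norm{z^\prime}^2-2\norm{z}^2\norm{z^\prime}^2+\frac{1}{\norm{z}^2}+\Mm(z)=\Bb(z),
\]
which is the expected Legendre identity. Write $F(z,\eta):=\Aa(z,\eta)$. The chain rule gives
\[
   d\Bb(z)\xi=\p_zF(\iota z)\xi+\p_\eta F(\iota z)\,\bigl(d\iota_2(z)\xi\bigr),
\]
where $\iota_2$ is the fiber component of $\iota$. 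The second term vanishes because $\p_\eta F=0$ on the image of $\iota$, by the fiber computation. Therefore $d\Bb(z)=\p_z\Aa(\iota z)$, and together with $\p_\eta\Aa(\iota z)=0$ this shows that $z\in\Crit\,\Bb$ if and only if $\iota(z)\in\Crit\,\Aa$. Hence $\pi$ and $\iota$ are mutually inverse bijections between $\Crit\,\Aa$ and $\Crit\,\Bb$.

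\emph{Main obstacle.} The delicate point is regularity of the non-local term $\Mm(z)=\Theta_z z^\prime$, which involves $t_z$ and hence the non-smooth (only scale-smooth) dependence of $\tau_z$ on $z$. The argument, however, never needs an explicit formula for $d\Mm$; it only needs that $\Mm$ appears identically in $\Aa\circ\iota$ and $\Bb$ and is independent of $\eta$. I would therefore fix the function spaces, for instance $z\in W^{1,2}$ and $\eta\in L^2$ (or the smooth loops used in the paper), and interpret critical points in whatever differentiability sense $\Bb$ is taken earlier in the paper. Doing this consistently makes the chain rule step legitimate, since $\iota$ is smooth and the $\eta$-dependence of $\Aa$ is quadratic and explicit.
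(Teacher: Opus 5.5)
Your argument is correct, and it takes a genuinely different route from the paper's. The paper first writes out both critical point equations explicitly and then matches them:
\begin{itemize}
\item it uses Cartan's formula on loop space to get $d\Aa=i_{(\Xx_\Theta+\Yy_\Theta-\Vv)}\Omega_\Theta$;
\item it uses that $\Omega_\Theta$ is weakly non-degenerate (Lemma~\ref{le:tw-sympl-form}) to identify $\Crit\,\Aa$ with solutions of the delay Euler--Hamilton equation $\Vv=\Xx_\Theta+\Yy_\Theta$;
\item it converts the Lorentz plus Euler force into $\grad\Mm$ via (\ref{eq:Lorentz-BOV}) and Lemma~\ref{le:gradient-Mm};
\item it checks in Lemmas~\ref{le:Ham-Lag} and~\ref{le:Lag-Ham} that this first order DDE and the second order DDE~(\ref{eq:z}) correspond under $\pi$ and $\iota$.
\end{itemize}
You instead use the standard fiberwise reduction. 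The condition $\p_\eta\Aa=0$ forces $\eta=4\norm{z}^2z^\prime$. The identity $\Aa\circ\iota=\Bb$ is the case $\eta=\iota_2(z)$ of Lemma~\ref{le:Lag-domin}, which the paper uses only for equality of critical values (Corollary~\ref{cor:crit-values-coincide}). The chain rule then gives $d\Bb(z)=\p_z\Aa(\iota z)$.

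Your route is shorter and needs neither the weak symplectic structure nor any formula for $d\Mm$. It works for any $\eta$-independent term $\Mm$, so it would carry over verbatim to the twisted-periodic case. There $\Theta$ does not exist, but one could still define $\Aa:=\INNER{\eta}{z^\prime}+\Mm-\Hh$. What the paper's route buys is the explicit critical point equation (Theorem~\ref{thm:C}). It also gives its interpretation as a delay Euler--Hamilton equation with non-local Lorentz and Euler forces, which is the structure needed for the Fredholm analysis in part~II.

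One small correction to your ``main obstacle'' paragraph. $\Mm$ depends on $z$ only through $t_z$, not through $\tau_z$. Since $t_z$ is a quotient of quadratic expressions in $z$, it is differentiable in the ordinary sense (see (\ref{eq:dt(z)})). The merely scale-smooth object is $\Qq$, via $\tau_z$, and it plays no role in Theorem~\ref{thm:B}. Since you never differentiate $\Mm$ separately, this does not affect your proof.
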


\begin{proof}
Section~\ref{sec:EH=Lag}.
\end{proof}

The critical points of $\Aa$ are solutions of the following
second order delay equation.

\begin{theoremABC}\label{thm:C}
Suppose that $(z,\eta)\in\Ll^\times\C\times \Ll\C$
is a critical point of $\Aa$.
Then $(z,\eta)$ is a solution of the following problem
\begin{equation*}
  \left\{
   \begin{aligned}
     z^\prime
     &=\tfrac{1}{4\norm{z}^2}\;\eta
\\
     \eta^\prime
     &=\tfrac{\norm{\eta}^2-8}{4\norm{z}^4} z +\grad\Mm(z)
   \end{aligned}
   \right.
\end{equation*}
where the $L^2$-gradient of $\Mm$ is of the following form.
Writing the $1$-form $\vartheta_t= a_t^1\, dx+ a_t^2\, dy$
in the form of a pair $\aaa_t=( a_t^1,  a_t^2)\in\R^2$
there is the formula
\begin{equation*}%\label{eq:grad-Mm}
\begin{split}
   (\grad\Mm|_z)_\tau
   &=
% T3 magnetic M2
   -\frac{2z_\tau}{\norm{z}^4}
   \int_0^1
{\textstyle
   \int_0^\sigma\Abs{z_\rho}^2 d\rho
}
   \cdot
   \inner{\dot{\aaa}_{t_z(\sigma)}|_{z_\sigma}}{z^\prime_\sigma}_0\; d\sigma
% T5 magnetic M3
   -\frac{\Abs{z_\tau}^2}{\norm{z}^2}\dot{\aaa}_{t_z(\tau)}|_{z_\tau}
   \\
   &\quad
% T4 magnetic M1
   +\frac{2 z_\tau}{\norm{z}^2}
   \int_{\sigma=\tau}^1
   \inner{\dot{\aaa}_{t_z(\sigma)}|_{z_\sigma}}{z^\prime_\sigma}_0\;
   d\sigma
% T6 magnetic M4
   -(\rot\,\aaa_{t_z}|_z)_\tau\; i z^\prime_\tau
\end{split}
\end{equation*}
for each time $\tau\in\SS^1$ and abbreviating $z_\tau:=z(\tau)$.
\end{theoremABC}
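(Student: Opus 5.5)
The proof computes the two partial differentials of $\Aa$ at $(z,\eta)$ and integrates by parts to identify the $L^2$-gradients. The only non-routine part is the differential of the non-local term $\Mm$. Writing out the ingredients,
\[
   \Aa(z,\eta)=\int_0^1\inner{\eta_\tau}{z^\prime_\tau}_0\,d\tau+\Mm(z)
   -\frac{\norm{\eta}^2}{8\norm{z}^2}+\frac{1}{\norm{z}^2},
\]
since $i_\Vv\Lambda=\int\inner{\eta}{z^\prime}$ and $i_\Vv\pi^*\Theta=\Theta_z z^\prime=\Mm(z)$.

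\emph{Variation in $\eta$.} For $\zeta\in\Ll\C$ the variation is
\[
   d_\eta\Aa\,\zeta=\inner{z^\prime-\tfrac{1}{4\norm{z}^2}\eta}{\zeta}.
\]
Its vanishing gives the first equation.

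\emph{Variation in $z$.} For $\xi\in\Ll\C$, integrating $\int\inner{\eta}{\xi^\prime}_0$ by parts over the circle (no boundary terms) gives
\[
   d_z\Aa\,\xi=\inner{-\eta^\prime+\tfrac{\norm{\eta}^2-8}{4\norm{z}^4}z}{\xi}+d\Mm(z)\xi .
\]
Here I used $d(\norm{z}^{-2})\xi=-2\inner{z}{\xi}/\norm{z}^4$. Setting this to zero yields the second equation, provided $d\Mm(z)\xi=\inner{\grad\Mm(z)}{\xi}$ with $\grad\Mm$ as claimed. So everything reduces to computing $\grad\Mm$.

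\emph{Computing $\grad\Mm$.} I use the explicit reparametrization from the Barutello--Ortega--Verzini construction: $t_z(\tau)=\norm{z}^{-2}\int_0^\tau\abs{z_\rho}^2d\rho$, so that $t_z^\prime=\abs{z}^2/\norm{z}^2$. Differentiating, the variation of the reparametrized time is
\[
   \delta t_z(\sigma)=\frac{2}{\norm{z}^2}\int_0^\sigma\inner{z_\rho}{\xi_\rho}_0\,d\rho
   -\frac{2\int_0^\sigma\abs{z_\rho}^2d\rho}{\norm{z}^4}\inner{z}{\xi}.
\]
Now write $\Mm(z)=\int_0^1\inner{\aaa_{t_z(\sigma)}|_{z_\sigma}}{z^\prime_\sigma}_0\,d\sigma$ and vary it. Three contributions arise.

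1. \emph{Explicit time dependence.} This gives $\int\inner{\dot\aaa_{t_z(\sigma)}|_{z_\sigma}}{z^\prime_\sigma}_0\,\delta t_z(\sigma)\,d\sigma$.
   The second summand of $\delta t_z$ immediately produces the first term of the formula.
   For the first summand I apply Fubini to $\int_0^1\int_0^\sigma\ldots\,d\rho\,d\sigma=\int_0^1\int_{\rho}^1\ldots\,d\sigma\,d\rho$. After renaming $\rho$ as $\tau$, this gives the term $\tfrac{2z_\tau}{\norm{z}^2}\int_\tau^1\ldots$.

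2. \emph{Spatial derivative plus the $\xi^\prime$ term.} The contributions are $\inner{D\aaa_{t_z}|_z\,\xi}{z^\prime}+\inner{\aaa_{t_z}|_z}{\xi^\prime}$. I integrate the second by parts. The total $\sigma$-derivative of $\aaa_{t_z(\sigma)}|_{z_\sigma}$ splits into
   \begin{itemize}
   \item a spatial part, $D\aaa\, z^\prime$, and
   \item a time part, $t_z^\prime\,\dot\aaa=\tfrac{\abs{z}^2}{\norm{z}^2}\dot\aaa$.
   \end{itemize}
   The time part gives the term $-\tfrac{\abs{z_\tau}^2}{\norm{z}^2}\dot\aaa$. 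The spatial parts combine into $(D\aaa)^T z^\prime-D\aaa\,z^\prime$. In the plane this antisymmetric combination equals $-\rot\aaa\; iz^\prime$, which is the last term.

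3. \emph{Boundary terms.} These vanish because $t_z(0)=0$, $t_z(1)=1$ and $\aaa$ is $1$-periodic in $t$.

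In the twisted-periodic setting of the body of the paper, the boundary term needs the twist compatibility instead of plain periodicity. I would check this separately.

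The main obstacle is the correct differentiation of the non-local time $t_z$, including the normalizing factor $\norm{z}^{-2}$, together with the Fubini bookkeeping. These are what produce the two double-integral terms with their exact signs and factors. I would first verify the formula on the constant-in-time case, where $\dot\aaa=0$ and only the rotation term should survive. Regularity — that critical points are smooth, so the integrations by parts are legitimate — follows by bootstrapping the two equations themselves.
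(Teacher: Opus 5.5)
Your proposal is correct. Your computation of $\grad\Mm$ is the same as the paper's proof of Lemma~\ref{le:gradient-Mm}. Both use the formula for $dt|_z\xi$, Fubini on its first summand, integration by parts of $\inner{\aaa_{t_z}|_z}{\xi^\prime}$ with the total derivative split into $t_z^\prime\dot\aaa$ and $d\aaa\,z^\prime$, and the identity $(d\aaa)^T-d\aaa=-(\rot\,\aaa)\,j_0$. For loops in $\Ll_-$, the boundary term you set aside vanishes by $\mathfrak{i}$-invariance of $\vartheta=\varsigma^*\theta$, exactly as in~(\ref{eq:problem}).

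Where you genuinely differ is in deriving the critical point equations of $\Aa$. You vary $\Aa$ directly in $\eta$ and in $z$. The paper (Theorem~\ref{thm:Crit-Aa_Theta}) instead proceeds in three steps:
\begin{itemize}
\item It applies Cartan's formula on $T^*\bar\Ll^\times\Zfrak$ to write $d\Aa=i_{(\Xx_\Theta+\Yy_\Theta-\Vv)}\Omega_\Theta$.
\item It computes the Hamiltonian vector field $\Xx_\Theta$ of the weak symplectic form $\Omega_\Theta$, which contains the non-local Lorentz force $\Zz_z\eta$ coming from $d\Theta$. It also computes the Euler vector field $\Yy_\Theta$, coming from $L_\nu\Theta$.
\item Only after substituting the first equation $\eta=4\norm{z}^2z^\prime$ does it recombine $\Zz_z z^\prime+\Yy^2_\Theta$ into $\grad\Mm$. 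That recombination uses Cartan's formula for $\Theta$ on loop space, verified in Example~\ref{ex:BOV-repara}.
\end{itemize}

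Your route is shorter and fully self-contained. It needs neither the existence of a Hamiltonian vector field for a merely weak symplectic form, nor $L_\Vv\Lambda=0$, nor Cartan's formula on loop space. Moreover $\grad\Mm(z)$ appears directly in your second equation, because $\Mm(z)=\Theta_z z^\prime$ depends on $z$ alone. You also get the converse direction for free.

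What the paper's route buys is structure. It exhibits critical points as solutions of a delay Euler--Hamilton equation $\Vv=\Xx_\Theta+\Yy_\Theta$ for the twisted non-local symplectic form. In that form the magnetic contribution is split into a Lorentz force and an Euler force, which is the viewpoint the paper is setting up for the Floer-theoretic continuation.
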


\begin{proof}
Theorem~\ref{thm:Crit-Aa_Theta} and Lemma~\ref{le:gradient-Mm}.
\end{proof}

In  Appendix~\ref{sec:non-loc-Ham-twisted} we show that the exterior
derivative of the non-local $1$-form $\Lambda+\pi^*\Theta$
is a weak symplectic form on $T^*\Ll^\times\C$.
This uses an abstract result on weak symplectic forms in
Appendix~\ref{sec:tw-sympl-form}
which is of independent interest.
Hence the critical point equation in
Theorem~\ref{thm:C} can be interpreted as the Euler-Hamilton equation
of $\Hh$ with respect to the $1$-form $\Lambda+\pi^*\Theta$
as in~\cite{Frauenfelder:2026a}.
This is explained in Theorem~\ref{thm:Crit-Aa_Theta}.

The $1$-form $\Theta$ only exists for time-dependent $1$-forms
$\vartheta$ which are periodic in time.
However, the weak symplectic form on $T^*\Ll^\times\C$ still makes
sense in the twisted-periodic case and remains a weak symplectic form,
as discussed as well in Appendix~\ref{sec:non-loc-Ham-twisted}.

\medskip
\textbf{Notation.}
Working with functions on function spaces easily triggers
excesses of parentheses, which harms legibility.
Therefore we often write variables either as subscripts
$\Mm_z$ or in the form $\Mm|_z$, as opposed to $\Mm(z)$.
For time-dependence subscript has priority, for example if $t\mapsto
q(t)$ is a loop then $\theta_t|_{q_t} \dot q_t$ denotes a
time-dependent $1$-form at time $t$ and at the spatial point $q(t)$
evaluated on the velocity vector $\dot q(t)$.
Let $\norm{\cdot}$ and $\INNER{\cdot}{\cdot}$ be the
$L^2(\SS^1,\R^2)$ norm and inner~product.
There are two circles, the quotient circle $\SS^1=\R/\Z$ and the
unit circle $\SS^1_\C\subset \C$.
A map $f\colon\R\to X$ with $f_{t+1}=f_t$
$\forall t\in\R$ is called \textbf{periodic}, notation
$f\colon\SS^1\to X$.
While, as is common, ODE abbreviates ordinary differential equation,
\textbf{DDE} stands for \textbf{delay differential equation}.

\medskip\noindent
{\bf Acknowledgements.}
UF~acknowledges
support by DFG grant FR~2637/5-1.

\newpage
%%%%%%%%%%%%%%%%%%%%%%%%%%%%%%%%%%%
%%%%%%%%%%%%%%%%%%%%%%%%%%%%%%%%%%%
%%%%%%% Section  %%%%%%%%%%%%%%%%%%%%%%
%%%%%%%%%%%%%%%%%%%%%%%%%%%%%%%%%%%
%%%%%%%%%%%%%%%%%%%%%%%%%%%%%%%%%%%
\section{Twisted Zeeman system}
\label{sec:tw-Zeeman}

Let $0\in\Qfrak\subset\C$ be an open subset containing the origin.
Without $0$ we write
$$
   \Qfrak^\times:=\Qfrak\setminus\{0\}
{\color{gray}\;
   \subset \C\simeq\R^2 .
}
$$
We freely identify $\C$ with $\R^2$ using whichever is convenient.
For $q\in \Qfrak$ we write
\begin{equation}\label{eq:i-j_0}
{\color{gray}
   \C\ni 
{\color{black}
   q_1+iq_2\simeq (q_1,q_2)
}
   \in\R^2
   \qquad
   \Ll_\R(\C)\ni 
{\color{black}
   i\simeq j_0:=
   \begin{pmatrix}0&-1\\1&0\end{pmatrix}
}
   \in\Ll(\R^2)
   .
}
\end{equation}
For brevity we often write $i$. If $i$ is in front of an element
of $\R^2$ it means $j_0$.
By $\INNER{\cdot}{\cdot}_0$ we denote the Euclidean inner product on
$\R^2$, by $\abs{\cdot}$ the induced norm.

\begin{definition}[twisted-periodic]
\label{def:tw-per-1-form}
A \textbf{twisted-periodic \boldmath$1$-form}
$\theta=\{\theta_t\}_{t\in\R}$
is a real smooth family of $1$-forms on $\Qfrak$, notation
\begin{equation}\label{eq:tw-per-1-form}
   \theta_t=A_t^1\, dq_1+A_t^2\, dq_2
   ,\qquad
   A_t^1, A_t^2\colon\Qfrak\to\R ,
\end{equation}
with $\theta$ \textbf{twisted-periodic} in the sense that
a)~the time-derivative is periodic
\begin{equation}\label{eq:df_t}
   \text{a) $\dot\theta_{t+1}=\dot\theta_t$,}
   \qquad
   \text{b) $\theta_{t+1}=\theta_t+df$,}
\end{equation}
and b)~there is a smooth function
$f\colon\Qfrak\to \R$ satisfying the above,
cf.~\cite[\S5]{Frauenfelder:2026a},
called a \textbf{twist function} of $\theta=\{\theta_t\}$.
In the periodic case $f=0$.
As a consequence the exterior derivative is periodic, in symbols
$
   d\theta_{t+1}=d\theta_t
$.
The coefficients of the twisted $1$-form $\theta$
yield a vector field along $\Qfrak$ of the form
\begin{equation}\label{eq:AAA}
   \AAA_t|_q:=\left(A_t^1|_{(q_1,q_2)},
   A_t^2|_{(q_1,q_2)}\right)
   ,\qquad
   \AAAdot_{t+1}
   \stackrel{\text{(\ref{eq:df_t})}_{\mathrm{a}}}{=}\AAAdot_t
   ,
\end{equation}
called a \textbf{vector potential}.
\end{definition}

\begin{remark}\label{rem:magn-2-form}
Magnetic fields are described by closed $2$-forms $\sigma$,
where closedness $d\sigma=0$ encodes the fact that there are no
magnetic charges; see e.g.~\cite[\S\,2.4.1]{Weber:2017b}.
For a twisted-periodic $1$-form $\theta$ the $2$-form
\begin{equation}\label{eq:d-theta}
\begin{split}
   \sigma_t:=d\theta_t
   &=d\left(A_t^1\, dq_1+A_t^2\, dq_2\right)
   =\underbrace{\left(\p_1A^2_t-\p_2A^1_t\right)}_{=: \rot\,\AAA_t=:B_t}
    \underbrace{dq_1\wedge dq_2}_{=:\omega_0}
\end{split}
\end{equation}
is time-periodic 
$
   \sigma_t:=d\theta_t=d\theta_{t+1}=\sigma_{t+1}
$
and closed $d\sigma_t=dd\theta_t=0$.
\end{remark}

\boldmath
%%%%%%% Subsection  %%%%%%%%%%%%%%%%
\subsection{Classical Lagrangian functional $\Ss$}
\unboldmath

\begin{definition}\label{def:class-action}
Let $\theta$ be a twisted-periodic $1$-form~(\ref{eq:df_t})
and $f$ a twist function.
Define the \textbf{classical Lagrangian action} functional on the
\textbf{free loop space}
$\Ll\Qfrak^\times:=C^\infty(\SS^1,\Qfrak^\times)$
of $\Qfrak^\times:=\Qfrak\setminus\{0\}$ by
\begin{equation}\label{eq:class-action}
\begin{gathered}
   \Ss=\Ss_{L^\theta}
   \colon{\color{cyan}\Ll\Qfrak^\times}
   \to\R
\\
   q\mapsto
   \int_0^1
   \Bigl(
   \tfrac12\Abs{\dot q_t}^2+\tfrac{1}{\abs{q_t}}
   +\theta_t|_{q_t}\dot q_t
   \Bigr)
   dt
   -f(q_0)
{\color{gray}\;\,
   =\int_0^1 L_t^\theta(q_t,\dot q_t)\, dt -f(q_0)
}
\end{gathered}
\end{equation}
where $L_t ^\theta\colon T\Qfrak^\times\to\R$ is the function, called
\textbf{Lagrangian}, defined 
by
$$
   L_t^\theta(q,v)
   :=\tfrac12\Abs{v}^2+\tfrac{1}{\Abs{q}}+\theta_t|_{q} v
   .
$$
\end{definition}

\begin{remark}[twist term and integration interval, 
{\cite[Rmk.\,3.4]{Frauenfelder:2026a}}]
\label{rem:interval-not-SS^1}

(i)~Since we integrate over $[\underline{0},1]$, we subtract specifically
the \textbf{twist term \boldmath$f(q_{\underline{0}})$}
in~(\ref{eq:class-action}) in order to
get as critical points the periodic solutions of the
$(\AAA,\phi)$-equation~(\ref{eq:q}).

(ii)~If $\theta_t$ is twisted-periodic, and not periodic, then $L_t^\theta$
is not periodic in time.
Thus the integral~(\ref{eq:class-action}) is not over the circle
$\SS^1=\R/\Z$,
but over the interval~$[0,1]$; see~\cite[Rmk.\,3.4]{Frauenfelder:2026a}
which shows that integration $\int_k^{k+1}$
leads to the same value $\Ss(q)$ in~(\ref{eq:class-action})
for any integer $k\in\Z$.
\\
(iii)~If $\theta_{t+1}=\theta_t$ is periodic, then there is no twist
term ($f\equiv 0$) and any integration interval  $[r,r+1]$ for $r\in\R$
leads to the same value $\Ss(q)$ in~(\ref{eq:class-action}).\footnote{
  schematically
  $\int_r^{r+1}\theta_t=\int_r^1 \theta_t+\int_1^{r+1}\theta_t
  =\int_r^1 \theta_t+\int_0^r \theta_{t+1}=\int_0^1 \theta_t$
  as $\theta_{t+1}=\theta_t$ is periodic
  }
\end{remark}

\begin{remark}[classical critical point ODE]
\label{rem:class-action}
By~\cite[Prop.\,3.7]{Frauenfelder:2026a}, the critical points of the
functional $\Ss$ are solutions of the
$(\AAA,\phi)$-equation with $\phi(q)=-\frac{1}{\Abs{q}}$
\begin{equation}\label{eq:q}
\begin{split}
   \Crit\, \Ss
   &=
   \left\{q\in{\color{cyan}\Ll\Qfrak^\times}\mid    
   \ddot q
   =
   -B_t|_q j_0\dot q
   -\AAAdot_t|_q
   -\tfrac{q}{\Abs{q}^3}
   \right\}
   ,\quad
{\color{gray}\text{\small
   $B_t|_q:=\rot\,\AAA_t|_q$}
    .
}
\end{split}
\end{equation}
The critical points $q$ are called \textbf{classical} or
\textbf{physical solutions}.
A solution to this ODE describes the motion $t\mapsto q(t)$ of an
electron attracted by a proton at the origin and subject to a magnetic field.
The origin is a singularity of the potential $\phi$
and $q(t)$ approaching $0$ is called a \textbf{collision}. In the
classical description the origin is a forbidden locus.
\end{remark}

%\newpage %.\newpage
%%%%%%%%%%%%%%%%%%%%%%%%%%%%%%%%%%%
%%%%%%%%%%%%%%%%%%%%%%%%%%%%%%%%%%%
%%%%%%% Section  %%%%%%%%%%%%%%%%%%%%%%
%%%%%%%%%%%%%%%%%%%%%%%%%%%%%%%%%%%
%%%%%%%%%%%%%%%%%%%%%%%%%%%%%%%%%%%
\section{Loop space blow-up}\label{sec:BOV-blow-up}

\boldmath
%%%%%%% Subsection  %%%%%%%%%%%%%%%%
\subsection{Complex squaring and twisted loop spaces}
\unboldmath

\boldmath
%%%%%%% Subsubsection:  %%%%%%%%%%%%%%%%
\subsubsection*{Standard notions on the Euclidean plane $\R^2\simeq\C$}
\unboldmath

It is convenient to identify $\R^2$ with $\C$
via $(x,y)\mapsto x+iy=z$.
We recall how standard notions look like on each side.
The complex side lends itself
for shorter formulas and quicker calculations.

\smallskip
On $\R^2$ three natural players are
the Euclidean inner product, the counter-clockwise quarter rotation,
and the natural symplectic form.
They are defined~by
$$
   \INNER{z}{\zeta}_0
   :=x\xi+y\eta
   ,\qquad
   i\stackrel{\text{(\ref{eq:i-j_0})}}{\simeq} j_0
   ,\qquad
   \omega_0(z,\zeta):=x\eta-y\xi ,
$$
for $z=(x,y)$ and $\zeta=(\xi,\eta)$.
The three are compatible in the sense that
\begin{equation}\label{eq:g_0-omega_0-J_0}
   \omega_0(\cdot,\cdot):=\INNER{j_0\cdot}{\cdot}_0
   \qquad
   {\color{gray}\small\text{or, equivalently}}
   \qquad
   \INNER{\cdot}{\cdot}_0:=\omega_0(\cdot, j_0\cdot).
\end{equation}
As $j_0j_0=-\1$ and $j_0^t=-j_0$ is skew-symmetric,
the previous identity tells that
$$
   \omega_0(j_0\cdot,j_0\cdot)=\omega_0(\cdot,\cdot)
   ,\qquad
   \INNER{j_0\cdot}{j_0\cdot}_0=\INNER{\cdot}{\cdot}_0 ,
$$
are both invariant under $j_0$.
In natural coordinates $\omega_0=dx\wedge dy$.

\begin{remark}\label{rmk:3-natural-players-complex}
On $\C$ the three natural players appear as follows\footnote{
  Of course, on the left hand sides $z$ stands for $x+iy$ and
  on the right hand sides for $(x,y)$.
  }
\begin{equation}\label{eq:bar-z-zeta}
   \bar z\zeta
   =\INNER{z}{\zeta}_0+i\omega_0(z,\zeta)
   ,\qquad
   \Re(\bar z\zeta)=\INNER{z}{\zeta}_0
   ,\quad
   \Im(\bar z\zeta)=\omega_0(z,\zeta) ,
\end{equation}
and
\begin{equation*}\label{eq:bar-iz-zeta}
\begin{split}
   \widebar{iz}\zeta
   &=\INNER{j_0 z}{\zeta}_0+i\omega_0(j_0 z,\zeta)
\\
   &=\omega_0(z,\zeta)-i\INNER{z}{\zeta}_0
\end{split}
\end{equation*}
for $z=x+iy$ and $\zeta=\xi+i\eta$
and where $\bar z:=x-iy$ is the complex conjugate.
On the complex side multiplication by $i$
corresponds on the real plane to applying the rotation matrix $j_0$.
Note that
$   \Re(\widebar{i z} \zeta)
   =\Im(\bar z \zeta)
$
and 
$
   \Im(\widebar{i z} \zeta)
   =-\Re(\bar z \zeta)
$.
Observe that
$
   x=(z+\bar z)/2
$
and
$
   y=(z-\bar z)/2i
$.
The one-forms $dz=dx+i\, dy$ and $d\bar z=dx-i\, dy$
reproduce
$
   dx=\frac{dz+d\bar z}{2}
$
and
$
   dy=\frac{dz-d\bar z}{2i}
$.
There are the identities
$$
   d\bar z\wedge dz
{\color{gray}\,\text{\small
   $=(dx+idy)\wedge(dx-idy)$}
}
   =2i\, dx\wedge dy
   ,\qquad
   z\bar z=x^2+y^2=\Abs{z}^2 ,
$$
where $\Abs{\cdot}$ is the Euclidean norm on $\R^2$.
\end{remark}

\boldmath
%%%%%%% Subsubsection:  %%%%%%%%%%%%%%%%
\subsubsection*{Complex squaring map and sign involution
$\mathfrak{i}$}
\unboldmath

In polar coordinates $z=r e^{i\varphi}$ on $\C^\times:=\C\setminus\{0\}$ the
complex squaring map is
$
   \varsigma\colon \C^\times
   \to \C^\times
$,
$
   {\color{gray}re^{i\varphi}=\;} z
   \mapsto
   z^2{\color{gray}\; =r^2e^{i2\varphi}}
$.
Let $0\in\Qfrak\subset\C$ be an open subset containing the origin.
Without $0$ we write
$$
   \Zfrak^\times:=\varsigma^{-1}(\Qfrak^\times)
   =\{z\in\C^\times\mid z^2\in\Qfrak^\times\}
   ,\qquad
   {\color{gray}\varsigma(\Zfrak^\times)
   =\Qfrak^\times:=\Qfrak\setminus\{0\} ,}
$$
where the pre-image $\Zfrak^\times$ is open since $\varsigma$ is
continuous. 
While $\Zfrak^\times$ does not contain the origin, its closure does.
For later use we define
\begin{equation}\label{eq:Zfrak}
   \Zfrak:=\varsigma^{-1}(\Qfrak)=\Zfrak^\times\cup\{0\}\subset \C.
\end{equation}
A useful observation is that $\Zfrak^\times$ is invariant under the
\textbf{sign involution}
\begin{equation}\label{eq:I-Zfrak}
   \mathfrak{i}\colon \Zfrak^\times\to \Zfrak^\times
   ,\quad
   z\mapsto -z
\end{equation}
indeed
$z\in \Zfrak^\times\Leftrightarrow -z\in\Zfrak^\times$
as $z^2=(-z)^2$.
Hence the \textbf{complex squaring~map}
\begin{equation}\label{eq:I-Zfrak-i}
   \varsigma\colon\Zfrak^\times
   \stackrel{2 : 1}{\longrightarrow}
   \Qfrak^\times
   ,\quad
   z\mapsto z^2
   ,\qquad
   \varsigma\circ \mathfrak{i}=\varsigma
   ,
\end{equation}
is a double cover and invariant under sign involution.
By linearization
\begin{equation}\label{eq:TI-Zfrak}
   T\mathfrak{i}\colon T\Zfrak^\times\to T\Zfrak^\times
   ,\quad
   (z,\xi)\mapsto (-z,-\xi)
   .
\end{equation}

\begin{remark}[square root of complex numbers]
\label{rem:cx-square-root}
A non-zero complex number $z=re^{i\phi}$ has two square roots
$
   \sqrt{r} e^{i\phi/2}
$
and
$
   \sqrt{r} e^{i(\phi/2+\pi)}=\sqrt{r} e^{i\phi/2}e^{i\pi}=-\sqrt{r} e^{i\phi/2}
$.
Thus, the square root is only well defined modulo sign.
We need to take square roots in order to define the inverse $\Zz$
of the rescale-square map $\Qq$ later on in~(\ref{eq:Rr-inverse-C}),
as illustrated by Figure~\ref{fig:fig-Qq}.
This is why in~(\ref{eq:involution-quotients}) we will divide by the
sign involution and work on the quotient space.
\end{remark}

\boldmath
%%%%%%% Subsubsection:  %%%%%%%%%%%%%%%%
\subsubsection*{Twisted loop spaces and sign involution $I$}
\unboldmath

Considering twisted loop space allows for defining square roots.
The open subsets $\Zfrak^\times\subset\C\setminus\{0\}$
and $\Zfrak=\Zfrak^\times\cup\{0\}$
are defined by~(\ref{eq:Zfrak}).

\begin{definition}[regularization loop space $\bar\Ll^\times\Zfrak$]
(i)~Both
the $\boldmath\pretop{\pm}\textbf{loop spaces}$
\begin{equation*}%\label{eq:twisted-loop-space}
   \Ll_\pm\Zfrak
   :=\{z\in C^\infty (\R,\Zfrak)\mid\forall t\in\R\colon
   z(t+1)=\pm z(t)\}
   ,
\end{equation*}
for $\Zfrak$ from~(\ref{eq:Zfrak})
are invariant under \textbf{sign involution} 
\begin{equation}\label{eq:inv-I}
   I=-\Id_1\colon \Ll_\pm\Zfrak\to \Ll_\pm\Zfrak
   ,\quad
   z\mapsto -z
   ,\qquad
{\color{gray}
   \Id_k:=\Id_{\C^k}
   ,
}
\end{equation}
because the target $\Zfrak$ is by~(\ref{eq:I-Zfrak}).
For simplicity we call the elements of both spaces still \textbf{loops}.
The intersection of both spaces consists of the zero loop, the unique
fixed point of $I$, in symbols
$
   \Ll_+\Zfrak\cap \Ll_-\Zfrak=\{0\}=\Fix\, I
$.
Taking away the zero loop $z\equiv 0$ we get
the pointed $\pretop{\pm}\text{loop spaces}$
$$
   \Ll_\pm^\times\Zfrak
   :=\Ll_\pm\Zfrak\setminus\{0\}
   ,\qquad
{\color{gray}
   \Ll_+^\times\Zfrak\cap \Ll_-^\times\Zfrak=\emptyset
   .
}
$$
These are disjoint and the sign involution $I$ acts freely on each one
of them, so on their union.
We introduce the quotient spaces (indicated by a bar)
\begin{equation}\label{eq:involution-quotients}
   \bar\Ll_\pm^\times\Zfrak
   :=\frac{\Ll_\pm^\times\Zfrak}{I}
   ,\qquad
{\color{red}
   \bar\Ll^\times\Zfrak
}
   :=\bar\Ll_+^\times\Zfrak\cup
   \bar\Ll_-^\times\Zfrak
   ,
\end{equation}
and call their union $\bar\Ll^\times\Zfrak$
\textbf{regularization loop space}.

The elements of quotient space $\bar\Ll^\times\Zfrak$
are still called \textbf{loops} and denoted by~$z$.
We keep in mind that, in fact, each element has two representatives $\pm z$
and our constructions must be independent of choosing $z$ or $-z$.
The elements $t_*$ of the set $z^{-1}(0)$
are called \textbf{collision times} or simply \textbf{collisions}.

\smallskip
(ii)~The elements $z$ of $\bar\Ll^\times\Zfrak$ that have
no collisions, in symbols $z^{-1}(0)=\emptyset$, form the
\textbf{non-collisional part} of regularization loop space, notation
\begin{equation*}%\label{eq:involution-quotients-times}
{\color{cyan}
   \bar\Ll\Zfrak^\times
}
   =\bar\Ll_+\Zfrak^\times\cup
   \bar\Ll_-\Zfrak^\times
\end{equation*}
with $\bar\Ll_\pm\Zfrak^\times$ defined as earlier, just with
$\Zfrak^\times=\Zfrak\setminus\{0\}$
in place of $\Zfrak$.
\end{definition}

\begin{remark}[tangent bundles]\label{rem:T-bdls}
Both tangent bundles
\begin{equation}\label{eq:TLlZ}
   T\Ll_\pm\Zfrak
   :=\{(z,\xi)\in C^\infty (\R,\Zfrak\times\C)\mid
   \forall t\in\R\colon (z_{\tau+1},\xi_{\tau+1})=\pm (z_\tau,\xi_\tau)\}
\end{equation}
are invariant under \textbf{sign involution}
$$
   TI=-\Id_2
   \colon T\Ll_\pm\Zfrak\to T\Ll_\pm\Zfrak
   ,\quad
   (z,\xi)\mapsto (-z,-\xi)
$$
because the target $\Zfrak\times\C$ is, see~(\ref{eq:I-Zfrak}).
The intersection of both tangent bundles consists of the zero
element $(0,0)$, the unique fixed point of $TI$.
Taking away the fiber over the zero loop we get the two tangent
bundles\footnote{
  The uppercase greek letter $\Xi$ is called ``Xi''.
  }
$$
   T\Ll_\pm^\times\Zfrak
   :=\{\Xi=(z,\xi)\in T\Ll_\pm\Zfrak\colon \norm{z}\not=0\}
   ,\qquad
{\color{gray}
   T\Ll_+^\times\Zfrak\cap T\Ll_-^\times\Zfrak=\emptyset
   .
}
$$
To put it differently
$$
   T\Ll_+^\times\Zfrak
   =\Ll_+^\times\Zfrak \times \Ll_+\C
   ,\qquad
   T\Ll_-^\times\Zfrak
   =\Ll_-^\times\Zfrak \times \Ll_-\C
   .
$$
These are disjoint and the sign involution $TI$ acts freely
on each one of them, so on their union.
We unite the two quotient spaces (indicated by bars)
\begin{equation*}%\label{eq:involution-quotients-*}
   T\bar\Ll_\pm^\times\Zfrak
   :=\frac{T\Ll_\pm^\times\Zfrak}{TI}
   ,\qquad
   T{\color{red}\bar\Ll^\times\Zfrak}
   :=T\bar\Ll_+^\times\Zfrak\cup
   T\bar\Ll_-^\times\Zfrak
   .
\end{equation*}
The elements of $T \bar\Ll^\times\Zfrak$
are still denoted by
\begin{equation}\label{eq:bar-Ll-C}
   \Xi=(z,\xi)\in \Ll_\pm^\times\Zfrak \times\Ll_\pm\C
\end{equation}
actually representing $\pm(z,\xi)$.
This concludes Remark~\ref{rem:T-bdls}.
\end{remark}

\boldmath
%%%%%%%%%%%%%%%%%%%%%%%%%%%%%%%%%%%
%%%%%%% Subsubsection  %%%%%%%%%%%%%%%%
%%%%%%%%%%%%%%%%%%%%%%%%%%%%%%%%%%%
\subsubsection*{Canonical vector field on loop space and induced flow}
\unboldmath

The \textbf{canonical vector field} is generated by time derivative
\begin{equation}\label{eq:can-vf-loops}
   \nu\colon \bar\Ll^\times\Zfrak
   \to T\bar\Ll^\times\Zfrak
   ,\quad
   z\mapsto (z,z^\prime)
\end{equation}
For the principal part we use the same notation $\nu(z)=z^\prime$.
Note that both $\nu$ is well defined on quotients, indeed
$\nu(-z)=(-z,-z^\prime)=-(z,z^\prime)=-\nu(z)$.

\smallskip
The \textbf{flow} induced by $\nu$ on $\bar\Ll^\times\Zfrak$
is time shift, in symbols
\begin{equation}\label{eq:shift-nu}
   \phi^r_\nu z=r_* z
   ,\qquad
   \left(\phi^r_\nu z\right)_\tau=z_{\tau+r} ,
\end{equation}
where $(r_* z)(\tau)=z(\tau+r)$ for every time $\tau\in\R$.
Given $\xi\in T_z \bar\Ll^\times\Zfrak$,
pick a smooth map $h\colon\R\to \bar\Ll^\times\Zfrak$
with $h(0)=z$ and $h^\prime(0)=\xi$. Then
$$
   \bigl(d \phi^r_\nu|_z \xi\bigr)_\tau
   =\bigl(\left.\tfrac{d}{ds}\right|_{s=0}\phi^r_\nu(h(s))\bigr)_\tau
   =\left.\tfrac{d}{ds}\right|_{s=0} h(s)_{\tau+r}
   =\xi_{\tau+r} .
$$

\boldmath
%%%%%%% Subsection  %%%%%%%%%%%%%%%%
\subsection{Rescale-square map $\Qq$ and inverse $\Zz$
(non-collisional)}
\label{sec:resc-square-non-coll}
\unboldmath

In this section we consider loops avoiding the origin (no collisions),
i.e. $z\in {\color{cyan}\bar\Ll\Zfrak^\times}$
and $q\in {\color{cyan}\bar\Ll\Qfrak^\times}$.
The constructions do not depend on choosing $z$ or $-z$.
We construct two maps $\Qq$ and its inverse $\Zz$, as illustrated by
Figure~\ref{fig:fig-Qq}.

\begin{remark}\label{rem:times}
Pick $z\in \bar\Ll\Zfrak^\times$,
so there are no collisions $z^{-1}(0)=\emptyset$,
see Figure~\ref{fig:fig-Qq}.
We call the variable $\tau$ of $z\colon\SS^1\to\Zfrak^\times$,
equivalently $z\colon\R\to\Zfrak^\times$ with $z_{\tau+1}=\pm z_\tau$,
\textbf{regularized time}.
\textbf{Classical time} we call the values of the map
$t_z\colon\SS^1_\tau\to\SS^1_t$
\begin{equation}\label{eq:t_z}
   \forall\tau\in\R\colon\qquad
   t_z(\tau)
   :=\frac{\int_0^\tau \Abs{z(s)}^2\, ds}{\norm{z}^2}
   ,\qquad
   {\color{gray} t_z=t_{-z}}
   .
\end{equation}
Classical time has the following obvious properties
\begin{equation}\label{eq:class-time-tau-deriv-C}
   t_z^\prime(\tau)
   =\frac{\Abs{z(\tau)}^2}{\norm{z}^2}> 0
   ,\quad
   t_z\in C^\infty,\quad t_{\rho z}=t_z,\quad
   t_z(0)=0,\quad t_z(1)=1 ,
\end{equation}
for every real $\rho\not=0$.
Moreover, classical time is indeed equivariant with respect to the
$\Z$-action $\Z\times\R\to\R$, $(k,\tau)\mapsto
\tau+k$, in symbols
$
   t_z(\tau+k)=t_z(\tau)+ k
$.
As the map $t_z\colon\SS^1_\tau\to\SS^1_t$ is continuous and strictly
monotone increasing (as $z$ avoids the origin),
it has a continuous strictly
monotone increasing inverse\footnote{
  see e.g.~\cite[\S12 Satz\,1]{Forster:2011a}
  }
\begin{equation}\label{eq:t-inverse}
   \tau_z:={t_z}^{-1}
   \colon\SS^1_t\to\SS^1_\tau
   ,\quad
   t\mapsto
   \tau_z(t)
\end{equation}
called \textbf{regularized time}.
Since $z$ avoids the origin $\dot \tau_z$ is continuous. Indeed
\begin{equation}\label{eq:inverse-C}
   \dot \tau_z(t)
   =\tfrac{1}{t_z^\prime(\tau_z(t))}
   =\tfrac{\norm{z}^2}{\Abs{z(\tau_z(t))}^2}>0
   ,\quad
   \tau_z\in C^\infty,\quad
   \tau_z(0)=0,\quad \tau_z(1)=1
   ,
\end{equation}
and $\tau_{\rho z}=\tau_z$ $\forall \rho>0$.
In particular, both $t_z$ and $\tau_z$ are \underline{smooth}
diffeomorphisms of $\SS^1$.
This is due to the fact that $z$ \underline{avoids} the origin.
This concludes Remark~\ref{rem:times}.
\end{remark}

\begin{figure}%[b]
  \centering
\begin{equation*}
\begin{tikzcd} [row sep=tiny] %[column sep=small] %
{\color{gray} \text{extension}}
  &{\color{gray} \text{regularized}}
  &&
    {\color{gray} \text{classical}}
\\
\R
  &\R
  &&\R
\\ \mbox{} \\ \mbox{} \\
{\color{red}
   \bar\Ll^\times\Zfrak
}
\arrow[uuu, "\Bb:=", "\Qq^*\Ss"']
\arrow[r, phantom, "\supset"]
  &
{\color{cyan}
   \bar\Ll\Zfrak^\times
}
    \arrow[rr, shift left=2, "\Qq", "1:1"']
    \arrow[uuu, dashed, "\Bb^\times:=", "\Qq^*\Ss"']
  &&{\color{cyan}\Ll \Qfrak^\times}
      \arrow[uuu, "\Ss"]
      \arrow[ll, shift left=2, "\Zz"]
\\
  &\pm z
    \arrow[rr, mapsto]
  &&
    q_z:=z^2\circ \tau_z
\\
  &z_q(\tau):=q^\frac12\circ \tau_{1/\sqrt{q}}(\tau)
  &&
    q(t)
    \arrow[ll, mapsto]
\end{tikzcd} 
\end{equation*}
  \caption{Pull-back of classical action gives a formula
                 $\Qq^*\Ss$ on $\bar{\Ll}\Zfrak^\times$
                 {\small\color{gray} (loops avoiding $0$)}
                 that makes sense on the larger space
                 $\bar\Ll^\times\Zfrak$
                 {\small\color{gray} (all loops except zero loop $\equiv 0$)}
                 on which $\Bb:=\Qq^*\Ss$ has extra critical points
                 (rescaled classical collision trajectories)}
   \label{fig:fig-Qq}
\end{figure}

\begin{definition}\label{def:reg-path-C}
The \textbf{rescale-square operation}
is defined by
\begin{equation*}%\label{eq:q_z}
\begin{split}
   \Qq\colon
{\color{cyan}
   \bar\Ll\Zfrak^\times
}
   \to
{\color{cyan}
   \Ll\Qfrak^\times
}
   ,\quad
   z
   \mapsto
   q_z:=z^2\circ \tau_z
   {\color{gray}\;=\varsigma\circ z\circ \tau_z,
   \quad
   \forall \rho\not=0\colon \Qq(\rho z)=\rho^2\Qq(z)}
   .
\end{split}
\end{equation*}
\end{definition}

\begin{remark}
By definition of $q_z$ and the chain rule we get the identity
\begin{equation}\label{eq:dot-q_z}
   \dot q_z(t)
   =2z(\tau_z(t))\, z^\prime(\tau_z(t))\, \dot\tau_z(t)
   \stackrel{(\ref{eq:inverse-C})}{=}
   2\norm{z}^2\,\tfrac{z^\prime(\tau_z(t))}{\bar z(\tau_z(t))}
   .
\end{equation}
In equality~2 below
we change the variable to $\sigma:=\tau_z(t)$,
then use~(\ref{eq:class-time-tau-deriv-C}) to get
\begin{equation}\label{eq:fsfcs-C}
\begin{split}
   \norm{\dot q_z}^2
   &\stackrel{\text{(\ref{eq:dot-q_z})}}{=}
   4\norm{z}^4 \int_0^1
     \frac{\abs{z^\prime(\tau_z(t))}^2}{\abs{\bar z(\tau_z(t))}^2} dt
   \stackrel{2}{=} 4\norm{z}^4 \int_0^1
     \frac{\abs{z^\prime(\sigma)}^2}{\abs{z(\sigma)}^2}
     t_z^\prime(\sigma)\, d\sigma
\\
   &\stackrel{\text{(\ref{eq:class-time-tau-deriv-C})}}{=}
   4\norm{z}^4 \int_0^1
     \frac{\abs{z^\prime(\sigma)}^2}{\abs{z(\sigma)}^2}
     \frac{\Abs{z(\sigma)}^2}{\norm{z}^2}
     \, d\sigma
   =
   4\norm{z}^2\INNER{z^\prime}{z^\prime}
   .
\end{split}
\end{equation}
\end{remark}

\begin{lemma}[well defined bijection]\label{le:bij-proj}\label{le:Qq}
For $z\in \bar\Ll\Zfrak^\times$ the image $\Qq(z)$ lies in
$\Ll\Qfrak^\times$.
The map $\Qq\colon\bar\Ll\Zfrak^\times\to \Ll\Qfrak^\times$
is a bijection with inverse~(\ref{eq:Rr-inverse-C}).
\end{lemma}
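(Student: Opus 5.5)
The plan is to check well-definedness of $\Qq$ first, then write down the candidate inverse $\Zz$ explicitly (as in Figure~\ref{fig:fig-Qq}) and verify both compositions.

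\textbf{Well-definedness.} Let $z\in\Ll_\pm\Zfrak^\times$ represent a class in $\bar\Ll\Zfrak^\times$. By Remark~\ref{rem:times}, $\tau_z$ is a smooth diffeomorphism of $\R$ with $\tau_z(t+1)=\tau_z(t)+1$. Hence
\[
q_z(t+1)=z(\tau_z(t)+1)^2=(\pm z(\tau_z(t)))^2=q_z(t),
\]
so $q_z$ is $1$-periodic. It is smooth as a composition of smooth maps, and its values lie in $\varsigma(\Zfrak^\times)=\Qfrak^\times$. Independence of the representative follows from $t_{-z}=t_z$ and $(-z)^2=z^2$. Thus $\Qq(z)\in\Ll\Qfrak^\times$.

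\textbf{Construction of the inverse.} Given $q\in\Ll\Qfrak^\times$, first choose a smooth square root $w\colon\R\to\Zfrak^\times$ with $w^2=q$. This exists by lifting along the double cover $\varsigma$ of~(\ref{eq:I-Zfrak-i}): $\R$ is simply connected, or explicitly one takes $w=\sqrt{\abs{q}}\,e^{i\varphi/2}$ with $\varphi$ a smooth lift of the argument. Since $w(t+1)^2=w(t)^2$, we get $w(t+1)=\pm w(t)$ with a sign that is constant by continuity. So $w\in\Ll_\pm\Zfrak^\times$, and the lift is unique up to the global sign, i.e.\ unique in the quotient. Next define the classical-time reparametrization $\tau$ by requiring that its inverse $t(\tau)$ satisfy $t'(\tau)=c\,\abs{w(t(\tau))}^{-2}$. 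Equivalently,
\[
\tau(t)=\frac{\int_0^t\abs{q(s)}\,ds}{\int_0^1\abs{q(s)}\,ds},
\]
which is the map $\tau_{1/\sqrt q}$ of the figure, i.e.\ $t_{1/\sqrt q}$ after inversion conventions. Set $\Zz(q):=w\circ\tau^{-1}$, which is smooth and twisted-periodic by the same argument as in the first step.

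\textbf{Verifying the compositions.} The key computation is the relation between the clock of $z$ and the clock of $q$. For $z=\Zz(q)$, the chain rule gives
\[
t_z'(\sigma)\propto\abs{z(\sigma)}^2=\abs{q(\tau^{-1}(\sigma))},
\qquad
(\tau^{-1})'(\sigma)=\frac{c}{\abs{q(\tau^{-1}(\sigma))}}.
\]
I would check directly that $t_z=\tau$, equivalently $\tau_z=\tau^{-1}$, using the normalizations $t_z(0)=0$ and $t_z(1)=1$. Then $\Qq(\Zz(q))=z^2\circ\tau_z=w^2\circ\tau^{-1}\circ\tau=q$, after getting the direction of the inverses right. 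Conversely, for $z\in\bar\Ll\Zfrak^\times$ and $q=\Qq(z)$, the map $z\circ\tau_z$ is a square root of $q$, so by uniqueness of lifts $w=\pm z\circ\tau_z$. Reversing the computation then shows that the reparametrization built from $q$ is $t_z$, which gives $\Zz(\Qq(z))=\pm z$, the same class.

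\textbf{Main obstacle.} The only real subtlety is the bookkeeping of which of the two time maps is the inverse of which. The classical clock weights by $\abs z^2=\abs q$, while the inverse direction weights by $1/\abs q$. This needs to be matched against the notation $\tau_{1/\sqrt q}$ in~(\ref{eq:Rr-inverse-C}) so that the two normalizations agree.
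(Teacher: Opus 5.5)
Your well-definedness step is fine and matches the paper. Your lifting argument is also good: a smooth square root $w$ of $q$ lies in $\Ll_\pm\Zfrak^\times$ and is unique up to sign, which the paper only covers through Remark~\ref{rem:cx-square-root}. The gap is the reparametrization you use to define $\Zz$. It has the wrong weight, and this is not a question of which map is the inverse of which.

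Your $\tau(t)=\int_0^t\abs{q_s}\,ds/\int_0^1\abs{q_s}\,ds$ is a classical-to-regularized clock with $\dot\tau\propto\abs{q}$. It is $t_w$, the clock~(\ref{eq:t_z}) of the loop $w=\sqrt q$ itself; it is neither $t_{1/\sqrt q}$ nor $\tau_{1/\sqrt q}$. But for any $z$ with $\Qq(z)=q$, (\ref{eq:inverse-C}) gives $\dot\tau_z(t)=\norm{z}^2/\Abs{z(\tau_z(t))}^2=\norm{z}^2/\abs{q(t)}$. So the classical-to-regularized clock is forced to weight by $1/\abs{q}$. Your own displayed derivatives already show the failure. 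You have $t_z'\propto\abs{q\circ\tau^{-1}}$, while $(\tau^{-1})'\propto 1/\abs{q\circ\tau^{-1}}$ and $\tau'(\sigma)\propto\abs{q(\sigma)}$. Hence neither $t_z=\tau$ nor $t_z=\tau^{-1}$ holds unless $\abs{q}$ is constant. You also assert $\tau_z=\tau^{-1}$ and then use $\tau_z=\tau$ in $z^2\circ\tau_z=w^2\circ\tau^{-1}\circ\tau$. Concretely, take $\Qfrak=\C$ and $z(\tau)=2+\cos 2\pi\tau$. Your recipe gives $\Zz(\Qq(z))=z\circ\tau_z\circ\tau^{-1}\neq\pm z$, because $\dot\tau\neq\dot\tau_z$.

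The missing step is the computation the paper uses for injectivity. Substituting $\sigma=\tau_z(s)$ gives $\int_0^t\frac{ds}{\abs{q_z(s)}}=\tau_z(t)/\norm{z}^2$. Hence $\tau_z=t_{1/\sqrt{q_z}}$, where $t_{1/\sqrt q}(t)=\int_0^t\abs{q_s}^{-1}ds/\int_0^1\abs{q_s}^{-1}ds$, and therefore $\Zz(\Qq(z))=z\circ\tau_z\circ\tau_{1/\sqrt{q_z}}=z$. So you must define $\Zz(q)=w\circ\tau_{1/\sqrt q}$ with $\tau_{1/\sqrt q}:=t_{1/\sqrt q}^{-1}$. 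Surjectivity then follows from the substitution $s=\tau_{1/\sqrt q}(\sigma)$, $d\sigma=ds/(\abs{q_s}\,\norm{1/\sqrt q}^2)$. It yields $t_{z_q}=\tau_{1/\sqrt q}$ as in~(\ref{eq:rescaling-sqrt-q}), hence $\Qq(\Zz(q))=q\circ\tau_{1/\sqrt q}\circ\tau_{z_q}=q$. With the weight corrected, your outline (lift, reparametrize, check both compositions) becomes the paper's proof.
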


\begin{proof}
Pick $z\in\bar\Ll\Zfrak^\times$, in particular
$z\colon\SS^1\to\Zfrak^\times$.
Since $z$ and $\tau_z$ are smooth, so is their composition $q_z$.
Non-vanishing of $z$ implies non-vanishing of $q_z$.
To see that $q_z$ is $1$-periodic note that
$q_z(1)=z^2\circ \tau_z(1)=z(1)^2=(\pm z(0))^2=
z(0)^2=z^2\circ t_z(0)=q_z(0)$.
This shows that
$\Qq(z)=q_z\in \Ll\Qfrak^\times=C^\infty(\SS^1,\Qfrak\setminus\{0\})$.

\medskip\noindent
\textbf{Surjective.}
Given $q\in\bar\Ll\Qfrak^\times$,
define $1/\sqrt{q}\in\bar \Ll\C^\times$
and $\Zz(q)\in  \bar\Ll\Zfrak^\times$
by
\begin{equation}\label{eq:Rr-inverse-C}
   \Zz(q):=z_q:=q^\frac12\circ \tau_{1/\sqrt{q}}
   ,\qquad
   1/\sqrt{q}=[t\mapsto 1/q(t)^{\frac12}] ,
\end{equation}
where $\tau_{1/\sqrt{q}}={t_{1/\sqrt{q}}}^{-1}\colon\SS^1_\tau\to\SS^1_t$
is the inverse of the map $t_{1/\sqrt{q}}$ associated
by~(\ref{eq:t_z})
to the loop $1/\sqrt{q}$ in $\C^\times$.
For $\tau\in\R$ we calculate the identities
\begin{equation}\label{eq:rescaling-sqrt-q}
\begin{split}
   t_{z_q}(\tau)
   &\stackrel{\text{def.}}{=}
   \frac{\int_0^\tau\abs{{z_q}^2(\sigma)}\, d\sigma}{\int_0^1\abs{{z_q}^2(\sigma)}\, d\sigma}
   =\frac{\int_0^\tau\abs{q\circ\overbrace{\tau_{1/\sqrt{q}}(\sigma)}^{=:s}}
      \, d\sigma}
      {\int_0^1\abs{q\circ \tau_{1/\sqrt{q}}(\sigma)}\, d\sigma}
   =\frac{\int_0^{\tau_{1/\sqrt{q}}(\tau)} \frac{ds}{\norm{1/\sqrt{q}}^2}}
      {\int_0^1 \frac{ds}{\norm{1/\sqrt{q}}^2}}
\\
   &=\tau_{1/\sqrt{q}}(\tau)
\end{split}
\end{equation}
using change of variables $s(\sigma)=\tau_{1/\sqrt{q}}(\sigma)$, equivalently
$\sigma(s)=t_{1/\sqrt{q}}(s)$, hence
$$
   d\sigma
   =t_{1/\sqrt{q}}^\prime(s) \,ds
   \stackrel{\text{(\ref{eq:class-time-tau-deriv-C})}}{=}
   \tfrac{\abs{1/\sqrt{q}(s)}^2}{\norm{1/\sqrt{q}}^2} \, ds
   =\tfrac{1/\abs{q_s}}{\norm{1/\sqrt{q}}^2} \, ds
   .
$$
With this result we obtain that
\[
   \bigl(\Qq\circ\underbrace{\Zz(q)}_{z_q}\bigr)(t)
   \stackrel{\text{def.\,$\Qq$}}{=}
   {z_q}^2\circ \tau_{z_q}(t)
   \stackrel{\text{def.\,$z_q^2$}}{=}
   q\circ\underbrace{\tau_{1/\sqrt{q}}}_{t_{z_q}}\circ
   \underbrace{\tau_{z_q}}_{{t_{z_q}}^{-1}}(t)
   =q(t) .
\]

\medskip\noindent
\textbf{Injective.}
For $z\in\bar\Ll\Zfrak^\times$ set
$q_z:=\Qq(z):=z^2\circ \tau_z$. Then for $t\in\R$ we get
\[
   \int_0^t\frac{1}{\abs{q_z(s)}}\,ds
   =\int_0^t\frac{1}{\abs{z\circ \tau_z(s)}^2}\,ds
   =\int_0^{\tau_z(t)}\frac{1}{\abs{z(\sigma)}^2}
   \frac{\abs{z(\sigma)}^2\, d\sigma}{\norm{z}^2}
   =\frac{\tau _z(t)}{\norm{z}^2}
\]
by change of variables $\sigma=\tau_z(s)$ using~(\ref{eq:inverse-C}).
Pick $t=1$ to obtain
\begin{equation}\label{eq:1/q^2-C}
   \int_0^1\frac{1}{\abs{q_z(t)}}\, dt
   =\frac{1}{\norm{z}^2} .
\end{equation}
Therefore we get for $t\in\R$ the formula
\begin{equation}\label{eq:tau_z=t1/sqrtq}
   \tau_z(t)
   =\norm{z}^2 \int_0^t\frac{1}{\abs{q_z(s)}}\,ds
   \stackrel{(\ref{eq:1/q^2-C})}{=}
      \frac{\int_0^t\frac{1}{\abs{q_z(s)}}\,ds}
      {\int_0^1\frac{1}{\abs{q_z(s)}}\,ds}
   \stackrel{(\ref{eq:t_z})}{=}
   t_{1/\sqrt{q_z}} (t) .
\end{equation}
With this result we obtain
\[
   \bigl(\Zz\circ\underbrace{\Qq(z)}_{q_z}\bigr)(\tau)
   \stackrel{\text{def.\,$\Zz$}}{=}
      {q_z}^{\frac12}\circ \tau_{1/\sqrt{q_z}}(\tau)
   \stackrel{\text{def.\,$q_z$}}{=}
      z\circ\underbrace{\tau_z}_{{t_{1/\sqrt{q_z}}}}\circ
      \underbrace{\tau_{1/\sqrt{q_z}}}_{{t_{1/\sqrt{q_z}}}^{-1}}  (\tau)
   =z(\tau) .
\]
This proves Lemma~\ref{le:bij-proj}.
\end{proof}

\begin{proposition}[extendable formula]\label{prop:Bb}
For $\Ss$ in~(\ref {eq:class-action}) and
$z\in {\color{cyan}\bar\Ll\Zfrak^\times}$ it holds
\begin{equation}\label{eq:Bb}
   \Ss (\Qq(z))
   =2\norm{z}^2\norm{z^\prime}^2
   +\frac{1}{\norm{z}^2}
   +
   \int_0^1(\varsigma^*\theta)_{t_z(\sigma)}|_{z(\sigma)}
      z^\prime(\sigma)\, d\sigma
    -f(z^2_0)
\end{equation}
as illustrated by Figure~\ref{fig:fig-Qq}.
The functional $\Qq^*\Ss$ has the same value on $\pm z$.
\end{proposition}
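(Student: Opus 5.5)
The plan is to evaluate $\Ss(q_z)$ for $q_z=\Qq(z)=z^2\circ\tau_z$ term by term, using the change of variables $t=t_z(\sigma)$, equivalently $\sigma=\tau_z(t)$. Since $z$ avoids the origin, $t_z$ is a smooth diffeomorphism of $[0,1]$ by~(\ref{eq:class-time-tau-deriv-C}). Under this substitution $dt=t_z^\prime(\sigma)\,d\sigma=\abs{z(\sigma)}^2\norm{z}^{-2}\,d\sigma$. The four summands of~(\ref{eq:class-action}) are the kinetic term, the Kepler potential term, the magnetic term and the twist term, and each is converted separately.

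\textbf{Kinetic term.} This is already computed in~(\ref{eq:fsfcs-C}): $\tfrac12\norm{\dot q_z}^2=2\norm{z}^2\norm{z^\prime}^2$.

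\textbf{Potential term.} This is~(\ref{eq:1/q^2-C}), proved in the injectivity part of Lemma~\ref{le:bij-proj}: $\int_0^1\abs{q_z(t)}^{-1}dt=\norm{z}^{-2}$.

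\textbf{Magnetic term.} By~(\ref{eq:dot-q_z}) we have $\dot q_z(t)=d\varsigma|_{z(\tau_z(t))}z^\prime(\tau_z(t))\,\dot\tau_z(t)$, where $d\varsigma|_z\xi=2z\xi$. Hence
\[
\theta_t|_{q_z(t)}\dot q_z(t)=(\varsigma^*\theta_t)|_{z(\tau_z(t))}z^\prime(\tau_z(t))\,\dot\tau_z(t).
\]
Substituting $\sigma=\tau_z(t)$, the factor $\dot\tau_z(t)\,dt$ becomes exactly $d\sigma$. The time index $t$ turns into $t_z(\sigma)$, and the integration interval $[0,1]$ is preserved because $\tau_z(0)=0$ and $\tau_z(1)=1$. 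This gives $\int_0^1(\varsigma^*\theta)_{t_z(\sigma)}|_{z(\sigma)}z^\prime(\sigma)\,d\sigma$.

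\textbf{Twist term.} Since $\tau_z(0)=0$, we have $q_z(0)=z(0)^2$, so $-f(q_z(0))=-f(z_0^2)$. Adding the four pieces yields~(\ref{eq:Bb}).

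For the invariance under $z\mapsto -z$, note that $\norm{\pm z}$ and $\norm{\pm z^\prime}$ are unchanged, $t_{-z}=t_z$ by~(\ref{eq:t_z}), and $(-z_0)^2=z_0^2$. For the integral term, $\varsigma\circ\mathfrak{i}=\varsigma$ implies $\mathfrak{i}^*\varsigma^*\theta=\varsigma^*\theta$. Concretely, $(\varsigma^*\theta_t)|_{-z}(-\xi)=\theta_t|_{z^2}(2(-z)(-\xi))=(\varsigma^*\theta_t)|_z\xi$, so the integrand is unchanged pointwise. One could also argue more simply that $\Qq(-z)=\Qq(z)$.

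The only delicate point is the twisted case $z\in\bar\Ll_-\Zfrak^\times$ together with a non-periodic $\theta$. Here I must make sure the integration is over $[0,1]$ and not over the circle, in line with Remark~\ref{rem:interval-not-SS^1}. The substitution maps $[0,1]_t$ onto $[0,1]_\sigma$ monotonically, so no periodicity of the integrand is needed and the step goes through.
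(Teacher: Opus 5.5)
Your proof is correct and follows the paper's argument. The kinetic and Kepler terms come from (\ref{eq:fsfcs-C}) and (\ref{eq:1/q^2-C}), the magnetic and twist terms from the substitution $\sigma=\tau_z(t)$ together with $\tau_z(0)=0$, and $\pm z$-invariance from $\Qq(-z)=\Qq(z)$ or $\varsigma\circ\mathfrak{i}=\varsigma$. The one difference is cosmetic: you handle the magnetic term by naturality of pull-back, $\theta_t|_{\varsigma(z)}\circ d\varsigma|_z=(\varsigma^*\theta_t)|_z$ with $\dot\tau_z\,dt=d\sigma$, whereas the paper writes $\dot q_z=2\norm{z}^2 z^\prime/\bar z$, cancels against $t_z^\prime$ to get $2z_\sigma z^\prime_\sigma$, and then expands in the real components $A^1_t,A^2_t$.
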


\noindent
\textbf{Note.} Formula~(\ref{eq:Bb}) makes sense even if $z$ takes on
the value $0$, even along intervals, as long as $\norm{z}\not=0$,
i.e. as long as $z$ is not constantly zero.

\begin{proof}
Set $q_z:=\Qq(z)=\varsigma\circ z\circ \tau_z$.
By definition~(\ref{eq:class-action}) of $\Ss$ we have
\begin{equation*}%\label{eq:Ss=Bb-C}
\begin{split}
   \Ss (q_z):
   &= 
   \tfrac12\norm{\dot q_z}^2
   +\int_{[0,1]} q_z^*\theta
   +\int_0^1\frac{1}{\Abs{q_z(t)}}\, dt
    -f(q_z(0))
\\
   &\stackrel{{\color{white}(2.3)}}{=}
   \tfrac12\, 4\norm{z}^2\INNER{z^\prime}{z^\prime}
   +\int_{[0,1]} q_z^* \theta
   +\frac{1}{\norm{z}^2}
   -f(z^2_0)
   .
\end{split}
\end{equation*}
Here equality two is by~(\ref{eq:fsfcs-C})
and~(\ref{eq:1/q^2-C}). We compute the last two summands
\begin{equation}\label{eq:pull-back-of-theta}
\begin{split}
   &\int_{[0,1]} q_z^*\theta-f(q_z(0))
\\
%1
   &=\int_0^1 \theta_t|_{q_z(t)}\dot q_z(t)\, dt-f(q_z(0))
\\
%2
   &\stackrel{{\color{gray}2}}{=}
   2\norm{z}^2\int_0^1 \theta_{t_z(\sigma)}|_{z^2_\sigma}
   \tfrac{z^\prime_\sigma}{\bar z_\sigma} t_z^\prime(\sigma)\, d\sigma
   -f(z_0^2)
\\
%3
   &\stackrel{{\color{gray}3}}{=}
   2\int_0^1 \theta_{t_z(\sigma)}|_{z^2(\sigma)}
   z_\sigma  z^\prime_\sigma \, d\sigma
   -f(z_0^2)
\\
%4
   &\stackrel{{\color{gray}}}{=}
   2\int_0^1 \theta_{t_z(\sigma)}|_{{z_\sigma}^2}
   \left(x+iy\right)_\sigma
   \left(x^\prime+iy^\prime\right)_\sigma
   d\sigma
   -f(z_0^2)
\\
%5
   &\stackrel{{\color{gray}}}{=}
   2\int_0^1 \theta_{t_z(\sigma)}|_{{z_\sigma}^2}
   \Bigl(
   \left(x x^\prime-y y^\prime\right)_\sigma
   +i \left(x y^\prime-y x^\prime\right)_\sigma
   \Bigr)
   d\sigma
   -f(z_0^2)
\\
\end{split}
\end{equation}
\begin{equation*}
\begin{split}
   &\stackrel{{\color{gray}6}}{=}
   2\int_0^1
   \Bigl(
   A^1_{t_z(\sigma)}|_{{z_\sigma}^2}
   \left(x x^\prime-y y^\prime\right)_\sigma
   +
   A^2_{t_z(\sigma)}|_{{z_\sigma}^2}
   \left(x y^\prime+y x^\prime\right)_\sigma
   \Bigr)
   d\sigma
   -f(z_0^2)
\\
%7
   &\stackrel{{\color{gray}7}}{=}
   2\int_0^1
   \Bigl(
   \left(x A^1_{t_z}|_{z^2}+y A^2_{t_z}|_{z^2}\right) x^\prime
   +
   \left(x A^2_{t_z}|_{z^2}-y A^1_{t_z}|_{z^2}\right) y^\prime
   \Bigr)_{\sigma}
   d\sigma
   -f(z_0^2)
\\
%8
   &\stackrel{{\color{gray}8}}{=}
   \int_0^1(\varsigma^*\theta)_{t_z(\sigma)}|_{z_\sigma}
   z^\prime_\sigma\, d\sigma
   -f(z_0^2)
%9
   \stackrel{\text{(\ref{eq:Mm-BOV})}}{=:} \Mm(z)
\end{split}
\end{equation*}
where we identified $\C\simeq\R^2$
via $z=x+iy\mapsto (x,y)$.
Equality 2 is by~(\ref{eq:dot-q_z})
in combination with variable substitution
$\sigma=\tau_z(t)$ and the identity $t=t_z(\sigma)$
which uses the inverse function $\tau_z(t)=t_z^{-1}(t)$.
Equality 3 uses that $t_z^\prime(\tau)$ is given
by~(\ref{eq:class-time-tau-deriv-C}).
Equality 6 uses that $\theta$ is of the form~(\ref{eq:tw-per-1-form}).
Equality 8 is by definition~(\ref{eq:I-Zfrak-i}) of $\varsigma$ and of
the pull-back by $\varsigma$.
The result we denote by $\Mm(z)$.

\smallskip\noindent
$\Mm$ has the same value on $\pm z$:
By~(\ref{eq:pull-back-of-theta}) it suffices to show $q_z=q_{-z}$.
Indeed $q_z=z^2\circ {t_z}^{-1}=(-z)^2\circ {t_{-z}}^{-1}= q_{-z}$
for $t_z=t_{-z}$ see~(\ref{eq:t_z}).
Alternatively, inspect the right hand side of equality~3
in~(\ref{eq:pull-back-of-theta}).
This proves Proposition~\ref{prop:Bb}.
\end{proof}

\boldmath
%%%%%%%%%%%%%%%%%%%%%%%%%%%%%%%%%%%
%%%%%%%%%%%%%%%%%%%%%%%%%%%%%%%%%%%
%%%%%%% Section  %%%%%%%%%%%%%%%%%%%%%%
%%%%%%%%%%%%%%%%%%%%%%%%%%%%%%%%%%%
%%%%%%%%%%%%%%%%%%%%%%%%%%%%%%%%%%%
\section[Non-local Lagrangian mechanics -- twisted-periodic case]
{Non-local Lagrangian mechanics}
\label{sec:non-loc-Lag-mech}
\unboldmath

\boldmath
%%%%%%%%%%%%%%%%%%%%%%%%%%%%%%%%%%%
%%%%%%% Subsection  %%%%%%%%%%%%%%%%%%%
%%%%%%%%%%%%%%%%%%%%%%%%%%%%%%%%%%%
\subsection{Magnetic functional $\Mm$}
\label{sec:magnetic-Mm}
\unboldmath

Fix a twisted-periodic $1$-form~$\theta$
and a twist function $f$ on $\Qfrak\subset\C$;
see~(\ref{eq:df_t}).
Formula~(\ref{eq:Bb}) for the pull-back
of the classical action functional motivates

\begin{definition}[magnetic functional]\label{def:Mm}
Define the \textbf{magnetic functional}
by
\begin{equation}\label{eq:Mm-BOV}
   \Mm\colon {\color{red}\bar\Ll^\times\Zfrak}\to\R
   ,\quad
   z\mapsto
   \int_0^1 (\varsigma^*\theta)_{t_z(\tau)}|_{z_\tau}
   z_\tau^\prime\, d\tau
    -f(z_0^2)
   ,
\text{\color{gray}\small\;\,
   $f(z_0^2)=(\varsigma^* f)|_{z_0}$
}
\end{equation}
with repara\-metrization $t_z\colon\SS^1_\tau\to\SS^1_t$
defined by~(\ref{eq:t_z}) 
and $\varsigma(z)=z^2$.
For $\Mm$ written as $L^2$-inner product
see~(\ref{eq:Mm-as-inner-product}).
That $\Mm(z)=\Mm(-z)$ one reads off immediately
from formula~(\ref{eq:pull-back-of-theta})$_7$.
Remark~\ref{rem:interval-not-SS^1} explains
the integration interval.
\end{definition}

\begin{remark}[pull-back $\vartheta:=\varsigma^*\theta$]
\label{rem:pull-back}
The pull-back of $\theta$ from the open subset
$\Qfrak\subset\C$ with coordinates $q=q_1+iq_2$ to
$\Zfrak=\varsigma^{-1}(\Qfrak)\subset\C$ with coordinates $z=x+iy$ 
under the map $z\mapsto \varsigma(z):=z^2$
is a smooth real family of $1$-forms
\begin{equation}\label{eq:tw-per-1-form-var}
   \vartheta_t:=\varsigma_*\theta_t
   = a_t^1\, dx+ a_t^2\, dy
   ,\qquad
   a_t^1,  a_t^2\colon\Zfrak\to\R
   ,\qquad
   \aaa_t=( a_t^1,  a_t^2) .
\end{equation}
Analogous to~(\ref{eq:d-theta}) we get
$d\vartheta_t=(\rot\,\aaa_t)\,\omega_0$.
Note that $\vartheta$ is twisted-periodic
\begin{equation*}%\label{eq:vartheta}
\begin{split}
   \dot\vartheta_{t+1}
   &
{\color{gray}
   =\varsigma^*\dot\theta_{t+1}=\varsigma^*\dot\theta_t
}
   =\dot\vartheta_t
\\
   \vartheta_{t+1}
   &
{\color{gray}
   =\varsigma^*\theta_{t+1}=\varsigma^*(\theta_t+df)
}
   =\vartheta_t+dF
   ,\qquad
   F:=\varsigma^*f=f\circ\varsigma
   .
\end{split}
\end{equation*}
To compute the pull-back under the squaring
map $\varsigma$ we write
$$
   q_1+iq_2=q=z^2=(x+iy)^2)=x^2-y^2+2ixy .
$$
Thus the components are
$
   q_1=x^2-y^2
$
and
$
   q_2=2xy
$.
Hence
$$
   dq_1=2x dx-2y dy
   ,\qquad
   dq_2=2ydx+2xdy .
$$
Recall from~(\ref{eq:tw-per-1-form})
that we wrote $\theta_t$ in the form
$\theta_t|_q=A_t^1|_q dq_1+A_t^2|_qdq_2$ for $q\in\Qfrak$.
Calculation shows that the pull-back under $\varsigma$
is of the form
\begin{equation*}
\begin{split}
   \vartheta_t|_z:
   &=\left(\varsigma^*\theta\right)_t|_z
\\
   &=A_t^1|_{\varsigma(z)}(2x dx-2y dy)+A_t^2|_{\varsigma(z)}(2ydx+2xdy)
\\
   &=\left(2xA_t^1|_{z^2}+2yA_t^2|_{z^2}\right) dx
   +\left(2xA_t^2|_{z^2}-2yA_t^1|_{z^2}\right) dy.
\end{split}
\end{equation*}
By~(\ref{eq:tw-per-1-form-var}) the components $ a_t^j|_z$
of $\vartheta$ and the $A_t^j|_{z^2}$ of $\theta$ are related by
\begin{equation}\label{eq:A-tilde-A}
\begin{split}
    a_t^1|_z
   &=2xA_t^1|_{z^2}+2yA_t^2|_{z^2}
{\color{gray}\;
   =2\INNER{z}{\AAA_t|_{z^2}}_0
}
\\
    a_t^2|_z
   &=2xA_t^2|_{z^2}-2yA_t^1|_{z^2}
{\color{gray}\;
   =2\omega_0(z,\AAA_t|_{z^2})
   .
}
\end{split}
\end{equation}
Pointwise at $z=(x,y)$, using the previous two formulas, we calculate
\begin{equation}\label{eq:tilde-A-A}
   \rot\,\aaa_t|_z
   =(\p_x a_t^2-\p_y a_t^1)_z
   =4\Abs{z}^2(\p_1 A^2_t-\p_2 A^1_t)_{z^2}
   =4\bar z z\, \rot\,\AAA_t|_{z^2}
   .
\end{equation}
In particular, we see that $\rot\,\aaa_t|_0=0$
vanishes at the origin singularity at
all times. 
The second equality requires a bit of work.
Hence
\begin{equation}\label{eq:dvartheta}
   (d\vartheta_t)_z
   =(\rot\,\aaa_t)\,\INNER{j_0\cdot}{\cdot}_0
   =4\Abs{z}^2(\rot\,\AAA_t|_{z^2})\, \INNER{j_0\cdot}{\cdot}_0 .
\end{equation}
In complex notation there are the identities
\begin{equation}\label{eq:barzA}
   2\bar z \AAA_t|_{z^2}
   =2(x-iy)(A^1_t+iA^2_t)|_{z^2}
   = a_t^1|_z+i  a_t^2|_z
   =\aaa_t|_z
   .
\end{equation}
Step two uses~(\ref{eq:A-tilde-A});
similarly $2\bar z \AAAdot_t|_{z^2}=\dot{\aaa}_t|_z$.
This concludes Remark~\ref{rem:pull-back}.
\end{remark}

By~(\ref{eq:Mm-BOV}) and~(\ref{eq:tw-per-1-form-var})
the magnetic term translates to an inner product
\begin{equation}\label{eq:Mm-as-inner-product}
\begin{split}
   \Mm(z)
   &=\int_0^1 \left( a^1_{t_z(\tau)}|_{z_\tau} x^\prime_\tau
   + a^2_{t_z(\tau)}|_{z_\tau} y^\prime_\tau \right) d\tau
   -f(z_0^2)
   =\inner{\aaa_{t_z}|_z}{z^\prime}
   -f(z_0^2)
\end{split}
\end{equation}
for $z\in \bar\Ll^\times\Zfrak$.
The task at hand is to calculate the derivative $d\Mm(z)$.

\boldmath
%%%%%%%%%%%%%%%%%%%%%%%%%%%%%%%%%%%
%%%%%%% Subsubsection  %%%%%%%%%%%%%%%%
%%%%%%%%%%%%%%%%%%%%%%%%%%%%%%%%%%%
\subsubsection[Gradient]{$L^2$-gradient}
\unboldmath

\begin{lemma}[$L^2$-gradient of $\Mm$]\label{le:gradient-Mm}
At $z\in \bar\Ll^\times\Zfrak$ for any time $\tau$ we have
\begin{equation*}%\label{eq:grad-Mm}
\begin{split}
   (\grad\Mm|_z)_\tau
   &=
% T3 magnetic M2
   -\frac{2z_\tau}{\norm{z}^4}
   \int_0^1
{\textstyle
   \int_0^\sigma\Abs{z_\rho}^2 d\rho
}
   \cdot
   \inner{\dot{\aaa}_{t_z(\sigma)}|_{z_\sigma}}{z^\prime_\sigma}_0\; d\sigma
% T5 magnetic M3
   -\frac{\Abs{z_\tau}^2}{\norm{z}^2}\dot{\aaa}_{t_z(\tau)}|_{z_\tau}
   \\
   &\quad
% T4 magnetic M1
   +\frac{2 z_\tau}{\norm{z}^2}
   \int_{\sigma=\tau}^1
   \inner{\dot{\aaa}_{t_z(\sigma)}|_{z_\sigma}}{z^\prime_\sigma}_0\;
   d\sigma
% T6 magnetic M4
   -(\rot\,\aaa_{t_z}|_z)_\tau\; j_0 z^\prime_\tau
   .
\end{split}
\end{equation*}
\end{lemma}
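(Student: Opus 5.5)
We differentiate $\Mm(z)=\inner{\aaa_{t_z}|_z}{z^\prime}-f(z_0^2)$ from (\ref{eq:Mm-as-inner-product}) along a variation $z+s\xi$, $\xi\in T_z\bar\Ll^\times\Zfrak$. The result is rearranged into $\inner{G}{\xi}$, and $G$ is read off as $\grad\Mm|_z$. There are three sources of $s$-dependence in the integrand $\inner{\aaa_{t_{z+s\xi}(\tau)}|_{z_\tau+s\xi_\tau}}{z^\prime_\tau+s\xi^\prime_\tau}_0$: the spatial point, the velocity, and the non-local reparametrized time $t_z(\tau)$. We treat them in that order.

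\emph{Local part (spatial point and velocity).} These two contributions are exactly those of the ordinary magnetic action of the time-dependent $1$-form $\sigma\mapsto\vartheta_{t_z(\sigma)}$ along $z$. The velocity term $\inner{\aaa_{t_z}|_z}{\xi^\prime}$ is integrated by parts. The derivative $\frac{d}{d\tau}\aaa_{t_z(\tau)}|_{z_\tau}$ then splits into a spatial part and a time part $t_z^\prime(\tau)\,\dot\aaa_{t_z(\tau)}|_{z_\tau}$. The spatial part combines with the spatial-variation term $\inner{D\aaa\,\xi}{z^\prime}$ to give, as in the standard magnetic computation, $-(\rot\,\aaa_{t_z}|_z)_\tau\,j_0z^\prime_\tau$. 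The time part gives $-t_z^\prime(\tau)\dot\aaa_{t_z(\tau)}|_{z_\tau}=-\frac{\abs{z_\tau}^2}{\norm{z}^2}\dot\aaa_{t_z(\tau)}|_{z_\tau}$ by (\ref{eq:class-time-tau-deriv-C}).

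\emph{Boundary terms.} In the integration by parts over $[0,1]$, twisted periodicity produces a boundary term. Since $z_1=\pm z_0$ and $\xi_1=\pm\xi_0$, we have $\vartheta_1=\vartheta_0+dF$ and $\aaa_1|_{\pm z_0}=\pm\aaa_0|_{z_0}\pm\nabla F|_{z_0}$ (here $F=f\circ\varsigma$ is even, so $\nabla F$ is odd). The boundary term is therefore $dF|_{z_0}\xi_0$. This cancels against the variation of $-f(z_0^2)=-F(z_0)$. I would also check that the variation of $t_z$ at the endpoints vanishes, since $t_z(0)=0$ and $t_z(1)=1$ for all $z$, so no extra boundary contribution arises there.

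\emph{Non-local part (variation of $t_z$).} This step is the main obstacle. From (\ref{eq:t_z}),
\[
\delta t_z(\sigma)=\frac{2\int_0^\sigma\inner{z_\rho}{\xi_\rho}_0d\rho}{\norm{z}^2}-\frac{2\inner{z}{\xi}\int_0^\sigma\abs{z_\rho}^2d\rho}{\norm{z}^4},
\]
which contributes $\int_0^1\delta t_z(\sigma)\inner{\dot\aaa_{t_z(\sigma)}|_{z_\sigma}}{z^\prime_\sigma}_0d\sigma$. The second summand of $\delta t_z$ already has the form $\inner{G_1}{\xi}$ with
\[
G_1(\tau)=-\frac{2z_\tau}{\norm{z}^4}\int_0^1\Bigl(\int_0^\sigma\abs{z_\rho}^2d\rho\Bigr)\inner{\dot\aaa_{t_z(\sigma)}|_{z_\sigma}}{z^\prime_\sigma}_0d\sigma.
\]
For the first summand we apply Fubini on the triangle $0\le\tau\le\sigma\le1$, exchanging $\int_0^1d\sigma\int_0^\sigma d\tau$ for $\int_0^1d\tau\int_\tau^1d\sigma$. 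This yields
\[
\frac{2z_\tau}{\norm{z}^2}\int_\tau^1\inner{\dot\aaa_{t_z(\sigma)}|_{z_\sigma}}{z^\prime_\sigma}_0d\sigma.
\]
Summing the four contributions gives the claimed formula. The care needed lies in the bookkeeping of the twisted periodicity. In particular, the non-local terms are not individually periodic in $\tau$: the integrand of the $\int_\tau^1$ term involves $\dot\aaa$, which is periodic and odd under sign, while $z$ is only $\pm$-periodic. I expect the total to be consistent nonetheless, and in particular independent of the choice of the representative $\pm z$.
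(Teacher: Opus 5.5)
Your proposal is correct and follows the paper's proof essentially step by step. You differentiate the $L^2$-representation $\Mm(z)=\inner{\aaa_{t_z}|_z}{z^\prime}-f(z_0^2)$, integrate the $\xi^\prime$-term by parts so that the twisted boundary term $d(\varsigma^*f)|_{z_0}\xi_0$ cancels the variation of $-f(z_0^2)$, get the $\rot$-term from $(d\aaa)^T-d\aaa=-(\rot\,\aaa)\,j_0$ and the $t_z^\prime\dot\aaa$-term from the chain rule, and handle the variation of $t_z$ via the formula for $dt|_z\xi$ and Fubini on the triangle. Your closing concern about periodicity is not needed: the gradient is identified only as an $L^2$-function on $[0,1]$, and each of the four terms is visibly odd under $z\mapsto -z$, as required.
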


\begin{proof}
We use the representation~(\ref{eq:Mm-as-inner-product}) of $\Mm$ as
$L^2$-inner product to calculate the $L^2$-gradient of $\Mm$ at a loop $z$.
Given a smooth vector field $\xi\in T_z\bar\Ll^\times\Zfrak$
along $z$, pick a smooth path of loops $\eps\mapsto z_\eps$
with $z_0=z$ and $\left.\tfrac{d}{d\eps}\right|_0 z_\eps=\xi$.
Then
\begin{equation}\label{eq:dt(z)}
\begin{split}
   (dt|_z\xi)_\tau :
   =\left.\tfrac{d}{d\eps}\right|_{\eps=0} t(z_\eps)_\tau
   &=
   \frac{2}{\norm{z}^2}\int_0^\tau\INNER{z_\sigma}{\xi_\sigma}_0 d\sigma
   -\frac{2\INNER{z}{\xi}}{\norm{z}^4}\int_0^\tau\Abs{z_\sigma}^2 d\sigma
   .
\end{split}
\end{equation}
Another preparation for the main calculation is to determine the difference
\begin{equation}\label{eq:problem}
\begin{split}
   \inner{\aaa_1|_{z_1}}{\xi_1}_0
   -\inner{\aaa_0|_{z_0}}{\xi_0}_0
%1
   &=
   (\varsigma^*\theta)_1|_{z_1}\xi_1
   -(\varsigma^*\theta)_0|_{z_0}\xi_0
\\
%2
   &
{\color{brown}\,
   =
   (\mathfrak{i}^*\varsigma^*\theta)_1|_{-z_0}(-\xi_0)
   -(\varsigma^*\theta)_0|_{z_0}\xi_0
}
\\
%3
   &=
   (\varsigma^*\theta)_1|_{z_0} \xi_0
   -(\varsigma^*\theta)_0|_{z_0}\xi_0
\\
%4
   &=
   (\varsigma^*\theta_0+\varsigma^*df)|_{z_0} \xi_0
   -(\varsigma^*\theta)_0|_{z_0}\xi_0
\\
%5
   &=d(\varsigma^*f)|_{z_0} \xi_0
   .
\end{split}
\end{equation}
{\color{brown}Equality~2} is only for the twisted case, skip it in the
untwisted case.
Equality~2 uses $\varsigma^*=\mathfrak{i}^*\varsigma^*$,
by~(\ref{eq:I-Zfrak-i}), and $(z_1,\xi_1)=-(z_0,\xi_0)$ by the twist
hypothesis.
Equality~3 uses $Ti(-z_0,-\xi_0)=-(-z_0,-\xi_0)$, by~(\ref{eq:TI-Zfrak}).
Equality 4 is by twisted-periodicity of~$\theta$ and linearity of pull-back.
Equality 5 uses that pull-back and $d$ commute and, moreover, two
terms cancel each other.

\smallskip\noindent
The principal calculation is as follows
\begin{equation*}\label{eq:hgjgj77}
\begin{split}
   &\INNER{\grad\Mm|_z}{\xi}
%1
   =d\Mm|_z\xi
%2
   =\left.\tfrac{d}{d\eps}\right|_{\eps=0} \Mm(z_\eps)
\\
%3
   &=\left.\tfrac{d}{d\eps}\right|_{\eps=0}
   \INNER{\aaa_{t_{z_\eps}}|_{z_\eps}}{z_\eps^\prime}
   -\left.\tfrac{d}{d\eps}\right|_{\eps=0}
   f\circ\varsigma(z_\eps(0))
\\
%4
   &=\INNER{\left.\tfrac{d}{d\eps}\right|_{\eps=0}\aaa_{t_{z_\eps}}|_{z_\eps}}{z^\prime}
   +\INNER{\aaa_{t_z}|_z}{\xi ^\prime}
   -d(\varsigma^*f)|_{z_0}\xi_0
\\
%5
   &\stackrel{{\color{gray}5}}{=}
{\small
   \INNER{\left.\tfrac{d}{d\eps}\right|_{\eps=0}\aaa_{t_{z_\eps}}|_{z_\eps}}{z^\prime}
   +
\underline{\inner{\aaa_1|_{z_1}}{\xi_1}_0
   -\inner{\aaa_0|_{z_0}}{\xi_0}_0
}
   -\INNER{\left(\aaa_{t_z}|_z\right)^\prime}{\xi}
   -
\underline{ 
   d(\varsigma^*f)|_{z_0}\xi_0
}
}
\\
%6
   &\stackrel{{\color{gray}6}}{=}
   \INNER{\dot\aaa_{t_z}|_z \left(dt|_z\xi\right)+d\aaa_{t_z}|_z\xi}{z^\prime}
   -\INNER{\dot\aaa_{t_z}|_z (t_z)^\prime+d\aaa_{t_z}|_z z^\prime}{\xi}
\\
%7
   &\stackrel{{\color{gray}7}}{=}
{\small
   \inner{\dot\aaa_{t_z}|_z\left(dt|_z\xi\right)}{z^\prime}
   -\inner{\dot\aaa_{t_z}|_z (t_z)^\prime}{\xi}
   +\inner{(d\aaa_{t_z}|_z^T-d\aaa_{t_z}|_z) z^\prime}{\xi}
   .
}
\end{split}
\end{equation*}
Identity~5 uses integration by parts.
Identity~6 is by the chain and product rules,
the underlined terms cancel due to~(\ref{eq:problem}).
Identity~7 moves $d\aaa$ to the other side of the inner product
where it arrives as transpose $d\aaa^T$.

Now we are in position to identify the three resulting summands
with the summands in Lemma~\ref{le:gradient-Mm}.
Both second summands are equal by
formula~(\ref{eq:class-time-tau-deriv-C}) for $(t_z)^\prime$.
Abbreviating $\aaa=\aaa_{t_z}|_z$ both final summands
correspond, since
\begin{equation*}
\begin{split}
   (d\aaa)^T-d\aaa
   &=
   \begin{pmatrix}
      \p_1\aaa_1&\p_1\aaa_2\\
      \p_2\aaa_1&\p_2\aaa_2
   \end{pmatrix}
   -
   \begin{pmatrix}
      \p_1\aaa_1&\p_2\aaa_1\\
      \p_1\aaa_2&\p_2\aaa_2
   \end{pmatrix}
   =-\left(\rot\,\aaa\right) j_0
   .
\end{split}
\end{equation*}
It remains to calculate the first summand
\begin{equation*}
\begin{split}
   \INNER{\dot\aaa_{t_z}|_z\left(dt|_z\xi\right)}{z^\prime}
   &=\int_0^1
   \INNER{\dot\aaa_{t_z(\tau)}|_{z_\tau}\,(dt|_z\xi)_\tau}{z_\tau^\prime}_0\, d\tau
\\
   &\stackrel{{\color{gray}1}}{=}
   \frac{2}{\norm{z}^2}
   \int_0^1
   {\textstyle\int_0^\tau\INNER{z_\sigma}{\xi_\sigma}_0 d\sigma}
   \cdot
   \INNER{\dot\aaa_{t_z(\tau)}|_{z_\tau}}{z_\tau^\prime}_0\, d\tau
\\
   &\quad
   -
   \frac{2\INNER{z}{\xi}}{\norm{z}^4}
   \int_0^1
   {\textstyle\int_0^\tau\Abs{z_\sigma}^2 d\sigma}
   \INNER{\dot\aaa_{t_z(\tau)}|_{z_\tau}}{z_\tau^\prime}_0\, d\tau
\\
   &\stackrel{{\color{gray}2}}{=}
   \frac{2}{\norm{z}^2}\int_{\tau=0}^1\INNER{z_\tau}{\xi_\tau}_0
   \int_{\sigma=\tau}^1\INNER{\dot\aaa_{t_z(\sigma)}|_{z_\sigma}}{z_\sigma^\prime}_0
   d\sigma\, d\tau
\\
   &\quad
   -
   \frac{2\INNER{z}{\xi}}{\norm{z}^4}
   \int_0^1
   {\textstyle\int_0^\tau\Abs{z_\sigma}^2 d\sigma}
   \INNER{\dot\aaa_{t_z(\tau)}|_{z_\tau}}{z_\tau^\prime}_0\, d\tau
   .
\end{split}
\end{equation*}
Identity~1 spells out the $L^2$-inner product.
Identity~2 inserts formula~(\ref{eq:dt(z)}) for $dt_z\xi$
and pulls real factors out of the integral.
In identity three we interchanged the order of integration
using Fubini's theorem.
Take the $L^2$-inner product of
the identity in Lemma~\ref{le:gradient-Mm}
with $\xi$ to see equality.
This proves Lemma~\ref{le:gradient-Mm}.
\end{proof}

In~(\ref{eq:dMm-BOV}) we calculate $d\Mm$, equivalently $\grad \Mm$,
applying Cartan's formula from finite dimensions formally on the loop
space.\footnote{
  The final summand in Lemma~\ref{le:gradient-Mm}
  arises from the final summand in~(\ref{eq:dMm-BOV}),
  via the identity
  $
     -(d\vartheta_{t_z(\tau)})|_{z_\tau}(z_\tau^\prime,\xi_\tau)
     =-\bigl(\rot\,\aaa_{t_z(\tau)}|_{z_\tau}\bigr)
     \INNER{j_0 z_\tau^\prime}{\xi_\tau}_0
  $
  in~(\ref{eq:dvartheta}).
  }
This yields the same formula as the rigorously proved
Lemma~\ref{le:gradient-Mm} above.

\boldmath
%%%%%%%%%%%%%%%%%%%%%%%%%%%%%%%%%%%
%%%%%%% Subsubsection  %%%%%%%%%%%%%%%%
%%%%%%%%%%%%%%%%%%%%%%%%%%%%%%%%%%%
\subsection{Lagrangian action functional $\Bb$}
\label{sec:Lag-action}
\unboldmath

Formula~(\ref{eq:Bb}) for
$\Bb^\times:=\Qq^*\Ss\colon {\color{cyan}
   \bar\Ll\Zfrak^\times
}\to\R$
makes sense on the larger space $\bar\Ll^\times\Zfrak$
that consists of all smooth loops in $\Zfrak$ not identically zero.
This motivates

\begin{definition}
The \textbf{non-local Lagrangian action functional}
is defined by
\begin{equation}\label{eq:Ss_Ll}
\begin{split}
   \Bb\colon {\color{red}\bar\Ll^\times\Zfrak}
    &\to\R
\\
   z
   &\mapsto
   \underbrace{2\norm{z}^2\norm{z^\prime}^2}_{\Kk(z)}      % kinetic
   +\underbrace{\frac{1}{\norm{z}^2}}_{-\Uu(z)}                  % Kepler
   +\underbrace{\int_0^1(\varsigma^*\theta)_{t_z(\tau)}|_{z_\tau}  % magnetic
   z^\prime_\tau\, d\tau
   -f(z_0^2)}_{\text{$\Mm(z)=\Theta_z z^\prime$ (\ref{eq:Theta-periodic}), (\ref{eq:Mm})}}
   .
\end{split}
\end{equation}
Here $\theta$ is a twisted-periodic $1$-form with twist function $f$
along $\Qfrak$, see~(\ref{eq:tw-per-1-form}), and
$\varsigma\colon\C\supset \Zfrak\to\Qfrak$ is the complex squaring
map~(\ref{eq:I-Zfrak-i}).
\end{definition}

\boldmath
%%%%%%%%%%%%%%%%%%%%%%%%%%%%%%%%%%%
%%%%%%% Subsection  %%%%%%%%%%%%%%%%%%%
%%%%%%%%%%%%%%%%%%%%%%%%%%%%%%%%%%%
\subsubsection[Gradient]{$L^2$-gradient}
\unboldmath

\begin{definition}\label{def:gradient-Bb}
The \textbf{\boldmath$L^2$-gradient} of the action functional
$\Bb=\Kk-\Uu+\Mm$ with respect to
the standard $L^2$-inner product at a loop
$z\in\bar\Ll^\times\Zfrak$ is determined by
$
   \inner{\grad\Bb(z)}{\xi}
   =d\Bb|_z\xi
$
for every $\xi$ with $(z,\xi)\in \Ll_\pm^\times\Zfrak \times\Ll_\pm\C$;
see~(\ref{eq:bar-Ll-C}).
Thus
\begin{equation}\label{eq:Bb=Kk+Mm+Cc}
   \grad\Bb(z)
   =
   \grad\Kk(z)-\grad\Uu(z)+\grad\Mm(z)
\end{equation}
where $\Kk$, $\Uu$, $\Mm$ are defined in~(\ref{eq:Ss_Ll}).
Points where the differential, equivalently the gradient, of a
function vanishes are called \textbf{critical points}.
\end{definition}

With the chain and product rule we calculate
\begin{equation*}%\label{eq:dKk}
\begin{split}
   d\Kk|_z\xi:
   &=\left.\tfrac{d}{d\eps}\right|_{0} \Kk(z+\eps\xi)
\\
   &=\left.\tfrac{d}{d\eps}\right|_{0} 
   2\INNER{z+\eps\xi}{z+\eps\xi}
   \INNER{z^\prime+\eps\xi^\prime}{z^\prime+\eps\xi^\prime}
\\
   &=4\norm{z^\prime}^2\INNER{z}{\xi}
   +4\norm{z}^2\INNER{z^\prime}{\xi^\prime}
\\
   &=4\norm{z^\prime}^2\INNER{z}{\xi}
   +\text{\st{${\color{gray}4\norm{z}^2}{\color{red}0}$}}
   -4\norm{z}^2\INNER{z^{\prime\prime}}{\xi}
\end{split}
\end{equation*}
where the last equality (integration by parts)
holds since $z$ is sufficiently regular,
i.e. has at least two weak derivatives and since the boundary term
vanishes\footnote{
  The derivative of $(z_{\tau+1},\xi_{\tau+1})=\pm (z_\tau,\xi_\tau)$
  in~(\ref{eq:TLlZ}) at $\tau=0$ tells
  $({\color{cyan}z_1^\prime},\xi_1^\prime)
      =({\color{cyan}\pm  z_0^\prime},\pm \xi_0^\prime)$.
  The identity $(z_{\tau+1},\xi_{\tau+1})=\pm (z_\tau,\xi_\tau)$
  in~(\ref{eq:TLlZ}) at $\tau=0$ tells
  $(z_1,{\color{brown}\xi_1})=(\pm  z_0,{\color{brown}\pm \xi_0})$.
  }
$$
   \inner{{\color{cyan}z^{\prime}_1}}{{\color{brown}\xi_1}}_0
   -\inner{z^{\prime}_0}{\xi_0}_0
   =\inner{{\color{cyan}\pm z^{\prime}_0}}{{\color{brown}\pm\xi_0}}_0
   -\inner{z^{\prime}_0}{\xi_0}_0
   =\left((\pm 1)^2-1\right) \inner{z^{\prime}_0}{\xi_0}_0
   ={\color{red}0}
   .
$$
Vanishing is independent of choosing $(z,\xi)$ or $-(z,\xi)$.
Furthermore, we calculate
\begin{equation*}%\label{eq:dCc}
\begin{split}
   d\Uu|_z\xi:
   =\left.\tfrac{d}{d\eps}\right|_{0} \Uu(z+\eps\xi)
   =\left.\tfrac{d}{d\eps}\right|_{0}
   \INNER{z+\eps\xi}{z+\eps\xi}^{-1}
   =\frac{2\INNER{z}{\xi}}{\norm{z}^4} .
\end{split}
\end{equation*}
From these differentials one immediately reads off the $L^2$-gradients
\begin{equation}\label{eq:grad-Kk-Cc}
   \grad\Kk(z)
   =4\norm{z^\prime}^2z-4\norm{z}^2z^{\prime\prime}
   ,\qquad
   \grad\Uu(z)
   =\frac{2z}{\norm{z}^4} .
\end{equation}
The magnetic gradient is much more subtle.
We calculated it in Lemma~\ref{le:gradient-Mm}.

\begin{lemma}[$L^2$-gradient of $\Bb$]\label{le:grad-Bb}
At $z\in \bar\Ll^\times\Zfrak$ the $L^2$-gradient is of the form
\begin{equation}\label{eq:grad-Bb}
\begin{split}
   \grad\Bb|_z
%1
   &=4\norm{z^\prime}^2z-4\norm{z}^2z^{\prime\prime}
   -\frac{2z}{\norm{z}^4}
   +\grad\Mm|_z
\\
%2
   &=4\norm{z^\prime}^2z-4\norm{z}^2z^{\prime\prime}
   -\frac{2z}{\norm{z}^4}
\\
   &\quad
% T3 magnetic M2
   -\frac{2z}{\norm{z}^4}
   \int_0^1
{\textstyle
   \int_0^\sigma\Abs{z_\rho}^2 d\rho
}
   \cdot
   \inner{\dot{\aaa}_{t_z(\sigma)}|_{z_\sigma}}{z^\prime_\sigma}_0\; d\sigma
% T5 magnetic M3
   -\frac{\Abs{z}^2}{\norm{z}^2}\dot{\aaa}_{t_z}|_z
   \\
   &\quad
% T4 magnetic M1
   +\frac{2 z}{\norm{z}^2}
   \int_{\sigma=\tau}^1
   \inner{\dot{\aaa}_{t_z(\sigma)}|_{z_\sigma}}{z^\prime_\sigma}_0\;
   d\sigma
% T6 magnetic M4
   -\bigl(\rot\,\aaa_{t_z}|_z\bigr)\; j_0 z^\prime .
\end{split}
\end{equation}
\end{lemma}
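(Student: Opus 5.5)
The plan is to assemble the formula from the pieces already computed, using linearity of the $L^2$-gradient. By Definition~\ref{def:gradient-Bb}, the gradient is characterized by $\inner{\grad\Bb(z)}{\xi}=d\Bb|_z\xi$ for all admissible $\xi$, with $(z,\xi)\in\Ll_\pm^\times\Zfrak\times\Ll_\pm\C$. Since $\Bb=\Kk-\Uu+\Mm$ and differentiation is linear, $d\Bb|_z=d\Kk|_z-d\Uu|_z+d\Mm|_z$. This gives the decomposition~(\ref{eq:Bb=Kk+Mm+Cc}), provided each summand has an $L^2$-representative. The first line of~(\ref{eq:grad-Bb}) then follows by substituting~(\ref{eq:grad-Kk-Cc}) for $\grad\Kk$ and $\grad\Uu$. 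The sign in front of $\Uu$ turns $\grad\Uu=2z/\norm{z}^4$ into the summand $-2z/\norm{z}^4$.

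Next I will double-check the two local gradients. For $\Kk$, the one subtle point is the integration by parts that moves the derivative off $\xi^\prime$. The boundary term $\inner{z_1^\prime}{\xi_1}_0-\inner{z_0^\prime}{\xi_0}_0$ vanishes in both the periodic and the antiperiodic case, because $z$ and $\xi$ carry the same sign twist, so the factor is $(\pm1)^2-1=0$. This is independent of the representative $\pm(z,\xi)$, so the formula is well defined on the quotient $\bar\Ll^\times\Zfrak$. For $\Uu=-1/\norm{z}^2$, the derivative is immediate. Both computations only require $\norm{z}\neq0$, which holds on $\bar\Ll^\times\Zfrak$ even when $z$ has collisions. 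So no extra argument is needed beyond the non-collisional case.

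The second equality is obtained by inserting Lemma~\ref{le:gradient-Mm} for $\grad\Mm|_z$, evaluated pointwise at $\tau$. That lemma was proved on all of $\bar\Ll^\times\Zfrak$. In particular, the twist contribution $-f(z_0^2)$ was absorbed there via~(\ref{eq:problem}), so nothing further is needed for twisted $\theta$.

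The only point where I expect any care is needed is consistency of sign conventions. The decomposition is written as $+\grad\Mm$, and the rotation term appears as $-(\rot\,\aaa_{t_z}|_z)\,j_0z^\prime$, matching the identity $(d\aaa)^T-d\aaa=-(\rot\,\aaa)j_0$ from the proof of Lemma~\ref{le:gradient-Mm}. I also need to confirm that the nonlocal terms are genuine $L^2$-functions of $\tau$; they are continuous, since $z$ is smooth and $t_z$ is $C^1$ wherever $\norm{z}\neq0$. With these checks in place, the lemma is a direct collation of earlier results.
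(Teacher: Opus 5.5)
Your proposal is correct and matches the paper's proof, which likewise just combines the decomposition~(\ref{eq:Bb=Kk+Mm+Cc}), the local gradients~(\ref{eq:grad-Kk-Cc}), and Lemma~\ref{le:gradient-Mm}. Your extra checks repeat verifications the paper already makes in the text around the lemma: the boundary term vanishing in both the periodic and antiperiodic cases, the sign convention $\Uu=-1/\norm{z}^2$, and the twist term being absorbed via~(\ref{eq:problem}).
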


\begin{proof}
(\ref{eq:Bb=Kk+Mm+Cc}),
(\ref{eq:grad-Kk-Cc}),
and Lemma~\ref{le:gradient-Mm}.
\end{proof}

\boldmath
%%%%%%%%%%%%%%%%%%%%%%%%%%%%%%%%%%%
%%%%%%% Subsection  %%%%%%%%%%%%%%%%%%%
%%%%%%%%%%%%%%%%%%%%%%%%%%%%%%%%%%%
\subsubsection{Critical points -- finitely many collision times}
\label{sec:Crit-Bb}
\unboldmath

As the identity $d\Bb|_z=\inner{\grad\Bb(z)}{\cdot}$ determines the
gradient, an equation for the critical points of $\Bb$ is obtained by
setting~(\ref{eq:grad-Bb}) equal zero, then solve for~$z^{\prime\prime}$.

\begin{theorem}[regularized critical point DDE]\label{thm:crit-Ss_Ll}
The critical points of the non-local action
$\Bb\colon \bar\Ll^\times\Zfrak\to\R$
in~(\ref{eq:Ss_Ll}) are the solutions of the 
delay equation
\begin{equation}\label{eq:z}
\begin{split}
   z^{\prime\prime}(\tau)
%1
   &=z(\tau)\left(\frac{\norm{z^\prime}^2}{\norm{z}^2}
   -\frac{1}{2\norm{z}^6}\right)
   +\frac{1}{4\norm{z}^2}(\grad\Mm|_z)(\tau)
\\
%2
   &=z(\tau)
   \Biggl(
   \frac{\norm{z^\prime}^2}{\norm{z}^2}
   -\frac{1}{2\norm{z}^6}
   -\frac{1}{2\norm{z}^6}
   \int_0^1
{\textstyle
   \int_0^\sigma\abs{z_\rho}^2 d\rho
}
   \;
   \inner{\dot{\aaa}_{t_z(\sigma)}|_{z_\sigma}}{z^\prime_\sigma}_0\; d\sigma
   \\
   &\quad\qquad\qquad\qquad\qquad\quad\;
   +\frac{1}{2\norm{z}^4}
   \int_{\tau}^1
   \inner{\dot{\aaa}_{t_z(\sigma)}|_{z_\sigma}}{z^\prime_\sigma}_0\;
   d\sigma
   \Biggr)
   \\
   &\quad
   -
   \frac{\Abs{z(\tau)}^2}{4\norm{z}^4}\cdot
    \dot{\aaa}_{t_z(\tau)}|_{z(\tau)}
   -\frac{\rot\,\aaa_{t_z(\tau)}|_{z(\tau)}}{4\norm{z}^2}
   \; j_0 z^\prime(\tau)
\end{split}
\end{equation}
for smooth loops
$z\not\equiv 0$ in $\Zfrak$.
In symbols, the set of critical points is given by
\begin{equation*}%\label{eq:Crit-Bb}
\begin{split}
   \Crit\,\Bb:
   &=\{z\in \bar\Ll^\times\Zfrak\mid \grad\Bb|_z=0\}
   =\{\text{\rm$z\in \bar\Ll^\times\Zfrak$ solving~(\ref{eq:z})}\}
   .
\end{split}
\end{equation*}
The elements $z\in  \Crit\,\Bb$ are called
\textbf{regularized collision solutions}.\footnote{
  away from collisions, they correspond to classical/physical
  solutions, i.e. of the ODE~(\ref{eq:q})
  }
\end{theorem}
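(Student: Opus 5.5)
The plan is to read off the critical point equation directly from Lemma~\ref{le:grad-Bb}. By Definition~\ref{def:gradient-Bb} a loop $z\in\bar\Ll^\times\Zfrak$ is critical precisely when $d\Bb|_z\xi=\inner{\grad\Bb|_z}{\xi}=0$ for all admissible $\xi$, i.e.\ for all $\xi\in\Ll_\pm\C$ with the same twist sign as $z$. First I will check that $\grad\Bb|_z$ itself lies in this tangent space: each summand in~(\ref{eq:grad-Bb}) is $\pm$-twisted in the same way as $z$. For the terms $z$, $z''$, $j_0z'$ this is immediate. For $\dot\aaa_{t_z}|_z$ one uses $t_z(\tau+1)=t_z(\tau)+1$, periodicity of $\dot\vartheta$ from Remark~\ref{rem:pull-back}, and the identity $2\bar z\dot\AAA|_{z^2}=\dot\aaa|_z$ from~(\ref{eq:barzA}), which shows that $\dot\aaa$ is odd in $z$. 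For $\rot\,\aaa$ one uses~(\ref{eq:tilde-A-A}), which shows it is even in $z$.

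The one delicate term is the nonlocal factor $\int_\tau^1\inner{\dot\aaa_{t_z(\sigma)}|_{z_\sigma}}{z'_\sigma}_0\,d\sigma$ multiplying $z_\tau$. Its integrand is periodic in $\sigma$. I expect this step to be the main obstacle: the factor is a priori not periodic in $\tau$, unless the total integral over a period is handled properly. Here I would try the following. Since $\int_0^1 \dot\vartheta_{t_z}|_z z'\,d\tau$ combines with the other nonlocal term, and the factor multiplies $z_\tau$, the jump at $\tau=1$ can be absorbed by the constant term. In any case, the gradient is by definition the representing element in $L^2$ over a fundamental domain $[0,1]$. Thus vanishing of $\grad\Bb|_z$ as an $L^2$ function on $[0,1]$ is equivalent to criticality, because test functions $\xi$ supported in $(0,1)$ are dense and admissible.

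Given this, $z$ is critical if and only if the expression~(\ref{eq:grad-Bb}) vanishes pointwise for $\tau\in[0,1]$, by continuity. Since $z\not\equiv0$, we have $\norm{z}\neq0$, so we may divide by $4\norm{z}^2$ and solve for $z''$:
\[
z''=\frac{\norm{z'}^2}{\norm{z}^2}z-\frac{z}{2\norm{z}^6}+\frac{1}{4\norm{z}^2}\grad\Mm|_z .
\]
This is the first line of~(\ref{eq:z}). Substituting Lemma~\ref{le:gradient-Mm} and collecting the coefficients of $z(\tau)$ (the two double-integral terms divided by $4\norm{z}^2$) gives the second form.

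Conversely, a smooth solution of~(\ref{eq:z}) makes~(\ref{eq:grad-Bb}) vanish identically, hence $d\Bb|_z=0$. The integration by parts used for $\grad\Kk$ and in Lemma~\ref{le:gradient-Mm} is valid for smooth $z$, with the boundary terms cancelling as shown there. This yields the asserted description of $\Crit\,\Bb$. Finally, the equation is invariant under $z\mapsto -z$, since every term is odd in $z$, so it is well defined on the quotient.
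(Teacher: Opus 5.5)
Your argument is the paper's: start from Lemma~\ref{le:grad-Bb}, divide by $4\norm{z}^2\neq 0$, solve for $z''$, and expand $\grad\Mm|_z$ via Lemma~\ref{le:gradient-Mm}. Your justification that criticality means vanishing of this expression on the fundamental domain $[0,1]$ is correct and is a detail the paper leaves implicit: test vectors supported in $(0,1)$ are admissible and dense in $L^2$, and the expression is continuous on $[0,1]$.

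You should drop two side claims, however: that $\grad\Bb|_z$ is $\pm$-twisted like $z$, and that the jump of $\int_\tau^1\inner{\dot\aaa_{t_z(\sigma)}|_{z_\sigma}}{z'_\sigma}_0\,d\sigma$ can be absorbed by the constant term. That jump equals $\int_0^1\inner{\dot\aaa_{t_z(\sigma)}|_{z_\sigma}}{z'_\sigma}_0\,d\sigma=\int_0^1\dot\theta_t|_{q_t}\dot q_t\,dt$, where $q=\Qq(z)$ in the non-collisional case. This quantity is in general nonzero, and a term that is a fixed multiple of $z_\tau$ cannot cancel it. So the gradient is in general only an $L^2$ element with a discontinuity at $\tau=0$. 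This is harmless, because your density argument never uses twisted periodicity.
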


\begin{proof}
Divide~(\ref{eq:grad-Bb}) by $4\norm{z}^2$, resolve for $z^{\prime\prime}$.
\end{proof}

\begin{lemma}\label{le:finite-colls}
For $z\in\Crit\,\Bb$ \textbf{regularization collision times}
form a finite set
$$
   \Tt_z:=\{\tau_*\in[0,1)\colon z(\tau_*)=0 \}
   =\{\tau_1,\dots,\tau_N\}
$$
enumerated increasingly $\tau_j<\tau_{j+1}$.
Hence $[0,1)\setminus \Tt_z$ is a union of
$N$ intervals\footnote{
  In case $\tau_1=0$ the formal final interval $(\tau_N,\tau_1)$
  abbreviates $(\tau_N,1)$.
  In case $\tau_1>0$ the formal final interval $(\tau_N,\tau_1)$
  abbreviates $(\tau_N,1)\cup[0,\tau_1)$.
  }
\begin{equation*}%\label{eq:Tt_z}
   [0,1)\setminus \Tt_z
   =\Ii_1\cup\dots\cup\Ii_N
   ,\qquad
   \Ii_j:=(\tau_j,\tau_{j+1}))
   ,\quad 
   \tau_{N+1}:=\tau_1 .
\end{equation*}
Furthermore, at collisions velocities are non-zero while accelerations vanish
$$
   \forall\tau_*\in\Tt_z=z^{-1}(0)\colon
   \qquad
   z^\prime(\tau_*)\not=0
   ,\qquad
   z^{\prime\prime}(\tau_*)=0
   .
$$
\end{lemma}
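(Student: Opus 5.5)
The plan is to view the critical point equation (\ref{eq:z}) of Theorem~\ref{thm:crit-Ss_Ll} as a \emph{linear} second order ODE in $z$ once the non-local quantities are frozen. Fix $z\in\Crit\,\Bb$ and set
\[
   \alpha(\tau):=\frac{\norm{z^\prime}^2}{\norm{z}^2}-\frac{1}{2\norm{z}^6}
   -\frac{1}{2\norm{z}^6}\int_0^1{\textstyle\int_0^\sigma}\abs{z_\rho}^2d\rho\,
   \inner{\dot\aaa_{t_z(\sigma)}|_{z_\sigma}}{z^\prime_\sigma}_0 d\sigma
   +\frac{1}{2\norm{z}^4}\int_\tau^1\inner{\dot\aaa_{t_z(\sigma)}|_{z_\sigma}}{z^\prime_\sigma}_0 d\sigma ,
\]
which is a smooth real-valued function of $\tau$ alone (all integrals are now fixed numbers or smooth functions of $\tau$, and $\norm{z}\neq0$). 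Then (\ref{eq:z}) reads
\[
   z^{\prime\prime}=\alpha(\tau)\,z+\beta(\tau,z)\,j_0z^\prime+\gamma(\tau,z)
\]
with $\beta(\tau,w)=-\rot\,\aaa_{t_z(\tau)}|_w/(4\norm{z}^2)$ and $\gamma(\tau,w)=-\abs{w}^2\dot\aaa_{t_z(\tau)}|_w/(4\norm{z}^4)$, where $t_z$ is a fixed smooth function of $\tau$.

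\textbf{Key observation:} by (\ref{eq:barzA}), $\dot\aaa_t|_w=2\bar w\,\AAAdot_t|_{w^2}$, so $\gamma(\tau,w)=O(\abs{w}^3)$ near $w=0$; and by (\ref{eq:tilde-A-A}), $\beta(\tau,w)=-\abs{w}^2\rot\,\AAA_{t_z(\tau)}|_{w^2}/\norm{z}^2=O(\abs{w}^2)$. In particular, evaluating at a collision time $\tau_*$ with $z(\tau_*)=0$, every term on the right vanishes, giving $z^{\prime\prime}(\tau_*)=0$ at once.

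\textbf{Non-vanishing velocity.} Suppose $z(\tau_*)=0$ and $z^\prime(\tau_*)=0$. Consider the (local, non-autonomous) ODE $w^{\prime\prime}=\alpha(\tau)w+\beta(\tau,w)j_0w^\prime+\gamma(\tau,w)$ with smooth right-hand side in $(\tau,w,w^\prime)$. Both $z$ and $w\equiv0$ solve it (the latter because $\gamma(\tau,0)=0$) with the same initial data at $\tau_*$. By uniqueness for ODEs, $z\equiv0$ on $\R$, contradicting $z\in\bar\Ll^\times\Zfrak$, i.e.\ $z\not\equiv0$. Here $\R$ is covered since $\alpha$, $t_z$ extend smoothly to all $\tau\in\R$ (equivariance of $t_z$ and periodicity of $\dot\aaa$), or alternatively one can run the argument on $[0,1]$ and use $z_{\tau+1}=\pm z_\tau$.

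\textbf{Finiteness.} Since $z^\prime(\tau_*)\neq0$ at every zero, each zero is isolated: $z(\tau)=z^\prime(\tau_*)(\tau-\tau_*)+o(\tau-\tau_*)$. The zero set in the compact circle $\R/\Z$ (well defined as $z_{\tau+1}=\pm z_\tau$) is closed, hence compact and discrete, hence finite; enumerating it in $[0,1)$ gives $\Tt_z$ and the interval decomposition. The main point requiring care is the freezing step: one must check that the non-local coefficients are genuinely smooth given $z$ (fine since $z$ is smooth and $\norm{z}>0$), and that the vanishing $\gamma(\tau,0)=0$ holds, which is exactly where the squaring-map pull-back (\ref{eq:barzA}) is essential; for a general, non-pulled-back $1$-form the uniqueness argument would fail.
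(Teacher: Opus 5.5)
Your argument is correct and is essentially the paper's proof: you freeze the non-local quantities so that (\ref{eq:z}) becomes a second-order ODE along the given $z$, use ODE uniqueness against the zero solution to exclude $z(\tau_*)=z^\prime(\tau_*)=0$, get finiteness from discreteness on the compact circle, and read off $z^{\prime\prime}(\tau_*)=0$ from $\rot\,\aaa_t|_0=0$. The only cosmetic difference is that the paper also freezes the $\dot\aaa$- and $\rot\,\aaa$-coefficients along $z$, writing the $\dot\aaa$-term as $b(\tau)z(\tau)$ with $b=-\bar z\,\dot\aaa_{t_z}|_z/(4\norm{z}^4)$, so its ODE is linear homogeneous.

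One correction to your closing remark. $\gamma(\tau,0)=0$ already follows from the explicit factor $\abs{w}^2$, which comes from $t_z^\prime$ and not from the squaring map. So the uniqueness step does not need (\ref{eq:barzA}); the pull-back structure is used only to get $z^{\prime\prime}(\tau_*)=0$.
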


\begin{proof}
We define three maps pointwise at $\tau\in\SS^1$, namely
$a\in\Ll\R$ by
$$\small
   a(\tau)
   :=
   \frac{\norm{z^\prime}^2}{\norm{z}^2}
   -
   \frac{(1+\int_0^1\textstyle{\int_0^s
   \Abs{z_\sigma}^2 d\sigma}\;
   \inner{\dot{\aaa}_{t_z(s)}|_{z_s}}{z^\prime_s}_0 ds)}{2\norm{z}^6}
   +
   \frac{\int_{\tau}^1
   \inner{\dot{\aaa}_{t_z(\sigma)}|_{z_\sigma}}{z^\prime_\sigma}_0
   d\sigma}{2\norm{z}^4}
   ,
$$
the map $b\in\Ll\C$ by
$
   b(\tau)
   :=
   -\tfrac{1}{4\norm{z}^4}\;
   \bar z(\tau)\dot{\aaa}_{t_z(\tau)}|_{z(\tau)}
$,
and the map $c\in\Ll i\R$ by
$
   c(\tau)
   :=-\tfrac{1}{4\norm{z}^2}\; i\tilde B_{t_z\tau}|_{z_\tau}
$.
Because $z$ solves the regularized DDE~(\ref{eq:z}),
it is as well a solution of the second order
linear homogeneous ODE with continuous coefficient functions
$
   z^{\prime\prime}(\tau)
   =a(\tau)z(\tau)+b(\tau) z(\tau)+c(\tau)z^\prime(\tau)
$.
This implies that
\begin{equation*}%\label{eq:30}
   z^\prime(\tau_*)\not=0, \quad\forall \tau_*\in \Tt_z .
\end{equation*}
Suppose by contradiction that $z$ is a solution of the second order
ODE with initial condition $z(\tau_*)=z^\prime(\tau_*)=0$. Hence
$z\equiv0$ and $z\in \bar\Ll^\times\Zfrak$. Contradiction.
So $\Tt_z$ is a discrete
subset of the compact space $\SS^1$, hence finite.

That $z(\tau_*)=0$ $\Rightarrow$ $z^{\prime\prime}(\tau_*)=0$ follows
from the critical point equation~(\ref{eq:z}) together with
$\rot\,\aaa_t|_0=0$ by~(\ref{eq:tilde-A-A}).
This proves Lemma~\ref{le:finite-colls}.
\end{proof}

\boldmath
%%%%%%%%%%%%%%%%%%%%%%%%%%%%%%%%%%%
%%%%%%% Subsection  %%%%%%%%%%%%%%%%%%%
%%%%%%%%%%%%%%%%%%%%%%%%%%%%%%%%%%%
\subsection[Non-collisional regularized solutions]{Non-collisional regularized solutions
-- $\Crit\,\Bb^\times$}
\label{sec:reg-sol} 
\unboldmath

We show that those solutions $z$ of the regularized critical point
DDE~(\ref{eq:z}) which actually avoid the singularity at the origin,
in symbols $z\in(\Crit\,\Bb)\cap {\color{cyan}\bar\Ll\Zfrak^\times}$,
correspond simultaneously to \emph{physical solutions}, i.e.
solutions of the ODE~(\ref{eq:q}), namely via the map 
$z\mapsto \Qq(z)=:q\in\Ll\Qfrak^\times$.
Abstractly this is clear, since solutions of~(\ref{eq:q})
are critical points of the classical functional
$\Ss\colon\Ll\Qfrak^\times\to\R$ and the restriction
$\Bb^\times\colon {\color{cyan}\bar\Ll\Zfrak^\times}\to\R$ of the
regularized functional $\Bb\colon {\color{red}\bar\Ll^\times\Zfrak}\to\R$
coincides with the pull-back
$\Ss\circ\Qq\colon{\color{cyan}\bar\Ll\Zfrak^\times}\to\R$,
as illustrated by Figure~\ref{fig:fig-Qq}.
Although clear, it is not obvious by looking at the
equations~(\ref{eq:z}) and~(\ref{eq:q}).

Hence in this section we show this explicitly.
In doing so we find in an intermediate step a second order DDE for $q$
which will help us in the subsequent Section~\ref{sec:collisions} to see
how solutions of the regularized DDE~(\ref{eq:z}) in the complement
of $\bar\Ll\Zfrak^\times$ in $\bar\Ll^\times\Zfrak$
correspond to \emph{collisional} solutions of the classical ODE~(\ref{eq:q}).

\begin{proposition}\label{prop:reg-sol}
Let $z\in \bar\Ll\Zfrak^\times$ solve the regularized
critical point DDE~(\ref{eq:z}). Then $q:=z^2\circ\tau_z$,
Definition~\ref{def:reg-path-C},
solves the classical critical point ODE~(\ref{eq:q}).
\end{proposition}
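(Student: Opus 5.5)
The plan is a direct computation: differentiate $q=z^2\circ\tau_z$ twice in classical time $t$ and substitute the DDE~(\ref{eq:z}) for $z''$. Throughout, abbreviate $\sigma:=\tau_z(t)$ and use $\dot\tau_z(t)=\norm{z}^2/\abs{z_\sigma}^2$ from~(\ref{eq:inverse-C}).

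By~(\ref{eq:dot-q_z}) we have $\dot q=2\norm{z}^2 z'_\sigma/\bar z_\sigma$, i.e.\ $\dot q=2\norm{z}^2 z'_\sigma z_\sigma/\abs{z_\sigma}^2$. Differentiating once more gives
\[
\ddot q=\frac{2\norm{z}^4}{\abs{z_\sigma}^2}\Bigl(\frac{z''_\sigma}{\bar z_\sigma}-\frac{z'_\sigma\bar z'_\sigma}{\bar z_\sigma^2}\Bigr).
\]
Now insert~(\ref{eq:z}) for $z''_\sigma$ and treat the resulting pieces one at a time.

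\textbf{Real scalar multiple of $z$.} Call it $z\,\alpha(\sigma)$, where $\alpha$ is the real bracket in~(\ref{eq:z}). It contributes $2\norm{z}^4\alpha(\sigma)\, z_\sigma/(\abs{z_\sigma}^2\bar z_\sigma)=2\norm{z}^4\alpha\, q/\abs{q}^2$, which is radial in $q$. In the magnetic-free case the Kepler constant $-\tfrac{1}{2\norm{z}^6}$ produces the term $-\norm{z}^{-2}q/\abs{q}^2$.

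\textbf{Term $-\abs{z'}^2/\bar z^2$.} Rewrite it as $-\abs{\dot q}^2/(4\norm{z}^4)\cdot z^2/\abs{z}^2$. Together with the previous piece this should combine to $-q/\abs{q}^3$, provided the energy-type identity
\[
\tfrac12\abs{\dot q}^2-\tfrac1{\abs q}+(\text{Euler contribution})=\text{const}
\]
holds, with the constant expressed through $\norm{z'}^2$ and $\norm{z}^{-2}$.

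This identity is the main obstacle. Without magnetic terms, energy is conserved for~(\ref{eq:z}): multiply by $\bar z'$ and take real parts. I would therefore first prove directly from~(\ref{eq:z}) the pointwise identity
\[
\abs{z'_\sigma}^2-\norm{z'}^2\frac{\abs{z_\sigma}^2}{\norm z^2}+\frac{\abs{z_\sigma}^2}{2\norm z^6}\cdot\ldots=\ldots,
\]
that is, a first integral. I would do this by differentiating $\abs{z'}^2$ using~(\ref{eq:z}), noting that the $j_0z'$ term drops out, and integrating. The non-local terms $\int_\sigma^1\langle\dot\aaa,z'\rangle$ are exactly the $\sigma$-primitive of the Euler work, and the constant $\int_0^1\int_0^s\ldots$ fixes the normalization. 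This is the step where the delay terms must be shown to conspire into a constant: the integration constant is pinned down by integrating the identity over $[0,1]$, using $\int\abs{z'}^2=\norm{z'}^2$, and Fubini turns $\int_0^1\abs{z_\sigma}^2\int_\sigma^1(\cdot)$ into the double integral term.

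\textbf{Remaining magnetic pieces.} The term $-\frac{\abs{z}^2}{4\norm z^4}\dot\aaa$ becomes, via $\dot\aaa_t|_z=2\bar z\AAAdot_t|_{z^2}$ from~(\ref{eq:barzA}), the expression $-\AAAdot_t|_q$ after multiplication by $2\norm z^4/(\abs{z}^2\bar z)$. The term $-\frac{\rot\aaa}{4\norm z^2}j_0z'$, via $\rot\aaa=4\abs z^2 B_t|_{z^2}$ from~(\ref{eq:tilde-A-A}), becomes $-2\norm z^2 B\, i z'/\bar z=-B\,j_0\dot q$.

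Collecting all pieces gives exactly~(\ref{eq:q}).
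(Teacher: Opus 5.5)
Your argument is correct. Its first half coincides with the paper's Step~1: computing $\ddot q$ from (\ref{eq:dot-q_z}), inserting (\ref{eq:z}), and converting the Euler and Lorentz pieces via (\ref{eq:barzA}) and (\ref{eq:tilde-A-A}).

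The two routes differ in how the radial part is turned into $-q/\abs{q}^3$.

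\textbf{The paper's route.} It rewrites everything as the intermediate DDE (\ref{eq:q-DDE}) in classical variables and sets $\beta:=(\ddot q+B\,i\dot q+\AAAdot)/q$. It notes that $\beta$ is real and derives $\abs{q}^2\dot\beta=-3\inner{q}{\dot q}_0\beta$, hence $\beta=\mu/\abs{q}^3$. It then fixes $\mu=-1$ by integrating over a period with Fubini, see (\ref{eq:mu=-1}).

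\textbf{Your route.} You find the first integral directly on the $z$-side. Your placeholder identity can be made precise as follows. Write (\ref{eq:z}) as $z''=\alpha z+b+c$, with $\alpha$ the real bracket, $b=-\frac{\abs{z}^2}{4\norm{z}^4}\dot{\aaa}_{t_z}|_z$ and $c\perp z'$. The quantity to differentiate is $E:=\abs{z'}^2-\alpha\abs{z}^2$, not $\abs{z'}^2$ alone. Then $E'=2\inner{z'}{b}_0-\alpha'\abs{z}^2=0$, because $\alpha'=-\frac{1}{2\norm{z}^4}\inner{\dot{\aaa}_{t_z}|_z}{z'}_0$. Averaging over $[0,1]$ with Fubini gives $E\equiv\frac{1}{2\norm{z}^4}$. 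Multiplying by $2\norm{z}^4/\abs{z}^2$, this is exactly $2\norm{z}^4\alpha-\frac12\abs{\dot q}^2=-1/\abs{q}$, which is what your radial pieces need.

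\textbf{Comparison.} The two conserved quantities are the same, $\mu=-2\norm{z}^4E$, so your normalization step is the paper's (\ref{eq:mu=-1}) in other variables.
\begin{itemize}
\item Your version involves no division by $z$, $q$ or $\beta$; the paper's $\ln\abs{\beta}$ step silently divides by $\beta$.
\item $E$ is smooth on all of $\SS^1$, even where $z$ vanishes. In the collisional setting of Proposition~\ref{prop:q_z-collision} it would give a single global constant and the collision speed $\abs{z'(\tau_*)}^2=E=\frac{1}{2\norm{z}^4}$ at once. This bypasses the interval-wise constants $\mu_j$ and the matching of one-sided limits.
\item The paper's version yields the intermediate DDE (\ref{eq:q-DDE}) in purely classical variables, which it reuses in that collisional proposition.
\end{itemize}
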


\begin{proof}
The proof has two steps.

\smallskip\noindent
\textbf{Step~1.}
{\it
If $z\in \bar\Ll\Zfrak^\times$ is a solution
of the regularized critical point DDE~(\ref{eq:z}),
then $q:=\Qq(z):=z^2\circ\tau_z$ is a solution of the $2^{\rm nd}$ order
\textbf{intermediate DDE}
\begin{equation}\label{eq:q-DDE}
\begin{split}
   &\ddot q_t+B_t(q_t)j_0\dot q_t+\AAAdot_t(q_t)
\\
   &=
   \frac{
   \tfrac12 \norm{\dot q}^2
   -\int_0^1\frac{ds}{\abs{q_s}}
   -
   \int_0^1s \inner{\AAAdot_{s}|_{q_s}}{\dot q_s}_0\; ds
   -\tfrac12 \Abs{\dot q_t}^2
   +
   \int_{t}^1\inner{\AAAdot_{s}|_{q_s}}{\dot q_s}_0\; ds
   }{\bar q_t} .
\end{split}
\end{equation}
}

\begin{proof}[Proof of Step~1]
We need to calculate the first two derivatives of
$\Qq(z)\colon\SS^1\to\Qfrak^\times$.
It is convenient to abbreviate and use complex notation
$$
   q:=q_z:=\Qq(z)
   ,\qquad
   q=q_1+iq_2
   ,\qquad
   q_t:=q(t)
   ,\qquad
   q_t=q_{1,t}+iq_{2,t} .
$$
Hence $\bar q:=\widebar{q_z}=\bar z^2\circ \tau_z$.
The derivative $\dot q$ is given by~(\ref{eq:dot-q_z})
and the derivative of $\tau_z(t)$ by~(\ref{eq:inverse-C}).
So, following~\cite[\S 5]{Frauenfelder:2025b},
the second derivative is of the form
\begin{equation*}%\label{eq:ddot-q_z}
\begin{split}
   \ddot q_t
   &=\frac{2\norm{z}^4}{\bar z_{\tau_z(t)} \bar z_{\tau_z(t)} z_{\tau_z(t)}}
   \left(z_{\tau_z(t)}^{\prime\prime}-\frac{\abs{z^\prime_{\tau_z(t)}}^2}
   {\bar z_{\tau_z(t)}}\right)
   =\frac{1}{\bar q_t}\left(
   \frac{2\norm{z}^4 z_{\tau_z(t)}^{\prime\prime}}{z_{\tau_z(t)}}
   -\frac12 \Abs{\dot q_t}^2
   \right)
\end{split}
\end{equation*}
for every $t\in\SS^1$.
To calculate $\Abs{\dot q_t}^2={\dot q}_t\bar{\dot q}_t$
we used~(\ref{eq:dot-q_z}) and its complex conjugate form.
Continuing the calculation of $\ddot q_t$,
we change first the order of the two summands, 
then in equality~2 we replace $z_{\tau_z(t)}^{\prime\prime}$
by~(\ref{eq:z}), hence
\begin{equation*}%\label{eq:}
\begin{split}
   \ddot q(t)
%1
   &=-\frac{\Abs{\dot q_t}^2}{2\bar q_t}
   +\frac{\norm{z}^2}{2\bar q_t}
   \frac{4\norm{z}^2 z_{\tau_z(t)}^{\prime\prime}}{z_{\tau_z(t)}}
\\
%2
   &\stackrel{{\color{gray}2}}{=}
   -\frac{\Abs{\dot q_t}^2}{2\bar q_t}
   +\frac{2}{\bar q_t}\norm{z}^2\norm{z^\prime}^2
   -\frac{1}{\bar q_t\norm{z}^2}
   \\
   &\quad\;
{\color{brown}
   -\frac{1}{\bar q_t\norm{z}^2}
   \int_0^1
\underbrace{
{\textstyle
   \int_0^\sigma\Abs{z_\rho}^2 d\rho}
}_{\stackrel{\text{(\ref{eq:t_z})}}{=}t_z(\sigma)\norm{z}^2} \;
   \inner{\dot{\aaa}_{t_z(\sigma)}|_{z_\sigma}}{z^\prime_\sigma}_0\; d\sigma
}
   \\
   &\quad\;
{\color{cyan}
   +\frac{1}{\bar q_t}
   \int_{\sigma={\tau_z(t)}}^1
   \inner{\dot{\aaa}_{t_z(\sigma)}|_{z_\sigma}}{z^\prime_\sigma}_0\;
   d\sigma
}
   -\frac{\Abs{z_{\tau_z(t)}}^2}{2\bar q_t z_{\tau_z(t)}}
    \dot{\aaa}_{{\color{red}t}}|_{z_{\tau_z(t)}}
   \\
   &\quad
   -\frac{1}{2\bar q_t}
    \norm{z}^2\left(\rot\,\aaa_{{\color{red}t}}|_{z_{\tau_z(t)}}\right)
   \; i \frac{{\color{red}z^\prime_{\tau_z(t)}}}{z_{\tau_z(t)}}
\\
%3
   &\stackrel{{\color{gray}3}}{=}
   -\frac{\Abs{\dot q_t}^2}{2\bar q_t}
   +
   \frac{1}{2\bar q_t}\norm{\dot q}^2
   -\frac{1}{\bar q_t}\int_0^1\frac{1}{\abs{q_s}}\, ds
   \\
   &\quad\;
{\color{brown}
   -\frac{1}{\bar q_t}
   \int_{s=t_z(0)=0}^{t_z(1)=1}
   s\cdot
   \Re\left(\overline{2{\bar z}_\sigma\AAAdot_s}\cdot
   \frac{\dot q_s {\bar z}_\sigma}{2\norm{z}^2}\right)
   \frac{\norm{z}^2ds}{z_\sigma\bar z_\sigma}
}
   \\
   &\quad\;
{\color{cyan}
   +\frac{1}{\bar q_t}
   \int_{s=t_z(\tau_z)=t}^{t_z(1)=1}
   \Re\left(\overline{2{\bar z}_\sigma\AAAdot_s}\cdot
   \frac{\dot q_s {\bar z}_\sigma}{2\norm{z}^2}\right)
   \frac{\norm{z}^2ds}{z_\sigma\bar z_\sigma}
}
   -\AAAdot_t|_{z_{\tau_z(t)}^2}
   \\
   &\quad
   -\frac{1}{2\bar q_t}
   \norm{z}^2 4   \bar z_{\tau_z(t)}   z_{\tau_z(t)}
   \underbrace{\left(\rot\,\AAA_{t}|_{q_t}\right)}_{=B_t(q_t)}
   \; i \frac{{\color{red}\dot q_t \bar z_{\tau_z(t)}}}
      {z_{\tau_z(t)} {\color{red}2\norm{z}^2}}
\\
%4
   &\stackrel{{\color{gray}4}}{=}
   -\frac{\Abs{\dot q_t}^2}{2\bar q_t}
   +
   \frac{1}{2\bar q_t}\norm{\dot q}^2
   -\frac{1}{\bar q_t}\int_0^1\frac{1}{\abs{q_s}}\, ds
   -\frac{1}{\bar q_t}
   \int_0^1
   s\cdot
   \inner{\AAAdot_{s}|_{q_s}}{\dot q_s}_0\; ds
   \\
   &\quad
%{\color{cyan}
   +\frac{1}{\bar q_t}
   \int_{t}^1
   \inner{\AAAdot_{s}|_{q_s}}{\dot q_s}_0\;
   ds
%}
   -\AAAdot_t|_{q_t}
   -B_t(q_t) j_0 \dot q_t .
\end{split}
\end{equation*}
Equality 2 also uses that
$t_z({\tau_z(t)})={\color{red}t}$ by~(\ref{eq:t-inverse}).
\newline
Equality 3: There are seven summands.
Summand one remains unchanged.
In summand two replace $\norm{z}^2\norm{z^\prime}^2$ by
$\frac{1}{4}\norm{\dot q}^2$ according to~(\ref{eq:fsfcs-C}).
In summand three replace $1/\norm{z}^2$ by~(\ref{eq:1/q^2-C}).
In {\color{brown} summand four} we replaced the $\int_0^\sigma$-term
according to~(\ref{eq:t_z}). Then we changed the integration
variable to $s(\sigma):=t_z(\sigma)$. The substitution of the inner
product, of $z_\sigma^\prime$, and of $d\sigma$ is explained next in
summand five.
In
{\color{cyan}
summand five 
}
we rewrote the inner product in the $\R^2$ picture
in terms of the real part~(\ref{eq:bar-z-zeta}) in the $\C$ picture
and we substituted the vector potential according to~(\ref{eq:barzA})
with dots (time derivative).
Then we changed the integration variable to $s:=t_z(\sigma)$:
We replaced $d\sigma$ with the help of~(\ref{eq:inverse-C})
and $z_\sigma^\prime$ according to~(\ref{eq:dot-q_z}).
Now many factors annulate in pairs and (in equality 4) we go back
to the $\R^2$ picture and inner product.
In summand six we also used~(\ref{eq:barzA}) for time derivatives
and that $\bar q_t=\bar z^2_{\tau_z(t)}$.
To the final summand seven the following happened:
We used that $z_{\tau_z(t)}^2=q_t$.
We replaced the rotational according to~(\ref{eq:tilde-A-A})
and ${\color{red}z^\prime_{\tau_z(t)}}$ according to~(\ref{eq:dot-q_z}).
Now certain factors annulate in pairs.
Equality 4 writes down the cleaned up result.
This proves Step~1.
\end{proof}   % Step 1

\smallskip\noindent
\textbf{Step~2.}
{\it
If $q\in \Ll\Qfrak^\times$ is a solution
of the intermediate DDE~(\ref{eq:q-DDE}), then it is also a solution
of the classical ODE~(\ref{eq:q}).
}

\begin{proof}[Proof of Step~2]
Given a solution $q\in \Ll\Qfrak^\times$ of~(\ref{eq:q}),
we define
\begin{equation}\label{eq:beta}
   \beta_t
   :=\frac{\ddot q_t+B_t(q_t) i\dot q_t+\AAAdot_t(q_t)}{q_t}
   ,\quad \forall t\in\R .
\end{equation}
By definition of $\beta$ and since $\Abs{q_t}^2=\bar q_t q_t$,
using~(\ref{eq:q-DDE}) in equality~2, we obtain
\begin{equation}\label{eq:beta-q}
\begin{split}
   &\beta_t\Abs{q_t}^2
   =\bar q_t\left(\ddot q_t+B_t|_{q_t} i\dot q_t+\AAAdot_t|_{q_t}\right)
\\
   &\stackrel{\color{gray}2}{=}
\underbrace{
  \tfrac12 \norm{\dot q}^2
   -\int_0^1\tfrac{ds}{\abs{q_s}}
   -
   \int_0^1s \inner{\AAAdot_{s}|_{q_s}}{\dot q_s}_0\; ds
}_{\text{independent of $t$}}
   -\tfrac12 \Abs{\dot q_t}^2
   +
   \int_{t}^1\inner{\AAAdot_{s}|_{q_s}}{\dot q_s}_0\; ds .
\end{split}
\end{equation}
As the right hand side of~(\ref{eq:beta-q}) is real, so is
$\beta$, in symbols
$
   \beta_t\in\R, \quad \forall t\in\R
$.
Differentiate~(\ref{eq:beta-q}) to get the first equality, 
the definition of $\beta$ gives the second
\begin{equation*}
\begin{split}
   &2\INNER{q_t}{\dot q_t}_0 \beta_t
   +\Abs{q_t}^2\dot \beta_t
\\
   &=-\INNER{\dot q_t}{\ddot q_t}_0
   -\inner{\AAAdot_t|_{q_t}}{\dot q_t}_0
\\
   &=-\inner{\dot q_t}{\beta_t q_t}_0
   +\inner{\dot q_t}{B_t(q_t) j_0\dot q_t}_0
   +\inner{\dot q_t}{\AAAdot_t(q_t)}_0
   -\inner{\AAAdot_t|_{q_t}}{\dot q_t}_0
\\
   &=-\beta_t \inner{q_t}{\dot q_t}_0
   .
\end{split}
\end{equation*}
Equality three uses that $\beta$ is real
and that, since $B$ is real and by~(\ref{eq:g_0-omega_0-J_0}),
the term
$
   \inner{\dot q_t}{B_t(q_t) j_0\dot q_t}_0
   =B_t(q_t)\omega_0(\dot q_t,j_0j_0 \dot q_t)
   =-B_t(q_t)\omega_0(\dot q_t, \dot q_t)
   =0
$
vanishes. By rearrangement
$\Abs{q}^2\dot\beta=-3\inner{q}{\dot q}\beta$, thus
$
   \tfrac{\dot \beta}{\beta}
   =-\tfrac{3}{2}\tfrac{\p_t \Abs{q}^2}{\Abs{q}^2}
$.
A solution of this first order ODE is of the form
$
   \ln\Abs{\beta}
   =-\frac{3}{2}\ln\Abs{q}^2+c
   =-\ln\Abs{q}^3+c
$
for some constant $c\in\R$.
Equivalently $\beta=\tfrac{\mu}{\Abs{q}^3}$
for $\mu=\pm e^c$.
Definition~(\ref{eq:beta}) of $\beta$
shows that $q$ solves the second order ODE
\begin{equation}\label{eq:mu-q}
   \ddot q=\tfrac{\mu q}{\Abs{q}^3}-B_t(q) i\dot q- \AAAdot_t(q) .
\end{equation}
To prove $\mu=-1$, thus proving Step~2,
combine $\beta=\tfrac{\mu}{\Abs{q}^3}$
and~(\ref{eq:beta-q}) to get
\begin{equation*}
\begin{split}
   \frac{\mu}{\Abs{q_t}}
   &=
\underbrace{
  \tfrac12 \norm{\dot q}^2
   -\int_0^1\tfrac{ds}{\abs{q_s}}
   -
   \int_0^1s \inner{\AAAdot_{s}|_{q_s}}{\dot q_s}_0\; ds
}_{\text{independent of $t$}}
   -\tfrac12 \Abs{\dot q_t}^2
   +
   \int_{s=t}^1\inner{\AAAdot_{s}|_{q_s}}{\dot q_s}_0\; ds .
\end{split}
\end{equation*}
Integrate this equation and in equality 2 change the order of integration
to get
\begin{equation}\label{eq:mu=-1}
\small
\begin{split}
   \mu\int_0^1\frac{dt}{\Abs{q_t}}
   &=\tfrac12 \norm{\dot q}^2
   -\int_0^1\frac{ds}{\abs{q_s}}
   -
   \int_0^1s \inner{\AAAdot_{s}|_{q_s}}{\dot q_s}_0\; ds
\\
   &\quad
   -\tfrac12\int_0^1\Abs{\dot q_t}^2\, dt
   +\int_{t=0}^1 \int_{s=t}^1\inner{\AAAdot_{s}|_{q_s}}{\dot q_s}_0\; ds dt
\\
   &\stackrel{{\color{gray}2}}{=}
   -\int_0^1\frac{dt}{\abs{q_t}}
   -
   \int_0^1s \inner{\AAAdot_{s}|_{q_s}}{\dot q_s}_0\; ds
   +\int_{s=0}^1 \int_{t=0}^s\inner{\AAAdot_{s}|_{q_s}}{\dot q_s}_0\; dt ds
\\
   &=-\int_0^1\frac{dt}{\abs{q_t}}.
\end{split}
\end{equation}
Thus $\mu=-1$ and this proves Step~2.
\end{proof}

Step~1 and Step~2 together conclude the proof of
Proposition~\ref{prop:reg-sol}.
\end{proof}

\boldmath
%%%%%%%%%%%%%%%%%%%%%%%%%%%%%%%%%%%
%%%%%%% Subsection  %%%%%%%%%%%%%%%%%%%
%%%%%%%%%%%%%%%%%%%%%%%%%%%%%%%%%%%
\subsection[Collisional regularized solutions]{Collisional regularized solutions -- $\Crit\,\Bb$}
\label{sec:collisions}
\unboldmath

First we extend the reparametrization map $t_z$
to collisional loops $z\in{\color{red}\bar\Ll^\times\Zfrak}$
with finitely many collisions of non-zero speed each.
While such extension is useful later on, in this sections we
focus on critical points $z$ of $\Bb$; cf. Lemma~\ref{le:finite-colls}.

\smallskip\noindent
The goal is to show that applying the rescale-square operation $\Qq$
to $z\in\Crit\,\Bb$ yields a continuous map
$q_z:=z^2\circ\tau_z\colon\SS^1\to\Qfrak$
which, away from \textbf{classical collision times}
$T_{q_z}:=q_z^{-1}(0)$, is smooth and solves the classical 
ODE~(\ref{eq:q}).
\\
As a byproduct we get that
at all collision times the collision velocity is equal
$\abs{z^\prime(\tau_*)}=1/\sqrt{2}\norm{z}^2$
as seen in~(\ref{eq:collision-velocity}) since $\mu_j=-1$.
(Later on, in~(\ref{eq:z-prime-prime=0}),
we show that actually the collision acceleration
$\abs{z^{\prime\prime}(\tau_*)}=0$ vanishes.)

\boldmath
%%%%%%%%%%%%%%%%%%%%%%%%%%%%%%%%%%%
%%%%%%% Subsection  %%%%%%%%%%%%%%%%%%%
%%%%%%%%%%%%%%%%%%%%%%%%%%%%%%%%%%%
\subsubsection{Collisional reparametrization map $t_z$}
\label{sec:coll-BOV-map}
\unboldmath

\begin{lemma}\label{le:t_z-coll}
Let $z\in\bar\Ll^\times\Zfrak$ be a loop
with finite \textbf{regularization collision times set}
$\Tt_z:=z^{-1}(0)$ and
$z^\prime(\tau_*)\not=0$ $\forall\tau_*\in\Tt_z$.
Then the following is true.
\begin{itemize}\setlength\itemsep{0ex}
\item[\rm (i)]
  The map $t_z\colon \SS^1_\tau\to\SS^1_t$ defined (as in the
  non-collisional case) by
  \begin{equation}\label{eq:t_z-prime-coll}
     \forall \tau\in\R\colon
     \qquad
     t_z(\tau):=\frac{\int_0^\tau\Abs{z(\sigma)}^2d\sigma}{\norm{z}^2}
     ,\qquad
  {\color{gray}
     t_z^\prime(\tau)=\frac{\Abs{z(\tau)}^2}{\norm{z}^2}\ge0
     ,
  }
  \end{equation}
  is smooth and a strictly increasing homeomorphism with $t_z(0)=0$.
\item[\rm (ii)]
  The inverse homeomorphism $\tau_z:=t_z^{-1}\colon \SS^1_t\to\SS^1_\tau$
  is smoothly diff\-erentiable away from
  collision times $t_z(\Tt_z)$ where
  $\dot \tau_z(t)=\tfrac{1}{t_z^\prime(\tau_z(t))}
  =\tfrac{\norm{z}^2}{\Abs{z(\tau_z(t))}^2}>0$.
\end{itemize}
\end{lemma}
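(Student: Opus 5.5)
For part (i) I will work directly from the integral formula~(\ref{eq:t_z-prime-coll}). Since $z$ is smooth, the integrand $\sigma\mapsto\Abs{z(\sigma)}^2$ is smooth, so $\tau\mapsto\int_0^\tau\Abs{z}^2$ is smooth by the fundamental theorem of calculus. Because $z\not\equiv0$ we have $\norm{z}\not=0$, so dividing by it preserves smoothness. Evaluating gives $t_z(0)=0$ and $t_z(1)=1$.

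Equivariance $t_z(\tau+k)=t_z(\tau)+k$ follows, as in Remark~\ref{rem:times}, from $\Abs{z_{\tau+1}}^2=\Abs{\pm z_\tau}^2=\Abs{z_\tau}^2$. Hence $t_z$ descends to a map $\SS^1_\tau\to\SS^1_t$, and the construction is independent of the representative $\pm z$.

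Strict monotonicity is the only point where collisions matter. The derivative $t_z^\prime=\Abs{z}^2/\norm{z}^2\ge0$ vanishes exactly on the finite set $\Tt_z$. For $\tau_1<\tau_2$ the interval $(\tau_1,\tau_2)$ contains an open subinterval avoiding $\Tt_z$, on which the integrand is strictly positive. Therefore $t_z(\tau_2)-t_z(\tau_1)>0$.

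A continuous, strictly increasing, equivariant map $\R\to\R$ with $t_z(\tau+1)=t_z(\tau)+1$ is a bijection of $\R$. Its inverse is continuous by the standard monotone-inverse theorem (as cited in Remark~\ref{rem:times}). It is also equivariant, so it induces a homeomorphism of circles. Note that I only need finiteness of $\Tt_z$ here, or even just that $\Tt_z$ has empty interior; the hypothesis $z^\prime(\tau_*)\not=0$ already guarantees that each zero is isolated.

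For part (ii), I will apply the inverse function theorem locally. Take $t\notin t_z(\Tt_z)$ and set $\tau=\tau_z(t)$. Then $\tau\notin\Tt_z$, so $t_z^\prime(\tau)=\Abs{z(\tau)}^2/\norm{z}^2>0$. The smooth inverse function theorem in one variable then says $t_z$ restricts to a smooth diffeomorphism near $\tau$. Its local inverse coincides with $\tau_z$, because $\tau_z$ is the global inverse, so $\tau_z$ is smooth near $t$ with $\dot\tau_z(t)=1/t_z^\prime(\tau_z(t))$, which is the stated formula.

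I expect no step to be a serious obstacle. The only care needed is to phrase strict monotonicity via the measure-zero (finite) vanishing set of $t_z^\prime$ rather than via a positive derivative. It is also worth remarking, though the statement does not ask for it, that near collision times $\dot\tau_z$ blows up like $\Abs{z}^{-2}$. So $\tau_z$ genuinely fails to be differentiable at $t_z(\Tt_z)$: since $z^\prime(\tau_*)\not=0$, we get $t_z(\tau)-t_z(\tau_*)\sim c(\tau-\tau_*)^3$, hence $\tau_z$ behaves like a cube root there.
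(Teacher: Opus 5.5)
Your proof is correct, and it departs from the paper's only in the key step, strict monotonicity of $t_z$.

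The paper notes $\Tt_z=\Crit\,t_z$ and then shows each collision is an inflection point of $t_z$. It computes $t_z^{\prime\prime}(\tau_*)=0$ and $t_z^{\prime\prime\prime}(\tau_*)=2\Abs{z^\prime(\tau_*)}^2/\norm{z}^2>0$, which is where the hypothesis $z^\prime(\tau_*)\not=0$ is used. (The paper actually quotes the critical-point value $\Abs{z^\prime(\tau_*)}=1/(\sqrt{2}\norm{z}^2)$, which is not among the lemma's hypotheses; only nonvanishing is needed.) Monotonicity is then read off from a nonnegative derivative whose zeros are isolated inflection points.

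You instead argue by integration: since $\Tt_z$ has empty interior, every interval $(\tau_1,\tau_2)$ contains an open subinterval on which $\Abs{z}^2>0$, so $t_z(\tau_2)>t_z(\tau_1)$. This is more elementary and more general, since it needs no higher derivatives and no condition on $z^\prime$ at collisions. Your equivariance check via $\Abs{z_{\tau+1}}=\Abs{z_\tau}$ also covers loops in $\Ll_-^\times\Zfrak$, which the paper's appeal to ``$z_{\sigma+1}=z_\sigma$'' skips over.

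What the paper's third-derivative computation buys is the local picture $t_z(\tau)-t_z(\tau_*)\approx\tfrac16 t_z^{\prime\prime\prime}(\tau_*)(\tau-\tau_*)^3$. This explains why $\tau_z$ genuinely fails to be differentiable at $t_z(\Tt_z)$, and your closing cube-root remark recovers exactly this, with the same constant. Part (ii), via the one-variable inverse function theorem plus global injectivity of $t_z$, matches the paper.
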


\begin{proof}
That $t_z$ maps $\SS^1=\R/\Z$ to $\SS^1$ is true, because
$t_z(\tau+1)=t_z(\tau)+1$ $\forall \tau\in\R$, which follows from
$\int_0^{\tau+1}=\int_0^1+\int_1^{\tau+1}$ and since
$z_{\sigma+1}=z_\sigma$ is periodic.
Clearly $t_z$ is smooth and its derivative is of the
form~(\ref{eq:t_z-prime-coll}) which also shows that collision times
precisely coincide with critical points of $t_z$, in symbols
$
   \Tt_z=\Crit\, t_z
$.
The proof completes in two bullets.

$\bullet$
The critical points of $t_z$ are inflection points of $t_z$,
more precisely
\begin{equation}\label{eq:31}
   \forall \tau_*\in\Tt_z
   \qquad
   t_z^\prime(\tau_*)=t_z^{\prime\prime}(\tau_*)=0
   ,\qquad
   t_z^{\prime\prime\prime}(\tau_*)>0
   .
\end{equation}
To see this we compute the following derivatives
\begin{equation*}%\label{eq:tau-primeprimeprime}
   t_z^{\prime\prime}(\tau)
   =\tfrac{2\INNER{z(\tau)}{z^\prime(\tau)}_0}{\norm{z}^2}
   ,\qquad
   t_z^{\prime\prime\prime}(\tau)
   =2\tfrac{\Abs{z^\prime(\tau)}^2+\INNER{z(\tau)}{z^{\prime\prime}(\tau)}}
   {\norm{z}^2}
   ,
\end{equation*}
for $\tau\in\R$.
At any collision $z(\tau_*)=0$, as
$\abs{z^\prime_{\tau_*}}=\tfrac{1}{\sqrt{2}\norm{z}^2}\not=0$,
we conclude that
$$
   t_z^{\prime}(\tau_*)=0
   ,\qquad
   t_z^{\prime\prime}(\tau_*)=0
   ,\qquad
   t_z^{\prime\prime\prime}(\tau_*)
   =\tfrac{2\Abs{z^\prime(\tau_*)}^2}{\norm{z}^2}>0
   .
$$

$\bullet$
By~(\ref{eq:t_z-prime-coll}) and~(\ref{eq:31})
the continuous map $t_z$ is strictly 
monotonically increasing.
So, since $t_z(0)=0$ and $t_z(1)=1$, 
as in the regular case of Remark~\ref{rem:times}, the map
$t_z$ has a continuous inverse $\tau_z:=t_z^{-1}\colon\SS^1_t\to\SS^1_\tau$
strictly monotonically increasing.
Thus $\tau_z$ is still a homeomorphism of the circle,
but in contrast to the regular case $\tau_z$ is not everywhere differentiable.
Away from the finite set $t_z(\Tt_z)$ it is still smoothly differentiable,
the derivative still given by~(\ref{eq:inverse-C}).
\end{proof}

\boldmath
%%%%%%%%%%%%%%%%%%%%%%%%%%%%%%%%%%%
%%%%%%% Subsection  %%%%%%%%%%%%%%%%%%%
%%%%%%%%%%%%%%%%%%%%%%%%%%%%%%%%%%%
\subsubsection{Collisional rescale-square map $\Qq$}
\label{sec:coll-BOV-map}
\unboldmath

\begin{proposition}[$\Qq$ on $\Crit\,\Bb$]\label{prop:q_z-collision}
A solution $z\in\bar\Ll^\times\Zfrak$
to the regularized DDE~(\ref{eq:z}),
equivalently $z\in\Crit\,\Bb$,
yields a $W^{1,2}$-map
\begin{equation}\label{eq:Qq-extension}
   q_z:=\Qq(z)=z^2\circ\tau_z\colon\SS^1\to\Qfrak
\end{equation}
smoothly differentiable away from classical collision
times~${t_j:=t_z(\tau_j)}$. Set
\begin{equation*}%\label{eq:T_q}
   T_{q_z}
   :=q_z^{-1}(0)
   =t_z(\Tt_z)
   =\{t_1,\dots,t_N\}
   ,\quad
   I_j:=t_z(\Ii_j) =(t_j,t_{j+1}) .
\end{equation*}
Restricted to non-collision times
$q_z\colon\SS^1\setminus T_{q_z}\to\Qfrak$
solves the classical ODE~(\ref{eq:q}).
Furthermore, at all collision times of $z$
the collision velocity is equal
$$
   \forall z\in\Crit\,\Bb\quad \forall \tau_*\in \Tt_z=z^{-1}(0)\colon
   \qquad
  \abs{z^\prime(\tau_*)}=\frac{1}{\sqrt{2}\norm{z}^2}
   .
$$
\end{proposition}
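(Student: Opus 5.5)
By Lemma~\ref{le:finite-colls}, for $z\in\Crit\,\Bb$ the collision set $\Tt_z=\{\tau_1,\dots,\tau_N\}$ is finite and $z^\prime(\tau_j)\not=0$. Lemma~\ref{le:t_z-coll} therefore applies: $t_z$ is a smooth strictly increasing homeomorphism of $\SS^1$, and its inverse $\tau_z$ is smooth away from $T:=t_z(\Tt_z)=\{t_1,\dots,t_N\}$. Hence $q_z=z^2\circ\tau_z$ is continuous, vanishes exactly on $T$, and is smooth on each interval $I_j=t_z(\Ii_j)$.

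\textbf{$W^{1,2}$-regularity.} Formula~(\ref{eq:dot-q_z}) still holds on $\SS^1\setminus T$. Repeating the change of variables $\sigma=\tau_z(t)$ in~(\ref{eq:fsfcs-C}) interval by interval, the factor $\abs{z}^2$ cancels, and we get
\[
   \int_{\SS^1\setminus T}\abs{\dot q_z}^2dt=4\norm{z}^2\norm{z^\prime}^2<\infty .
\]
Since $q_z$ is continuous and absolutely continuous on each closed interval $\bar I_j$, this gives $q_z\in W^{1,2}$. Similarly, (\ref{eq:1/q^2-C}) persists, with $\int_0^1 dt/\abs{q_z}=1/\norm{z}^2$ finite. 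This finiteness will matter below.

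\textbf{The ODE away from collisions.} On each $I_j$ I redo Step~1 of Proposition~\ref{prop:reg-sol}. That computation is purely pointwise at $\tau=\tau_z(t)$ with $z(\tau)\not=0$, except for the global quantities $\norm{z}^2$, $\norm{z^\prime}^2$ and the nonlocal integrals. Those are converted to $q$-quantities by changes of variables that remain valid on the complement of the finite set $\Tt_z$. So $q_z$ satisfies the intermediate DDE~(\ref{eq:q-DDE}) on $\SS^1\setminus T$. Step~2 then runs on each $I_j$ separately. The function $\beta$ is real, and $\beta=\mu_j/\abs{q}^3$ on $I_j$ with a constant $\mu_j$ that may a priori differ from interval to interval. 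Thus on $I_j$ we have $\ddot q=\mu_j q/\abs{q}^3-B_t(q)i\dot q-\AAAdot_t(q)$. By~(\ref{eq:beta-q}),
\[
   \frac{\mu_j}{\abs{q_t}}=C-\tfrac12\abs{\dot q_t}^2+\int_t^1\inner{\AAAdot_s|_{q_s}}{\dot q_s}_0\,ds ,
\]
with the same constant $C$ on every interval.

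\textbf{Main obstacle: showing $\mu_j=-1$ for every $j$.} The single integration in~(\ref{eq:mu=-1}) only yields $\sum_j\mu_j\int_{I_j}dt/\abs{q}=-\int_0^1dt/\abs{q}$, which is a weighted average. To separate the intervals I would use the collision asymptotics. Near $t_j$, $\abs{\dot q_t}^2=4\norm{z}^4\abs{z^\prime}^2/\abs{z}^2$ by~(\ref{eq:dot-q_z}), while $\abs{q_t}=\abs{z}^2$. So $\abs{q_t}\cdot\bigl(C-\tfrac12\abs{\dot q_t}^2+\int_t^1\cdots\bigr)\to-2\norm{z}^4\abs{z^\prime(\tau_j)}^2$ as $t\to t_j$ from either side, because the remaining terms are bounded times $\abs{q_t}\to0$. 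Comparing with the identity above gives
\[
   \mu_{j-1}=\mu_j=-2\norm{z}^4\abs{z^\prime(\tau_j)}^2 ,
\]
which is the relation~(\ref{eq:collision-velocity}). Hence all $\mu_j$ equal a common $\mu$. The averaged identity, with $\int_0^1 dt/\abs{q}$ finite and nonzero, then forces $\mu=-1$. Consequently $q_z$ solves~(\ref{eq:q}) on $\SS^1\setminus T$, and $\abs{z^\prime(\tau_*)}^2=1/(2\norm{z}^4)$, i.e.\ $\abs{z^\prime(\tau_*)}=1/(\sqrt2\norm{z}^2)$. One technical point must be checked in the averaged identity: the Fubini exchange and the integral $\int_0^1\abs{\dot q}^2$ have to be justified across the singular points $t_j$, and the $W^{1,2}$ bound together with the $L^1$ bound on $1/\abs{q}$ supplies exactly this.
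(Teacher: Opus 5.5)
Your proposal is correct and takes essentially the same route as the paper. You run Step~1 of Proposition~\ref{prop:reg-sol} on each $I_j$ to get constants $\mu_j$, use the boundary limit $\mu_j=-2\norm{z}^4\abs{z^\prime(\tau_j)}^2$ from~(\ref{eq:q-qdot}) to identify neighbouring $\mu_j$, and then use the integrated identity~(\ref{eq:mu=-1}) to get $\mu=-1$. Your ordering is slightly cleaner than the paper's: you prove the $W^{1,2}$ bound first, and this gives finiteness of $\norm{\dot q}^2$ and of $\int_t^1\inner{\AAAdot_s|_{q_s}}{\dot q_s}_0\,ds$ before the boundary limits and the Fubini exchange, whereas the paper uses these facts in its Step~2 and only proves $W^{1,2}$ in Step~3.
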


\begin{proof}
Pick $z\in\Crit\,\Bb$.
Lemma~\ref{le:t_z-coll} applies by Lemma~\ref{le:finite-colls}.
That the composition $q_z=z^2\circ\tau_z$ is continuous and
smoothly differentiable away from $t_z(\Tt_z)$ holds since $t_z$ has
these properties, see Lemma~\ref{le:t_z-coll}, and $z$ is smooth anyway.
Since $t_z$ fixes both $0$ and $1$, and it is strictly monotonically
increasing the order $\tau_1<\dots<\tau_N$ is inherited by the images
$t_1<\dots<t_N$.

It remains to show that $q:=q_z$ lies in $W^{1,2}$ and restricted to any
of the collision free intervals $I_j$ solves the classical
ODE~(\ref{eq:q}). This takes three steps.

\smallskip\noindent
\textsc{Step~1.}
The same argument as in the regular case---in the proof of
Proposition~\ref{prop:reg-sol} replace $t\in\SS^1$ by
$t\in I_j$---shows a) that $q_z|_{I_j}$ solves the intermediate
DDE~(\ref{eq:q-DDE}) and b) that there exists $\mu_j\in\R$ such that
\begin{equation}\label{eq:mu_j-q}
   \ddot q_t
   =\frac{\mu_j q_t}{\Abs{q_t}^3}-B_t(q_t) i\dot q_t- \AAAdot_t(q_t)
   ,\quad
   \forall t\in I_j
    .
\end{equation}
To see this note the following. a)~The DDE~(\ref{eq:q-DDE}) still
makes sense since $1/\bar q_t$ is finite for $t\in I_j$
and so is the mean value $\int_0^1\frac{ds}{\abs{q_s}}<\infty$,
by~(\ref{eq:1/q^2-C}).
b)~Since $q_t\not=0$ the function $\beta_t$ in~(\ref{eq:beta})
is well defined and so is~(\ref{eq:mu-q}),
respectively~(\ref{eq:mu_j-q}), and the argument to get there.

\smallskip\noindent
\textsc{Step~2.}
We show that $\mu_j=-1$ in~(\ref{eq:mu_j-q}) for any $j$.
To see this pick $t\in I_j$, then
\begin{equation}\label{eq:mu_j(t)}
\begin{split}
   \mu_j
   &\stackrel{\text{(\ref{eq:mu_j-q})}}{=}
   \Abs{q_t}\bar q_t
   \left(
   \ddot q_t+B_t(q_t) i\dot q_t+\AAAdot_t(q_t)
   \right)
\\
   &\stackrel{\text{(\ref{eq:q-DDE})}}{=}
   -\frac{\Abs{q_t}\cdot \Abs{\dot q_t}^2}{2}
   \\
   &\quad
   +
\underbrace{
   \Abs{q_t}
   {\textstyle
   \left(
  \tfrac12 \norm{\dot q}^2
   -\int_0^1\frac{ds}{\abs{q_s}}
   -
   \int_0^1s \inner{\AAAdot_{s}|_{q_s}}{\dot q_s}_0\; ds
   +
   \int_{t}^1\inner{\AAAdot_{s}|_{q_s}}{\dot q_s}_0\; ds
   \right) 
   }
}_{\text{$\to0$, as $t\to t_j$ or $t\to t_{j+1}$ }}
   .
\end{split}
\end{equation}
The limit is zero as the first three summands in the bracket
are constants and summand four remains finite as $t$ goes to $t_j$ or $t_{j+1}$.
By definition of $q_z$ we get
\begin{equation}\label{eq:q-qdot}
   \frac{\Abs{q_t}\cdot \Abs{\dot q_t}^2}{2}
   \stackrel{\text{(\ref{eq:dot-q_z})}}{=}
   \frac{\Abs{z^2(\tau_z(t))}\cdot 4\norm{z}^4 \Abs{z^\prime(\tau_z(t))}^2}
   {2 \Abs{\bar z(\tau_z(t))}^2}
   =
   2\norm{z}^4 \Abs{z^\prime(\tau_z(t))}^2 .
\end{equation}
Hence, if we take in~(\ref{eq:mu_j(t)}) the limit $t\to t_j$,
respectively $t\to t_{j+1}$, we obtain
\begin{equation}\label{eq:collision-velocity}
   \mu_j
   =-2\norm{z}^4 \Abs{z^\prime(\tau_z(t_j))}^2
   =-2\norm{z}^4 \Abs{z^\prime(\tau_z(t_{j+1}))}^2 .
\end{equation}
Since this is true for all $j$, the right hand side is actually equal
to $\mu_{j+1}$ viewed as the left boundary limit of
$I_{j+1}=(t_{j+1},t_{j+2})$. This means that all the $\mu_j$ are equal
to the same real number, say $\mu$.
Now~(\ref{eq:mu_j-q}) tells that
\begin{equation*}%\label{eq:mu_j-q-no-j}
   \ddot q_t
   =\tfrac{\mu q_t}{\Abs{q_t}^3}-B_t(q_t) i\dot q_t- \AAAdot_t(q_t)
   ,\quad
   \forall t\in \SS^1\setminus t_z(\Tt_z)
   .
\end{equation*}
As in the regular case~(\ref{eq:mu=-1}),
using in addition that $t_z(\Tt_z)$ is a finite set,
integration implies that $\mu=-1$.
Hence, by~(\ref{eq:collision-velocity}), 
it holds $\abs{z^\prime(\tau_*)}=1/\sqrt{2}\norm{z}^2$
whenever $\tau_*\in z^{-1}(0)$.
This proves Step~2.

\smallskip\noindent
\textsc{Step~3.}
Since $q$ is continuous it is $L^2$.
To see that $q:=q_z\in W^{1,2}(\SS^1,\Qfrak)$
it remains to show finiteness of
\begin{equation*}
\begin{split}
   \int_0^1\abs{\dot q_t}^2\, dt
%1
   \stackrel{{\color{gray}1}}{=}
   4\norm{z}^4\int_0^1
   \frac{\abs{z^\prime_{\tau_z(t)}}^2}{\abs{z_{\tau_z(t)}}^2}\, dt
%2
   \stackrel{{\color{gray}2}}{=}
   4\norm{z}^2\int_0^1\abs{z^\prime_\sigma}^2\, d\sigma
%3
   <\infty
   .
\end{split}
\end{equation*}
Equality~1 is by~(\ref{eq:q-qdot}) with $q_t$ replaced by $z_{\tau_z(t)}^2$.
Equality~2 is by variable substitution $\sigma:=\tau_z(t)$, thus
$dt=\frac{\norm{z}^2}{\abs{z_{\tau_z(t)}}^2} d\tau$.
Finiteness is true since $z^\prime$ is smooth and its domain $\SS^1$
is compact.
This proves Step~3 and Proposition~\ref{prop:q_z-collision}.
\end{proof}

\boldmath
%%%%%%%%%%%%%%%%%%%%%%%%%%%%%%%%%%%
%%%%%%% Subsection  %%%%%%%%%%%%%%%%%%%
%%%%%%%%%%%%%%%%%%%%%%%%%%%%%%%%%%%
\subsection{Correspondence of regularized and
classical collisions}
\label{sec:correspondence}
\unboldmath

\boldmath
%%%%%%%%%%%%%%%%%%%%%%%%%%%%%%%%%%%
%%%%%%% Subsubsection  %%%%%%%%%%%%%%%%%
%%%%%%%%%%%%%%%%%%%%%%%%%%%%%%%%%%%
\subsubsection{Classical collision spaces and main theorem}
\unboldmath

\begin{definition}[$\Lambda^\times_{\mathrm{coll}}\Qfrak$]
\label{def:class-coll-loops}
The \textbf{classical collision \underline{loop} space}
$\Lambda^\times_{\mathrm{coll}}\Qfrak$
consists of all loops $q\in W^{1,2}(\SS^1,\Qfrak)$
such that
\begin{itemize}\setlength\itemsep{0ex}
\item[(i)]
  there are at most finitely many collision times
   $T_q:=q^{-1}(0)=\{t_1,\dots,t_N\}$;
\item[(ii)]
  there exist (unique) \footnote{
    Since the set of collision times is finite,
    the functions $\alpha=\alpha_q$ and $e=e_q$ are uniquely
    determined by continuity of $q$.
    }
  continuous maps
  $$
     \alpha_q\colon\SS^1_t\to\SS^1_\C
     ,\qquad
     e_q\colon\SS^1_t\to\R
     ,\qquad
     \beta_q\colon\SS^1_t\to\C
     ,\quad
     \beta|_{T_q}\equiv 0
     ,
  $$
  which away from collisions are solution angle and energy, more
  precisely
  \begin{equation}\label{eq:class-coll-loops}
     \alpha|_{\SS^1\setminus T_q}
     =\frac{q}{\abs{q}}
     ,\qquad
     e|_{\SS^1\setminus T_q}
     =\tfrac12 \abs{\dot q}^2-\frac{1}{\abs{q}}
     ,\qquad
     \beta|_{\SS^1\setminus T_q}
     =\dot\alpha_q\abs{q}
     .
\end{equation}
\end{itemize}
\end{definition}

\begin{definition}[$\mathrm{Coll}\,\Ss$]
\label{def:class-coll-solutions}
The \textbf{space of classical periodic collision orbits}
$\mathrm{Coll}\,\Ss$ consists of all classical collision loops
$q\in\Lambda^\times_{\mathrm{coll}}\Qfrak$ such that
\begin{itemize}\setlength\itemsep{0ex}
\item[(iii)]
  away from collision times $q\colon \SS^1\setminus T_q\to\Qfrak$
  satisfies the classical ODE~(\ref{eq:q}).
\end{itemize}
\end{definition}

\begin{remark}[Motivation for~(\ref{eq:class-coll-loops})]
If one of the first two conditions~(\ref{eq:class-coll-loops}) fails,
then already in the Kepler problem the correspondence with critical
points of the regularized functional will not be true.
In the Kepler problem collisional orbits are just rays.
By continuity of $\alpha$, after collision the particle will stay on
the same ray.
Moreover, in the Kepler problem the energy is constant
on each ray, by continuity of the energy it will be globally constant.
\\
Without the continuity assumption on $\alpha$ and $e$ one could
consider rays, parametrized according to the Kepler equation,
which after collision jump to another ray or change the energy.
\end{remark}

\begin{theorem}[bijection]\label{thm:bijection}
The rescale-square map $\Qq$ in~(\ref{eq:Qq-extension}) is a bijection
between $\Crit\,\Bb$ and $\mathrm{Coll}\,\Ss$.
\end{theorem}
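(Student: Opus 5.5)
We prove the theorem in three steps: $\Qq$ maps $\Crit\,\Bb$ into $\mathrm{Coll}\,\Ss$, $\Qq$ is injective there, and an inverse $\Zz$ exists on $\mathrm{Coll}\,\Ss$.

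\textbf{Well defined.} Let $z\in\Crit\,\Bb$. Proposition~\ref{prop:q_z-collision} already gives $q_z\in W^{1,2}$ with finitely many collision times $T_{q_z}=t_z(\Tt_z)$, and $q_z$ solving~(\ref{eq:q}) off $T_{q_z}$. So conditions (i) and (iii) hold, and only condition (ii) remains.
\begin{itemize}
\item For the angle, off collisions we have $q/\abs{q}=z^2/\abs{z}^2\circ\tau_z$. Near $\tau_*\in\Tt_z$ write $z(\tau)=(\tau-\tau_*)(z'(\tau_*)+O(\tau-\tau_*))$ with $z'(\tau_*)\neq0$ (Lemma~\ref{le:finite-colls}). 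Then $z^2/\abs{z}^2\to z'(\tau_*)^2/\abs{z'(\tau_*)}^2$ from both sides, since the sign of $(\tau-\tau_*)$ is squared away. So $\alpha$ extends continuously.
\item For the energy, (\ref{eq:dot-q_z}) gives $e=2\norm{z}^4\abs{z'}^2/\abs{z}^2-1/\abs{z}^2$ evaluated at $\tau_z(t)$. Its continuity at collisions needs $2\norm{z}^4\abs{z'}^2-1=O(\abs{z}^2)$. This follows from the collision velocity $\abs{z'(\tau_*)}^2=1/(2\norm{z}^4)$ together with $\frac{d}{d\tau}\abs{z'}^2=2\inner{z'}{z''}_0$ and $z''(\tau_*)=0$.
\item For $\beta=\dot\alpha\abs{q}$, we get $\dot\alpha=\p_\tau(z^2/\abs{z}^2)\,\norm{z}^2/\abs{z}^2$, hence $\beta=\norm{z}^2\p_\tau(z/\bar z)$. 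Since $z/\bar z$ has a removable singularity with vanishing derivative (because $z''(\tau_*)=0$), $\beta$ is continuous and vanishes on $T_q$.
\end{itemize}

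\textbf{Injectivity.} Off collisions, (\ref{eq:tau_z=t1/sqrtq}) still holds on each interval, so $\tau_z=t_{1/\sqrt{q_z}}$. This function is well defined since $1/\abs{q}$ is integrable, $\int 1/\abs{q_z}=1/\norm{z}^2$ by the same substitution. Hence $q_z$ determines $\tau_z$ and $\norm{z}$. It also determines $z$ up to a sign on each interval $\Ii_j$, via $z=\pm\sqrt{q_z}\circ t_z$. Smoothness of $z$ with $z'(\tau_*)\ne0$ forces a sign change of the chosen branch at each collision. So the signs are determined globally up to the overall involution $I$, which is exactly the quotient in $\bar\Ll^\times\Zfrak$.

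\textbf{Surjectivity (main obstacle).} Given $q\in\mathrm{Coll}\,\Ss$, we define $z:=\Zz(q)=q^{1/2}\circ\tau_{1/\sqrt q}$ as in~(\ref{eq:Rr-inverse-C}), choosing the branch of the root to flip across each collision. Finiteness of $\int 1/\abs{q}$ must be shown first. Near a collision, constant energy $e$ and the ODE give the Kepler-type asymptotics $\abs{q}\sim c\abs{t-t_j}^{2/3}$, and this asymptotic is also what makes the pieces glue.

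The hard part is proving that the glued $z$ is smooth across $\tau_*$. Here I would use the Levi-Civita regularization. On each interval, running the computation of Proposition~\ref{prop:reg-sol} backwards shows that $z$ satisfies~(\ref{eq:z}) there. Written in the local form $z''=az+bz+cz'$ of Lemma~\ref{le:finite-colls}, this is a linear ODE with coefficients continuous across the collision, provided the energy $e$ is continuous, since the coefficient $a$ involves $e$ through $\norm{z'}^2$. That ODE has a unique smooth solution through $\tau_*$.
\begin{itemize}
\item Continuity of $\alpha$ matches the direction $\pm z'(\tau_*)$ from the two sides, modulo the sign flip.
\item The common value of $e$, with $\beta\to0$, matches the speed, which is $1/(\sqrt2\norm{z}^2)$.
\end{itemize}
So the one-sided limits of $(z,z')$ agree, the local solutions coincide with a single smooth solution, and $z\in\Crit\,\Bb$. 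The relation $\Qq(\Zz(q))=q$ then follows from the computation (\ref{eq:rescaling-sqrt-q}) applied piecewise. The remaining care is that the number of sign flips determines whether $z$ lies in $\Ll_+$ or $\Ll_-$, which is harmless since both are allowed.
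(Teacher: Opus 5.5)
Your proposal is correct. The well-definedness part (condition (ii) via Taylor expansion at $\tau_*$, using $z'(\tau_*)\neq0$, $z''(\tau_*)=0$ and the collision speed $1/(\sqrt2\norm{z}^2)$) is in substance the paper's Proposition~\ref{prop:extend-Qq}. The other two directions are organized differently from the paper.

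\textbf{Injectivity.} The paper never proves injectivity on its own. It constructs $\Zz$, shows $\Zz(q)\in\Crit\,\Bb$, and reads off $\Qq\circ\Zz=\id$ and $\Zz\circ\Qq=\id$ from the identities~(\ref{eq:rescaling-sqrt-q-coll}). Your direct argument is different: $q_z$ determines $\tau_z=t_{1/\sqrt{q_z}}$, $\norm{z}$, and $z$ up to a sign on each $\Ii_j$, and $z'(\tau_*)\neq0$ forces that sign to agree on both sides of every collision. This makes explicit the branch choice that the paper leaves implicit in ``taking square root is well defined on the quotient''.

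\textbf{Surjectivity.} The paper first proves $z_q\in C^2$ by hand: the limit speed (Lemma~\ref{le:1st-deriv}), the explicit computation $z_q''\to0$ through the classical ODE (Lemma~\ref{le:coll-7887}), and continuity of the direction of $z_q'$ from continuity of $\alpha_q$ (Lemma~\ref{le:z_q-C2}). Only then does it verify~(\ref{eq:z}) on all of $\SS^1$ and bootstrap. You instead obtain~(\ref{eq:z}) on each open interval first, freeze the non-local terms into coefficients $a,b,c$ that are continuous across $\tau_*$, and glue by linear-ODE uniqueness once the one-sided data $(0,z'(\tau_*^\pm))$ agree. This makes Lemma~\ref{le:coll-7887} unnecessary, since $z''(\tau_*)=0$ then drops out of~(\ref{eq:z}) because $z(\tau_*)=0$ and $\rot\,\aaa|_0=0$. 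The price is that the interval-wise derivation of~(\ref{eq:z}) must be done carefully.

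\textbf{Points to tighten.}
\begin{itemize}
\item Step~2 of Proposition~\ref{prop:reg-sol} does not simply run backwards on a single interval. From~(\ref{eq:q}) you only get that $e_q(t)-\int_t^1\inner{\AAAdot_s|_{q_s}}{\dot q_s}_0\,ds$ is constant on each $I_j$. Continuity of $e_q$ across collisions, together with integrating over the circle, is what makes this the same global constant on every interval; this is Lemma~\ref{le:energy-id}.
\item That is the real role of continuity of $e$, not continuity of the coefficient $a$ via $\norm{z'}^2$. For the mere gluing, a jump of $a$ at $\tau_*$ would be harmless, since it is multiplied by $z(\tau_*)=0$. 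What a jump would break is that $z$ solves~(\ref{eq:z}) with its single global constant.
\item The energy is not constant here, because of the Euler force.
\item You do not need the $\abs{t-t_j}^{2/3}$ asymptotics. One has $\int_0^1\abs{q_t}^{-1}dt=\tfrac12\norm{\dot q}^2-\int_0^1e_q\,dt<\infty$, and then $t_{1/\sqrt q}$ is continuous as the indefinite integral of an $L^1$ function.
\item With merely continuous coefficients, uniqueness gives only $z\in C^2$; smoothness needs the bootstrap.
\item Whether $z$ lands in $\Ll_+$ or $\Ll_-$ depends on the parity of the winding of $\alpha_q$ as well as on the number of flips. This is harmless, as you say.
\end{itemize}
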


\boldmath
%%%%%%%%%%%%%%%%%%%%%%%%%%%%%%%%%%%
%%%%%%% Subsubsection  %%%%%%%%%%%%%%%%%
%%%%%%%%%%%%%%%%%%%%%%%%%%%%%%%%%%%
\subsubsection{Regularized collision spaces}
\unboldmath

\begin{definition}[$\bar\Ll^\times_{\mathrm{coll}}\Zfrak$]
\label{def:reg-coll-loop-space}
Define the \textbf{regularized collision \underline{loop} space} by
$$
   \bar\Ll^\times_{\mathrm{coll}}\Zfrak
   :=\left\{
   z\in{\color{red}\bar\Ll^\times\Zfrak}
   \;\bigl|\; \forall \tau_*\in z^{-1}(0)\colon
   \text{$\abs{z^\prime_{\tau_*}}=\tfrac{1}{\sqrt{2}\norm{z}^2}$
    and $z^{\prime\prime}_{\tau_*}=0$}
   \right\} .
$$
The elements are called \textbf{regularized collision \underline{loops}}.
Each has finitely many zeroes: first derivatives
at zeroes of $z$ have same length and $\SS^1$ is compact.
\end{definition}

\begin{lemma}\label{le:crit->coll}
$\Crit\,\Bb\subset \bar\Ll^\times_{\mathrm{coll}}\Zfrak
\subset\bar\Ll^\times\Zfrak$.
\end{lemma}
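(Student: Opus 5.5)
The second inclusion $\bar\Ll^\times_{\mathrm{coll}}\Zfrak\subset\bar\Ll^\times\Zfrak$ holds by Definition~\ref{def:reg-coll-loop-space}, since the collision loop space is defined as a subset of $\bar\Ll^\times\Zfrak$. So only the first inclusion needs an argument. Fix $z\in\Crit\,\Bb$ and a collision time $\tau_*\in z^{-1}(0)$. We must check two conditions at $\tau_*$: the velocity normalization $\abs{z^\prime_{\tau_*}}=1/(\sqrt2\norm{z}^2)$, and the vanishing of the acceleration $z^{\prime\prime}_{\tau_*}=0$.

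The order of the steps matters. First, Lemma~\ref{le:finite-colls} applied to $z\in\Crit\,\Bb$ gives three facts:
\begin{itemize}
\item the collision set $\Tt_z$ is finite;
\item $z^\prime(\tau_*)\neq0$ at every collision;
\item $z^{\prime\prime}(\tau_*)=0$ at every collision.
\end{itemize}
The last fact came from the DDE~(\ref{eq:z}). At $\tau_*$ every term on its right hand side carries a factor $z(\tau_*)=0$, or $\abs{z(\tau_*)}^2=0$, or $\rot\,\aaa_t|_0=0$; the last vanishes by~(\ref{eq:tilde-A-A}). This settles the acceleration condition.

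Next, the first two facts are exactly the hypotheses of Lemma~\ref{le:t_z-coll}. Hence $t_z$ is a homeomorphism and $q_z=\Qq(z)$ is well defined. Proposition~\ref{prop:q_z-collision} then applies, and its last assertion gives $\abs{z^\prime(\tau_*)}=1/(\sqrt2\norm{z}^2)$ at every $\tau_*\in z^{-1}(0)$. This settles the velocity condition, and so $z\in\bar\Ll^\times_{\mathrm{coll}}\Zfrak$.

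The only real content is the velocity identity, which we take from Proposition~\ref{prop:q_z-collision}; its proof is Step~2 there, namely the boundary limits~(\ref{eq:collision-velocity}) combined with $\mu=-1$. One apparent obstacle is that the proof of Lemma~\ref{le:t_z-coll} already uses the velocity value to obtain $t_z^{\prime\prime\prime}(\tau_*)>0$, which looks circular. It is not: that argument only needs $z^\prime(\tau_*)\neq0$, which Lemma~\ref{le:finite-colls} supplies independently. Noting this removes the circularity, and the inclusion follows.
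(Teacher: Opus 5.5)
Your proof is correct and is the paper's argument: the paper proves this lemma by citing exactly Lemma~\ref{le:finite-colls} (finite collision set, $z^\prime(\tau_*)\neq0$, $z^{\prime\prime}(\tau_*)=0$) and Proposition~\ref{prop:q_z-collision} (collision speed $1/(\sqrt2\norm{z}^2)$). Your remark that Lemma~\ref{le:t_z-coll} needs only $z^\prime(\tau_*)\neq0$, so there is no circularity, is accurate and matches how the paper applies that lemma.
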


\begin{proof}
Proposition~\ref{prop:q_z-collision} and Lemma~\ref{le:finite-colls}.
\end{proof}

\begin{figure}%[t]
  \centering
\begin{equation*}
\begin{tikzcd} [row sep=normal] %[column sep=small] %
% 0-th row
  &&{\color{gray} \text{regularized}}
  &&
    {\color{gray} \text{classical}}
\\
% 1-st row
&&
\Crit\,\Bb
\arrow[rr, shift left=2, dashed, "\Qq", 
            "\text{\qquad\;\;\; 1:1\; \S\ref{sec:main-bij}}"']
  &&
  \mathrm{Coll}\,\Ss
  \arrow[ll, shift left=2, dashed, "\Zz"]
\\
% 2-nd row
{\color{red}\R}
&
{\color{red}\bar\Ll^\times\Zfrak}
\arrow[l, red, "\Bb"']
\arrow[r, phantom, "\supset"]
&
\bar\Ll^\times_{\mathrm{coll}}\Zfrak
\arrow[u, phantom, sloped, allow upside down, "\supset"]
\arrow[rr, dashed, 
            "\text{\qquad\;\;\;\; $\Qq$\;\;\, \S\ref{sec:ext-Q}}"]
  &&
  \Lambda^\times_{\mathrm{coll}}\Qfrak
  \arrow[u, phantom, sloped, allow upside down, "\supset"]
\\
% 3-rd row
&&
{\color{cyan}\bar\Ll\Zfrak^\times}
\arrow[u, phantom, sloped, allow upside down, "\subset"]
\arrow[rr, shift left=2, dashed, "\Qq", 
            "\text{\qquad\; 1:1\;\; \S\ref{sec:resc-square-non-coll}}"']
  &&
  {\color{cyan}\Ll\Qfrak^\times}
  \arrow[u, phantom, sloped, allow upside down, "\subset"]
  \arrow[r, cyan, "\Ss"]
  \arrow[ll, shift left=2, "\Zz"]
    &
     {\color{cyan}\R}
\end{tikzcd} 
\end{equation*}
  \caption{Correspondence 1:1 of regularized and classical collision
    orbits}
   \label{fig:fig-Qq-ext}
\end{figure}

\boldmath
%%%%%%%%%%%%%%%%%%%%%%%%%%%%%%%%%%%
%%%%%%% Subsubsection  %%%%%%%%%%%%%%%%
%%%%%%%%%%%%%%%%%%%%%%%%%%%%%%%%%%%
\subsubsection{Extending $\Qq$}\label{sec:ext-Q}
\unboldmath

The following proposition served us to identify
the proper conditions to define the space
$\Lambda^\times_{\mathrm{coll}}\Qfrak$
to accommodate the critical point equation on the $q$-side.

\begin{proposition}[$\Qq$]\label{prop:extend-Qq}
The rescale-square map $\Qq$ defined on $\Crit\,\Bb$
by~(\ref{eq:Qq-extension})
extends to $\bar\Ll^\times_{\mathrm{coll}}\Zfrak$ and takes
values in $\Lambda^\times_{\mathrm{coll}}\Qfrak$,
as illustrated by Figure~\ref{fig:fig-Qq-ext}.
\end{proposition}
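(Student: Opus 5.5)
Given $z\in\bar\Ll^\times_{\mathrm{coll}}\Zfrak$, I define $q_z:=z^2\circ\tau_z$, exactly as in~(\ref{eq:Qq-extension}), and check properties (i) and (ii) of Definition~\ref{def:class-coll-loops}. The definition does not depend on the sign choice $\pm z$. The zero set of $z$ is finite, as noted after Definition~\ref{def:reg-coll-loop-space}, and all collision velocities are nonzero. So Lemma~\ref{le:t_z-coll} applies: $t_z$ is a smooth strictly increasing homeomorphism, and $\tau_z$ is smooth away from the finite set $t_z(\Tt_z)$. Hence $q_z$ is continuous, smooth off $T_{q_z}=t_z(\Tt_z)$, and has finitely many zeros, which gives (i). The $W^{1,2}$ bound $\norm{\dot q}^2=4\norm{z}^2\norm{z'}^2<\infty$ is obtained verbatim as in Step~3 of Proposition~\ref{prop:q_z-collision}; that step used only smoothness of $z$ and the substitution $\sigma=\tau_z(t)$.

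For (ii) I compute the three quantities away from collisions in terms of $z$ at $\sigma=\tau_z(t)$. Writing $z=|z|u$ with $u$ unit, I get $\alpha=q/|q|=(z/|z|)^2$. By~(\ref{eq:dot-q_z}) and~(\ref{eq:q-qdot}), $\tfrac12|\dot q|^2=2\norm{z}^4|z'|^2/|z|^2$ and $1/|q|=1/|z|^2$, so
\[
e=\frac{2\norm{z}^4|z'_\sigma|^2-1}{|z_\sigma|^2}.
\]
Finally $\beta=\dot\alpha|q|$. Since $\dot\alpha=\alpha'(\sigma)\dot\tau_z=\alpha'(\sigma)\norm{z}^2/|z|^2$, this gives $\beta=\norm{z}^2\alpha'(\sigma)$, and a short computation yields $\alpha'=2i\,\omega_0(z,z')\,z^2/|z|^4$, which is $2i\,\omega_0(z,z')\,\alpha/|z|^2$.

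The main task is to show that these expressions extend continuously across each collision $\tau_*$ and that $\beta(\tau_*)=0$. Near $\tau_*$ I Taylor expand
\[
z(\tau_*+h)=hv+O(h^3),\qquad v:=z'(\tau_*)\neq0,
\]
where the $h^2$ term vanishes because $z''(\tau_*)=0$. Then $z/|z|=\operatorname{sgn}(h)\,v/|v|+O(h^2)$, so $\alpha=(z/|z|)^2\to v^2/|v|^2$ from both sides. The square removes the sign flip; this is exactly why the squaring is needed. Next, $\omega_0(z,z')=O(h^3)$: the $h\cdot 1$ term is $\omega_0(v,v)=0$, and the $O(h^2)$ corrections vanish since $z''(\tau_*)=0$. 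Hence $\beta=O(h^3)/O(h^2)\to0$.

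The energy is the delicate point. Both numerator and denominator of $e$ vanish at $\tau_*$, because $|v|^2=1/(2\norm{z}^4)$. Here $|z'|^2=|v|^2+O(h^2)$, using $z''(\tau_*)=0$ so that there is no linear term, and $|z|^2=h^2|v|^2+O(h^4)$. The quotient therefore has a finite limit, of size $2\norm{z}^4\cdot\tfrac{d^2}{dh^2}|z'|^2/(2|v|^2)$, involving $\langle v,z'''(\tau_*)\rangle$. The same limit is attained from $h>0$ and $h<0$ since the expression is even to leading order. I expect this limit computation, with careful bookkeeping of the $O(h^3)$ remainders so that the one-sided limits agree, to be the main obstacle; it is where both defining conditions of $\bar\Ll^\times_{\mathrm{coll}}\Zfrak$ are used. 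After this, $\alpha,e,\beta$ are continuous on $\SS^1_t$ because $\tau_z$ is a homeomorphism.
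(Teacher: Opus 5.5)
Your proposal is correct and follows the paper's proof. Both use the Taylor expansion $z(\tau_*+h)=hv+O(h^3)$ at each collision, with $z''(\tau_*)=0$ and $2\norm{z}^4\abs{v}^2=1$, to show that $\alpha_q$, $e_q$ and $\beta_q$ (the last tending to $0$) extend continuously, then compose with the homeomorphism $\tau_z$. The only deviation is minor: you get $\dot q\in L^2$ directly from the substitution $\norm{\dot q}^2=4\norm{z}^2\norm{z'}^2$ as in Step~3 of Proposition~\ref{prop:q_z-collision}, whereas the paper deduces it from continuity of $e_q$ together with $\int_0^1\abs{q_t}^{-1}\,dt=\norm{z}^{-2}$.
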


\begin{proof}
Pick $z\in\bar\Ll^\times_{\mathrm{coll}}\Zfrak$ and abbreviate $q=q_z$ where
$q_z:=z^2\circ \tau_z\colon\SS^1\to\Qfrak$.
The zero sets $\Tt_z:=z^{-1}(0)$ and $T_q:=q^{-1}(0)$ are, respectively,
called regularized and classical collision times.
By Lemma~\ref{le:t_z-coll} the map
$t_z\colon \SS^1\to\SS^1$ defined by~(\ref{eq:t_z}) is still smooth,
but if collisions exist only a homeomorphism $\SS^1\to\SS^1$, since
the inverse homeomorphism $\tau_z$ has a derivative~(\ref{eq:inverse-C})
with singularities at the classical collision times $T_q$.
The relation~(\ref{eq:q-qdot}) is still valid
on the finitely many intervals $\SS^1\setminus T_q$.

Note that $q$ is periodic and $T_q$ is finite:
As in Lemma~\ref{le:Qq} one checks that $q_z(1)=q_z(0)$ is $1$-periodic.
Since $q=z^2\circ \tau_z$ and $\tau_z$ is a bijection the zeroes of
$q$ are in bijection to those of $z$, but $z^{-1}$ is a finite set.

\medskip
\noindent
\textbf{Step~1.} Existence of $e_q$.

\begin{proof}
Since $z$ is smooth and vanishes together with its second derivative
at $\tau_*\in \Tt_z$, in symbols $z^{\prime\prime}_{\tau_*}=0$,
by Taylor's theorem there exists a
smooth function $\zeta\colon\R\to\C$ such that for every $\tau$ the
following formula for $z$ holds
\begin{equation}\label{eq:z_tau}
\begin{split}
   z_\tau
   &=z^\prime_{\tau_*}(\tau-\tau_*)+\zeta_\tau (\tau-\tau_*)^3
\\
   z_\tau^\prime
   &=z^\prime_{\tau_*}+3\zeta_\tau (\tau-\tau_*)^2
   +\zeta_\tau^\prime(\tau-\tau_*)^3
   .
\end{split}
\end{equation}
Away from collisions, for $t\in \SS^1\setminus T_q$,
we define $e_q$ as follows, then in equality~3
we use~(\ref{eq:q-qdot}) to get
\begin{equation*}%\label{eq:e_q}
\begin{split}
   e_q(t):
   &=\tfrac12 \abs{\dot q_t}^2-\tfrac{1}{\abs{q_t}}
\\
   &=\tfrac{1}{2\abs{q_t}}
   \left(\abs{q_t}\cdot\abs{\dot q_t}^2-2\right)
\\
   &\stackrel{\text{{\color{gray}3}}}{=}
   \tfrac{1}{2\abs{z^2_{\tau_z(t)}}}
   \left(4\norm{z}^4 \abs{z^\prime_{\tau_z(t)}}^2 -2\right)
\\
   &\stackrel{\text{{\color{gray}4}}}{=}
   \frac{4\norm{z}^4 \Abs{z^\prime_{\tau_*}
   +3\zeta_{\tau_z(t)} (\tau_z(t)-\tau_*)^2
   +\zeta_{\tau_z(t)}^\prime(\tau_z(t)-\tau_*)^3}^2 -2}
      {2\Abs{\left(z^\prime_{\tau_*}(\tau_z(t)-\tau_*)
   +\zeta_{\tau_z(t)} (\tau_z(t)-\tau_*)^3\right)^2}}
\\
   &\stackrel{\text{{\color{gray}5}}}{=}
   \frac{4\norm{z}^4\abs{z^\prime_{\tau_*}}^2
      +(\tau_z(t)-\tau_*)^2\gamma_t -2}
   {2(\tau_z(t)-\tau_*)^2\abs{z^\prime_{\tau_*}}^2+(\tau_z(t)-\tau_*)^4\delta_t}
\\
   &\stackrel{\text{{\color{gray}6}}}{=}
   \frac{2
      +(\tau_z(t)-\tau_*)^2\gamma_t -2}
   {2
     (\tau_z(t)-\tau_*)^2\abs{z^\prime_{\tau_*}}^2+(\tau_z(t)-\tau_*)^4\delta_t}
\\
   &\stackrel{\text{{\color{gray}7}}}{=}
   \frac{\gamma_t \norm{z}^4}
   {1+(\tau_z(t)-\tau_*)^2\delta_t \norm{z}^4}
   .
\end{split}
\end{equation*}
Equality 4 uses~(\ref{eq:z_tau}).
Equality 5 abbreviates two continuous maps $\R\to\R$ by
\begin{equation*}
\begin{split}
   \gamma_t:
   &=4\norm{z}^4
   \biggl(6\inner{z^\prime_{\tau_*}}{\zeta_{\tau_z(t)}}_0
   +2(\tau_z(t)-\tau_*)\inner{z^\prime_{\tau_*}}{\zeta^\prime_{\tau_z(t)}}_0
   \\
   &\quad
   +9(\tau_z(t)-\tau_*)^2\Abs{\zeta_{\tau_z(t)}}^2
   +6(\tau_z(t)-\tau_*)^3
   \inner{\zeta_{\tau_z(t)}}{\zeta^\prime_{\tau_z(t)}}_0
   \\
   &\quad
   +(\tau_z(t)-\tau_*)^4\Abs{\zeta^\prime_{\tau_z(t)}}^2
   \biggr)
\end{split}
\end{equation*}
and
\begin{equation*}
\begin{split}
   \delta_t:
   &=4\inner{z^\prime_{\tau_*}}{\zeta_{\tau_z(t)}}_0
   +2(\tau_z(t)-\tau_*)^2\Abs{\zeta_{\tau_z(t)}}
   .
\end{split}
\end{equation*}
Equality 6 uses that $4\norm{z}^4\abs{z^\prime_{\tau_*}}^2=2$
since $z\in\bar\Ll^\times_{\mathrm{coll}}\Zfrak$.
Equation~7 in the $e_q$ calculation above
extends continuously to $t_*$.
This proves Step~1.
\end{proof}

\medskip
\noindent
\textbf{Step~2.} Existence of $\alpha_q$.

\begin{proof}
Away from collisions, for $t\in \SS^1\setminus T_q$,
we define $\alpha_q$ as follows
\begin{equation*}%\label{eq:alpha_q}
\begin{split}
   \alpha_q(t):
   &=\frac{q_t}{\Abs{q_t}}
   =\tfrac{z^2_{\tau_z(t)}}{\Abs{z^2_{\tau_z(t)}}}
\\
   &\stackrel{\text{\color{gray}3}}{=}
   \frac{
   2\left(z^\prime_{\tau_*}(\tau_z(t)-\tau_*)
      +\zeta_{\tau_z(t)}(\tau_z(t)-\tau_*)^3\right)^2
}
{
   2\Abs{\left(
      z^\prime_{\tau_*}(\tau_z(t)-\tau_*)
      +\zeta_{\tau_z(t)} (\tau_z(t)-\tau_*)^3\right)^2}
}
\\
   &\stackrel{\text{\color{gray}4}}{=}
   \frac{(\tau_z(t)-\tau_*)^2}{(\tau_z(t)-\tau_*)^2}\cdot
   \frac{2\left(z^\prime_{\tau_*}+\zeta_{\tau_z(t)} (\tau_z(t)-\tau_*)^2\right)^2}
   {2\abs{z^\prime_{\tau_*}}^2+(\tau_z(t)-\tau_*)^2\delta_t}
\\
   &\stackrel{\text{\color{gray}5}}{=}
   \frac{2\left(z^\prime_{\tau_*}+\zeta_{\tau_z(t)} (\tau_z(t)-\tau_*)^2\right)^2}
   {\norm{z}^{-4}+(\tau_z(t)-\tau_*)^2\delta_t}
   .
\end{split}
\end{equation*}
Equation 2 is by definition of $q$.
Equation 3 uses~(\ref{eq:z_tau}).
In equation 4 for the denominator we used the previous definition of
$\delta_t$.
Equality 5 uses that $4\norm{z}^4\abs{z^\prime_{\tau_*}}^2=2$
since $z\in\bar\Ll^\times_{\mathrm{coll}}\Zfrak$.

The right hand side
extends continuously to $t_*$.
This proves Step~2.
\end{proof}

\medskip
\noindent
\textbf{Step~3.} Existence of $\beta_q$.

\begin{proof}
Away from collisions, for $t\in \SS^1\setminus T_q$,
we define $\beta_q$ as follows
\begin{equation*}%\label{eq:alpha_q}
\begin{split}
   \beta_q(t):
   &=\abs{q_t}\;\p_t\tfrac{q_t}{\Abs{q_t}}
   =\abs{z_{\tau_z(t)}^2}
   \left(\p_\tau\frac{z_{\tau_z(t)}}{\bar z_{\tau_z(t)}}\right)
{\color{cyan}
   \dot\tau_z(t)
}
\\
   &\stackrel{3}{=}
   \abs{z_{\tau_z(t)}}^2\cdot
{\color{cyan}
   \tfrac{\norm{z}^2}{\Abs{z_{\tau_z(t)}}^2}
}
   \cdot
   \tfrac{z^\prime_{\tau_z(t)}\bar z_{\tau_z(t)}-z_{\tau_z(t)}\bar z_{\tau_z(t)}^\prime}
      {\bar z_{\tau_z(t)}^2}
\\
   &\stackrel{4}{=}
   \norm{z}^2
   \cdot
   \tfrac{(\tau-\tau_*)^32i}{(\tau-\tau_*)^2}
   \tfrac{2\Im(\bar z_*^\prime \zeta_\tau)
   +(\tau-\tau_*)\Im(\bar z_*^\prime\zeta_\tau^\prime)
   +(\tau-\tau_*)^2\Im(\bar \zeta_\tau\zeta_\tau^\prime)}
      {\bar z_*^\prime\bar z_*^\prime
      +2 \bar z_*^\prime\bar\zeta_\tau (\tau-\tau_*)^2
      +\bar\zeta_\tau\bar\zeta_\tau (\tau-\tau_*)^4}
\\
   &\stackrel{5}{=}
   2i\left(\tau-\tau_*\right)\norm{z}^2
   \cdot
   \frac{
   2\Im(\bar z_*^\prime \zeta_\tau)
   +(\tau-\tau_*)\Im(\bar z_*^\prime\zeta_\tau^\prime)
   +(\tau-\tau_*)^2\Im(\bar \zeta_\tau\zeta_\tau^\prime)
   }
      {
      {\color{magenta}\bar z_*^\prime\bar z_*^\prime}
      +2 \bar z_*^\prime\bar\zeta_\tau (\tau-\tau_*)^2
      +\bar\zeta_\tau\bar\zeta_\tau (\tau-\tau_*)^4}
   .
\end{split}
\end{equation*}
Equation 2 is by definition of $q$.
Equality 3 is by the quotient rule and~(\ref{eq:inverse-C}).
In equality 4 we replace $z$ and $z^\prime$ according
to~(\ref{eq:z_tau}), 
multiply out and observe that the terms linear
in $\tau-\tau_*$ cancel and so do those of fifth power. To simplify we
used that $(a+ib)-(a-ib)=2ib=2i\Im(a+ib)$.
The right hand side
extends continuously to $t_*$.
This proves Step~3.
\end{proof}

\medskip
\noindent
\textbf{Step~4.} Given $z\in\bar\Ll^\times_{\mathrm{coll}}\Zfrak$, let
$q:=\Qq(z):=z^2\circ \tau_z$, then
\begin{equation}\label{eq:hgghh5677}
   \int_0^1\tfrac{1}{\abs{q_t}}\, dt=\frac{1}{\norm{z}^2}
   .
\end{equation}

\begin{proof}
The open intervals $I_j=(t_j,t_{j+1})$ between collision times,
Proposition~\ref{prop:q_z-collision}, cover $\SS^1\setminus T_q$.
The corresponding intervals $\Ii_j=(\tau_j,\tau_{j+1})$ are
given in Lemma~\ref{le:finite-colls} where $\tau_j=\tau(t_j)$. Then
\begin{equation*}
\begin{split}
   \int_0^1\tfrac{1}{\abs{q_t}}\, dt
%1
   &=\sum_{j=1}^N \int_{I_j}\tfrac{1}{\abs{q_t}}\, dt
%2
   =\sum_{j=1}^N \int_{I_j}\tfrac{1}{\abs{z\circ \tau_z(t)}^2}\,dt
%3
   \stackrel{\color{gray}3}{=}
   \sum_{j=1}^N \int_{\Ii_j}
   \tfrac{1}{\abs{z(\tau)}^2}
   \tfrac{\abs{z(\tau)}^2\, d\tau}{\norm{z}^2}
%4
   =\tfrac{1}{\norm{z}^2}
   .
\end{split}
\end{equation*}
Equality 3 is change of variables $\tau=\tau_z(t)$
with~(\ref{eq:inverse-C}).
This proves Step~4.
\end{proof}

\medskip
\noindent
\textbf{Step~5.} 
  The map $q\colon\SS^1\to\Qfrak$ is of Sobolev class $W^{1,2}$.

\begin{proof}
Note that $q=z^2\circ \tau_z$ is continuous
since $z$ is smooth and $\tau_z$ is a homeomorphism 
by Lemma~\ref{le:t_z-coll}.
As $q$ is continuous, it suffices to show that $\dot q$ is in~$L^2$.
Using the definition of $e_q$ and Step~4 we compute
\begin{equation*}
\begin{split}
   \norm{\dot q}^2
   &=2\int_0^1 e_q(t)\, dt+2\int_0^1 \tfrac{1}{\abs{q_t}}\, dt
   \stackrel{\text{(\ref{eq:hgghh5677})}}{=}
   2\int_0^1 e_q(t)\, dt+\tfrac{2}{\norm{z}^2}
   .
\end{split}
\end{equation*}
As $e_q$ is $C^0$ by Step~1, the right hand side is finite. This
proves Step~5.
\end{proof}
This proves Proposition~\ref{prop:extend-Qq}.
\end{proof}
\newpage

\begin{corollary}\label{cor:hjghjgh75567}
The rescale-square map $\Qq$ in~(\ref{eq:Qq-extension})
is well defined as a map
$$
   \Qq\colon\Crit\,\Bb
   \to \mathrm{Coll}\,\Ss
   .
$$
\end{corollary}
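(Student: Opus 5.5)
The corollary follows by combining three results already established; no new computation is needed. Given $z\in\Crit\,\Bb$, I would first invoke Lemma~\ref{le:crit->coll}, which places $z$ in the regularized collision loop space $\bar\Ll^\times_{\mathrm{coll}}\Zfrak$. At every collision time $\tau_*\in z^{-1}(0)$ this gives $\abs{z^\prime_{\tau_*}}=1/\sqrt{2}\norm{z}^2$ and $z^{\prime\prime}_{\tau_*}=0$; the first identity comes from Proposition~\ref{prop:q_z-collision} and the second from Lemma~\ref{le:finite-colls}.

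Next I would apply Proposition~\ref{prop:extend-Qq} to $z$. It shows that $q:=\Qq(z)=z^2\circ\tau_z$ lies in $\Lambda^\times_{\mathrm{coll}}\Qfrak$. Concretely, $q$ is a periodic $W^{1,2}$-loop (Step~5) with a finite collision set $T_q=t_z(\Tt_z)$, and the angle $\alpha_q$, energy $e_q$ and function $\beta_q$ extend continuously across $T_q$ (Steps~1--3). This verifies conditions (i) and (ii) of Definition~\ref{def:class-coll-loops}.

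For condition (iii) of Definition~\ref{def:class-coll-solutions}, I would cite Proposition~\ref{prop:q_z-collision}. It states that $q_z$, restricted to $\SS^1\setminus T_{q_z}$, solves the classical ODE~(\ref{eq:q}); the key input is the argument showing all constants $\mu_j$ equal $-1$. Hence $q\in\mathrm{Coll}\,\Ss$.

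The remaining point is well-definedness on the quotient by the sign involution. For this I would note that $t_z=t_{-z}$ and $(-z)^2=z^2$, so $\Qq(-z)=\Qq(z)$, exactly as in the proof of Proposition~\ref{prop:Bb}.

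The only place I would check carefully is that the extension in Proposition~\ref{prop:extend-Qq} agrees with the map~(\ref{eq:Qq-extension}) on $\Crit\,\Bb$. This holds by construction, since both are given by the same formula $z^2\circ\tau_z$, with $\tau_z$ the homeomorphism from Lemma~\ref{le:t_z-coll}.
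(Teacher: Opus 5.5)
Your proposal is correct and takes the same route as the paper, which proves the corollary by citing exactly Lemma~\ref{le:crit->coll}, Proposition~\ref{prop:q_z-collision} and Proposition~\ref{prop:extend-Qq}, with the same division of labor as yours: conditions (i)--(ii) come from the extension result and condition (iii) from the collisional ODE result. Your extra checks that $\Qq(-z)=\Qq(z)$ and that the extension agrees with~(\ref{eq:Qq-extension}) on $\Crit\,\Bb$ are consistency points the paper leaves implicit.
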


\begin{proof}
Lemma~\ref{le:crit->coll},
Proposition~\ref{prop:q_z-collision},
Proposition~\ref{prop:extend-Qq}.
\end{proof}

In view of Corollary~\ref{cor:hjghjgh75567},
in order to prove Theorem~\ref{thm:bijection}
it suffices to construct an inverse map,
notation $\Zz\colon \mathrm{Coll}\,\Ss\to \Crit\,\Bb$.

\boldmath
%%%%%%%%%%%%%%%%%%%%%%%%%%%%%%%%%%%
%%%%%%% Subsubsection  %%%%%%%%%%%%%%%%
%%%%%%%%%%%%%%%%%%%%%%%%%%%%%%%%%%%
\subsubsection{Inverse map $\Zz$}\label{sec:inverse-Zz}
\unboldmath

\begin{definition}[$\Zz$]
For a classical periodic collision orbit
$q\in \mathrm{Coll}\,\Ss$
define
\begin{equation*}%\label{eq:Rr-inverse-C-collision}
   \Zz(q):=z_q:=q^\frac12\circ \tau_{1/\sqrt{q}}
   ,\qquad
   1/\sqrt{q}=[t\mapsto 1/\sqrt{q_t}]
   ,
\end{equation*}
where $\tau_{1/\sqrt{q}}\colon \SS^1_\tau\to\SS^1_t$ is to be defined yet
in Corollary~\ref{cor:ghjg67t87gh} below.
Taking square root is well defined on the quotient space
$\bar\Ll^\times\Zfrak$, see Remark~\ref{rem:cx-square-root}.
\end{definition}

The goal of Subsection~\ref{sec:inverse-Zz}
is to show that $z_q\in\Crit\,\Bb$.

\boldmath
%%%%%%%%%%%%%%%%%%%%%%%%%%%%%%%%%%%
%%%%%%% Subsubsection  %%%%%%%%%%%%%%%%
%%%%%%%%%%%%%%%%%%%%%%%%%%%%%%%%%%%
\subsubsection*{Rescaling of collisional loops $1/\sqrt{q}$}
\unboldmath

\begin{lemma}
Let $q\in \Lambda^\times_{\mathrm{coll}}\Qfrak$.
Then the integral $\int_0^1\frac{1}{\abs{q_t}}\, dt$ is finite.
\end{lemma}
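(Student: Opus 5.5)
The key identity is $\tfrac{1}{\abs{q_t}}=\tfrac12\abs{\dot q_t}^2-e_q(t)$, valid on $\SS^1\setminus T_q$ by~(\ref{eq:class-coll-loops}). Since $q\in W^{1,2}(\SS^1,\Qfrak)$, the first term is integrable. Since $e_q$ is continuous on the compact circle $\SS^1$, it is bounded, hence integrable. Therefore the nonnegative function $1/\abs{q}$, which is defined on the complement of the finite set $T_q$, is integrable, and we obtain
\begin{equation*}
   \int_0^1\frac{dt}{\abs{q_t}}
   =\tfrac12\norm{\dot q}^2-\int_0^1 e_q(t)\,dt
   \le \tfrac12\norm{\dot q}^2+\max_{\SS^1}\abs{e_q}<\infty .
\end{equation*}

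I would carry out the argument in the following order. First, note that $T_q$ is finite, by condition~(i) of Definition~\ref{def:class-coll-loops}, so it has measure zero. The integral over $[0,1]$ therefore equals the integral over $[0,1]\setminus T_q$, or equivalently the sum of the integrals over the finitely many open intervals $I_j=(t_j,t_{j+1})$. Second, on each $I_j$ the loop $q$ is nonzero and the identity above holds pointwise almost everywhere; here $\dot q$ is the weak derivative, which agrees almost everywhere with the classical derivative wherever the latter exists. Third, integrate this identity over each $I_j$ and sum over $j$. Monotone convergence justifies this for the nonnegative left-hand side, for instance by exhausting each $I_j$ with compact subintervals. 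This yields the displayed bound.

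The only subtle point is that condition~(ii) states the energy formula pointwise away from collisions, and this presupposes that $\dot q$ exists there. If one reads $\dot q$ as the weak derivative defined almost everywhere, the identity still holds almost everywhere on $\SS^1\setminus T_q$, and that is all the integration requires. No step looks like a real obstacle: the continuity of $e_q$ up to the collision times is exactly the hypothesis that controls the singular term $1/\abs{q}$.
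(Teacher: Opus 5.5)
Your proof is correct and is the paper's argument: integrate the energy identity $1/\abs{q}=\tfrac12\abs{\dot q}^2-e_q$ off the finite collision set, using $\dot q\in L^2$ (from $q\in W^{1,2}$) and the boundedness of the continuous function $e_q$ on $\SS^1$. Your measure-theoretic additions (that $T_q$ is a null set, that weak and classical derivatives agree a.e., and the monotone-convergence step) only spell out what the paper's one-line proof leaves implicit.
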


\begin{proof}
Since $q\in W^{1,2}$ we have that $\dot q\in L^2$, hence
the following integral (obtained by integrating the restriction of $e_q$)
\begin{equation*}
\begin{split}
   \int_0^1\tfrac{1}{\abs{q_t}}\, dt
%1
   &=\tfrac12 \norm{\dot q}^2-    \int_0^1 e_q(t) \, dt
\end{split}
\end{equation*}
is finite since the function $e_q\colon\SS^1\to\R$ is continuous.
\end{proof}

In analogy with the non-collisional case~(\ref{eq:Rr-inverse-C}),
although $t\mapsto 1/q(t)^{\frac12}$ is not defined
at collision times, hence it is only a 'loop' up to finitely many
points, integration still makes sense and leads to a strictly monotone
increasing function.
Indeed we still rescale time according to this 'loop',
notation $1/\sqrt{q}$, by defining
\begin{equation}\label{eq:class-time-tau-C-collision}
   t_{1/\sqrt{q}}\colon\SS^1_t\to\SS^1_t
   ,\quad
   t\mapsto
   \frac{\int_0^t\frac{1}{\abs{q_s}}\, dt}{\int_0^1\frac{1}{\abs{q_s}}\, ds}
\end{equation}
for every $t\in\R$.
It is well defined on $\SS^1=\R/\Z$ since, by integral properties,
the map $t_{1/\sqrt{q}}$ takes $0$ to $0$ and $1$ to $1$.
Also $t_{1/\sqrt{q}}$ strictly monotone increases:
the integrand is $>0$, even $\infty$ at the
finitely many collisions,
and $\int_0^1\frac{1}{\abs{q_s}}\, ds<\infty$.

\begin{lemma}\label{le:gh877g}
Suppose that $q\in \Lambda^\times_{\mathrm{coll}}\Qfrak$,
notation $q_t=r_t e^{i\vartheta_t}$, has a collision at time $t=0$,
i.e. $q_0=0$. Then for every $\eps>0$ there exists $\delta_\eps>0$
such that
\begin{equation*}%\label{eq:gh877g}
   (\tfrac{3}{2}\sqrt{2-\eps})^{\frac{2}{3}} t^{\frac{2}{3}}
   \le r_t\le (\tfrac{3}{2}\sqrt{2+\eps})^{\frac{2}{3}} t^{\frac{2}{3}}
\end{equation*}
for $t\in[-\delta_\eps,\delta_\eps]$.
\end{lemma}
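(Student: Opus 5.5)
The plan is to use continuity of the energy $e_q$ at the collision time. Away from collisions $e_q=\tfrac12\abs{\dot q}^2-\tfrac1{\abs q}$, and $e_q$ extends continuously to $\SS^1$ by Definition~\ref{def:class-coll-loops}. Multiplying by $r_t=\abs{q_t}$ gives
$$
   \tfrac12 r_t\abs{\dot q_t}^2=1+r_t e_q(t).
$$
Since $e_q$ is bounded near $t=0$ and $r_t\to0$ as $t\to0$ (continuity of $q$, $q_0=0$), the right hand side tends to $1$. So, given $\eps>0$, there is $\delta>0$ with $2-\eps\le r_t\abs{\dot q_t}^2\le 2+\eps$ for $0<\abs t\le\delta$. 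Since $T_q$ is finite, we may shrink $\delta$ so that $t=0$ is the only collision in $[-\delta,\delta]$ and $q$ is smooth on the punctured interval.

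The main obstacle is that this controls $\abs{\dot q}$, whereas the statement concerns the radial speed $\dot r$. We have $\dot q=(\dot r+ir\dot\vartheta)e^{i\vartheta}$, hence $\abs{\dot q}^2=\dot r^2+r^2\dot\vartheta^2$, and the angular part must be shown negligible. Here the continuous map $\beta_q$ is used. Away from collisions $\beta=\dot\alpha\, r$ with $\alpha=e^{i\vartheta}$, so $\abs\beta=r\abs{\dot\vartheta}$, and since $\beta$ vanishes at $T_q$, continuity gives $r\abs{\dot\vartheta}\to0$ as $t\to0$. Then
$$
   r\dot r^2=r\abs{\dot q}^2-r\,(r\dot\vartheta)^2 ,
$$
and the second term tends to $0$ because both factors do. Therefore $r\dot r^2\to2$. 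After shrinking $\delta$, we get $2-\eps\le r\dot r^2\le2+\eps$ on the punctured interval, i.e.\ $\sqrt{2-\eps}\le \sqrt r\,\abs{\dot r}\le\sqrt{2+\eps}$.

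It remains to integrate. Since $\dot r\ne0$ on $(0,\delta]$ and $r$ is continuous with $r_0=0$ and $r>0$ there, $\dot r$ has a constant sign on $(0,\delta]$, and that sign must be positive. Hence $\tfrac{d}{dt}\tfrac23 r^{3/2}=\sqrt r\,\dot r\in[\sqrt{2-\eps},\sqrt{2+\eps}]$. Integrating from $0$ to $t$, using continuity of $r^{3/2}$ at $0$, gives
$$
   \sqrt{2-\eps}\,t\le\tfrac23 r_t^{3/2}\le\sqrt{2+\eps}\,t ,
$$
which rearranges to the claimed bounds with $t^{2/3}$. The case $t\in[-\delta,0)$ is symmetric: there $\dot r<0$, and integrating from $t$ to $0$ gives the same bounds with $\abs t$ in place of $t$. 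This is how $t^{2/3}$ is to be read for negative $t$, namely as $\abs{t}^{2/3}$. Finally set $\delta_\eps:=\delta$.
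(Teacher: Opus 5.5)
Your proof is correct and is essentially the paper's argument: you use the polar energy identity and the continuity of $e_q$ and of $\beta_q$ (with $\abs{\beta_q}=r\abs{\dot\vartheta}$) to get $(2-\eps)/r\le\dot r^2\le(2+\eps)/r$ near the collision, and then integrate. The differences are refinements --- you derive the sign of $\dot r$ from $\dot r\neq0$ where the paper simply asserts $\dot r\ge0$ on $[0,\delta_0]$, and you integrate $\tfrac{d}{dt}\tfrac23 r^{3/2}=\sqrt r\,\dot r$ directly instead of citing sub/super-solution comparison for $\dot R=a/\sqrt R$, which is delicate because the right-hand side is not Lipschitz at the initial value $R=0$ --- and, like the paper, you tacitly assume $q$ is differentiable off $T_q$; this does not follow from finiteness of $T_q$, but it is implicit in the pointwise formulas of Definition~\ref{def:class-coll-loops}.
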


\begin{proof}
By definition of the energy in polar coordinates
$q_t=r_t e^{i\vartheta_t}$, $e^{i\vartheta_t}=\alpha_t$, we have
the energy equation
$
   \tfrac12 \dot r^2+\tfrac12 r^2\dot\vartheta^2-\tfrac{1}{r}=e
$.

Since $r_0=0$ and $r>0$, there exists $\delta_0>0$ such that $\dot
r_t\ge 0$ for every $t\in[0,\delta_0]$.
Thus, solving for $\dot r$ we obtain the ODE
$$
   \dot r
   =\sqrt{\dot r^2}
   =\sqrt{2e+\tfrac{2}{r}-r^2\dot\vartheta^2}
   =\sqrt{2e+\tfrac{2}{r}-\abs{\beta}^2}
    .
$$
As $e$ and $\beta$ are continuous, 
for every $\eps\in(0,2)$, we can further
choose $\delta_\eps>0$ small enough such that
$$
   -\frac{\eps}{r_t}
   \le 2e_t-\abs{\beta_t}^2
   \le \frac{\eps}{r_t}
$$
whenever $t\in[0,\delta_\eps]$.
So from the above inequality, on $[0,\delta_\eps]$
we get the inequality
$$
   \sqrt{\tfrac{2-\eps}{r}}
   \le\dot r
   \le \sqrt{\tfrac{2+\eps}{r}}
   .
$$
For $a>0$ the ODE $\dot R=\tfrac{a}{\sqrt{R}}$ on $[0,\infty)$
with $R_0=0$ has the solution
$
   R_t^a=(\tfrac{3}{2}a t)^{\frac{2}{3}}
$.
The theory of sub- and super-solutions
for $R_t^{\sqrt{2-\eps}}$ and $R_t^{\sqrt{2+\eps}}$ tells that
\begin{equation*}
   (\tfrac{3}{2}\sqrt{2-\eps})^{\frac{2}{3}} t^{\frac{2}{3}}=R_t^{\sqrt{2-\eps}}
   \le r_t\le R_t^{\sqrt{2+\eps}}=(\tfrac{3}{2}\sqrt{2+\eps})^{\frac{2}{3}} t^{\frac{2}{3}}
\end{equation*}
whenever $t\in[0,\delta_\eps]$; see e.g.~\cite[Le.\,1.2]{teschl:2012a}.

A similar estimate holds for negative times where $\dot r_t\le 0$
so that, maybe after shrinking $\eps>0$, we can assume
that the previous inequalities hold for $t\in[-\delta_\eps,\delta_\eps]$.
This proves Lemma~\ref{le:gh877g}.
\end{proof}

\begin{proposition}\label{prop:gh77877g}
Let $q\in \Lambda^\times_{\mathrm{coll}}\Qfrak$.
Then $t_{1/\sqrt{q}}\colon\SS^1_t\to\SS^1_t$ is continuous.
\end{proposition}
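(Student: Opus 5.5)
The map $t_{1/\sqrt{q}}$ in~(\ref{eq:class-time-tau-C-collision}) is, up to the normalizing constant $c_q:=\int_0^1\frac{ds}{\abs{q_s}}$, the indefinite integral $t\mapsto\int_0^t\frac{ds}{\abs{q_s}}$. The plan is to show that the integrand $1/\abs{q}$ is Lebesgue integrable on $[0,1]$, with locally uniform control near the collisions. Continuity then follows from absolute continuity of the Lebesgue integral. Finiteness of $c_q$ is already provided by the preceding lemma. Since $c_q>0$, it suffices to show that $F(t):=\int_0^t\frac{ds}{\abs{q_s}}$ is continuous on $\R$. Compatibility with the $\SS^1$ structure is immediate: $F(t+1)=F(t)+c_q$, so $t_{1/\sqrt q}(t+1)=t_{1/\sqrt q}(t)+1$.

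First I split $\SS^1$ using the finite collision set $T_q=\{t_1,\dots,t_N\}$. On any compact interval $K\subset\SS^1\setminus T_q$ the loop $q$ is continuous and non-vanishing, so $1/\abs{q}$ is bounded by some $C_K$. Hence $\abs{F(t)-F(t')}\le C_K\abs{t-t'}$ on $K$, and $F$ is even Lipschitz there.

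It remains to show continuity at each collision time $t_j$. After a time shift $t\mapsto t-t_j$, which preserves membership in $\Lambda^\times_{\mathrm{coll}}\Qfrak$ since the defining conditions are translation invariant, we may assume the collision is at $t=0$. Lemma~\ref{le:gh877g}, applied with say $\eps=1$, gives $\delta>0$ and a constant $c=(\tfrac32)^{2/3}>0$ such that $r_t\ge c\abs{t}^{2/3}$ for $\abs{t}\le\delta$. The statement of the lemma writes $t^{2/3}$; for negative $t$ I read this as $\abs{t}^{2/3}$, in line with the symmetric argument at the end of its proof. Hence
\[
   \abs{F(t)-F(0)}\le\int_{-\abs t}^{\abs t}\frac{ds}{\abs{q_s}}
   \le\frac{2}{c}\int_0^{\abs t}s^{-2/3}\,ds=\frac{6}{c}\abs{t}^{1/3},
\]
which tends to $0$ as $t\to0$. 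The same bound yields continuity at points near $0$ from the overlap of the two regions. So $F$ is continuous, and in fact locally Hölder-$\tfrac13$ near collisions. Monotonicity, already noted before the proposition, together with continuity and the boundary values $0\mapsto0$, $1\mapsto1$ then makes $t_{1/\sqrt q}$ a homeomorphism of $\SS^1$.

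The only real input is the lower bound $r_t\gtrsim\abs{t}^{2/3}$ from Lemma~\ref{le:gh877g}, which makes the singularity $1/\abs{q}$ integrable with a quantitative modulus. Strictly, mere integrability already suffices for continuity via absolute continuity of the integral, so the main care point is only the sign convention for negative $t$ in that lemma.
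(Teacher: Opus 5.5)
Your proof is correct and follows the paper's route: continuity away from collisions (the paper argues via differentiability of $t_{1/\sqrt{q}}$ there, you via a local Lipschitz bound), and at a collision shifted to $t=0$ the lower bound $r_t\ge(\tfrac32)^{2/3}\abs{t}^{2/3}$ from Lemma~\ref{le:gh877g}, integrated to an $O(\abs{t}^{1/3})$ modulus of continuity. Your closing remark is also right and is a genuine shortcut: the preceding lemma already gives $1/\abs{q}\in L^1$, so continuity of the indefinite integral follows from absolute continuity of the Lebesgue integral, and Lemma~\ref{le:gh877g} is not needed at all.
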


\begin{proof}[Proof of Proposition~\ref{prop:gh77877g}]
Away from collisions, namely for $t\in \SS^1\setminus \Tt_q$,
the map $t_{1/\sqrt{q}}$ is differentiable at $t$, hence
continuous. Indeed the derivative is given by
\begin{equation}\label{eq:bhjbhjbhj7458}
   \dot t_{1/\sqrt{q}}(t)
   =\frac{1}{\abs{q_t}\cdot\norm{\tfrac{1}{q}}_{L^1}}
   .\qquad
   \text{Thus
   $\tau_{1/\sqrt{q}}^\prime(\tau)
   =\Abs{q\circ \tau_{1/\sqrt{q}}(\tau)}\cdot\norm{\tfrac{1}{q}}_{L^1}$}
   .
\end{equation}
It remains to discuss continuity of $t_{1/\sqrt{q}}$ at collision times.
After shifting time
we can assume without loss of generality
that the collision occurs at time $0$.

\smallskip
\noindent
We next show continuity of $f(t):=\int_0^t\frac{1}{\abs{q_s}}\, ds$
at zero from the right.
This means that for every $\eps>0$ there exists $\mu_\eps>0$
such that $f(t)\le\eps$ whenever $t\in[0,\mu_\eps]$.

By Lemma~\ref{le:gh877g} there exists $\delta>0$ such that
$r_t\ge (\frac{3}{2})^{\frac{2}{3}} t^{\frac{2}{3}}\ge t^{\frac{2}{3}}$ 
whenever $t\in[0,\delta]$.
Hence if $t\in[0,\delta]$ we estimate
\begin{equation*}
\begin{split}
   f(t)
   =\int_0^t\frac{1}{r_s}\, ds
   \le \int_0^t s^{-\frac{2}{3}}\, ds
   &=\left. 3 s^{\frac{1}{3}}\right|_{s=0}^{s=t}
   =3 t^{\frac{1}{3}}.
\end{split}
\end{equation*}
Choosing $\mu_\eps=(\eps/3)^3$ completes the proof of continuity from
the right.
Continuity from the left follows similarly.
This proves Proposition~\ref{prop:gh77877g}.
\end{proof}

\begin{corollary}[inverse]\label{cor:ghjg67t87gh}
Let $q\in \Lambda^\times_{\mathrm{coll}}\Qfrak$.
Then $t_{1/\sqrt{q}}$ in~(\ref{eq:class-time-tau-C-collision})
has an inverse
$$
   \tau_{1/\sqrt{q}}={t_{1/\sqrt{q}}}^{-1}\colon \SS^1_\tau\to\SS^1_t
   ,\quad
  \tau\mapsto \tau_{1/\sqrt{q}}(\tau)
$$
which is strictly monotone increasing, too.
\end{corollary}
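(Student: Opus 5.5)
The plan is to deduce the corollary from Proposition~\ref{prop:gh77877g} together with the elementary properties of $t_{1/\sqrt{q}}$ recorded right after~(\ref{eq:class-time-tau-C-collision}). The first step is to work on the universal cover. Let $F(t):=\int_0^t\frac{ds}{\abs{q_s}}$ for $t\in\R$. Since $q$ is $1$-periodic, $F(t+1)=F(t)+F(1)$, where $F(1)=\norm{1/q}_{L^1}$ is finite by the preceding lemma. Hence $t_{1/\sqrt{q}}=F/F(1)$ satisfies $t_{1/\sqrt{q}}(t+1)=t_{1/\sqrt{q}}(t)+1$, so it descends to a degree-one map of $\SS^1=\R/\Z$. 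Continuity on $\R$ is exactly Proposition~\ref{prop:gh77877g}, extended to all collision times by the time shift used in its proof.

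The second step is strict monotonicity on $\R$. For $t<t'$ the integrand $1/\abs{q_s}$ is positive on $[t,t']$ (it equals $+\infty$ at only finitely many points, by condition~(i) of Definition~\ref{def:class-coll-loops}). Therefore $F(t')-F(t)>0$: the integrand is a positive measurable function and the interval has positive measure.

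The third step is surjectivity. A continuous, strictly increasing map $\R\to\R$ that commutes with the translation by $1$ tends to $\pm\infty$ as $t\to\pm\infty$. By the intermediate value theorem it is therefore a bijection of $\R$. Its inverse $\tau_{1/\sqrt{q}}$ is strictly increasing, and it is continuous because the inverse of a strictly monotone continuous bijection of intervals is continuous (as in Remark~\ref{rem:times}, with the same reference to~\cite{Forster:2011a}). The inverse also commutes with translation by $1$: from $t(\tau_{1/\sqrt{q}}(\tau)+1)=\tau+1$ we get $\tau_{1/\sqrt{q}}(\tau+1)=\tau_{1/\sqrt{q}}(\tau)+1$. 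Hence $\tau_{1/\sqrt{q}}$ descends to a homeomorphism of $\SS^1$ with $\tau_{1/\sqrt{q}}(0)=0$.

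The only genuinely delicate point is continuity of $t_{1/\sqrt{q}}$ at collision times, where the integrand blows up. That is already handled by Lemma~\ref{le:gh877g} and Proposition~\ref{prop:gh77877g} through the bound $r_t\gtrsim t^{2/3}$. The remaining work is bookkeeping: make sure the argument is carried out on the lift to $\R$, so that "strictly monotone" makes sense for circle maps.
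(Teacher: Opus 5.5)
Your proposal is correct and is essentially the paper's argument: continuity (Proposition~\ref{prop:gh77877g}) plus strict monotonicity yields a continuous strictly increasing inverse, as in the footnote to~(\ref{eq:t-inverse}). The difference is that you write out the lift to $\R$, the equivariance $t_{1/\sqrt{q}}(t+1)=t_{1/\sqrt{q}}(t)+1$ and surjectivity, which the paper leaves implicit. As a side remark, continuity at collisions is less delicate than you suggest: since $1/\abs{q}\in L^1$ by the lemma just before~(\ref{eq:class-time-tau-C-collision}), its indefinite integral is automatically absolutely continuous, so the $t^{2/3}$ bound of Lemma~\ref{le:gh877g} is not actually needed for this step.
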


\begin{proof}
As $t_{1/\sqrt{q}}$ is strictly monotone increasing, $C^0$ by
Proposition~\ref{prop:gh77877g}, 
it admits an inverse with the same properties;
see footnote to~(\ref{eq:t-inverse}).
\end{proof}

\boldmath
%%%%%%%%%%%%%%%%%%%%%%%%%%%%%%%%%%%
%%%%%%% Subsubsection  %%%%%%%%%%%%%%%%
%%%%%%%%%%%%%%%%%%%%%%%%%%%%%%%%%%%
\subsubsection*{Loops $q$: Speed $\abs{z_q^\prime}$ at collisions
-- equal non-zero speed}
\unboldmath

\begin{lemma}\label{le:gjhgh4798}
Pick a collision loop $q\in \Lambda^\times_{\mathrm{coll}}\Qfrak$.
Set $z_q:=q^\frac12\circ \tau_{1/\sqrt{q}}$. Then
\begin{equation}\label{eq:1/q-L1}
   {\color{gray}\norm{\tfrac{1}{q}}_{L^1} =\,}
   \int_0^1\frac{1}{\abs{q_t}}\, dt
   =\frac{1}{\norm{z_q}^2}
   .
\end{equation}
\end{lemma}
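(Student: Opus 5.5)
The plan is to compute $\norm{z_q}^2=\int_0^1\abs{z_q(\tau)}^2\,d\tau$ directly and substitute $t=\tau_{1/\sqrt{q}}(\tau)$, as in the surjectivity part of Lemma~\ref{le:Qq}. The only new point is that the change of variables must be justified when collisions are present.

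By definition $\abs{z_q(\tau)}^2=\abs{q(\tau_{1/\sqrt{q}}(\tau))}$. Let $T_q=\{t_1,\dots,t_N\}$ be the collision times of $q$, and set $\tau_j:=t_{1/\sqrt{q}}(t_j)$. By Corollary~\ref{cor:ghjg67t87gh} these are finitely many points that split $[0,1)$ into open intervals $\Ii_j=(\tau_j,\tau_{j+1})$. These correspond under the homeomorphism to the intervals $I_j=(t_j,t_{j+1})$. We may drop the finite sets of endpoints, since they have measure zero. On each $I_j$ the map $t_{1/\sqrt{q}}$ is $C^1$ with positive derivative $\dot t_{1/\sqrt{q}}(t)=1/(\abs{q_t}\,\norm{\tfrac1q}_{L^1})$ by~(\ref{eq:bhjbhjbhj7458}). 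Hence on each $I_j$ the substitution $\tau=t_{1/\sqrt{q}}(t)$, $d\tau=dt/(\abs{q_t}\norm{\tfrac1q}_{L^1})$, is legitimate, at least on compact subintervals. We then pass to the limit at the endpoints by monotone convergence, since the integrand is nonnegative.

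Carrying this out gives
\[
\int_{\Ii_j}\abs{q(\tau_{1/\sqrt{q}}(\tau))}\,d\tau
=\int_{I_j}\abs{q_t}\,\frac{dt}{\abs{q_t}\,\norm{\tfrac1q}_{L^1}}
=\frac{\abs{I_j}}{\norm{\tfrac1q}_{L^1}} .
\]
Summing over $j$, and using $\sum_j\abs{I_j}=1$, we get $\norm{z_q}^2=1/\norm{\tfrac1q}_{L^1}$. This is exactly~(\ref{eq:1/q-L1}). Here $\norm{\tfrac1q}_{L^1}$ is finite and positive by the preceding lemma.

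The only delicate step is the change of variables across collisions, because $\tau_{1/\sqrt{q}}$ fails to be differentiable at the points $\tau_j$. Splitting into the collision-free intervals and applying monotone convergence handles this. Alternatively, one could note that $t_{1/\sqrt{q}}$ is absolutely continuous, being an indefinite integral of the $L^1$ function $1/\abs{q}$, and apply the change of variables formula for absolutely continuous monotone maps globally.
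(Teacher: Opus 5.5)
Your proof is correct and follows the paper's argument. The paper likewise writes $\norm{z_q}^2=\int_0^1\abs{q\circ\tau_{1/\sqrt{q}}(\sigma)}\,d\sigma$ and substitutes $\sigma=t_{1/\sqrt{q}}(s)$ using~(\ref{eq:bhjbhjbhj7458}), all in one line. Your splitting into collision-free intervals with a limiting argument (or your absolute-continuity remark) supplies the justification of that substitution across collisions, which the paper leaves implicit here but carries out the same way for the analogous identity~(\ref{eq:hgghh5677}) in Step~4 of Proposition~\ref{prop:extend-Qq}.
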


\begin{proof}
Variable substitution $\sigma(s)=t_{1/\sqrt{q}}(s)$
and~(\ref{eq:bhjbhjbhj7458}) for $t^\prime_{1/\sqrt{q}}(s)$ yield
\begin{equation*}
\begin{split}
   \norm{z_q}^2
   =\int_0^1\abs{q\circ\underbrace{\tau_{1/\sqrt{q}}(\sigma)}_{=s}}\, d\sigma
   =\int_{s(0)=0}^{s(1)=1}\abs{q_s}\,\frac{ds}{\abs{q_s}\cdot\norm{\tfrac{1}{q}}_{L^1}}
   =\frac{1}{\norm{\tfrac{1}{q}}_{L^1}}
\end{split}
\end{equation*}
and this proves Lemma~\ref{le:gjhgh4798}.
\end{proof}

\begin{lemma}[first derivative]\label{le:1st-deriv}
Pick a collision loop $q\in \Lambda^\times_{\mathrm{coll}}\Qfrak$ and set
$z_q:=q^\frac12\circ \tau_{1/\sqrt{q}}$.
Then all collisions $\tau_*$ of $z_q$ happen at equal speed
\begin{equation*}%\label{eq:coll-77}
   \forall \tau_*\in z_q^{-1}(0)\colon
   \quad
   \lim_{\tau\to\tau_*}
   \Abs{z_q^\prime(\tau)}
   =\frac{1}{\sqrt{2}\norm{z_q}^2}
   \stackrel{\text{\rm(\ref{eq:1/q-L1})}}{=}
   \frac{1}{\sqrt{2}}\norm{\tfrac{1}{q}}_{L^1} 
   .
\end{equation*}
\end{lemma}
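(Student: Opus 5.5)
We compute $\Abs{z_q^\prime}$ away from collisions by the chain rule, express it through the energy $e_q$ and $\abs{q}$, and then let $\tau\to\tau_*$, using continuity of $e_q$ from Definition~\ref{def:class-coll-loops}. Write $\tau_q:=\tau_{1/\sqrt{q}}$ and $z:=z_q=q^{\frac12}\circ\tau_q$. For $\tau$ with $t:=\tau_q(\tau)\notin T_q$, any local branch of the square root is smooth near $q_t\neq0$, since $q$ is smooth there; this is property (iii) of Definition~\ref{def:class-coll-solutions}, or else we only use $W^{1,2}$ together with the identities away from collisions. Moreover $\tau_q$ is differentiable at $\tau$ by~(\ref{eq:bhjbhjbhj7458}).

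By the chain rule,
$$
   z^\prime(\tau)=\frac{\dot q_t}{2q_t^{1/2}}\,\tau_q^\prime(\tau)
   =\frac{\dot q_t}{2q_t^{1/2}}\,\abs{q_t}\,\norm{\tfrac1q}_{L^1},
$$
hence
$$
   \Abs{z^\prime(\tau)}^2=\tfrac14\abs{q_t}\,\abs{\dot q_t}^2\,\norm{\tfrac1q}_{L^1}^2 .
$$
This is the inverse of the identity~(\ref{eq:q-qdot}). Now use the energy relation $e_q(t)=\frac12\abs{\dot q_t}^2-\frac1{\abs{q_t}}$, which gives
$$
   \abs{q_t}\abs{\dot q_t}^2=2+2\abs{q_t}e_q(t) .
$$
Therefore
$$
   \Abs{z^\prime(\tau)}^2=\tfrac12\norm{\tfrac1q}_{L^1}^2\bigl(1+\abs{q_t}\,e_q(t)\bigr).
$$

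Next let $\tau\to\tau_*\in z^{-1}(0)$. Since $\tau_q$ is a homeomorphism (Corollary~\ref{cor:ghjg67t87gh}), we have $t=\tau_q(\tau)\to t_*:=\tau_q(\tau_*)$, and $q_{t_*}=z(\tau_*)^2=0$. By continuity of $q$ we get $\abs{q_t}\to0$. The function $e_q$ is continuous on $\SS^1$ by Definition~\ref{def:class-coll-loops}, so it is bounded near $t_*$. Hence $\abs{q_t}e_q(t)\to0$, and the limit of $\Abs{z^\prime(\tau)}$ equals $\frac{1}{\sqrt2}\norm{\frac1q}_{L^1}$. Lemma~\ref{le:gjhgh4798} then rewrites this as $\frac{1}{\sqrt2\norm{z_q}^2}$. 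Because collisions are finite in number and isolated, the limit may be taken along $\tau\neq\tau_*$ in a punctured neighbourhood, which avoids any collision times.

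The only delicate point is that the sign branch of $q^{1/2}$ must be chosen continuously along each collision-free interval. We work on the quotient by $I$, and $\Abs{z^\prime}$ does not depend on the branch, so this causes no problem. The key input is that continuity of $e_q$ makes $\abs{q}e_q$ vanish at collisions; this is exactly where the definition of $\Lambda^\times_{\mathrm{coll}}\Qfrak$ is used. The computation needs nothing beyond (i) and (ii), so it holds for all $q\in\Lambda^\times_{\mathrm{coll}}\Qfrak$, as the lemma claims.
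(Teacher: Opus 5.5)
Your proof is correct and follows essentially the paper's own argument. It uses the chain rule with $\dot\tau_{1/\sqrt{q}}=\abs{q}\,\norm{\tfrac1q}_{L^1}$ and the energy identity to get $\Abs{z_q^\prime}=\sqrt{(1+e_q\abs{q})/2}\,\norm{\tfrac1q}_{L^1}$ at non-collision times, then uses continuity (hence boundedness) of $e_q$ to get $e_q\abs{q}\to0$ at collisions, with Lemma~\ref{le:gjhgh4798} converting the limit to $1/(\sqrt2\norm{z_q}^2)$; working with $\Abs{z_q^\prime}^2$ instead of $\Abs{z_q^\prime}$ is only a cosmetic difference.
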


\begin{proof}
At any non-collision time $\tau$, simplifying $f(t)g(t)=:(fg)_t$, we have
\begin{equation}\label{eq:joker-1}
\begin{split}
   \Abs{z_q^\prime(\tau)}
%1
   &=\Abs{
   \frac{\dot q_{\tau_{1/\sqrt{q}}(\tau)}\cdot{\color{brown}\dot\tau_{1/\sqrt{q}}(\tau)}}
   {2q^{\frac12}_{\tau_{1/\sqrt{q}}(\tau)}}
   }
%2
   \stackrel{{\color{gray}2}}{=}
   \Biggl(
   \frac{
   \Abs{\sqrt{2e_q +\tfrac{2}{\abs{q}}}}
   \cdot{\color{brown}\Abs{q}}
   }
   {2\abs{q}^{\frac12}}
   \Biggr)_{\tau_{1/\sqrt{q}}(\tau)} \cdot
   \color{brown}\norm{\tfrac{1}{q}}_{L^1}
\\
%3
   &\stackrel{{\color{gray}3}}{=}
   \left(
   \frac{\sqrt{{\color{red}e_q \Abs{q}}+1}}{\sqrt{2}}
   \right)_{{\color{red}\tau_{1/\sqrt{q}}(\tau)}}
   \cdot \frac{1}{\norm{z_q}^2}
   .
\end{split}
\end{equation}
Equality 2, also 3, uses that $\abs{\sqrt{\cdot}}=\sqrt{\abs{\cdot}}$ on $\C$.
Equality 2 also uses the energy identity~(\ref{eq:class-coll-loops})
resolved for $\abs{\dot q}$ and~(\ref{eq:bhjbhjbhj7458}) for
{\color{brown}$\dot\tau_{1/\sqrt{q}}$}.
In equality 3 the common factor $\abs{q}^{\frac12}$ in nominator and
denominator cancels and the remaining $\abs{q}^{\frac12}$ we multiply
with the sum under the square root.
We replaced $\norm{\tfrac{1}{q}}_{L^1}$ by~(\ref{eq:1/q-L1}).

In the limit as time $\tau$ approximates a collision time $\tau_*$,
while the energy is bounded $q$ approaches zero, hence
\begin{equation}\label{eq:e-limit}
{\color{red}
   e_q\circ\tau_{1/\sqrt{q}}(\tau)\cdot\Abs{q\circ\tau_{1/\sqrt{q}}(\tau)}
}
   \stackrel{\tau\to\tau_*}{\longrightarrow} {\color{red}0}
\end{equation}
and this proves Lemma~\ref{le:1st-deriv}.
\end{proof}

\boldmath
%%%%%%%%%%%%%%%%%%%%%%%%%%%%%%%%%%%
%%%%%%% Subsubsection  %%%%%%%%%%%%%%%%
%%%%%%%%%%%%%%%%%%%%%%%%%%%%%%%%%%%
\subsubsection*{Solutions $q$: Second derivative $z_q^{\prime\prime}$
vanishes at collisions}
\unboldmath

\begin{lemma}[second derivative]\label{le:coll-7887}
If $q\in \mathrm{Coll}\,\Ss$
and $z_q:=q^\frac12\circ \tau_{1/\sqrt{q}}$, then
$$
   \lim_{\tau\to\tau_*}
   z_q^{\prime\prime}(\tau)
   =0
$$
at any collision time $\tau_*$.
\end{lemma}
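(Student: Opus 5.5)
The plan is to compute $z_q^{\prime\prime}$ away from collisions explicitly in terms of $q$ and then pass to the limit $\tau\to\tau_*$ using the classical ODE~(\ref{eq:q}) together with the continuity of $e_q$, $\alpha_q$, $\beta_q$.

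Abbreviate $s:=\tau_{1/\sqrt q}(\tau)$ and $c:=\norm{\tfrac1q}_{L^1}=1/\norm{z_q}^2$ by~(\ref{eq:1/q-L1}). By~(\ref{eq:bhjbhjbhj7458}) we have $\tau_{1/\sqrt q}^\prime=c\abs{q_s}$, so away from collisions the chain rule gives
\[
   z_q^\prime(\tau)=\frac{c\,\dot q_s\abs{q_s}}{2 q_s^{1/2}}
   =\frac{c}{2}\,\dot q_s\,\bar q_s^{1/2},
\]
using $\abs{q}/q^{1/2}=\bar q^{1/2}$ for a consistent choice of branch. Differentiating once more gives
\[
   z_q^{\prime\prime}(\tau)
   =\frac{c^2}{2}\abs{q_s}\Bigl(\ddot q_s\,\bar q_s^{1/2}
   +\tfrac12\dot q_s\,\bar{\dot q}_s\,\bar q_s^{-1/2}\Bigr)
   =\frac{c^2}{2}\,\frac{\abs{q_s}}{\bar q_s^{1/2}}
   \Bigl(\bar q_s\ddot q_s+\tfrac12\abs{\dot q_s}^2\Bigr).
\]
The prefactor is $\abs{q_s}^{1/2}$ times a unimodular factor.

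Next we insert the ODE~(\ref{eq:q}):
\[
   \bar q\ddot q=-\frac{1}{\abs q}-B\bar q\,i\dot q-\bar q\dot{\AAA}.
\]
Writing $\tfrac12\abs{\dot q}^2=e_q+\tfrac1{\abs q}$, the singular terms $\mp 1/\abs q$ cancel, and the bracket becomes
\[
   e_q(s)-B_s\bar q_s\,i\dot q_s-\bar q_s\dot{\AAA}_s .
\]
Multiplied by $\abs{q_s}^{1/2}$, the term $e_q\abs{q}^{1/2}$ tends to $0$ because $e_q$ is continuous, hence bounded, and $q_s\to0$. Likewise $\bar q_s\dot{\AAA}_s\abs{q_s}^{1/2}\to0$, since $\dot{\AAA}$ is bounded near $0\in\Qfrak$. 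The remaining term is bounded by $\abs{B}\abs{q}^{3/2}\abs{\dot q}$. Energy gives $\abs{\dot q}\le\sqrt{2\abs{e_q}+2/\abs q}$, so $\abs{q}^{3/2}\abs{\dot q}\lesssim \abs q$, which tends to $0$. As $\tau\to\tau_*$ we have $s\to t_*$ with $q_{t_*}=0$, by continuity and monotonicity of $\tau_{1/\sqrt q}$ (Corollary~\ref{cor:ghjg67t87gh}). This shows $z_q^{\prime\prime}(\tau)\to0$.

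The main obstacle is the branch issue in $q^{1/2}$ and $\bar q^{1/2}$. I would handle it by working with the modulus, since only $\abs{z_q^{\prime\prime}}\to0$ is needed, and by choosing the square root continuously on each collision-free interval. It is crucial that the $1/\abs q$ terms cancel exactly; this uses the specific Kepler coefficient, i.e.\ $\mu=-1$ built into~(\ref{eq:q}). Beyond that, the argument only needs boundedness of $e_q$ and of $\AAA,\dot\AAA,B$ on a neighbourhood of the origin, both of which are available.
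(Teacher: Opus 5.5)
Your proposal is correct and follows essentially the same route as the paper's proof. Both compute $z_q^{\prime\prime}$ at non-collision times via the chain rule with $\tau_{1/\sqrt{q}}^\prime=\norm{\tfrac1q}_{L^1}\abs{q\circ\tau_{1/\sqrt{q}}}$, then insert the classical ODE together with $\tfrac12\abs{\dot q}^2=e_q+\tfrac{1}{\abs{q}}$ so that the singular Kepler terms cancel, and finally kill the remaining energy, Euler and Lorentz terms using boundedness of $e_q$, $\AAAdot$, $B$ and the energy bound $\abs{q}^{3/2}\abs{\dot q}\lesssim\abs{q}$. Your remark on choosing the branch of $q^{1/2}$ consistently, with $\bar q^{1/2}=\overline{q^{1/2}}$, makes explicit a point the paper uses tacitly.
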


\begin{proof}
Let $\tau$ be a non-collision time.
We simplify notation $f(t)g(t)=:(fg)_t$.
For $z_q^\prime$ use the first line in~(\ref{eq:joker-1})
to get equality one
\begin{equation}\label{eq:z-prime-prime=0}
\begin{split}
   z_q^{\prime\prime}(\tau)
%1
   &=\frac{d}{d\tau}
   \left(
   \frac{\dot q_{\tau_{1/\sqrt{q}}(\tau)}\cdot \dot\tau_{1/\sqrt{q}}(\tau)}
   {2q^{\frac12}_{\tau_{1/\sqrt{q}}(\tau)}}
   \right)
%2
   \stackrel{{\color{gray}2}}{=}
   \tfrac12 \frac{d}{d\tau}
   \left(
   \frac{\dot q\Abs{q}}
   {q^{\frac12}}
   \right)_{\tau_{1/\sqrt{q}}(\tau)}
   \norm{\tfrac{1}{q}}_{L^1}
\\
%3
   &\stackrel{{\color{gray}3}}{=}
   \tfrac12 \norm{\tfrac{1}{q}}_{L^1}
   \tfrac{d}{d\tau}
   \left(
{\color{brown} 
   \dot q\circ\tau_{1/\sqrt{q}}
}
   \,\cdot\,
{\color{cyan} 
   (\bar q)^{\frac12}\circ\tau_{1/\sqrt{q}}
}
   \right)_\tau
\\
%4
   &\stackrel{{\color{gray}4}}{=}
   \tfrac12 \norm{\tfrac{1}{q}}_{L^1}
   \Bigl(
{\color{brown}
   \ddot q_{\tau_{1/\sqrt{q}}(\tau)}
\overbrace{
   \abs{q_{\tau_{1/\sqrt{q}}(\tau)}}\cdot\norm{\tfrac{1}{q}}_{L^1}
}^{=\dot\tau_{1/\sqrt{q}}(\tau)}
}
{\color{cyan}
   (\bar q)^{\frac12}_{\tau_{1/\sqrt{q}}(\tau)}
}
\\
   &\qquad
   +
{\color{brown} 
   \dot q_{\tau_{1/\sqrt{q}}(\tau)}
}
{\color{cyan}
   \tfrac12 (\bar q)^{-\frac12}_{\tau_{1/\sqrt{q}}(\tau)}
   \dot {\bar q}_{\tau_{1/\sqrt{q}}(\tau)}
\overbrace{
   \abs{q_{\tau_{1/\sqrt{q}}(\tau)}}\cdot\norm{\tfrac{1}{q}}_{L^1}
}^{=\dot\tau_{1/\sqrt{q}}(\tau)}
}
   \Bigr)
\\
%5
   &\stackrel{{\color{gray}5}}{=}
   \tfrac12 \norm{\tfrac{1}{q}}_{L^1}^2
   \Bigl(
\underbrace{
   \ddot q
}_{\text{(\ref{eq:q})}}
\underbrace{
   \Abs{q}
   (\bar q)^{\frac12}
}_{=\bar q q^{\frac12}}
   +
\underbrace{
   \tfrac12\abs{\dot q}^2
}_{e_q+\tfrac{1}{\abs{q}}}
   q^{\frac12}
  \Bigr)_{\tau_{1/\sqrt{q}}(\tau)}
\\
%6
   &\stackrel{{\color{gray}6}}{=}
   \tfrac12 \norm{\tfrac{1}{q}}_{L^1}^2
   \Bigl(
   -
\underbrace{
   B|_q
}_{\text{bd. $\SS^1\times\im q$}}
\underbrace{
{\color{red}\;
   (j_0 \dot q)
\;}
   \bar q q^{\tfrac12}
}_{\text{(\ref{eq:joker-2})}}
   -
\underbrace{
   \AAAdot|_q
}_{\text{bd. $\SS^1\times\im q$}}
\underbrace{
   \bar q q^{\tfrac12}
}_{\to 0}
\\
   &\qquad\qquad\qquad\qquad\qquad\qquad
{\color{gray}
\underbrace{
   -\tfrac{q}{\abs{q}^3} \,\bar q q^{\tfrac12}
   +\tfrac{1}{\abs{q}} q^{\tfrac12}
}_{=0}
}
   +
\underbrace{
   e_q  q^{\tfrac12}
}_{\stackrel{\tau\to\tau_*}{\longrightarrow} 0}
  \Bigr)_{\tau_{1/\sqrt{q}}(\tau)}
\\
   &\stackrel{\tau\to\tau_*}{\longrightarrow}
   0
   .
\end{split}
\end{equation}
Equality~2 uses~(\ref{eq:bhjbhjbhj7458}) for $\dot\tau_{1/\sqrt{q}}$.
Equality 3 replaces the complex absolute value
$\abs{q}$ by $q^{1/2}(\bar q)^{1/2}$,
then we cancel factor one with the denominator.
Equality~4 is by the chain rule and~(\ref{eq:bhjbhjbhj7458}) for
$\dot\tau_{1/\sqrt{q}}$.
Equality~5 just summarized in simplified notation.
Equality~6 is the key step, here we bring in the energy function $e_q$
from~(\ref{eq:class-coll-loops}) and we use that $q$ solves the classical
ODE~(\ref{eq:q}) replacing $\ddot q$ accordingly.
As a consequence a marvelous elimination of the two singular potential
terms takes place. 

Both maps $B$ and $\AAAdot$ are
continuous on $\SS^1\times \im q$, hence bounded.
The product $e_q q^{\frac12}$ tends to zero, because energy
$e_q\colon\SS^1\to\R$ is continuous, hence bounded, and $q$
collides with the origin, hence is zero, at collision times.

Furthermore, we profit from the additional factor $\abs{\bar q}=\abs{q}$
in ${\color{red}(j_0\dot q)} \bar q q^{\frac12}$, as compared
to~(\ref{eq:joker-1}), to obtain zero in the limit
\begin{equation}\label{eq:joker-2}
\begin{split}
   \Abs{{\color{red}(j_0 \dot q)} \bar q q^{\frac12}}_{\tau_{1/\sqrt{q}}(\tau)}
%1
   &\stackrel{{\color{gray}1}}{=}
   \left(
   \Abs{\sqrt{2e_q +\tfrac{2}{\abs{q}}}}
   \Abs{q}^{\frac12}\Abs{q}
   \right)_{\tau_{1/\sqrt{q}}(\tau)}
\\
%2
   &\stackrel{{\color{gray}2}}{=}
   \sqrt{2}
\underbrace{
   \left(
   \sqrt{{\color{red}e_q \Abs{q}}+1}
   \right)_{{\color{red}\tau_{1/\sqrt{q}}(\tau)}}
}_{\stackrel{\tau\to\tau_*}{\longrightarrow} 1\text{, by~(\ref{eq:e-limit})}}
\underbrace{
   \abs{q _{\tau_{1/\sqrt{q}}(\tau)}}
}_{\stackrel{\tau\to\tau_*}{\longrightarrow} 0}
\\
   &\stackrel{\tau\to\tau_*}{\longrightarrow}
   0
   .
\end{split}
\end{equation}
Equality~1 uses that the rotation $j_0\simeq i$ is an isometry,
that $\abs{\bar q}=\abs{q}$,
that $\abs{\sqrt{\cdot}}=\sqrt{\abs{\cdot}}$ on $\C$.
It also uses the energy identity~(\ref{eq:class-coll-loops})
resolved for $\dot q$.
In equality~2 we multiply out.
This concludes the proof of Lemma~\ref{le:coll-7887}.
\end{proof}

\begin{lemma}[$C^2$]\label{le:z_q-C2}
Let $q\in \mathrm{Coll}\,\Ss$.
Then $z_q:=q^\frac12\circ \tau_{1/\sqrt{q}}$
is of class $C^2$.
\end{lemma}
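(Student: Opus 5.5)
Away from collisions $z_q$ is smooth, since there $q$ is smooth and nonvanishing and $\tau_{1/\sqrt{q}}$ is a smooth diffeomorphism by~(\ref{eq:bhjbhjbhj7458}). So the work is local at a collision time $\tau_*$, i.e.\ at a point $t_*=\tau_{1/\sqrt{q}}(\tau_*)$ with $q_{t_*}=0$. There are finitely many such points by Definition~\ref{def:class-coll-loops}(i). The plan is to show that $z_q$, $z_q^\prime$ and $z_q^{\prime\prime}$ extend continuously across each $\tau_*$, and then to conclude with the elementary fact below.

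\emph{Elementary fact.} If $g$ is continuous on an interval, differentiable off one point, and $g^\prime$ has a limit there, then $g$ is differentiable at that point with that derivative (mean value theorem).

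First, $z_q$ itself is continuous. This is because $\tau_{1/\sqrt{q}}$ is a homeomorphism by Corollary~\ref{cor:ghjg67t87gh} and $\abs{z_q}=\abs{q}^{1/2}\circ\tau_{1/\sqrt{q}}$ tends to $0$. Here the sign ambiguity of the square root must be handled: on the quotient $\bar\Ll^\times\Zfrak$ one chooses the branch of $q^{1/2}$ along the whole loop so that $z_q$ is continuous. Near $\tau_*$ we may pick the sign flip across the collision.

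Next we need the limit of $z_q^\prime$, not merely of its modulus as in Lemma~\ref{le:1st-deriv}. Using the first line of~(\ref{eq:joker-1}) and (\ref{eq:bhjbhjbhj7458}), we get $z_q^\prime=\tfrac12\norm{1/q}_{L^1}\,(\dot q\,\bar q^{1/2})\circ\tau_{1/\sqrt{q}}$. The modulus converges by Lemma~\ref{le:1st-deriv}, so it remains to control the direction. Write $q=r\alpha$ with $\alpha=\alpha_q$ continuous. Then $\dot q\,\bar q^{1/2}=(\dot r\alpha+r\dot\alpha)\,r^{1/2}\bar\alpha^{1/2}$. The term $r^{3/2}\dot\alpha\bar\alpha^{1/2}=r^{1/2}\beta_q\bar\alpha^{1/2}$ tends to $0$ since $\beta_q$ is continuous. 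The term $\dot r\,r^{1/2}\alpha\bar\alpha^{1/2}=\dot r\,r^{1/2}\alpha^{1/2}$ is handled through the energy identity from the proof of Lemma~\ref{le:gh877g}: $\dot r^2 r=2e r+2-\abs{\beta}^2r\to2$, and $\dot r$ has a definite sign on each side of $t_*$. Hence $\dot r\,r^{1/2}\to\pm\sqrt2$. The change of sign of $\dot r$ across the collision is compensated by the sign flip of the branch $\alpha^{1/2}$, which is why the branch choice matters. Both one-sided limits therefore agree, equal to $\pm\sqrt2\,\alpha_{t_*}^{1/2}$ times the constant, and $z_q^\prime$ extends continuously. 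By the elementary fact, $z_q$ is $C^1$.

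Finally, Lemma~\ref{le:coll-7887} gives $z_q^{\prime\prime}(\tau)\to0$ as $\tau\to\tau_*$. Applying the elementary fact to $g=z_q^\prime$ shows that $z_q^{\prime\prime}$ exists at $\tau_*$, equals $0$, and is continuous there. Together with smoothness off collisions, $z_q$ is $C^2$.

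The main obstacle is the directional limit of $z_q^\prime$: matching the sign of $\dot r$ with the sign of $\alpha^{1/2}$ on the two sides of a collision. This uses continuity of $\alpha_q$ from Definition~\ref{def:class-coll-loops}, together with the existence of a consistent global branch of $q^{1/2}$ in $\Ll_\pm$, which is why the twisted loop spaces are needed.
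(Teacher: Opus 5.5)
Your proof is correct. It follows the paper's outer skeleton: smoothness off collisions, Lemma~\ref{le:coll-7887} for $z_q''\to0$, and the derivative-limit argument to upgrade to $C^2$. The difference is in how you obtain the limit of $z_q'$ at a collision.

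The paper gets continuity of $\abs{z_q'}$ from (\ref{eq:joker-1}) and Lemma~\ref{le:1st-deriv}. It then reads off the direction $\phi$ of $z_q'$ from the direction $\alpha_q^{1/2}\circ\tau_{1/\sqrt{q}}$ of $z_q$ itself, via a Taylor/mean-value relation $z_q(\tau_*+\eps)\approx\eps\,z_q'(\tau_*+r_\eps\eps)$, and never invokes $\beta_q$. You instead split $\dot q\,\overline{q^{1/2}}$ into two parts. The radial part $\dot r\,r^{1/2}\alpha^{1/2}$ has one-sided limits $\pm\sqrt2$, which you get from $\dot r^2r=2e_qr+2-\abs{\beta_q}^2r$. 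The angular part $r^{1/2}\beta_q\overline{\alpha^{1/2}}$ vanishes because $\beta_q$ is bounded.

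What your route buys is control of exactly the component the paper's step leaves open. A mean-value identity of that form is not available for $\C$-valued maps. Even granted, it would only pin down $\phi$ at the points $r_\eps\eps$, and convergence of $z_q/\abs{z_q}$ alone does not force convergence of $z_q'/\abs{z_q'}$. The continuity of $\beta_q$ required in Definition~\ref{def:class-coll-loops} closes this, and your argument uses it.

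You also make explicit the branch bookkeeping that the paper's appeal to ``continuity of $\alpha_q^{1/2}$'' hides. Once the missing factor $\eps$ is restored, its ``real quotient factor'' is $\mathrm{sgn}\,\eps$. The branch of $q^{1/2}$ must flip at each collision relative to a continuous square root of $\alpha_q$.

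One wording fix: continuity of $z_q$ does not select this branch. Since $\abs{z_q}\to0$, every branch choice on either side of $\tau_*$ gives a continuous loop. It is the $C^1$ requirement that forces the flip; with the opposite choice the one-sided limits of $z_q'$ differ by a sign. So the lemma has to be read with the flip convention. That convention is globally consistent, because landing in either $\Ll_+$ or $\Ll_-$ is allowed.
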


\begin{proof}
By Lemma~\ref{le:coll-7887},
it suffices to show that the first derivative of $z_q$ is continuous
at collisions.

\smallskip\noindent
The map $\tau\mapsto\abs{z_q^\prime(\tau)}$ is continuous,
by~(\ref{eq:joker-1}), and at each collision time we have
$\abs{z_q^\prime(\tau_*)}=\frac{1}{\sqrt{2}}\norm{q^{-1}}_{L^1}=:c_q>0$,
by Lemma~\ref{le:1st-deriv}.
Hence there exists $\eps_0>0$ such that for each collision time
$\tau_*$ there is the bound $\abs{z_q^\prime(\tau_*+\eps)}>c_q/2>0$
whenever $\eps\in(-\eps_0,\eps_0)$.
In particular, there exists a well defined angle function
$\phi\colon(-\eps_0,\eps_0)\to \SS^1_\C$ such that
$$
   z_q^\prime(\tau_*+\eps)
   =\abs{z_q^\prime(\tau_*+\eps)}\cdot\phi(\eps)
$$
whenever $\eps\in(-\eps_0,\eps_0)$
and it remains to check that $\phi$ is continuous at zero.
Since $z_q(\tau_*)=0$, by Taylor's theorem, there exists
$r_\eps\in[0,1]$ such that $z_q(\tau_*+\eps)=z_q^\prime(\tau_*+r_\eps\eps)$.
This and the previous displayed equation yield equality one
\begin{equation*}
\begin{split}
   \phi(r_\eps\cdot\eps)
   &=\frac{z_q(\tau_*+\eps)}{\abs{z_q^\prime(\tau_*+r_\eps\cdot\eps)}}
\\
   &=\frac{q^{\frac12}\circ \tau_{1/\sqrt{q}}(\tau_*+\eps)}
   {\abs{z_q^\prime(\tau_*+r_\eps\cdot\eps)}}
\\
   &\stackrel{\color{gray}3}{=}
   {\alpha_q}^{\frac12}\circ \tau_{1/\sqrt{q}}(\tau_*+\eps)
   \cdot \frac{\abs{q^{\frac12}\circ \tau_{1/\sqrt{q}}(\tau_*+\eps)}}
   {\abs{z_q^\prime(\tau_*+r_\eps\cdot\eps)}}
\\
   &\stackrel{\color{gray}4}{=}
   {\alpha_q}^{\frac12}\circ \tau_{1/\sqrt{q}}(\tau_*+\eps)
   .
\end{split}
\end{equation*}
Equality~3 holds by definition~(\ref{eq:class-coll-loops}) of $\alpha$.
Equality~4 uses the fact that $\phi$ and $\alpha$ take values in
$\SS^1_\C\subset\C$, hence the real quotient factor must be $1$.
Taking the limit $\eps\to0$ in the above equation and using that
$\tau_{1/\sqrt{q}}$
is continuous by Corollary~\ref{cor:ghjg67t87gh}
and $\alpha_q$ is continuous by assumption,
we deduce
$
   \phi(0)
   ={\alpha_q}^{\frac12}\circ \tau_{1/\sqrt{q}}(\tau_*)
$.
Hence at collision times $\phi$ is continuous since
$\alpha$ is.
This proves Lemma~\ref{le:z_q-C2}.
\end{proof}

\begin{lemma}\label{cor:ghjg67t77}
Let $q\in \Lambda^\times_{\mathrm{coll}}\Qfrak$.
Then Lemma~\ref{le:t_z-coll}
applies to $z_q$ and
\begin{equation}\label{eq:rescaling-sqrt-q-coll}
   t_{z_q}
   \stackrel{{\color{gray}1}}{=}
   \tau_{z_q}^{-1}{\color{gray}\colon\SS^1_\tau\to\SS^1_t}
   ,\quad
   \tau_{1/\sqrt{q}}
   \stackrel{{\color{gray}2}}{=} 
   t_{z_q}{\color{gray}\colon\SS^1_\tau\to\SS^1_t}
   ,\quad
   \tau_{z}
   \stackrel{{\color{gray}3}}{=}
   t_{1/\sqrt{q_z}}{\color{gray}\, \colon\SS^1_t\to\SS^1_\tau}
   .
\end{equation}
\end{lemma}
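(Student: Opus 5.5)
We first check that Lemma~\ref{le:t_z-coll} applies to $z_q$, i.e.\ that $z_q\in\bar\Ll^\times\Zfrak$ has finitely many zeroes with non-zero velocity there. The collision set of $z_q$ is $z_q^{-1}(0)=t_{1/\sqrt{q}}(T_q)$, since $\tau_{1/\sqrt{q}}$ is a bijection (Corollary~\ref{cor:ghjg67t87gh}) and $q^{\frac12}$ vanishes exactly on $T_q$. This set is finite by (i) of Definition~\ref{def:class-coll-loops}. The loop $z_q$ is not identically zero because $T_q$ is finite. Non-vanishing speed at collisions is Lemma~\ref{le:1st-deriv}: $\abs{z_q^\prime}\to\frac{1}{\sqrt2}\norm{1/q}_{L^1}>0$. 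Continuity of $z_q^\prime$ at collisions is the angle argument of Lemma~\ref{le:z_q-C2}, which uses only continuity of $\alpha_q$ and so holds on $\Lambda^\times_{\mathrm{coll}}\Qfrak$. Lemma~\ref{le:t_z-coll} needs only differentiability of $t_{z_q}$ and the formula $t_{z_q}^\prime=\abs{z_q}^2/\norm{z_q}^2$, which holds whenever $z_q$ is continuous. Identity~1 is then just part (ii) of Lemma~\ref{le:t_z-coll}, i.e.\ the definition $\tau_{z_q}:=t_{z_q}^{-1}$.

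For identity~2 we repeat the computation~(\ref{eq:rescaling-sqrt-q}) piecewise. On each collision-free interval we substitute $s=\tau_{1/\sqrt{q}}(\sigma)$ and use~(\ref{eq:bhjbhjbhj7458}), i.e.\ $d\sigma=ds/(\abs{q_s}\norm{1/q}_{L^1})$. This gives
\[
   \int_0^\tau\abs{z_q(\sigma)}^2\,d\sigma
   =\int_0^\tau\abs{q\circ\tau_{1/\sqrt{q}}(\sigma)}\,d\sigma
   =\frac{\tau_{1/\sqrt{q}}(\tau)}{\norm{1/q}_{L^1}} .
\]
Since $T_q$ is finite and all integrands are bounded, the sum over the finitely many subintervals is legitimate. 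The substitution itself is valid on each open interval because $\tau_{1/\sqrt{q}}$ is $C^1$ there and continuous up to the endpoints (Proposition~\ref{prop:gh77877g} and Corollary~\ref{cor:ghjg67t87gh}). Dividing by the case $\tau=1$ and using Lemma~\ref{le:gjhgh4798} yields $t_{z_q}(\tau)=\tau_{1/\sqrt{q}}(\tau)$.

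For identity~3, with $z\in\bar\Ll^\times_{\mathrm{coll}}\Zfrak$ (or with $z=z_q$) and $q_z=z^2\circ\tau_z$, we mimic~(\ref{eq:tau_z=t1/sqrtq}). For $t$ off collisions, substituting $\sigma=\tau_z(s)$ and using Lemma~\ref{le:t_z-coll}(ii) gives
\[
   \int_0^t\frac{ds}{\abs{q_z(s)}}=\frac{\tau_z(t)}{\norm{z}^2} .
\]
Both sides are continuous in $t$: the left side by Proposition~\ref{prop:gh77877g}, or directly since the integral is finite by~(\ref{eq:hgghh5677}), and the right side since $\tau_z$ is a homeomorphism. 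So the identity extends to all $t$. Normalizing with~(\ref{eq:hgghh5677}) gives $\tau_z=t_{1/\sqrt{q_z}}$.

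The main obstacle is justifying the change of variables across the singular points, where the substitution maps fail to be differentiable. We handle it by integrating over compact subintervals of the collision-free intervals and passing to the limit. Monotone convergence applies since the integrands are positive, and continuity of the primitive maps at the endpoints closes the argument.
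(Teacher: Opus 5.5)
Your proposal is correct and follows the paper's proof. The paper likewise gets identity~1 from Lemma~\ref{le:t_z-coll}, and identities~2 and~3 by rerunning the computations~(\ref{eq:rescaling-sqrt-q}) and~(\ref{eq:tau_z=t1/sqrtq}), the latter with~(\ref{eq:hgghh5677}) in place of~(\ref{eq:1/q^2-C}); your interval-by-interval change of variables, with a monotone limit at the collision times, makes explicit what the paper leaves implicit.

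One aside: your claim that the angle argument of Lemma~\ref{le:z_q-C2} gives continuity of $z_q^\prime$ for every $q\in\Lambda^\times_{\mathrm{coll}}\Qfrak$ is not justified. That lemma is proved only on $\mathrm{Coll}\,\Ss$, and without the ODE nothing controls the direction of $\dot q$ near a collision. You never need it, though: as you observe, continuity of $z_q$ together with finiteness of $z_q^{-1}(0)$ already makes $t_{z_q}$ a strictly increasing $C^1$ homeomorphism, which is all that identities~1--3 use.
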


\begin{proof}
Identity~1 holds by Lemma~\ref{le:t_z-coll}.
Identity~2 holds by the computation~(\ref{eq:rescaling-sqrt-q}).
Identity~3 holds by the computation~(\ref{eq:tau_z=t1/sqrtq})
with~(\ref{eq:1/q^2-C}) replaced by~(\ref{eq:hgghh5677}) which applies
since $z\in\Crit\,\Bb\subset \bar\Ll^\times_{\mathrm{coll}}\Zfrak$
by Lemma~\ref{le:crit->coll}.
\end{proof}

\boldmath
%%%%%%%%%%%%%%%%%%%%%%%%%%%%%%%%%%%
%%%%%%% Subsubsection  %%%%%%%%%%%%%%%%
%%%%%%%%%%%%%%%%%%%%%%%%%%%%%%%%%%%
\subsubsection*{Image of $\Zz$ lies in $\Crit\Bb$}
\unboldmath

\begin{lemma}[energy identity]\label{le:energy-id}
Any classical collisional $q\in \mathrm{Coll}\,\Ss$ satisfies
$$
   e_q(t)
   =\tfrac12\norm{\dot q}^2-\norm{\tfrac{1}{q}}_{L^1}
   -
   \int_0^1s \inner{\AAAdot_{s}|_{q_s}}{\dot q_s}_0\; ds
   +
   \int_{s=t}^1\inner{\AAAdot_{s}|_{q_s}}{\dot q_s}_0\; ds
$$
at every time $t$ and where at non-collision times
$e_q(t)=\tfrac12\abs{\dot q_t}^2-\tfrac{1}{\abs{q_t}}$ 
by~(\ref{eq:class-coll-loops}).
\end{lemma}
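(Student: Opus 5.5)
The plan is to show that $e_q$ changes only through the Euler force, use continuity of $e_q$ to glue the collision-free intervals together, and then fix the additive constant by averaging over the circle.

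\textbf{Step 1: derivative of $e_q$ on each collision-free interval.} Fix one of the intervals $I_j=(t_j,t_{j+1})$ of $\SS^1\setminus T_q$. On it $q$ is smooth and solves the classical ODE~(\ref{eq:q}). Differentiating $e_q=\tfrac12\abs{\dot q}^2-1/\abs{q}$ gives
\[
\dot e_q=\inner{\dot q}{\ddot q}_0+\frac{\inner{q}{\dot q}_0}{\abs{q}^3}.
\]
Substituting~(\ref{eq:q}) for $\ddot q$ has three effects. The Coulomb term cancels the second summand. The Lorentz term $\inner{\dot q}{B\,j_0\dot q}_0$ vanishes by skew-symmetry of $j_0$, exactly as in Step~2 of Proposition~\ref{prop:reg-sol}. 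Only the Euler term survives, so
\[
\dot e_q(t)=-\inner{\AAAdot_t|_{q_t}}{\dot q_t}_0,\qquad t\in I_j.
\]

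\textbf{Step 2: integrate across collisions.} Define
\[
g(t):=e_q(t)-\int_t^1\inner{\AAAdot_s|_{q_s}}{\dot q_s}_0\,ds.
\]
The integrand $s\mapsto\inner{\AAAdot_s|_{q_s}}{\dot q_s}_0$ is in $L^2\subset L^1$, because $\AAAdot$ is bounded on the compact set $[0,1]\times\im q$ and $\dot q\in L^2$ since $q\in W^{1,2}$. Hence the integral term is absolutely continuous in $t$. By Step~1, $\dot g=0$ on every $I_j$, so $g$ is constant on each $I_j$. Both $e_q$ (by Definition~\ref{def:class-coll-loops}(ii)) and the integral term are continuous on $[0,1]$, so $g$ is continuous. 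A function that is continuous on $[0,1]$ and locally constant off a finite set is constant; call the constant $C$. The endpoint $t=1$ versus $t=0$ needs no special treatment, since we simply work on $[0,1]$.

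\textbf{Step 3: identify $C$ by averaging.} Integrate $e_q(t)=C+\int_t^1\inner{\AAAdot_s|_{q_s}}{\dot q_s}_0\,ds$ over $t\in[0,1]$. On the left, the finitely many collision times have measure zero, and $\norm{\dot q}^2<\infty$ together with the preceding lemma give $\int_0^1 1/\abs{q}<\infty$. Therefore
\[
\int_0^1 e_q\,dt=\tfrac12\norm{\dot q}^2-\norm{\tfrac1q}_{L^1}.
\]
On the right, Fubini as in~(\ref{eq:mu=-1}) gives
\[
\int_0^1\int_t^1 \inner{\AAAdot_s|_{q_s}}{\dot q_s}_0\,ds\,dt=\int_0^1 s\,\inner{\AAAdot_s|_{q_s}}{\dot q_s}_0\,ds .
\]
Solving for $C$ yields exactly the constant in the claimed formula.

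\textbf{Main obstacle.} The delicate point is Step~2. We must ensure that no jump in $e_q$ can occur at a collision, and this relies entirely on the continuity of $e_q$ built into $\Lambda^\times_{\mathrm{coll}}\Qfrak$, together with integrability of the Euler integrand near collisions. The latter is exactly what the $W^{1,2}$ regularity provides.
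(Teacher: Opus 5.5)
Your proof is correct and follows the paper's argument: you differentiate $e_q$ along the classical ODE so that only the Euler term $-\inner{\AAAdot_t|_{q_t}}{\dot q_t}_0$ survives, and then fix the constant of integration via $\int_0^1 e_q\,dt=\tfrac12\norm{\dot q}^2-\norm{\tfrac1q}_{L^1}$. The differences are bookkeeping only (the paper integrates $1\cdot e_q$ by parts over $[t-1,t]$ using periodicity of $e_q$ and a unit time shift, while you use Fubini on $[0,1]$), and you make explicit the gluing across collision times (continuity of $e_q$ plus $L^1$-integrability of the Euler integrand) that the paper's integration by parts uses without comment.
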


\begin{proof}
At a non-collisional time $t$ we differentiate the energy
\begin{equation*}
\begin{split}
   \tfrac{d}{dt} e_q(t)
   =\tfrac{d}{dt}\left(\tfrac12\abs{\dot q_t}^2-\tfrac{1}{\abs{q_t}}\right)
   &=\inner{\dot q_t}{\ddot q_t+\tfrac{q_t}{\abs{q_t}}}_0
\\
   &=\inner{\dot q_t}{-B_t|_{q_t} i\dot q_t-\AAAdot_t|_{q_t}}_0
\\
   &=-\inner{\dot q_t}{\AAAdot_t|_{q_t}}_0
\end{split}
\end{equation*}
where equality two is by the classical ODE~(\ref{eq:q})
and equality three by orthogonality $\dot q_t\perp i\dot q_t$.
By definition of the $L^2$ and $L^1$ norms we get equality one
\begin{equation*}
\begin{split}
   \tfrac12\norm{\dot q}^2-\norm{\tfrac{1}{q}}_{L^1}
   &=\int_0^1\left(\tfrac12\abs{\dot
       q_s}^2-\tfrac{1}{\abs{q_s}}\right)ds
   =\int_0^1 e_q(s)\, ds
\\
%2
   &\stackrel{{\color{gray}2}}{=}
   \int_{t-1}^t 1\cdot e_q(s)\, ds
\\
%3
   &\stackrel{{\color{gray}3}}{=}
   \left. s\cdot e_q(s)\right|_{t-1}^t
   +\int_{t-1}^t s\,\inner{\dot q_s}{\AAAdot_s|_{q_s}}_0\, ds
\\
%4
   &\stackrel{{\color{gray}4}}{=}
   e_q(t)
   +\left(\int_{t-1}^0+\int_0^1-\int_t^1\right)
   s\,\inner{\dot q_s}{\AAAdot_s|_{q_s}}_0\, ds
\\
%5
   &\stackrel{{\color{gray}5}}{=}
   e_q(t)
   +\int_{t}^{1}({\color{gray}s}-1)
      \inner{\dot q_{s-1}}{\AAAdot_{s-1}|_{q_{s-1}}}_0\,ds
   +\int_0^1 s\,\inner{\dot q_{s}}{\AAAdot_{s}|_{q_{s}}}_0\,ds
   \\
   &\quad
{\color{gray}
   -\int_t^1 s\,\inner{\dot q_{s}}{\AAAdot_{s}|_{q_{s}}}_0\,ds
}
   .
\end{split}
\end{equation*}
Equality~2 is by periodicity of $e_q$.\footnote{
  schematically $\int_0^1 e_s=\int_0^t e_s+\int_t^1 e_s
  =\int_0^t e_s+\int_{t-1}^0 e_{s+1}=\int_{t-1}^t e_s$
  as $e_{s+1}=e_s$ is periodic
  }
Equality~3 is by integration by parts
and the previous displayed identity for the energy derivative.
Equality~4 uses again periodicity $e_q(t-1)=e_q(t)$
and additivity of the integral.
Equality~5 shifts the integration interval by $1$, so $s$ becomes $s-1$.
Then we use periodicity $q_{s-1}=q_s$ and, by~(\ref{eq:AAA}), also
$\AAAdot_{s-1}=\AAAdot_s$.
Then the two {\color{gray}grayed out} summands cancel
and we get equality~6
which proves Lemma~\ref{le:energy-id}.
\end{proof}

\begin{proposition}\label{prop:Coll=>Crit}
Let $q\in \mathrm{Coll}\,\Ss$.
Then $z_q:=q^\frac12\circ \tau_{1/\sqrt{q}} \in \Crit\,\Bb$.
\end{proposition}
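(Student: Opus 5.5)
The plan is to verify that $z:=z_q$ solves the regularized DDE~(\ref{eq:z}) at every $\tau$. Theorem~\ref{thm:crit-Ss_Ll} then gives $z\in\Crit\,\Bb$. We first check regularity. By Lemma~\ref{le:z_q-C2} the loop $z$ is $C^2$. Its zero set is finite because $T_q$ is finite and $\tau_{1/\sqrt{q}}$ is a homeomorphism (Corollary~\ref{cor:ghjg67t87gh}). Also $z\not\equiv 0$, so $z\in\bar\Ll^\times\Zfrak$, at least in class $C^2$. Lemma~\ref{cor:ghjg67t77} then gives $t_z=\tau_{1/\sqrt q}$ and $\tau_z=t_{1/\sqrt q}$, so that $q=z^2\circ\tau_z=\Qq(z)$. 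Lemma~\ref{le:gjhgh4798} gives $\norm{1/q}_{L^1}=1/\norm{z}^2$.

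At non-collision times we reverse Step~1 of Proposition~\ref{prop:reg-sol}. There we showed the algebraic identity
\[
\ddot q_t=\frac{1}{\bar q_t}\Bigl(\frac{2\norm{z}^4z''_{\tau_z(t)}}{z_{\tau_z(t)}}-\tfrac12\abs{\dot q_t}^2\Bigr),
\]
which is valid wherever $z\neq0$. Each summand of the right-hand side of~(\ref{eq:z}) was rewritten in $q$-variables, giving the right-hand side of~(\ref{eq:q-DDE}). This rewriting uses~(\ref{eq:fsfcs-C}), the identity~(\ref{eq:hgghh5677}) (equivalently~(\ref{eq:1/q-L1})), the substitution $s=t_z(\sigma)$, and~(\ref{eq:barzA}),~(\ref{eq:tilde-A-A}). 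Every step is an equivalence, so $z$ solves~(\ref{eq:z}) on $\SS^1\setminus z^{-1}(0)$ if and only if $q$ solves~(\ref{eq:q-DDE}) on $\SS^1\setminus T_q$. To show that $q$ solves~(\ref{eq:q-DDE}), combine the classical ODE~(\ref{eq:q}) with the energy identity of Lemma~\ref{le:energy-id}. By~(\ref{eq:q}),
\[
\bar q_t\bigl(\ddot q_t+B_tj_0\dot q_t+\AAAdot_t\bigr)=-\frac{1}{\abs{q_t}}=e_q(t)-\tfrac12\abs{\dot q_t}^2 .
\]
Substituting Lemma~\ref{le:energy-id} for $e_q(t)$ gives exactly~(\ref{eq:q-DDE}). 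One detail needs care. The substitution $s=t_z(\sigma)$ inside integrals running over collision times must be justified. We will do this by splitting into the intervals $\Ii_j$, as in Step~4 of Proposition~\ref{prop:extend-Qq}; the integrands are integrable since $\dot q\in L^2$.

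At collision times $\tau_*$ the left side of~(\ref{eq:z}) vanishes by Lemma~\ref{le:coll-7887} and continuity of $z''$. On the right side, the terms with factor $z(\tau)$ or $\abs{z(\tau)}^2$ vanish. The rotation term vanishes because $\rot\,\aaa_t|_0=0$ by~(\ref{eq:tilde-A-A}). Alternatively, both sides are continuous and agree on a dense set, so they agree everywhere. In either case~(\ref{eq:z}) holds on all of $\SS^1$.

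The main obstacle is smoothness: $\bar\Ll^\times\Zfrak$ consists of $C^\infty$ loops, and Lemma~\ref{le:z_q-C2} only gives $C^2$. We plan to bootstrap using the equation itself. The right-hand side of~(\ref{eq:z}) has the form $a(\tau)z+b(\tau)z+c(\tau)z'$, as in the proof of Lemma~\ref{le:finite-colls}. Its coefficients are built from $z,z'$, from the smooth $\vartheta$ evaluated at $t_z(\tau)$, and from integrals of these. Moreover $t_z$ is one degree smoother than $z$, since $t_z'=\abs{z}^2/\norm{z}^2$. Hence if $z\in C^k$, the right-hand side is $C^{k-1}$, so $z''\in C^{k-1}$ and $z\in C^{k+1}$. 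By induction $z$ is smooth, and therefore $z_q\in\Crit\,\Bb$.
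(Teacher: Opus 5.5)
Your proposal is correct and is essentially the paper's proof. Like the paper, you start from the $C^2$ regularity of Lemma~\ref{le:z_q-C2}, combine the classical ODE~(\ref{eq:q}) with the energy identity of Lemma~\ref{le:energy-id}, convert back to $z$-variables via $t_z=\tau_{1/\sqrt q}$, (\ref{eq:barzA}), (\ref{eq:tilde-A-A}) and the substitution $s=t_z(\sigma)$, and finish by bootstrapping~(\ref{eq:z}) to smoothness. The differences are organizational: the paper computes $z''$ directly from~(\ref{eq:z-prime-prime=0}) rather than reversing Step~1 of Proposition~\ref{prop:reg-sol} through the intermediate DDE~(\ref{eq:q-DDE}), and your explicit treatment of collision times and of substitutions across collisions spells out points the paper leaves implicit.
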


\begin{proof}
Pick $q\in \mathrm{Coll}\,\Ss$ and abbreviate $z=z_q:=
q^\frac12\circ \tau_{1/\sqrt{q}}\colon\SS^1\to\Zfrak$
which is $C^2$ by Lemma~\ref{le:z_q-C2}.
We represented $z^{\prime\prime}$ at time $\tau$
via equality~(\ref{eq:z-prime-prime=0})$_6$
in terms of $q$ and $\dot q$ at time $\tau_{1/\sqrt{q}}(\tau)$.
While $q_{\tau_{1/\sqrt{q}}(\tau)}=z^2_\tau$, in order to express
$\dot q$ in terms of $z$ and $z^\prime$ observe first that
\begin{equation*}%\label{eq:tau-prime}
   \abs{q_{\tau_{1/\sqrt{q}}(\tau)}}=\bar z_\tau z_\tau
   ,\qquad
   \tau_{1/\sqrt{q}}^\prime(\tau)
   %1
   \stackrel{\text{(\ref{eq:bhjbhjbhj7458})}}{=}
   \abs{q_{\tau_{1/\sqrt{q}}(\tau)}}\cdot\norm{\tfrac{1}{q}}_{L^1}
   %2
   =\frac{z_\tau \bar z_\tau}{\norm{z}^2}
   .
\end{equation*}
Use identity two and the definition of $z=z_q$ to get equality 2 of what
follows
$$
   z_\tau^{\prime}
   \stackrel{\text{(\ref{eq:joker-1})}}{=}
   \frac{\dot q_{\tau_{1/\sqrt{q}}(\tau)}\cdot\tau^\prime_{1/\sqrt{q}}(\tau)}
   {2q^{1/2}_{\tau_{1/\sqrt{q}}(\tau)}}
   \stackrel{{\color{gray}2}}{=}
   \frac{\dot q_{\tau_{1/\sqrt{q}}(\tau)}}
   {2z_\tau}
   \cdot
   \frac{z_\tau \bar z_\tau}{\norm{z}^2}
   =
   \frac{\dot q_{\tau_{1/\sqrt{q}}(\tau)}\bar z_\tau}
   {2\norm{z}^2}
   .
$$
Equivalently
\begin{equation}\label{eq:dot-q}
   \dot q_{\tau_{1/\sqrt{q}}(\tau)}
   =2\norm{z}^2 \frac{z_\tau^{\prime}}{\bar z_\tau}
   .\qquad
   \text{Thus $\tfrac12 \abs{\dot q_{\tau_{1/\sqrt{q}}(\tau)}}^2
   =2\norm{z}^4 
{\color{cyan}
   \frac{\abs{z_\tau^\prime}^2}{\abs{z_\tau}^2}
}$
   }
   .
\end{equation}
Use the previous identity at time $t:=\tau_{1/\sqrt{q}}(\tau)$, in
which case $\tau=(\tau_{1/\sqrt{q}})^{-1}(t)=(t_z)^{-1}(t)=\tau_z(t)$
by~(\ref{eq:rescaling-sqrt-q-coll}),
in equality 2 of what follows
\begin{equation}\label{eq:gvhg645}
\begin{split}
   \tfrac12\norm{\dot q}^2:
   &=\int_0^1\tfrac12\abs{\dot q_t}^2\, dt
   \stackrel{{\color{gray}2}}{=}
   2\norm{z}^4\int_0^1
{\color{cyan}
   \frac{\abs{z^\prime_{\tau_{z}(t)}}^2}{\abs{z_{\tau_{z}(t)}}^2}
}
   \, dt
   \stackrel{{\color{gray}3}}{=}
   2\norm{z}^4\int_0^1
{\color{cyan}
   \frac{\abs{z^\prime_\tau}^2}{\abs{z_\tau}^2}
}
   \frac{\abs{z_\tau}^2}{\norm{z}^2}\, d\tau
\\
   &\stackrel{{\color{gray}4}}{=}
   2\norm{z}^2\norm{z^\prime}^2
   .
\end{split}
\end{equation}
Equality~3 is by variable substitution
$t=t_z(\tau)$, to $dt=t_z^\prime(\tau)\, d\tau$,
apply~(\ref{eq:t_z-prime-coll}).

\noindent
By~(\ref{eq:z-prime-prime=0}) we obtain equality one
\begin{equation*}
\begin{split}
   z_\tau^{\prime\prime}
%1
   &\stackrel{{\color{gray}1}}{=}
   \tfrac12 \norm{\tfrac{1}{q}}_{L^1}^2
   \Bigl(
   -
   B|_q
   (j_0 \dot q)
   \bar q {\color{cyan}q^{\tfrac12}}
   -
   \AAAdot|_q
   \bar q  {\color{cyan}q^{\tfrac12}}
   +
   e_q   {\color{cyan}q^{\tfrac12}}
  \Bigr)_{\tau_{1/\sqrt{q}}(\tau)}
\\
%2
   &\stackrel{{\color{gray}2}}{=}
   \frac{{\color{cyan}z_\tau}}{2\norm{z}^4}
   \biggl(
   -
   \rot\,\AAA|_q\;
   (j_0 \dot q)
   \bar q
   -
{\color{brown}
   \AAAdot|_q
   \bar q
}
   +
{\color{red}
   \tfrac12\norm{\dot q}^2
}
   -
   \norm{\tfrac{1}{q}}_{L^1}
   \\
   &\qquad \quad 
   -
   \int_0^1 {\color{red}s}\inner{\AAAdot_{s}|_{q_s}}{\dot q_s}_0\; ds
   +
   \int_{s={\color{magenta}\tau_{1/\sqrt{q}}(\tau)}}^1
   \inner{\AAAdot_{s}|_{q_s}}{\dot q_s}_0\; ds
   \biggr)_{\tau_{1/\sqrt{q}}(\tau)\stackrel{\text{(\ref{eq:rescaling-sqrt-q-coll})}}{=}t_z(\tau)}
\\
%3
   &\stackrel{{\color{gray}3}}{=}
   \frac{-{\color{cyan}z_\tau}\rot\,\aaa_{t_z(\tau)}|_{z_\tau}}
      {8\bar z_\tau z_\tau\norm{z}^4}
   \left(
   2\norm{z}^2\frac{i z_\tau^\prime}{\bar z_\tau}
   \right)
   \bar z_\tau^2
   -
{\color{brown}
   \frac{{\color{cyan}z_\tau}\bar z_\tau\bar z_\tau}{2\norm{z}^4}
   \frac{\dot\aaa_{t_z(\tau)}|_{z_\tau}}{2\bar z_\tau}
}
   +
{\color{red}
   {\color{cyan}z_\tau}\frac{\norm{z^\prime}^2}{\norm{z}^2}
}
   -
   \frac{{\color{cyan}z_\tau}}{2\norm{z}^6}
   \\
   &\quad
   \qquad \;
   -\frac{{\color{cyan}z_\tau}}{2\norm{z}^4}
   \int_0^1 
   \overbrace{
   \frac{\int_0^\sigma\abs{z_\rho}^2\,d\rho}{\norm{z}^2}
   }^{=t_z(\sigma)={\color{red}s}}
   \Re\left(
   \overline{\frac{1}{2\bar z_\sigma}\dot \aaa_{t_z(\sigma)}|_{z_\sigma}}\cdot
   2\norm{z}^2\frac{z_\sigma^\prime}{z_\sigma}
   \right)
   \overbrace{\frac{\abs{z_\sigma}^2}{\norm{z}^2}}^{t_z^\prime(\sigma)}
   \, d\sigma
   \\
   &\quad
   \qquad \;
   +\frac{{\color{cyan}z_\tau}}{2\norm{z}^4}
   \int_{\sigma=t_z^{-1}\circ{\color{magenta}\tau_{1/\sqrt{q}}(\tau)}=\tau}^1
   \Re\left(
   \overline{\frac{1}{2\bar z_\sigma}\dot \aaa_{t_z(\sigma)}|_{z_\sigma}}\cdot
   2\norm{z}^2\frac{z_\sigma^\prime}{z_\sigma}
   \right)
   \frac{\abs{z_\sigma}^2}{\norm{z}^2}\, d\sigma
\\
\end{split}
\end{equation*}
\begin{equation*}
\begin{split}
%4
   &\stackrel{{\color{gray}4}}{=}
   -
   \frac{\rot\,\aaa_{t_z(\tau)}|_{z_\tau}}{4\norm{z}^2}\;
   i z_\tau^\prime
   -
{\color{brown}
   \frac{\abs{z_\tau}^2}{4\norm{z}^4}\cdot
   \dot\aaa_{t_z(\tau)}|_{z_\tau}
}
   +
{\color{red}
   z_\tau\frac{\norm{z^\prime}^2}{\norm{z}^2}
}
   -
   \frac{z_\tau}{2\norm{z}^6}
   \\
   &\quad
   \qquad \;
   -
   \frac{z_\tau}{2\norm{z}^6}
   \int_0^1
{\textstyle
   \int_0^\sigma\Abs{z_\rho}^2 d\rho
}
   \cdot
   \inner{\dot{\aaa}_{t_z(\sigma)}|_{z_\sigma}}{z^\prime_\sigma}_0\; d\sigma
   \\
   &\quad
   \qquad \;
   +\frac{z_\tau}{2\norm{z}^4}
   \int_{\tau}^1
   \inner{\dot{\aaa}_{t_z(\sigma)}|_{z_\sigma}}{z^\prime_\sigma}_0\;
   d\sigma
   .
\end{split}
\end{equation*}
But this is the regularized critical point DDE~(\ref{eq:z}) for $z$.
To conclude the proof of Proposition~\ref{prop:Coll=>Crit}
it remains to explain equalities 2, 3, and 4.

\smallskip\noindent
Equality~2 uses~(\ref{eq:1/q-L1}), that $B=\rot\,\AAA$ by
definition~(\ref{eq:q}),
that ${\color{cyan}q^{\tfrac12}}$ rescaled is~${\color{cyan}z}$ at $\tau$,
then we substitute the energy $e_q$ according to
Lemma~\ref{le:energy-id}.
\\
Equality~3 uses that $\tau_{1/\sqrt{q}}(\tau)=t_z(\tau)$
according to~(\ref{eq:rescaling-sqrt-q-coll}). 
Summand one:
we replace $\rot\,\AAA$ according to~(\ref{eq:tilde-A-A})
and $\dot q$ according to~(\ref{eq:dot-q})$_1$, for $j_0$ we write $i$,
and $\bar q$ rescaled is $\bar z_\tau^2$.
Summand two: replace $\AAAdot$ according to the dotted version
of~(\ref{eq:barzA}) and $\bar q$ rescaled by $\bar z^2$ at $\tau$.
Summand three: use~(\ref{eq:gvhg645}).
Summand four: use~(\ref{eq:1/q-L1}).
Summands five and six: variable substitution $s=t_z(\sigma)$,
replace $\AAAdot$ using the dotted version
of~(\ref{eq:barzA}), rewrite the inner product in the $\R^2$ picture
in terms of the real part~(\ref{eq:bar-z-zeta}) in the $\C$ picture,
replace $\dot q_s$ for $s=t_z(\sigma)=\tau_{1/\sqrt{q}}(\sigma)$ 
according to~(\ref{eq:dot-q})$_1$.
\\
Now a lot of factors annulate in pairs and (in equality 4) we go back
to the $\R^2$ picture and inner product.
This proves the critical point equation~(\ref{eq:z}) for $z$., 

Since $z\in C^2$, bootstrapping~(\ref{eq:z})
we see that $z$ is smooth.
This proves Proposition~\ref{prop:Coll=>Crit}.
\end{proof}

\boldmath
%%%%%%%%%%%%%%%%%%%%%%%%%%%%%%%%%%%
%%%%%%% Subsubsection  %%%%%%%%%%%%%%%%
%%%%%%%%%%%%%%%%%%%%%%%%%%%%%%%%%%%
\subsubsection{Proof of Main Theorem~\ref{thm:bijection} (bijection)}
\label{sec:main-bij}
\unboldmath

Pick $z\in \Crit\,\Bb$ and $q\in \mathrm{Coll}\,\Ss$.
Then with the definitions $q_z:=\Qq(z):=z^2\circ\tau_z$
and $z_q:=\Zz(q):=q^{\frac12}\circ\tau_{1/\sqrt{q}}$
we obtain
$$
   \Qq(\Zz(q))
%1
   =z_q^2\circ\tau_{z_q}
%2
   =q\circ \tau_{1/\sqrt{q}}\circ\tau_{z_q}
$$
and
$$
   \Zz(\Qq(z))
%1
   =q_z^{\frac12}\circ\tau_{1/\sqrt{q_z}}
%2
   =z\circ \tau_z\circ \tau_{1/\sqrt{q_z}}
   .
$$
This yields the following two equivalences
\begin{equation*}
\begin{split}
   \Qq\circ\Zz=\id
   \qquad&\Leftrightarrow\qquad
   \id
   =
   \tau_{1/\sqrt{q}}\circ\tau_{z_q}
   \stackrel{\text{(\ref{eq:rescaling-sqrt-q-coll})}_2}{=}
   t_{z_q}\circ\tau_{z_q}
(
   \stackrel{3}{=}
   \id
)
   ,
\\
   \Zz\circ\Qq=\id
   \qquad&\Leftrightarrow\qquad
   \id
   =
   \tau_z\circ \tau_{1/\sqrt{q_z}}
   \stackrel{\text{(\ref{eq:rescaling-sqrt-q-coll})}_3}{=}
   t_{1/\sqrt{q_z}}\circ \tau_{1/\sqrt{q_z}}
(
   \stackrel{3}{=}
   \id
)
   .
\end{split}
\end{equation*}
The right hand side of the first equivalence holds true (eq.~3) by
(\ref{eq:rescaling-sqrt-q-coll})$_1$.
The right hand side of the second equivalence holds true (eq.~3) by
Corollary~\ref{cor:ghjg67t87gh} for $q_z$ which indeed lies in
$\Lambda^\times_{\mathrm{coll}}\Qfrak$ by
Proposition~\ref{prop:extend-Qq}.
This proves Theorem~\ref{thm:bijection}.

\boldmath
%%%%%%%%%%%%%%%%%%%%%%%%%%%%%%%%%%%
%%%%%%%%%%%%%%%%%%%%%%%%%%%%%%%%%%%
%%%%%%% Section  %%%%%%%%%%%%%%%%%%%%%%
%%%%%%%%%%%%%%%%%%%%%%%%%%%%%%%%%%%
%%%%%%%%%%%%%%%%%%%%%%%%%%%%%%%%%%%
\section[Non-local Hamiltonian mechanics -- periodic case]{Non-local Hamiltonian mechanics -- periodic}
\label{sec:non-loc-Ham}
\unboldmath

Throughout Section~\ref{sec:non-loc-Ham}
we specialize to \underline{periodic} $1$-forms
$$
   \theta=\{\theta_t\}_{t\in\SS^1}
   ,\quad
   \theta_{t+1}=\theta_t
   ,\quad
   f=0
   .
$$
The reason is that in this case there exists a non-local magnetic
potential, more precisely a magnetic $1$-form $\Theta$ on loop space,
which induces the magnetic function $\Mm$, see~(\ref{def:Mm}),
and whose exterior derivative, since it is automatically closed,
induces a twisted symplectic form $\Omega_\Theta$ on the cotangent
bundle of loop space.

\boldmath
%%%%%%%%%%%%%%%%%%%%%%%%%%%%%%%%%%%
%%%%%%% Subsection:  %%%%%%%%%%%%%%%%%%%
%%%%%%%%%%%%%%%%%%%%%%%%%%%%%%%%%%%
\subsection{Cotangent bundle of twisted loop space}
\label{sec:cotangent-bdls}
\unboldmath

Considering twisted loop space allows for defining square roots, see
Remark~\ref{rem:cx-square-root}.
The open subsets $\Zfrak^\times\subset\C\setminus\{0\}$
and $\Zfrak=\Zfrak^\times\cup\{0\}$
are defined by~(\ref{eq:Zfrak}).

\begin{definition}[cotangent bundle of regularization loop space ${\color{red}\bar\Ll^\times\Zfrak}$]\label{def:T*Ll}
\mbox{}
\\
Both cotangent bundles
\begin{equation*}%\label{eq:twisted-loop-space-*}
   T^*\Ll_\pm\Zfrak
   :=\{(z,\eta)\in C^\infty (\R,\Zfrak\times\C)\mid\forall \tau\in\R\colon
   \left(z_{\tau+1},\eta_{\tau+1}\right)=\pm \left(z_{\tau},\eta_{\tau}\right)\}
   ,
\end{equation*}
are invariant under the $I$-induced, see~(\ref{eq:inv-I}),
\textbf{sign involution} 
$$
   T^*I=-\Id_2
   \colon T^*\Ll_\pm\Zfrak\to T^*\Ll_\pm\Zfrak
   ,\quad
   (z,\eta)\mapsto -(z,\eta)
   ,\qquad
{\color{gray}
   \Id_k:=\Id_{\C^k}
   ,
}
$$
because the target $\Zfrak\times\C$ is, see~(\ref{eq:I-Zfrak}).
The intersection of both cotangent bundles consists of the zero
element $(0,0)$, the unique fixed point of $T^*I$, in symbols
$
   T^*\Ll_+\Zfrak\cap T^*\Ll_-\Zfrak=\{(0,0)\}=\Fix\, T^*I
$.
Taking away the fiber over the zero loop we get the two cotangent
bundles
$$
   T^*\Ll_\pm^\times\Zfrak
   :=\{(z,\eta)\in T^*\Ll_\pm\Zfrak\colon \norm{z}\not=0\}
   ,\qquad
{\color{gray}
   T^*\Ll_+^\times\Zfrak\cap T^*\Ll_-^\times\Zfrak=\emptyset
   .
}
$$
To put it differently
$$
   T^*\Ll_+^\times\Zfrak
   =\Ll_+^\times\Zfrak \times \Ll_+\C
   ,\qquad
   T^*\Ll_-^\times\Zfrak
   =\Ll_-^\times\Zfrak \times \Ll_-\C
   .
$$
These are disjoint and the sign involution $T^*I$ acts freely
on each one of them, so on their union.
The \textbf{base point projection}, defined by
$$
   \pi\colon T^*\Ll_\pm^\times\Zfrak\to \Ll_\pm^\times\Zfrak
   ,\quad
   (z,\eta)\mapsto z
   ,\qquad
   \pi\circ T^*I=I\circ \pi
   ,
$$
is equivariant with respect to the sign involutions $T^*I$ and
$I$ in~(\ref{eq:inv-I}).
We introduce the two quotient spaces (indicated by a bar)
\begin{equation}\label{eq:T*-bar-Ll}
   T^*\bar\Ll_\pm^\times\Zfrak
   :=\frac{\Ll_\pm^\times\Zfrak \times \Ll_\pm\C}{T^*I}
   ,\qquad
   T^*{\color{red}\bar\Ll^\times\Zfrak}
   :=T^*\bar\Ll_+^\times\Zfrak\cup
   T^*\bar\Ll_-^\times\Zfrak
   .
\end{equation}
For $\bar\Ll^\times\Zfrak$
see~(\ref{eq:involution-quotients}).
The elements of quotient space $T^*\bar\Ll^\times\Zfrak$
are still denoted by~$(z,\eta)$
keeping in mind that each element has two representatives
$\pm (z,\eta)$ and our constructions must be independent of choosing
$(z,\eta)$ or $-(z,\eta)$.
The base point projection descends to quotient spaces,
still denoted by
\begin{equation}\label{eq:proj-involution-quotients-*}
   \pi\colon T^*\bar\Ll^\times\Zfrak
   \to \bar\Ll^\times\Zfrak
   ,\quad
   (z,\eta)\mapsto z
   .
\end{equation}
This concludes Definition~\ref{def:T*Ll}.
\end{definition}

\begin{remark}[tangent spaces]
At "Upsilon" $\Upsilon=(z,\eta)$ the tangent spaces
$$
   T_\Upsilon T^*\Ll_\pm^\times\Zfrak
   =\Ll_\pm\C^2
   :=\{\hat\Upsilon=(\hat z,\hat \eta)\in C^\infty (\R,\C^2)\mid
   \forall \tau\in\R\colon\hat\Upsilon_{\tau+1}=\pm \hat\Upsilon_\tau\}
$$
are disjoint and invariant under sign involution
$dT^*I|_\Upsilon=-\Id$ which acts freely on each of them.
We introduce the two quotient spaces (indicated by a bar)
\begin{equation*}%\label{eq:bar-Ll-C-*}
   T_\Upsilon T^*\bar\Ll_\pm^\times\Zfrak
   :=\bar\Ll_\pm\C^2
   =\frac{\Ll_\pm\C^2}{-\Id}
   ,\qquad
   T_\Upsilon T^*{\color{red}\bar\Ll^\times\Zfrak}
   :=T_\Upsilon T^*\bar\Ll_+^\times\Zfrak
   \cup T_\Upsilon T^*\bar\Ll_-^\times\Zfrak
%   =\bar\Ll_\pm\C^2
   .
\end{equation*}
The elements of quotient space
$T_\Upsilon T^*\bar\Ll^\times\Zfrak
=\bar\Ll_+\C^2\cup \bar\Ll_-\C^2$
are still denoted by~$\hat\Upsilon=(\hat z,\hat \eta)$
keeping in mind that each element has two representatives
$\pm \hat\Upsilon$ and our constructions must be independent of
choosing $\hat\Upsilon$ or $-\hat\Upsilon$.

The linearization of the projection $\pi(z,\eta)=z$
in~(\ref{eq:proj-involution-quotients-*}) acts by
\begin{equation}\label{eq:hghjghy77897-2}
   d\pi|_{(z,\eta)}
   \colon T_{(z,\eta)} T^*\bar\Ll^\times\Zfrak
   \to
   T_z \bar\Ll^\times\Zfrak
   ,\qquad
   (\hat z,\hat \eta)\mapsto \hat z
   .
\end{equation}
\end{remark}

\begin{remark}[sign involution on tangent bundle]
The tangent map of $T^*I=-\Id_{2}\colon
T^*\bar\Ll^\times\Zfrak\to
T^*\bar\Ll^\times\Zfrak$ 
acts on the tangent bundle
$T T^*\bar\Ll^\times\Zfrak$
as $TT^*I=-\Id_{4}$.
\end{remark}

\boldmath
%%%%%%%%%%%%%%%%%%%%%%%%%%%%%%%%%%%
%%%%%%% Subsubsection  %%%%%%%%%%%%%%%%
%%%%%%%%%%%%%%%%%%%%%%%%%%%%%%%%%%%
\subsubsection{Canonical vector field and induced flow}
\unboldmath

The \textbf{canonical vector field} is generated by time derivative
\begin{equation}\label{eq:can-vf-loops-T*}
   \Vv\colon T^*\bar\Ll^\times\Zfrak
   \to TT^*\bar\Ll^\times\Zfrak
   ,\quad
   \Upsilon
   \mapsto(\Upsilon,\Upsilon^\prime)
   .
\end{equation}
For the principal part we use the same notation
$\Vv(\Upsilon)=\Upsilon^\prime$.
Note that $\Vv$ is well defined on quotients, indeed
$\Vv(-\Upsilon)=(- \Upsilon,(-\Upsilon)^\prime)
=-(\Upsilon, \Upsilon^\prime)=-\Vv(\Upsilon)$.

\begin{remark}[induced flows and Lie derivative]
\label{rem:Vv-flow}
The flow induced by $\Vv=\p_\tau$ on
$T^*\bar\Ll^\times\Zfrak\ni\Upsilon$ is time shift
$\Phi^r_\Vv\Upsilon=r_*\Upsilon:=\Upsilon (\cdot+r)$
whenever $\tau\in\R$.
Given $\hat\Upsilon\in T_\Upsilon T^*\bar\Ll^\times\Zfrak$,
analogous as for the flow $\phi^r\nu$ on
$\bar\Ll^\times\Zfrak$ in~(\ref{eq:shift-nu}),
one shows that $\bigl(d \Phi^r_\Vv|_z \hat\Upsilon\bigr)_\tau
=\hat\Upsilon_{\tau+r}$.
The base point projection~(\ref{eq:proj-involution-quotients-*})
relates the two flows
\begin{equation}\label{eq:P-Phi-comm}
   \pi\circ\Phi^r_\Vv
   =\phi^r_\nu\circ \pi
   .
\end{equation}
Indeed $\pi\circ\Phi^r_\Vv(z_\tau,\eta_\tau)
=\pi(z_{\tau+r},\eta_{\tau+r})=z_{\tau+r}=\phi^r_\nu z_\tau
=\phi^r_\nu\circ \pi(z_\tau,\eta_\tau)$.

\smallskip
Pull-back under base point projection intertwines
the two \textbf{Lie derivatives}
\begin{equation}\label{eq:hghjghy777-2}
\begin{split}
   L_\Vv \pi^*:
   &=\left.\tfrac{d}{dr}\right|_{r=0}(\Phi^r_\Vv )^* \pi^*
\\
   &=\pi^*\left.\tfrac{d}{dr}\right|_{r=0}(\phi^r_\nu)^*
\\
   &=:\pi^* L_\nu
   .
\end{split}
\end{equation}
Here the first and the last equality are by definition of the Lie derivative.
Equality two is by~(\ref{eq:P-Phi-comm}) and *-functoriality
$(\Phi^r_\Vv )^* \pi^*=(\pi\circ\Phi^r_\Vv)^*
=(\phi^r_\nu\circ \pi)^*=\pi^*(\phi^r_\Vv )^*$
and since $\pi^*$ does not depend on $r$.
This concludes Remark~\ref{rem:Vv-flow}.
\end{remark}

\boldmath
%%%%%%%%%%%%%%%%%%%%%%%%%%%%%%%%%%%
%%%%%%% Subsubsection  %%%%%%%%%%%%%%%%
%%%%%%%%%%%%%%%%%%%%%%%%%%%%%%%%%%%
\subsubsection{Canonical $1$-form $\Lambda$  and symplectic form
$\Omega$}
\unboldmath

Let $\lambda=\eta\, dz$ be the canonical $1$-form
on the cotangent bundle of $\Zfrak^\times\subset\C$.
The \textbf{canonical \boldmath$1$-form} on
$T^*\bar\Ll^\times\Zfrak$ 
is defined by assigning to an element
$\hat\Upsilon=(\hat z,\hat \eta)$ of the tangent space to
$T^* \bar\Ll^\times\Zfrak$
at a point $\Upsilon=(z,\eta)$ the real value
\begin{equation*}%\label{eq:Lambda}
   \Lambda_\Upsilon \hat\Upsilon
   =\Lambda_{(z,\eta)} (\hat z,\hat\eta)
   :=\int_0^1\eta_\tau(\hat z_\tau)\, d\tau
   =\INNER{\eta}{\hat z}
   ,\quad
{\color{gray}
   \Lambda
   =\int_0^1\lambda\, d\tau
   ,
}
\end{equation*}
cf Definition~\ref{def:Liouville-form}.
Note that $\Lambda$ is invariant under sign involution. Indeed
$$
   (T^*I)^*\Lambda \;(\Upsilon,\hat\Upsilon)
=\Lambda_{T^*I\,\Upsilon} TT^*I\, \hat\Upsilon
=\Lambda_{-\Upsilon} (-\hat\Upsilon)
=\INNER{-\eta}{-\hat z}
=\INNER{\eta}{\hat z}
=\Lambda(\Upsilon,\hat\Upsilon)
   .
$$
We evaluate $\Lambda$ on the canonical vector field
$\Vv$ along $T^*\bar\Ll^\times\Zfrak$,
see~(\ref{eq:can-vf-loops}).
At any point $\Upsilon=(z,\eta)\in T^*\bar\Ll^\times\Zfrak$,
for later use in~(\ref{eq:Aa_Hh}), we compute
\begin{equation*}%\label{eq:Lambda-Vv}
  (i_\Vv\Lambda)_\Upsilon
  =\Lambda_\Upsilon \Upsilon^\prime =\int_0^1 \eta_\tau(z_\tau^\prime)\, d\tau
  =\INNER{\eta}{z^\prime}
   .
\end{equation*}

\begin{definition}[canonical symplectic form]
At a point $\Upsilon=(z,\eta)\in T^*\bar\Ll^\times\Zfrak$
and for tangent vectors $\hat\Upsilon^j=(\hat z^j,\hat\eta^j)\in
T_{\Upsilon}T^*\bar\Ll^\times\Zfrak$
for $j=1,2$ we define and compute the
\textbf{canonical symplectic form \boldmath$\Omega$}
on $T^*\bar\Ll^\times\Zfrak$ as follows
\begin{equation}\label{eq:Omega_can}
\begin{split}
   \Omega_\Upsilon(\hat\Upsilon^1, \hat\Upsilon^2) :
   &=d\Lambda_\Upsilon(\hat\Upsilon^1, \hat\Upsilon^2)
   \stackrel{{\color{gray}2}}{=}
   \int_0^1 d\lambda_{(z,\eta)}
   (\hat\Upsilon^1_\tau, \hat\Upsilon^2_\tau)\, d\tau
\\
   &=\int_0^1\left(\inner{\hat\eta^1_\tau}{\hat z^2_\tau}_0
   -\inner{\hat\eta^2_\tau}{\hat z^1_\tau}_0\right) d\tau
   =\inner{\hat\eta^1}{\hat z^2}
   -\inner{\hat\eta^2}{\hat z^1}
   .
\end{split}
\end{equation}
Here equality~2 uses~(\ref{eq:exterior-derivative}),
and $\inner{\cdot}{\cdot}$ is the $L^2$-inner product,
$d\lambda=d\eta\wedge dz$,
and $\inner{\cdot}{\cdot}_0$ is the Euclidean inner
product~(\ref{eq:bar-z-zeta}) on $\C\simeq\R^2$.
Similarly as for $\Lambda$ one checks that
$\Omegacan$ is invariant under sign involution, namely
$(T^*I)^*\Omegacan=\Omegacan$.
\end{definition}

\boldmath
%%%%%%%%%%%%%%%%%%%%%%%%%%%%%%%%%%%
%%%%%%% Subsubsection  %%%%%%%%%%%%%%%%
%%%%%%%%%%%%%%%%%%%%%%%%%%%%%%%%%%%
\subsubsection{Exact twisted symplectic form $\Omega_\Theta$}
\unboldmath

\begin{definition}[non-local magnetic $1$-form]\label{def:Theta-periodic}
Let $\vartheta=\varsigma^*\theta$ be a periodic $1$-form on
$\Zfrak$ as in (\ref{eq:Zfrak}).
Define the reparametrization $t_z\colon\SS^1_\tau\to\SS^1_t$
by formula~(\ref{eq:t_z-prime-coll}).
We get a $1$-form on loop space
$\bar\Ll^\times\Zfrak$, cf~(\ref{eq:Theta}),
defined for $(z,\xi)\in T \bar\Ll^\times\Zfrak$ by
\begin{equation}\label{eq:Theta-periodic}
   \Theta_z\xi=\Theta_z^{\vartheta,T}\xi
   :=\int_{\SS^1} \vartheta_{t_z}|_z \xi
   :=\int_0^1 \vartheta_{t_z(\tau)}|_{z_\tau} \xi_\tau\, d\tau
   .
\end{equation}
\end{definition}

Pull-back $\Theta$ by the base point projection $\pi$
in~(\ref{eq:proj-involution-quotients-*}),
then insert the canonical vector field $\Vv$
from~(\ref{eq:can-vf-loops-T*}),
see also $\nu$ in~(\ref{eq:can-vf-loops}),
to recover the magnetic func\-tional $\Mm$ in~(\ref{eq:Mm-BOV}):
At $\Upsilon=(z,\eta)\in T^*\bar\Ll^\times\Zfrak$,
for use in~(\ref{eq:Aa_Hh}), 
we compute
\begin{equation*}\label{eq:bhjvbhj7745}
   (i_\Vv \pi^*\Theta)_\Upsilon
   =\Theta_{\pi(\Upsilon)} d\pi|_\Upsilon\Upsilon^\prime
   \stackrel{\text{(\ref{eq:hghjghy77897-2})}}{=} 
   \Theta_z z^\prime
   =i_\nu\Theta_z
{\color{gray}\;
   \stackrel{f=0}{=}
   \Mm(z)
   .
}
\end{equation*}

\begin{lemma}[twisted non-local symplectic form]
\label{le:tw-sympl-form}
For $d\Theta$ in~(\ref{eq:dTheta})
the
$2$-forms
$$
   \Omega_\Theta
   :=\Omega+\pi^*d\Theta
   =d\left(\Lambda+\pi^*\Theta\right)
   ,\qquad
   \Omega:=d\Lambda
$$
are weak\footnote{
  see Remark~\ref{rem:reak-symplectic}
  }
symplectic forms on $T^*\bar\Ll^\times\Zfrak$
where $d$ is defined by~(\ref{eq:exterior-derivative}).
\end{lemma}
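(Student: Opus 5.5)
We must show $\Omega_\Theta=\Omega+\pi^*d\Theta$ is closed and weakly non-degenerate. Closedness is immediate from $\Omega_\Theta=d(\Lambda+\pi^*\Theta)$ and $dd=0$ for the exterior derivative~(\ref{eq:exterior-derivative}). This identity holds for an arbitrary smooth $1$-form on loop space, since $d$ is defined by the usual Cartan formula with Lie brackets of vector fields; alternatively one checks the cyclic-sum formula directly. Invariance under the sign involution $T^*I$ follows as for $\Lambda$: $\Theta_{-z}(-\xi)=\Theta_z\xi$, because $t_{-z}=t_z$ and $\vartheta=\varsigma^*\theta$ is $\mathfrak{i}$-invariant, so $\vartheta|_{-z}(-\xi)=\vartheta|_z\xi$. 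Hence everything descends to the quotient $T^*\bar\Ll^\times\Zfrak$.

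The real content is weak non-degeneracy: if $\Omega_\Theta(\hat\Upsilon^1,\hat\Upsilon^2)=0$ for all $\hat\Upsilon^2$, then $\hat\Upsilon^1=0$. The key structural observation is that $\pi^*d\Theta$ depends only on the $\hat z$-components, because $d\pi(\hat z,\hat\eta)=\hat z$ by~(\ref{eq:hghjghy77897-2}). Writing $(d\Theta)_z(\hat z^1,\hat z^2)=\inner{K_z\hat z^1}{\hat z^2}$ for a linear operator $K_z$ (obtainable from the computation in Lemma~\ref{le:gradient-Mm}, essentially the $d\vartheta$ term plus non-local terms coming from $dt|_z$), we get
$$
   \Omega_\Theta(\hat\Upsilon^1,\hat\Upsilon^2)
   =\inner{\hat\eta^1+K_z\hat z^1}{\hat z^2}-\inner{\hat\eta^2}{\hat z^1}.
$$
This triangular form suggests the following argument. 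First take $\hat z^2=0$ and arbitrary $\hat\eta^2$ to get $\inner{\hat\eta^2}{\hat z^1}=0$ for all $\hat\eta^2$, hence $\hat z^1=0$, since smooth (anti)periodic loops are $L^2$-dense. Then $K_z\hat z^1=0$, and testing with arbitrary $\hat z^2$ gives $\hat\eta^1=0$. This is the abstract result of Appendix~\ref{sec:tw-sympl-form}, which I would invoke: for any closed $2$-form $\beta$ on the base, $\Omega+\pi^*\beta$ is weakly non-degenerate on the cotangent bundle.

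The main obstacle is making sure $\pi^*d\Theta$ really is expressible as above and that no problems arise from the non-locality or from collisions. $\Theta$ involves $t_z$, which depends smoothly on $z$ as long as $\norm{z}\neq0$: formula~(\ref{eq:dt(z)}) shows it. Thus $d\Theta$ is a well-defined continuous bilinear form on the base, and only its dependence on $\hat z$ matters. In fact the non-degeneracy argument never needs the explicit form of $K_z$; it only uses that the second summand vanishes when $\hat z^1=0$, which holds for any $2$-form on the base. So I would phrase the proof through this vanishing rather than through an operator $K_z$, avoiding any representability issue.
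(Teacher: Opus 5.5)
Your proposal is correct and matches the paper's proof. Closedness follows from $\Omega_\Theta=d(\Lambda+\pi^*\Theta)$ and $dd=0$, and weak non-degeneracy is exactly Corollary~\ref{cor:omega_sigma} applied to the closed $2$-form $\sigma=d\Theta$. Your explicit check is the concrete $L^2$ form of that corollary's two-case argument: first $\hat z^2=0$ forces $\hat z^1=0$, then testing against arbitrary $\hat z^2$ forces $\hat\eta^1=0$. It also avoids Hahn--Banach, since you can simply take $\hat\eta^2=\hat z^1$ and $\hat z^2=\hat\eta^1$.
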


\begin{proof}
Since $dd=0$, cf. Remark~\ref{rem:dd=0},
Corollary~\ref{cor:omega_sigma} applies.
\end{proof}

\boldmath
%%%%%%%%%%%%%%%%%%%%%%%%%%%%%%%%%%%
%%%%%%%%%%%%%%%%%%%%%%%%%%%%%%%%%%%
%%%%%%% Subsection  %%%%%%%%%%%%%%%%%%%
%%%%%%%%%%%%%%%%%%%%%%%%%%%%%%%%%%%
\subsection[Hamiltonian vector field $\Xx$ and Lorentz force $\Zz$]
{Hamiltonian vector field $\Xx_\Hh^{\Omega_\Theta}$ and
Lorentz force $\Zz_\Theta$}
\unboldmath

\begin{definition}
On the cotangent bundle $T^*\bar\Ll^\times\Zfrak$,
see~(\ref{eq:T*-bar-Ll}),
the \textbf{mechanic} (kinetic plus potential energy)
\textbf{Hamiltonian} is defined by
\begin{equation*}%\label{eq:Hh}
\begin{split}
   \Hh\colon
   T^*\bar\Ll^\times\Zfrak
   &\to\R
   \qquad\qquad\qquad\quad
{\color{gray}
   \inner{\cdot}{\cdot}^z:=\tfrac{1}{4 \norm{z}^2} \inner{\cdot}{\cdot}
}
\\
   (z,\eta)
   &\mapsto
   \tfrac12 \INNER{\eta}{\eta}^z
   -\tfrac{1}{\norm{z}^2}
   =
{\color{gray}
   \underbrace{{\color{black}\tfrac{\norm{\eta}^2}{8\norm{z}^2}}}_{\Kk^*(z,\eta)}
   \underbrace{{\color{black}-\;\tfrac{1}{\norm{z}^2}}}_{\Uu^*(z)}
}
   .
\end{split}
\end{equation*}
The non-local \textbf{Hamiltonian vector field} is determined and
abbreviated by
$$
   d\Hh
   =\Omega_\Theta(\cdot, \Xx_\Theta)
   =\Omega(\cdot, \Xx_\Theta)
   +d\Theta(d\pi \cdot, \Xx_\Hh^{\Omega_\Theta})
   ,\qquad
   \Xx_\Theta=\Xx_\Hh^{\Omega_\Theta} .
$$
This guarantees uniqueness, but not necessarily existence
since we only have a weak symplectic
form; cf. Remark~\ref{rem:reak-symplectic}.
The next lemma guarantees existence.
\end{definition}

\begin{lemma}[Non-local Hamiltonian vector field]
\label{le:Xx_Theta}
At $(z,\eta)\in T^*\bar\Ll^\times\Zfrak$ we have
\begin{equation}\label{eq:X}
\begin{split}
   \Xx_\Theta(z,\eta)
   &
   =\tfrac{1}{4\norm{z}^2}
   \begin{pmatrix}
      \eta\\
      \tfrac{\norm{\eta}^2-8}{\norm{z}^2}\, z
   \end{pmatrix}
   +
   \tfrac{1}{4\norm{z}^2}
   \begin{pmatrix}
      0\\
      \Zz_z \eta
   \end{pmatrix}
   .
\end{split}
\end{equation}
The \textbf{non-local Lorentz force}
$\Zz_z\in\Ll( T_z\bar\Ll^\times\Zfrak)$
is uniquely determined\footnote{
  Formula~(\ref{eq:dTheta}) 
  shows that $d\Theta$ is a continuous bilinear form on $L^2$,
  hence by the Riesz representation theorem it determines an
  isomorphism.
  }
by
\begin{equation}\label{eq:Zz}
   \INNER{\Zz_z\eta}{\cdot}
   =-d\Theta_z(\eta,\cdot)
   .
\end{equation}
\end{lemma}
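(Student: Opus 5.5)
The plan is to compute $d\Hh$ explicitly and then check that the vector field on the right-hand side of~(\ref{eq:X}) satisfies the defining identity $d\Hh=\Omega_\Theta(\cdot,\Xx_\Theta)$. Uniqueness then follows because $\Omega_\Theta$ is weakly nondegenerate (Lemma~\ref{le:tw-sympl-form}). Note that $d\Theta_z$ extends to a continuous bilinear form on $L^2$ by~(\ref{eq:dTheta}), so the Riesz representation theorem produces a unique bounded operator $\Zz_z$ satisfying~(\ref{eq:Zz}). This settles the existence of the non-local Lorentz force, and I take it as the first step.

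Second, I would differentiate $\Hh(z,\eta)=\norm{\eta}^2/(8\norm{z}^2)-1/\norm{z}^2$ in the direction $\hat\Upsilon=(\hat z,\hat\eta)$, exactly as $d\Uu$ was computed earlier. This gives
\[
   d\Hh|_{(z,\eta)}\hat\Upsilon
   =\frac{\inner{\eta}{\hat\eta}}{4\norm{z}^2}
   -\frac{\norm{\eta}^2\inner{z}{\hat z}}{4\norm{z}^4}
   +\frac{2\inner{z}{\hat z}}{\norm{z}^4}
   =\frac{\inner{\eta}{\hat\eta}}{4\norm{z}^2}
   -\frac{(\norm{\eta}^2-8)\inner{z}{\hat z}}{4\norm{z}^4}.
\]

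Third, I would insert a candidate $X=(X^z,X^\eta)$ into the right-hand side of the defining identity. Using~(\ref{eq:Omega_can}) and~(\ref{eq:hghjghy77897-2}), one obtains $\Omega_\Theta(\hat\Upsilon,X)=\inner{\hat\eta}{X^z}-\inner{X^\eta}{\hat z}+d\Theta_z(\hat z,X^z)$. Comparing the $\hat\eta$-coefficients forces $X^z=\eta/(4\norm{z}^2)$. For the $\hat z$-part, skew-symmetry of $d\Theta_z$ together with~(\ref{eq:Zz}) gives $d\Theta_z(\hat z,X^z)=-d\Theta_z(X^z,\hat z)=\inner{\Zz_zX^z}{\hat z}$. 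The condition then reads $-X^\eta+\Zz_zX^z=-(\norm{\eta}^2-8)z/(4\norm{z}^4)$, so $X^\eta=(\norm{\eta}^2-8)z/(4\norm{z}^4)+\Zz_z\eta/(4\norm{z}^2)$ by linearity of $\Zz_z$. This is exactly~(\ref{eq:X}).

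Finally, I would check compatibility with the sign involution: $\Zz_{-z}(-\eta)=-\Zz_z\eta$, since $t_{-z}=t_z$ and $\vartheta$ is pulled back by $\varsigma$, so the field descends to $T^*\bar\Ll^\times\Zfrak$. I would also verify that $\Zz_z\eta$ lies in the correct (twisted) loop space, $\Ll_\pm\C$, which should follow from the explicit formula~(\ref{eq:dTheta}). The main obstacle is the sign convention in the $d\Theta$ term, namely fixing in which slot $d\pi$ enters and which orientation~(\ref{eq:Zz}) uses. If this comes out with the opposite sign, I would recheck~(\ref{eq:Omega_can}) against the defining identity rather than alter the Lorentz term.
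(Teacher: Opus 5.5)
Your computation of $d\Hh$ and the two coefficient comparisons are correct. They are the paper's argument in slightly different packaging. The paper first writes down the untwisted field $\Xx_0=(\p_\eta\Hh,-\p_z\Hh)$. It then tests the defining identity with vertical vectors $(0,\Yy^2)$ to get $\Xx_\Theta^1=\Xx_0^1$, and with horizontal ones to identify the vertical difference $\Xx_\Theta-\Xx_0=\tfrac{1}{4\norm{z}^2}(0,\Zz_z\eta)$ via (\ref{eq:Zz}). Your direct comparison is the same computation, with the same signs.

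The step that will not go through is your final check that $\Zz_z\eta\in\Ll_\pm\C$ ``should follow from'' (\ref{eq:dTheta}); the explicit formula shows the opposite. Take the term $+\int_0^1(dt|_z\xi)_\tau\,\dot\vartheta_{t_z(\tau)}|_{z_\tau}\eta_\tau\,d\tau$ of $-d\Theta_z(\eta,\xi)$, insert (\ref{eq:dt(z)}) and apply Fubini. It contributes to $\Zz_z\eta$ the summand
\[
   \sigma\mapsto\frac{2z_\sigma}{\norm{z}^2}\int_\sigma^1\dot\vartheta_{t_z(\tau)}|_{z_\tau}\eta_\tau\,d\tau ,
\]
and all remaining summands are smooth and twisted-periodic. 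This summand vanishes at $\sigma=1$ but equals $\frac{2z_0}{\norm{z}^2}\int_0^1\dot\vartheta_{t_z(\tau)}|_{z_\tau}\eta_\tau\,d\tau$ at $\sigma=0$. So $\Zz_z\eta$ jumps at $\tau=0$ unless $z_0\int_0^1\dot\vartheta_{t_z(\tau)}|_{z_\tau}\eta_\tau\,d\tau=0$. This is the same jump as in the term $\frac{2z_\tau}{\norm{z}^2}\int_\tau^1(\dots)$ of $\grad\Mm$ in Lemma~\ref{le:gradient-Mm}, and it is caused by the base point $0$ in $t_z$.

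Consequently, Riesz gives $\Zz_z$ only as a bounded operator on $L^2$, not as an endomorphism of smooth twisted loops. The existence half of your argument (plugging the right-hand side of (\ref{eq:X}) into the defining identity) is therefore valid only after completing the fibres to $L^2$. On smooth loops, (\ref{eq:X}) merely characterizes $\Xx_\Theta$ whenever it exists. The paper's proof has the same feature: it derives (\ref{eq:X}) from the defining identity and simply asserts $\Zz_z\in\Ll(T_z\bar\Ll^\times\Zfrak)$. So this is a defect of the statement rather than an idea you missed, but you should not plan on proving that membership.
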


\begin{proof}
The Hamiltonian vector field of $\Hh$ for the untwisted symplectic form
$\Omega$ is at a point $\Upsilon=(z,\eta)$ given by
\begin{equation*}
  \left\{
   \begin{aligned}
     \Xx_0^1&=\p_\eta\Hh  &&=\tfrac{1}{4\norm{z}^2}\;\eta\\
     \Xx_0^2&=-\p_z\Hh&&=
        \tfrac{\norm{\eta}^2}{4\norm{z}^4}\, z
        -\tfrac{2}{\norm{z}^4}\, z
   \end{aligned}
   \right.
   ,\quad
   \Xx_0(z,\eta)
   =\tfrac{1}{4\norm{z}^2}\begin{pmatrix}
      \eta\\
      \tfrac{\norm{\eta}^2}{\norm{z}^2}\, z
      -\tfrac{8}{\norm{z}^2}\, z
   \end{pmatrix}
   .
\end{equation*}
Pick $\Yy\in T_\Upsilon T^*\bar\Ll^\times\Zfrak$,
by~(\ref{eq:Omega_can})
and then by definition of $\Xx_0$ and of $\Xx_\Theta$, we get
\begin{equation}\label{eq:hgjhghjbhj}
\begin{split}
   \inner{\Xx_0^2}{\Yy^1}-\inner{\Xx_0^1}{\Yy^2}
   &=i_\Yy i_{\Xx_0}\Omega
\\
   &=-i_\Yy d\Hh
\\
   &= i_\Yy i_{\Xx_\Theta}\Omega_\Theta
\\
   &=\Omega(\Xx_\Theta,\Yy)+d\Theta(\pi_*\Xx_\Theta,\pi_*\Yy)
\\
   &=\inner{\Xx_\Theta^2}{\Yy^1}-\inner{\Xx_\Theta^1}{\Yy^2}
   +d\Theta(\Xx_\Theta^1,\Yy^1) .
\end{split}
\end{equation}
The last but one identity uses $\Omega_\Theta:=\Omega+\pi^*d\Theta$,
the last one~(\ref{eq:Omega_can})
and the linearized projection $\pi_*=d\pi$ in~(\ref{eq:hghjghy77897-2}).
For vectors of the form $\Yy=(0,\Yy^2)$ the displayed formula yields that
$
   -\inner{\Xx_0^1}{\Yy^2}
   =-\inner{\Xx_\Theta^1}{\Yy^2}
$
for every $\Yy^2$. Thus
$\Xx_\Theta^1=\Xx_0^1=\tfrac{\eta}{4\norm{z}^2}$, so in
the displayed formula two terms cancel and \underline{it reduces~to}
$$
\underline{
   \inner{\Xx_\Theta^2-\Xx_0^2}{\Yy^1}
   =-d\Theta(\Xx_\Theta^1,\Yy^1)
}
   =-d\Theta(\tfrac{\eta}{4\norm{z}^2},\Yy^1)
   \stackrel{\text{(\ref{eq:Zz})}}{=}
   \inner{\Zz_z \tfrac{\eta}{4\norm{z}^2}}{\Yy^1}
$$
for every $\Yy^1$.
Thus the difference is a vertical vector field, the Lorentz force
$$
   (\Xx_\Theta-\Xx_0)|_{(z,\eta)}
   =
   \begin{pmatrix}
      \Xx_\Theta^1-\Xx_0^1\\
      \Xx_\Theta^2-\Xx_0^2
   \end{pmatrix}|_{(z,\eta)}
   =\tfrac{1}{4\norm{z}^2}
   \begin{pmatrix}
      0\\
      \Zz_z\eta
   \end{pmatrix}
   .
$$
In~(\ref{eq:dTheta}) we calculate $d\Theta$.
The proof of Lemma~\ref{le:Xx_Theta} is complete.
\end{proof}

\boldmath
%%%%%%%%%%%%%%%%%%%%%%%%%%%%%%%%%%%
%%%%%%% Subsection  %%%%%%%%%%%%%%%%%%%
%%%%%%%%%%%%%%%%%%%%%%%%%%%%%%%%%%%
\subsection[Euler vector field $\Yy$]
{Euler vector field $\Yy_\Vv^{\Omega_\Theta}$}
\unboldmath

Let $\Theta$ be defined by~(\ref{eq:Theta-periodic}).
The base-point projection $\pi$ intertwines the
upstair and the downstair Lie derivative
by~(\ref{eq:hghjghy777-2}).

\begin{definition}\label{def:Euler}
The \textbf{non-local Euler vector field} $\Yy_\Theta$ is determined
by
$$
   i_{\Yy_\Theta}\Omega_\Theta
   =\pi^* L_\nu\Theta 
{\color{gray}
\quad\bigl(
   =
   L_\Vv \pi^*\Theta
\bigr)
}
   .
$$
\end{definition}

\begin{lemma}\label{le:Yy_Theta}
At $(z,\eta)\in T^*\bar\Ll^\times\Zfrak$
the Euler vector field is time-wise given
by
\begin{equation}\label{eq:Yy}
   \Yy_\Theta|_z(\tau)
   =
   \tfrac{1}{4\norm{z}^2}
   \begin{pmatrix}
      0\\
      -4 \Abs{z_0}^2\dot\vartheta_{t_z(\tau)}|_{z_\tau}
   \end{pmatrix}
   .
\end{equation}
\end{lemma}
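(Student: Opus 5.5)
The plan is to compute the Lie derivative $L_\nu\Theta$ explicitly on the base $\bar\Ll^\times\Zfrak$, observe that its pull-back $\pi^*L_\nu\Theta$ only sees the horizontal component of a tangent vector, and conclude that $\Yy_\Theta$ is vertical. Its vertical part is then just the $L^2$-representative of $L_\nu\Theta$. Uniqueness of $\Yy_\Theta$ comes for free from weak nondegeneracy of $\Omega_\Theta$ (Lemma~\ref{le:tw-sympl-form}), so it suffices to exhibit one vector field satisfying the defining identity of Definition~\ref{def:Euler}.

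\emph{Step 1 (pull-back under the shift).} Fix $z\in\bar\Ll^\times\Zfrak$ and $\xi\in T_z\bar\Ll^\times\Zfrak$. By~(\ref{eq:shift-nu}) and the formula $d\phi^r_\nu|_z\xi=\xi_{\cdot+r}$ we have
\[
   \bigl((\phi^r_\nu)^*\Theta\bigr)_z\xi
   =\Theta_{r_*z}(r_*\xi)
   =\int_0^1\vartheta_{t_{r_*z}(\tau)}|_{z_{\tau+r}}\xi_{\tau+r}\,d\tau .
\]
Since $\norm{r_*z}=\norm{z}$ (the integrand $\abs{z}^2$ is $1$-periodic even in the $\Ll_-$ case), definition~(\ref{eq:t_z-prime-coll}) gives $t_{r_*z}(\tau)=t_z(\tau+r)-t_z(r)$. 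Substituting $\sigma=\tau+r$ yields
\[
   \int_r^{1+r}\vartheta_{t_z(\sigma)-t_z(r)}|_{z_\sigma}\xi_\sigma\,d\sigma .
\]
The integrand is $1$-periodic in $\sigma$: $\vartheta$ is periodic in $t$, $t_z(\sigma+1)=t_z(\sigma)+1$, and $(z,\xi)_{\sigma+1}=\pm(z,\xi)_\sigma$ while $\vartheta=\varsigma^*\theta$ is $\mathfrak{i}$-invariant, exactly as in~(\ref{eq:problem}). Hence the interval can be replaced by $[0,1]$.

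\emph{Step 2 (differentiate at $r=0$).} Differentiating under the integral, which is legitimate since everything is smooth and the domain is compact, and using $t_z'(0)=\abs{z_0}^2/\norm{z}^2$ from~(\ref{eq:t_z-prime-coll}), we get
\[
   (L_\nu\Theta)_z\xi
   =-\frac{\abs{z_0}^2}{\norm{z}^2}\int_0^1\dot\vartheta_{t_z(\sigma)}|_{z_\sigma}\xi_\sigma\,d\sigma
   =\INNER{-\tfrac{\abs{z_0}^2}{\norm{z}^2}\,\dot\aaa_{t_z}|_z}{\xi}.
\]
This expression is invariant under $(z,\xi)\mapsto-(z,\xi)$, so it is well defined on the quotient.

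\emph{Step 3 (solve for $\Yy_\Theta$).} Make the ansatz $\Yy_\Theta=(0,Y^2)$. By~(\ref{eq:Omega_can}), and since $d\pi\,\Yy_\Theta=0$ kills the $\pi^*d\Theta$ term, for every $\hat\Upsilon=(\hat z,\hat\eta)$ we have
\[
   (i_{\Yy_\Theta}\Omega_\Theta)\hat\Upsilon=\INNER{Y^2}{\hat z}.
\]
On the other side, $(\pi^*L_\nu\Theta)\hat\Upsilon=(L_\nu\Theta)_z\hat z$ by~(\ref{eq:hghjghy77897-2}). Comparing with Step~2 gives $Y^2=-\tfrac{\abs{z_0}^2}{\norm{z}^2}\dot\vartheta_{t_z}|_z$, which is~(\ref{eq:Yy}) after factoring out $\tfrac{1}{4\norm{z}^2}$.

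The only delicate point is Step~1: the reparametrization $t_{r_*z}$ and the change of integration interval must be handled correctly in the twisted ($\Ll_-$) case. Once the shift identity $t_{r_*z}=t_z(\cdot+r)-t_z(r)$ is established, the rest is a direct computation.
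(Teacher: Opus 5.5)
Your proof is correct and follows essentially the paper's argument. Like the paper, you find $\Yy_\Theta^1=0$ and identify $\Yy_\Theta^2$ as the $L^2$-representative of $L_\nu\Theta$, as in (\ref{eq:BOV-Lie}). The only differences are in how you get there. You compute $L_\nu\Theta$ directly from the shift identity $t_{r_*z}=t_z(\cdot+r)-t_z(r)$ together with periodicity of the integrand, and you treat the $\Ll_-$ case explicitly via $\mathfrak{i}$-invariance; the paper instead goes through the general Lemma~\ref{le:Lie-magnetic} and the variation formula (\ref{eq:dt(z)}). You also verify an ansatz and invoke uniqueness from nondegeneracy, rather than testing against vertical vectors.
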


\begin{proof}[Proof of Lemma~\ref{le:Yy_Theta}]
Pick $\Xx=(\Xx^1,\Xx^2)\in
T_{(z,\eta)} T^*\bar\Ll^\times\Zfrak$, then by
definition of $\Yy_\Theta$ and similar to the last two lines
of~(\ref{eq:hgjhghjbhj}), we obtain
\begin{equation*}
\begin{split}
   i_\Xx L_\Vv \pi^*\Theta
   &=i_\Xx i_{\Yy_\Theta}\Omega_\Theta
\\
   &=\Omega(\Yy_\Theta,\Xx)+d\Theta(\pi_*\Yy_\Theta,\pi_*\Xx)
\\
   &=\inner{\Yy_\Theta^2}{\Xx^1}-\inner{\Yy_\Theta^1}{\Xx^2}
   +d\Theta(\Yy_\Theta^1,\Xx^1) .
\end{split}
\end{equation*}
Now at a point $\Upsilon=(z,\eta)$ the left hand side evaluates to
$$
   \left(i_\Xx \underline{L_\Vv \pi^*}\Theta\right)_\Upsilon
   \stackrel{\text{(\ref{eq:hghjghy777-2})}}{=}
   \left(i_\Xx \underline{\pi^* L_\nu}\Theta\right)_\Upsilon
   \stackrel{\color{gray}3}{=}
   \left(L_\nu \Theta\right)_{\pi(\Upsilon)} d\pi|_\Upsilon \Xx
   \stackrel{\text{(\ref{eq:hghjghy77897-2})}}{=} 
   (L_\nu \Theta)_z\Xx^1
$$
where equality~3 is by definition of pull-back.
So the earlier computation yields
$$
   \inner{\Yy_\Theta^2}{\Xx^1}-\inner{\Yy_\Theta^1}{\Xx^2}
   +d\Theta(\Yy_\Theta^1,\Xx^1)
   =(L_\nu \Theta |_z) \Xx^1
   .
$$
For vectors of the form $\Xx=(0,\Xx^2)$ this
implies $-\inner{\Yy_\Theta^1}{\Xx^2}=0$ for every $\Xx^2$,
thus $\Yy_\Theta^1=0$. Hence
$\inner{\Yy_\Theta^2}{\Xx^1}=(L_\nu \Theta |_z) \Xx^1$, equivalently
$$
   \int_0^1\inner{\Yy_\Theta^2|_{(z,\eta)}(\tau)}
   {\Xx^1|_{(z,\eta)}(\tau)}_0\,d\tau
   \stackrel{\text{\rm(\ref{eq:BOV-Lie})}}{=}
   -
   \int_0^1 \tfrac{\Abs{z_0}^2}{\norm{z}^2}
   \dot\vartheta_{t_z(\tau)}|_{z_\tau}
   \Xx^1|_{(z,\eta)}(\tau)
   \; d\tau
   .
$$
This proves Lemma~\ref{le:Yy_Theta}.
\end{proof}

\boldmath
%%%%%%%%%%%%%%%%%%%%%%%%%%%%%%%%%%%
%%%%%%% Subsection  %%%%%%%%%%%%%%%%%%%
%%%%%%%%%%%%%%%%%%%%%%%%%%%%%%%%%%%
\subsection[Action $\Aa$ and Euler-Hamilton equation]
{Action $\Aa_\Hh^{\Lambda_\Theta}$ and
Euler-Hamilton equation}
\unboldmath

\begin{definition}
Motivated by~\cite[\S 4]{Frauenfelder:2026a}, we define
the \textbf{perturbed symplectic action functional}
on $T^*\bar\Ll^\times\Zfrak$ as
follows\footnote{$\Aa(\Upsilon)
  =\Lambda|_\Upsilon\Vv(\Upsilon)
  +\Theta_{\pi(\Upsilon)} d\pi|_\Upsilon \Vv(\Upsilon)
  -\Hh(\Upsilon)$
  }
$
   \Aa=\Aa_\Hh^{\Lambda_\Theta}
   :=i_\Vv\left(\Lambda+\pi^*\Theta\right)-\Hh
$.
More precisely, the action is of the form
\begin{equation}\label{eq:Aa_Hh}
\begin{split}
   \Aa\colon T^*\bar\Ll^\times\Zfrak
   &\to\R
\\
   \Upsilon=(z,\eta)
   &\mapsto\Lambda_\Upsilon \Upsilon^\prime
   +\Theta_z z^\prime
   -\Hh(z,\eta)
\\
   &\quad=
   \INNER{\eta+\Theta_z}{z^\prime}
   -\tfrac{\norm{\eta}^2}{8\norm{z}^2} 
   +\tfrac{1}{\norm{z}^2}
   =\Aa(z,\eta)
   .
\end{split}
\end{equation}
\end{definition}

We compute $d\Aa$ using Cartan's formula in equation two
\begin{equation*}
\begin{split}
   d\Aa
   &=di_\Vv\left(\Lambda+\pi^*\Theta\right)-d\Hh
\\
   &=L_\Vv\left(\Lambda+\pi^*\Theta\right)
   -i_\Vv d\left(\Lambda+\pi^*\Theta\right)+i_{\Xx_\Theta}\Omega_\Theta
\\
   &=L_\Vv\Lambda+L_\Vv \pi^*\Theta -i_\Vv\Omega_\Theta
   +i_{\Xx_\Theta}\Omega_\Theta
\\
%4
   &=L_\Vv \pi^*\Theta -i_\Vv\Omega_\Theta
   +i_{\Xx_\Theta}\Omega_\Theta
\\
   &= i_{\Yy_\Theta}\Omega_\Theta
   -i_\Vv\Omega_\Theta
   +i_{\Xx_\Theta}\Omega_\Theta
\\
   &= i_{(\Xx_\Theta+\Yy_\Theta-\Vv)}\,\Omega_\Theta .
\end{split}
\end{equation*}
Equation two is by definition of $\Xx_\Theta$.
Equation four uses that $L_\Vv\Lambda=0$ by rotation invariance of
$\Lambda$.
Equation five is Definition~\ref{def:Euler} of the Euler vector field.
This, together with the formulas~(\ref{eq:X}) for $\Xx_\Theta$
and~(\ref{eq:Yy}) for $\Yy_\Theta$, proves the first two displayed
formulas in the next lemma.

\begin{theorem}[critical points of $\Aa$]\label{thm:Crit-Aa_Theta}
The critical points of the action functional $\Aa$ are the solutions
of the \textbf{delay Euler-Hamilton equation}
$$
   \Vv
   =\Xx_\Theta+\Yy_\Theta .
$$
The delay Euler-Hamilton equation for
$(z,\eta)\in T^*\bar\Ll^\times\Zfrak$ is of the form
\begin{equation}\label{eq:EH-eqs-Theta}
\begin{split}
   \begin{pmatrix}
      z^\prime\\\eta^\prime
   \end{pmatrix}
   &=
   \tfrac{1}{4\norm{z}^2}
   \begin{pmatrix}
      \eta\\
      \tfrac{\norm{\eta}^2-8}{\norm{z}^2}\, z 
   \end{pmatrix}
   +
   \tfrac{1}{4\norm{z}^2}
   \begin{pmatrix}
      0\\
      \Zz_z\eta
      -4\Abs{z_0}^2\dot\vartheta_{t_z(\tau)}|_{z_\tau}
   \end{pmatrix}
   .
\end{split}
\end{equation}
Along the image of the canonical injection~(\ref{eq:proj-inj}),
i.e. $\eta=4\norm{z}^2z^\prime$, it holds that
$$
   \tfrac{1}{4\norm{z}^2}
   \left(
   \Zz_z\eta-4\Abs{z_0}^2\dot\vartheta_{t_z(\tau)}|_{z_\tau}
   \right)
   \stackrel{\text{\rm(\ref{eq:Yy})}}{=}
   \Zz_z z^\prime+\Yy_\Theta^2|_z
   \stackrel{\text{\rm(\ref{eq:Lorentz-BOV})}}{=}
   \grad\Mm|_z
   .
$$
Equivalently to~(\ref{eq:EH-eqs-Theta})
the delay Euler-Hamilton equation is the
first order DDE
\begin{equation}\label{eq:EH-eqs}
\begin{split}
   z^\prime
   &=\tfrac{1}{4\norm{z}^2}\;\eta
\\
   \eta^\prime
%1
   &=
   \tfrac{\norm{\eta}^2-8}{4\norm{z}^4} z
   -\tfrac{2z}{\norm{z}^4}
   \int_0^1
{\textstyle
   \int_0^s\Abs{z_\sigma}^2 d\sigma
}
   \cdot
   \inner{\dot{\aaa}_{t_z(s)}|_{z_s}}{z^\prime_s}_0\; ds
   \\
   &\quad
% T4 magnetic M1
   +\tfrac{2 z}{\norm{z}^2}
   \int_{\sigma=\tau}^1
   \inner{\dot{\aaa}_{t_z(\sigma)}|_{z_\sigma}}{z^\prime_\sigma}_0\;
   d\sigma
% T5 magnetic M3
   -\tfrac{\Abs{z}^2}{\norm{z}^2}\dot{\aaa}_{t_z}|_z
% T6 magnetic M4
   -\bigl(\rot\,\aaa_{t_z}|_z\bigr)\; j_0 z^\prime
   .
\end{split}
\end{equation}
\end{theorem}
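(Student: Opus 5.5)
The Cartan computation preceding the theorem already gives $d\Aa=i_{(\Xx_\Theta+\Yy_\Theta-\Vv)}\Omega_\Theta$. Since $\Omega_\Theta$ is weakly nondegenerate by Lemma~\ref{le:tw-sympl-form}, $d\Aa|_\Upsilon=0$ holds if and only if $\Vv(\Upsilon)=\Xx_\Theta(\Upsilon)+\Yy_\Theta(\Upsilon)$. Weak nondegeneracy is exactly enough here: $i_W\Omega_\Theta=0$ forces $W=0$.

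The Cartan formula is used formally on loop space, so I would check it directly. I would compute $d\Aa$ from~(\ref{eq:Aa_Hh}) by differentiating each term.
\begin{itemize}
\item The term $\INNER{\eta}{z'}$ contributes $\INNER{\hat\eta}{z'}-\INNER{\eta'}{\hat z}$ after integrating by parts. The boundary term vanishes in the twisted setting, by the same footnote argument used for $d\Kk$.
\item The term $\Theta_zz'=\Mm(z)$ contributes $\INNER{\grad\Mm|_z}{\hat z}$ by Lemma~\ref{le:gradient-Mm} with $f=0$.
\item The term $-\Hh$ contributes $-\INNER{\p_\eta\Hh}{\hat\eta}-\INNER{\p_z\Hh}{\hat z}$, with $\p_\eta\Hh=\eta/(4\norm{z}^2)$ and $\p_z\Hh=(8-\norm{\eta}^2)z/(4\norm{z}^4)$.
\end{itemize}
Setting the $\hat\eta$-part to zero gives $z'=\eta/(4\norm{z}^2)$. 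Setting the $\hat z$-part to zero gives $\eta'=\tfrac{\norm{\eta}^2-8}{4\norm{z}^4}z+\grad\Mm|_z$. Inserting the formula of Lemma~\ref{le:gradient-Mm} then yields~(\ref{eq:EH-eqs}) verbatim. This direct route proves both the critical point characterization and the explicit DDE, and it avoids any analytic issue with $L_\Vv\Lambda=0$.

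The form~(\ref{eq:EH-eqs-Theta}) follows by substituting Lemma~\ref{le:Xx_Theta} and Lemma~\ref{le:Yy_Theta} into $\Vv=\Xx_\Theta+\Yy_\Theta$ componentwise. The first component gives $z'=\eta/(4\norm{z}^2)$. Only the second components carry $\Zz_z\eta$ and the Euler term.

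It remains to show that, along $\eta=4\norm{z}^2z'$, this second-component expression equals $\grad\Mm|_z$. That identity is what reconciles the two forms, and I expect it to be the main obstacle. Substituting $\eta=4\norm{z}^2z'$ turns the left side into $\Zz_zz'+\Yy^2_\Theta|_z$. I would then pair this with an arbitrary $\xi$ and use~(\ref{eq:Zz}) together with the definition of the Euler field. This gives
$\INNER{\Zz_zz'+\Yy_\Theta^2}{\xi}=-d\Theta_z(z',\xi)+(L_\nu\Theta)_z\xi$.
By Cartan's formula on $\bar\Ll^\times\Zfrak$, $(L_\nu\Theta)\xi=d(i_\nu\Theta)\xi+d\Theta(\nu,\xi)=d\Mm|_z\xi+d\Theta(z',\xi)$, so the sum is $d\Mm|_z\xi$, which is the claim~(\ref{eq:Lorentz-BOV}). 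Making this rigorous requires the explicit expressions~(\ref{eq:dTheta}) and~(\ref{eq:BOV-Lie}). I would verify the identity term by term against Lemma~\ref{le:gradient-Mm}:
\begin{itemize}
\item the $\rot\,\aaa$ term comes from $d\Theta(z',\xi)$;
\item the $\dot\aaa$-integral terms come from the variation of $t_z$;
\item the local $\dot\aaa$ term comes from the Euler field.
\end{itemize}
The direct computation in the previous paragraph already guarantees consistency, because both forms describe the same critical set.
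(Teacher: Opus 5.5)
Your proof is correct, and its main step differs from the paper's. The paper obtains the critical point equation from the formal Cartan computation $d\Aa=i_{(\Xx_\Theta+\Yy_\Theta-\Vv)}\Omega_\Theta$ on loop space, which also uses $L_\Vv\Lambda=0$. It reads off~(\ref{eq:EH-eqs-Theta}) from~(\ref{eq:X}) and~(\ref{eq:Yy}), and only then converts to~(\ref{eq:EH-eqs}) via~(\ref{eq:Lorentz-BOV}) and Lemma~\ref{le:gradient-Mm}.

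You instead differentiate $\Aa$ directly: integration by parts in $\INNER{\eta}{z^\prime}$, Lemma~\ref{le:gradient-Mm} for $\Theta_zz^\prime=\Mm(z)$, and the explicit partials of $\Hh$. This identifies $\Crit\,\Aa$ with the solution set of~(\ref{eq:EH-eqs}) without passing through $\Omega_\Theta$ or any Cartan formula on loop space. The link to $\Vv=\Xx_\Theta+\Yy_\Theta$ then rests only on the identity $\Zz_zz^\prime+\Yy_\Theta^2|_z=\grad\Mm|_z$ at $\eta=4\norm{z}^2z^\prime$. Any solution of either system satisfies $\eta=4\norm{z}^2z^\prime$ by its first component, so this is enough. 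That identity is Cartan's formula for $\Theta$ alone. The paper proves it exactly as you propose, by comparing~(\ref{eq:dTheta}) and~(\ref{eq:BOV-Lie}) with Lemma~\ref{le:gradient-Mm} in Example~\ref{ex:BOV-repara}. Your route is more economical and rigorous given that one explicit check. The paper's route is more conceptual, since it displays the equation as a Hamiltonian flow corrected by Lorentz and Euler forces.

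Two small corrections to your sketch:
\begin{itemize}
\item The local term $-\frac{\Abs{z_\tau}^2}{\norm{z}^2}\dot\aaa_{t_z(\tau)}|_{z_\tau}$ of $\grad\Mm|_z$ does not come from the Euler field alone. The field $\Yy_\Theta^2$ carries only the constant coefficient $\Abs{z_0}^2$. The remaining coefficient $(dt|_zz^\prime)_\tau=\frac{\Abs{z_\tau}^2-\Abs{z_0}^2}{\norm{z}^2}$ comes from the term $\dot\vartheta\,(dt|_z\zeta)\,\xi$ of $d\Theta_z(\zeta,\xi)$ at $\zeta=z^\prime$, that is, from $\Zz_zz^\prime$.
\item Your closing remark that equal critical sets guarantee consistency cannot replace the explicit check. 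It presupposes the formal Cartan computation you set out to avoid. At best it gives agreement on solutions, whereas the statement asserts the identity for every $z$ with $\eta=4\norm{z}^2z^\prime$.
\end{itemize}
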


\begin{proof}
The formula for $\grad\Mm$ in Lemma~\ref{le:gradient-Mm}
proves~(\ref{eq:EH-eqs}), so Theorem~\ref{thm:Crit-Aa_Theta}.
\end{proof}

\boldmath
%%%%%%%%%%%%%%%%%%%%%%%%%%%%%%%%%%%
%%%%%%% Subsection  %%%%%%%%%%%%%%%%%%%
%%%%%%%%%%%%%%%%%%%%%%%%%%%%%%%%%%%
\subsection{Euler-Hamilton and Lagrange solutions correspond}
\label{sec:EH=Lag}
\unboldmath

In this section we prove Theorem~\ref{thm:B}
from the introduction.
We need to show that there is a $1$-to-$1$ correspondence
between the critical point equation~(\ref{eq:EH-eqs-Theta}) of $\Aa$
in Theorem~\ref{thm:Crit-Aa_Theta}
and the critical point equation~(\ref{eq:z}) of $\Bb$ in
Theorem~\ref{thm:crit-Ss_Ll}.

%%%%%%%%%%%%%%%%%%%%%%%%%%%%%%%%%%%
%%%%%%% Subsubsection  %%%%%%%%%%%%%%%%
%%%%%%%%%%%%%%%%%%%%%%%%%%%%%%%%%%%
\subsubsection*{Euler-Hamilton implies Lagrange}

\begin{lemma}\label{le:Ham-Lag}
If $(z,\eta)$ solves the first order Euler-Hamilton
DDE~(\ref{eq:EH-eqs-Theta}), then the base part $z$ solves
the regularized second order Lagrangian DDE~(\ref{eq:z}).
\end{lemma}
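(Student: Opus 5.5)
The first equation of~(\ref{eq:EH-eqs}) says $\eta=4\norm{z}^2z^\prime$, so $(z,\eta)$ lies in the image of the canonical injection~(\ref{eq:proj-inj}). Substituting this into the second equation should produce exactly~(\ref{eq:z}). The order of steps is as follows.

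First, I would use the identity stated in Theorem~\ref{thm:Crit-Aa_Theta}. Along $\eta=4\norm{z}^2z^\prime$ the magnetic part $\tfrac{1}{4\norm{z}^2}(\Zz_z\eta-4\Abs{z_0}^2\dot\vartheta_{t_z}|_z)$ equals $\grad\Mm|_z$. Hence the second equation becomes
\[
   \eta^\prime=\tfrac{\norm{\eta}^2-8}{4\norm{z}^4}\,z+\grad\Mm|_z ,
\]
which is the form~(\ref{eq:EH-eqs}) with $\grad\Mm$ given by Lemma~\ref{le:gradient-Mm}.

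Second, I would differentiate $\eta=4\norm{z}^2z^\prime$ in $\tau$. The factor $\norm{z}^2$ is a constant, independent of $\tau$, so this gives $\eta^\prime=4\norm{z}^2z^{\prime\prime}$. In passing I would note that $z$ is smooth, because $(z,\eta)$ is a pair of smooth loops. I would also note that the twisted boundary conditions $(z_{\tau+1},\eta_{\tau+1})=\pm(z_\tau,\eta_\tau)$ are compatible with this relation, since the factor is real.

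Third, I would insert $\norm{\eta}^2=16\norm{z}^4\norm{z^\prime}^2$. The right hand side then becomes
\[
   \Bigl(4\norm{z^\prime}^2-\tfrac{2}{\norm{z}^4}\Bigr)z+\grad\Mm|_z .
\]
Dividing by $4\norm{z}^2$ yields
\[
   z^{\prime\prime}=z\Bigl(\tfrac{\norm{z^\prime}^2}{\norm{z}^2}-\tfrac{1}{2\norm{z}^6}\Bigr)+\tfrac{1}{4\norm{z}^2}\grad\Mm|_z ,
\]
which is the first line of~(\ref{eq:z}). The expanded second form then follows directly from Lemma~\ref{le:gradient-Mm}. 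Equivalently, the result is $\grad\Bb|_z=0$ in the form of Lemma~\ref{le:grad-Bb}.

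The main obstacle is the first step, namely the identification of $\Zz_z z^\prime+\Yy^2_\Theta|_z$ with $\grad\Mm|_z$. It relies on the formula~(\ref{eq:Lorentz-BOV}) relating $d\Theta$, $L_\nu\Theta$ and $d\Mm$. I would justify it by Cartan's formula, $d(i_\nu\Theta)=L_\nu\Theta-i_\nu d\Theta$, together with $i_\nu\Theta=\Mm$ in the periodic case $f=0$. This gives $\inner{\grad\Mm}{\xi}=(L_\nu\Theta)_z\xi+d\Theta_z(\xi,z^\prime)$. I would then combine this with definition~(\ref{eq:Zz}) of $\Zz_z$ and with Lemma~\ref{le:Yy_Theta}. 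The sign conventions, and the rigorous validity of Cartan's formula on the loop space, require care. As a cross-check, I would compare the resulting expression term by term with the rigorously derived formula of Lemma~\ref{le:gradient-Mm}.
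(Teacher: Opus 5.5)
Your proposal is correct and follows the paper's proof. You use the first component to get $\eta=4\norm{z}^2z^\prime$, differentiate (with $\norm{z}^2$ constant in $\tau$), substitute the second component, and divide by $4\norm{z}^2$ to reach the first line of~(\ref{eq:z}). The step you flag as the main obstacle, identifying the magnetic term with $\grad\Mm|_z$ along $\eta=4\norm{z}^2z^\prime$, is already established in Theorem~\ref{thm:Crit-Aa_Theta} via~(\ref{eq:Lorentz-BOV}) and Example~\ref{ex:BOV-repara}. The paper simply uses it, so you can cite it rather than re-derive it.
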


\begin{proof}
Suppose $(z,\eta)$ solves~(\ref{eq:EH-eqs-Theta}).
Differentiate component one with respect to time $\tau$ and then, in
equality~2, substitute $\eta^\prime$ according to component two,
use in addition that $\eta=4\norm{z}^2 z^\prime$ by component one
\begin{equation*}
\begin{split}
   z^{\prime\prime}
%1
   &=\frac{1}{4\norm{z}^2} \eta^\prime
%2
   \stackrel{{\color{gray}2}}{=}
   \frac{\norm{z^\prime}^2 z}{\norm{z}^2}
   -\frac{z}{2\norm{z}^6}
   +\frac{\grad\Mm|_z}{4\norm{z}^2}.
\end{split}
\end{equation*}
This proves~(\ref{eq:z}), hence Lemma~\ref{le:Ham-Lag}.
\end{proof}

%%%%%%%%%%%%%%%%%%%%%%%%%%%%%%%%%%%
%%%%%%% Subsubsection  %%%%%%%%%%%%%%%%
%%%%%%%%%%%%%%%%%%%%%%%%%%%%%%%%%%%
\subsubsection*{Lagrange implies Euler-Hamilton}

Here we need to inject appropriately a Lagrange solution $z$
to the Hamiltonian side which requires a pair. 
We define the injection by
$$
   \iota(z):=(z,\eta_z)
   ,\quad
   \eta_z:=4\norm{z}^2 z^\prime
   .
$$

\begin{lemma}\label{le:Lag-Ham}
If $z$ solves the regularized second order Lagrangian DDE~(\ref{eq:z}),
then $(z,\eta_z)$ solves the first order Euler-Hamilton
DDE~(\ref{eq:EH-eqs-Theta}).
\end{lemma}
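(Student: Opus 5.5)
The plan is to verify the two components of (\ref{eq:EH-eqs-Theta}) separately for the pair $(z,\eta_z)$ with $\eta_z:=4\norm{z}^2z^\prime$. The first component, $z^\prime=\tfrac{1}{4\norm{z}^2}\eta_z$, holds by the very definition of $\eta_z$. Everything therefore reduces to the second component, and the idea is to reverse the computation in the proof of Lemma~\ref{le:Ham-Lag}.

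For the second component, note that $\norm{z}$ is a constant independent of $\tau$. Hence $\eta_z^\prime=4\norm{z}^2z^{\prime\prime}$. We then substitute $z^{\prime\prime}$ from the first line of the regularized DDE~(\ref{eq:z}), which gives
\[
   \eta_z^\prime
   =4\norm{z^\prime}^2 z-\tfrac{2}{\norm{z}^4}z+\grad\Mm|_z .
\]
Using $\norm{\eta_z}^2=16\norm{z}^4\norm{z^\prime}^2$, the first two terms can be rewritten as $\tfrac{\norm{\eta_z}^2-8}{4\norm{z}^4}z$. This is exactly the first summand of the second component of (\ref{eq:EH-eqs-Theta}).

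It remains to identify $\grad\Mm|_z$ with the magnetic part $\tfrac{1}{4\norm{z}^2}\bigl(\Zz_z\eta_z-4\abs{z_0}^2\dot\vartheta_{t_z}|_z\bigr)$. Here we invoke the identity stated in Theorem~\ref{thm:Crit-Aa_Theta}, which holds along the image of the canonical injection: there this expression equals $\Zz_z z^\prime+\Yy^2_\Theta|_z=\grad\Mm|_z$, by (\ref{eq:Yy}) and (\ref{eq:Lorentz-BOV}). Our pair $(z,\eta_z)=\iota(z)$ lies in that image by construction, so the identity applies. Equivalently, one can compare directly with the explicit form (\ref{eq:EH-eqs}) together with Lemma~\ref{le:gradient-Mm}, which is term-by-term the same as (\ref{eq:z}) multiplied by $4\norm{z}^2$.

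Two small points remain. First, $(z,\eta_z)$ must be an admissible element of $T^*\bar\Ll^\times\Zfrak$. Smoothness is clear. The twist condition $\eta_{\tau+1}=\pm\eta_\tau$ is inherited from $z^\prime_{\tau+1}=\pm z^\prime_\tau$, and the construction is compatible with the sign involution because $\eta_{-z}=-\eta_z$. Second, the main obstacle is making sure the identification of $\Zz_z z^\prime+\Yy^2_\Theta$ with $\grad\Mm$ is legitimate. That identification relies on (\ref{eq:Lorentz-BOV}) and on the Cartan-formula computation of $d\Mm$, which the paper presents only formally. If needed, I would instead justify it by checking directly that the right-hand side of (\ref{eq:EH-eqs}) agrees with $4\norm{z}^2$ times the right-hand side of (\ref{eq:z}), using the rigorously proved Lemma~\ref{le:gradient-Mm}.
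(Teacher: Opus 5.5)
Your proposal is correct and follows the paper's proof. The first component holds by definition of $\eta_z$, and the second comes from differentiating $\eta_z=4\norm{z}^2z^\prime$, substituting $z^{\prime\prime}$ from~(\ref{eq:z}), rewriting with $z^\prime=\eta_z/(4\norm{z}^2)$, and using the identity in Theorem~\ref{thm:Crit-Aa_Theta} along the image of $\iota$ to match $\grad\Mm|_z$ with the magnetic part of~(\ref{eq:EH-eqs-Theta}), a step the paper leaves implicit and you state explicitly.
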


\begin{proof}
Suppose $z$ solves~(\ref{eq:z}).
First component of~(\ref{eq:EH-eqs-Theta}): Indeed
$z^\prime=\frac{1}{4\norm{z}^2} \eta_z$ by definition of $\eta_z$.
Second component:
Differentiate $\eta_z$ and then, in equality~2, replace $z^{\prime\prime}$
according to~(\ref{eq:z}), using in addition that
$z^\prime=\frac{1}{4\norm{z}^2} \eta_z$ to get
\begin{equation*}
\begin{split}
   \eta_z^\prime
%1
   &=4\norm{z}^2 z^{\prime\prime}
%2
   \stackrel{{\color{gray}2}}{=}
   4\norm{z}^2
   \left(
   \frac{\norm{\frac{1}{4\norm{z}^2} \eta_z}^2 z}{\norm{z}^2}
   -\frac{z}{2\norm{z}^6}
   +\frac{\grad\Mm|_z}{4\norm{z}^2}
   \right)
\end{split}
\end{equation*}
which is exactly the second component of~(\ref{eq:EH-eqs-Theta}).
This proves Lemma~\ref{le:Lag-Ham}.
\end{proof}

\boldmath
%%%%%%%%%%%%%%%%%%%%%%%%%%%%%%%%%%%
%%%%%%% Subsection  %%%%%%%%%%%%%%%%%%%
%%%%%%%%%%%%%%%%%%%%%%%%%%%%%%%%%%%
\subsection{Lagrangian action dominates Hamiltonian action}
\unboldmath

\begin{lemma}[Lagrangian domination]\label{le:Lag-domin}
There are the identities
\begin{equation*}
\begin{split}
   \Bb(z)
   &=\Aa(z,\eta)
   +\frac12\Norm{2\norm{z}\, z^\prime-\frac{\eta}{2\norm{z}}}^2\\
   &=\Aa(z,\eta)
   +
   \frac12\Norm{{\color{brown} 4\norm{z}^2}\, z^\prime-\eta}^2
   {\color{brown}\tfrac{1}{ 4\norm{z}^2}}
\end{split}
\end{equation*}
for every pair of loops
$(z,\eta)\in T^*\bar\Ll^\times\Zfrak$.
\end{lemma}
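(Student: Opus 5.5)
The identity is a completing-the-square computation in the $L^2$ inner product, done pointwise in $z$ and $\eta$. No analysis is needed beyond the definitions (\ref{eq:Ss_Ll}) of $\Bb$ and (\ref{eq:Aa_Hh}) of $\Aa$. I would subtract the two functionals. The magnetic terms cancel exactly, since in the periodic setting of this section $f=0$ and $\Mm(z)=\Theta_z z^\prime=\INNER{\Theta_z}{z^\prime}$, which is precisely the $\Theta$-summand in $\Aa$. The Kepler terms $1/\norm{z}^2$ also cancel, as they appear with the same sign in both. Hence
\begin{equation*}
   \Bb(z)-\Aa(z,\eta)
   =2\norm{z}^2\norm{z^\prime}^2-\INNER{\eta}{z^\prime}
   +\frac{\norm{\eta}^2}{8\norm{z}^2} ,
\end{equation*}
and it remains to recognize the right hand side as a square.

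Set $a:=2\norm{z}\,z^\prime$ and $b:=\eta/(2\norm{z})$. Both are well defined loops because $\norm{z}\not=0$ on $\bar\Ll^\times\Zfrak$. Expanding gives
\begin{equation*}
   \tfrac12\norm{a-b}^2
   =\tfrac12\bigl(4\norm{z}^2\norm{z^\prime}^2
   -2\INNER{z^\prime}{\eta}+\tfrac{\norm{\eta}^2}{4\norm{z}^2}\bigr)
   =2\norm{z}^2\norm{z^\prime}^2-\INNER{\eta}{z^\prime}
   +\tfrac{\norm{\eta}^2}{8\norm{z}^2},
\end{equation*}
which is exactly the difference above. This proves the first identity.

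The second identity follows by pulling the real scalar $1/(2\norm{z})$ out of the norm:
\begin{equation*}
   a-b=\tfrac{1}{2\norm{z}}\bigl(4\norm{z}^2 z^\prime-\eta\bigr),
\end{equation*}
so $\norm{a-b}^2=\tfrac{1}{4\norm{z}^2}\norm{4\norm{z}^2z^\prime-\eta}^2$.

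Finally, I would note that both sides are invariant under the sign involution $(z,\eta)\mapsto-(z,\eta)$, so the identity is well posed on the quotient $T^*\bar\Ll^\times\Zfrak$. The only point needing care is bookkeeping, namely confirming that the $\Theta$-term in $\Aa$ is literally $\Mm(z)$ (using $f=0$) and that the Kepler term has the same sign in both functionals. I expect no real obstacle. As a consequence, $\Bb(z)\ge\Aa(z,\eta)$, with equality exactly on the image of the canonical injection $\iota$.
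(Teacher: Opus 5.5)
Your proposal is correct and follows the same route as the paper: expand $\tfrac12\Norm{2\norm{z}\,z^\prime-\eta/(2\norm{z})}^2$, observe that the Kepler and $\Theta_z z^\prime=\Mm(z)$ terms (with $f=0$) match, and read off $\Bb(z)$. You run the computation as a subtraction rather than an addition, and you also write out the rescaling step for the second identity, which the paper leaves implicit.
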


\begin{proof}
By definition~(\ref{eq:Aa_Hh}) of $\Aa$, just multiplying out the
inner product we get
\begin{equation*}
\begin{split}
   &\Aa(z,\eta)
   +\frac12\Norm{2\norm{z}z^\prime-\frac{\eta}{2\norm{z}}}^2
\\
   &=\inner{\eta}{z^\prime}
   -\frac{\norm{\eta}^2}{8\norm{z}^2}+\frac{1}{\norm{z}^2}
   +\Theta_z z^\prime
   +\frac12\INNER{2\norm{z}z^\prime-\frac{\eta}{2\norm{z}}}
   {2\norm{z}z^\prime-\frac{\eta}{2\norm{z}}}
\\
   &=2\norm{z}^2\norm{z^\prime}^2
   +\frac{1}{\norm{z}^2}
   +\Theta_z z^\prime
\\
   &=\Bb(z)
\end{split}
\end{equation*}
where the last step is by definition~(\ref{eq:Ss_Ll}) of $\Bb$.
This proves Lemma~\ref{le:Lag-domin}.
\end{proof}

\begin{corollary}[equal values on critical points]
\label{cor:crit-values-coincide}
Both functionals coincide
  \[
     \Bb(z)=\Aa(z,\eta_z),\qquad \eta_z
      :={\color{brown} 4\norm{z}^2}\, z^\prime ,
  \]
on critical points: solutions $z$ of~(\ref{eq:z}),
equivalently, zeroes $(z,\eta_z)$ of~(\ref{eq:EH-eqs-Theta}).
\end{corollary}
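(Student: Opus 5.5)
The corollary follows by specializing Lemma~\ref{le:Lag-domin} to $\eta=\eta_z:=4\norm{z}^2 z^\prime$. The identity there holds for every pair $(z,\eta)\in T^*\bar\Ll^\times\Zfrak$. Its defect term is
$$
   \tfrac12\Norm{4\norm{z}^2 z^\prime-\eta}^2\tfrac{1}{4\norm{z}^2}.
$$
This term vanishes identically when $\eta=\eta_z$. Hence $\Bb(z)=\Aa(z,\eta_z)$ for \emph{every} $z\in\bar\Ll^\times\Zfrak$, not only for critical points, and in particular on $\Crit\,\Bb$.

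The remaining work is to justify the two descriptions of the critical points in the statement. First, if $z$ solves~(\ref{eq:z}), then by Theorem~\ref{thm:crit-Ss_Ll} it is a critical point of $\Bb$. By Lemma~\ref{le:Lag-Ham} the pair $(z,\eta_z)=\iota(z)$ then solves~(\ref{eq:EH-eqs-Theta}), and by Theorem~\ref{thm:Crit-Aa_Theta} this is a critical point of $\Aa$.

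Conversely, let $(z,\eta)$ be a zero of~(\ref{eq:EH-eqs-Theta}). Its first component reads $z^\prime=\eta/(4\norm{z}^2)$, which forces $\eta=\eta_z$. Lemma~\ref{le:Ham-Lag} then shows that $z$ solves~(\ref{eq:z}). So the two descriptions match, and on both sets the values agree by the first paragraph.

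No real obstacle is expected. The only care needed is well-definedness on the quotient by the sign involution. Both $\Bb$ and $\Aa$ are invariant under it, and $\eta_{-z}=-\eta_z$, so $\iota$ is equivariant and descends to the quotient. Once that is noted, the statement is immediate from Lemma~\ref{le:Lag-domin}.
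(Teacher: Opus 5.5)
Your proof is correct and follows the paper's own argument: specialize Lemma~\ref{le:Lag-domin} to $\eta=\eta_z$ so that the defect term vanishes, then use Section~\ref{sec:EH=Lag} (Lemmas~\ref{le:Ham-Lag} and~\ref{le:Lag-Ham}) to match the two descriptions of the critical points. Your further observation is also correct: $\Bb=\Aa\circ\iota$ holds for every $z$, not only on critical points, while the paper states it only along critical points.
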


\begin{proof}
Lemma~\ref{le:Lag-domin} and Section~\ref{sec:EH=Lag}.
\end{proof}

In terms of the projection $\pi$ and injection $\iota$
in~(\ref{eq:proj-inj}) the corollary tells that
\[
   \Bb=\Aa\circ\iota
   ,\qquad
   \Bb\circ\pi=\Aa ,
\]
\underline{along critical points} of $\Bb$, respectively of $\Aa$.

\boldmath
%%%%%%%%%%%%%%%%%%%%%%%%%%%%%%%%%%%
%%%%%%% Subsubsection  %%%%%%%%%%%%%%%%
%%%%%%%%%%%%%%%%%%%%%%%%%%%%%%%%%%%
\subsubsection{The diffeomorphism $\LL$}
\unboldmath

By~(\ref{eq:Ss_Ll}) with $\Mm(z)=\Theta_z z^\prime$,
see~(\ref{eq:bhjvbhj7745}), and
$\inner{\cdot}{\cdot}_z:={\color{brown} 4\norm{z}^2}
\inner{\cdot}{\cdot}$ we have
$$
   \Bb=\Kk-\Uu+\Mm
   \colon\bar\Ll^\times\Zfrak\to\R
   ,\quad
   z\mapsto
   \tfrac12 \inner{z^\prime}{z^\prime}_z
   +\frac{1}{\norm{z}^2}                                         % - Kepler
   +\Theta_z z^\prime
   .
$$
The functional defined by
\begin{equation}\label{eq:Ll}
   \Ll=\Ll^\Theta\colon T\bar\Ll^\times\Zfrak\to\R
   ,\quad
   (z,\xi)\mapsto
   \tfrac12\INNER{\xi}{\xi}_z
   +\frac{1}{\norm{z}^2}
   +\Theta_z \xi
\end{equation}
naturally extends $\Bb(z)=\Ll(z, z^\prime)$ in the same way
as the classical action $\Ss_L(q)=\Ll_L(q,\dot q)$ is extended
by a corresponding functional $\Ll_L(q,v)$.

In the classical case a fiberwise strictly convex Lagrange
function $L$ on the tangent bundle determines a function $H$
on the cotangent bundle: resolve $\frak{p}:=d_{\frak{v}}L(r,\frak{v})$
for $\frak{v}$ and substitute the obtained $\frak{v}=\frak{v}(\frak{p})$
in the Legendre identity
\[
   \inner{\frak{p}}{\frak{v}}=L(r,\frak{v})+H(r,\frak{p})
   .
\]
Returning to the non-local situation where the manifold is loop space
and $(z,\xi)$ and $(z,\eta)$ are pairs of loops, an analogous approach yields
\[
   \eta:=d_\xi\Ll(z,\xi)={\color{brown} 4\norm{z}^2} \xi+\Theta_z
   ,\qquad
   \xi=\tfrac{\eta-\Theta_z}{{\color{brown}4\norm{z}^2}}
   ,\qquad
   d_{\xi\xi}\Ll(z,\xi)={\color{brown} 4\norm{z}^2} >0
   .
\]
The \textbf{non-local Hamiltonian function} is then defined by
\[
   \Hh^\Theta(z,\eta):=\inner{\eta}{\xi}-\Ll^\Theta(z,\xi)
\]
and with
$\inner{\cdot}{\cdot}^z:=\frac{1}{{\color{brown} 4\norm{z}^2}}\inner{\cdot}{\cdot}$
given by the formula
\begin{equation*}
\begin{split}
   \Hh^\Theta\colon T^*\bar\Ll^\times\Zfrak&\to\R
\\
   (z,\eta)&\mapsto
   \frac12\inner{\eta-\Theta_z}{\eta-\Theta_z}^z -\frac{1}{\norm{z}^2}
   .
\end{split}
\end{equation*}

Given the functional $\Ll$ in~(\ref{eq:Ll}), define the non-local analogue of
the diffeomorphism introduced in~\cite[p.\,1891]{Abbondandolo:2015c},
in the local context, by the formula
\begin{equation*}
\begin{split}
   \LL\colon T\bar\Ll^\times\Zfrak
   &\to T^*\bar\Ll^\times\Zfrak
   \\
   (z,\xi)&\mapsto\left(z,d_\xi\Ll(z, z^\prime+\xi)\right)=:(z,\eta)
\end{split}
\end{equation*}
where
\[
   \eta_\xi:=d_\xi\Ll(z, z^\prime+\xi)
   ={\color{brown} 4\norm{z}^2}\,  (z^\prime+\xi)+\Theta_z
\]
Note that since the inverse is given by
\begin{equation*}
\begin{split}
   \LL^{-1}(z,\eta)
   =\left(z,\frac{\eta-\Theta_z}{{\color{brown} 4\norm{z}^2}}-z^\prime\right)
   =:(z,\xi)
   ,
\end{split}
\end{equation*}
the solutions $(z,\eta)$ of the Euler-Hamilton
equations~(\ref{eq:EH-eqs-Theta})
are zeroes of the fiber translated map
$\LL^{-1}(\cdot,\cdot+\tfrac{\Theta_z}{4\norm{z}^2})$.

\medskip
As in the ODE case~\cite{Abbondandolo:2015c}, also in the present delay
equation situation both functionals are related through the maps
$\iota(z)=(z,4\norm{z}^2 z^\prime)$ and $\pi(z,\eta)=z$
in~(\ref{eq:proj-inj}), Lemma~\ref{le:Lag-domin}, in the form
\begin{equation*}
\begin{split}
   \Bb\circ\pi(z,\eta)
   =\Aa(z,\eta)+\Uu^*(z,\eta)
   ,\qquad
   \Uu^*(z,\eta):=\tfrac12\Norm{\iota(z)-\eta}^2{\color{brown} \tfrac{1}{4\norm{z}^2}}
\end{split}
\end{equation*}
for every $(z,\eta)\in T^*\bar\Ll^\times\Zfrak$.
Observe that the map $\Uu^*\ge 0$ vanishes precisely along
the critical points.

With the non-negative functional $\Uu$ defined and given by
\[
   \Uu(z,\xi):=\Uu^*\circ\LL(z,\xi)
   =\tfrac12\INNER{\xi+\tfrac{\Theta_z}{4\norm{z}^2}}
   {\xi+\tfrac{\Theta_z}{4\norm{z}^2}}_z\ge 0
\]
the functionals $\Aa$ and $\Bb$ are related by the formula
\begin{equation*}
\begin{split}
   \Aa\circ\LL(z,\xi)
   &=\Bb(z)-\Uu(z,\xi)\\
   &\;{\color{gray} =\tfrac12\INNER{z^\prime}{z^\prime}_z+\frac{1}{\norm{z}^2}
   +\Theta_z z^\prime
   -\tfrac12\INNER{\xi+\tfrac{\Theta_z}{4\norm{z}^2}}
   {\xi+\tfrac{\Theta_z}{4\norm{z}^2}}_z}
   .
\end{split}
\end{equation*}

\appendix
%%%%%%%%%%%%%%%%%%%%%%%%%%%%%%%%%%%
%%%%%%%%%%%%%%%%%%%%%%%%%%%%%%%%%%%
%%%%%%% Appendix:  %%%%%%%%%%%%%%%%%%%
%%%%%%%%%%%%%%%%%%%%%%%%%%%%%%%%%%%
%%%%%%%%%%%%%%%%%%%%%%%%%%%%%%%%%%%

\newpage 
\boldmath
%%%%%%%%%%%%%%%%%%%%%%%%%%%%%%%%%%%
%%%%%%%%%%%%%%%%%%%%%%%%%%%%%%%%%%%
%%%%%%% Section:  %%%%%%%%%%%%%%%%%%%%%
%%%%%%%%%%%%%%%%%%%%%%%%%%%%%%%%%%%
%%%%%%%%%%%%%%%%%%%%%%%%%%%%%%%%%%%
\section[Twisted symplectic forms on Banach manifolds]
{Twisted symplectic forms}
\label{sec:tw-sympl-form}
\unboldmath

Let $N$ be a manifold, either of finite dimension $n$
or a Banach manifold, see e.g.~\cite{lang:2001a}.
The elements of the cotangent bundle $T^*N$ are the pairs $z=(q,p)$
where $q\in N$ and $p\in T_q^*N$.
The foot-point projection to the base is the map
$\pi \colon T^*N\to N$, $z=(q,p)\mapsto q$.
The derivative is a linear map
$d\pi |_z\colon T_zT^* N\to T_qN$.
The kernel $V_z:=\ker d\pi |_z$ is called the vertical subspace of
$T_zT^*N$.
There is the canonical identification
$$
   T_q^* N\stackrel{\simeq}{\longrightarrow} V_{(q,p)}\subset T_{(q,p)} T^*N
   ,\quad
   \nu\mapsto \left.\tfrac{d}{dt}\right|_{t=0}
   (q,p+t\nu)=(0,\nu)
   .
$$

\begin{definition}\label{def:Liouville-form}
The \textbf{canonical \boldmath$1$-form} at a point
$z=(q,p)\in T^* N$ applied to tangent vector $\zeta\in T_zT^*N$ is
defined by foot-point evaluation $\lambda_z \zeta:=p(d\pi |_z\zeta)$.
\end{definition}

\begin{definition}[exterior derivative]
\label{def:exterior-derivative}
For a $1$-form $\lambda$ and a $2$-form $\omega$
one defines
\begin{equation}\label{eq:exterior-derivative}
\begin{split}
   d\lambda(X,Y):
   &=d(\lambda Y) X
   -d(\lambda X) Y
   -\lambda [X,Y]
\\
   d\omega(X,Y,Z):
   &=d(\omega(Y,Z)) X
   -d(\omega(X,Z)) Y
   +d(\omega(X,Y)) Z
   \\
   &\quad
   -\omega([X,Y],Z)
   +\omega([X,Z],Y)
   -\omega([Y,Z],X)
\end{split}
\end{equation}
for vector fields $X,Y,Z$ along $N$;
see e.g.~\cite[V\,Prop.\,3.2]{lang:2001a}.
The $d$'s on the right hand side denote the differential
of a function, evaluated on a vector field.
A form is called \textbf{closed} if its exterior derivative vanishes.
\end{definition}

\begin{remark}\label{rem:dd=0}
Pull-back commutes with $d$ and $dd=0$;
see e.g.~\cite[V\,\S 3]{lang:2001a}.
\end{remark}

\begin{lemma}\label{le:dlambda-eta}
For $z=(q,p)\in T^*N$ let $\nu\in V_z\simeq T_q^*N$
and $\zeta\in T_zT^*N$, then
$$
   d\lambda(\nu,\zeta)
   =\nu\left(d\pi |_z\zeta\right) .
$$
\end{lemma}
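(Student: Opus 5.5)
The plan is to evaluate the formula~(\ref{eq:exterior-derivative}) for $d\lambda$ on two suitably extended vector fields, rather than working in coordinates. The value $d\lambda_z(\nu,\zeta)$ depends only on the vectors at $z$ (tensoriality), so the extensions can be chosen freely. I extend $\nu$ to the vertical vector field $X$ with $X(q',p')=(0,\nu')$, where $\nu'$ is a fixed covector field along $N$ near $q$ with $\nu'(q)=\nu$; in a local trivialization this is simply the constant vertical field. I extend $\zeta$ to a vector field $Y$ which, in a local chart $T^*U\cong U\times E^*$, is constant. With these choices $[X,Y]=0$ in the chart.

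First term, $d(\lambda Y)X$. Write the chart as $(q',p')$ with $Y=(Y^1,Y^2)$ constant. Then $\lambda_{(q',p')}Y=p'(Y^1)$, which is linear in $p'$. Differentiating in the vertical direction $(0,\nu)$ gives $\nu(Y^1)=\nu(d\pi|_z\zeta)$.

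Second term, $d(\lambda X)Y$. Since $X$ is vertical, $d\pi X=0$, so $\lambda X\equiv0$ and this term vanishes.

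Third term. Because $X$ and $Y$ are both constant in the chart, $[X,Y]=0$, so $\lambda[X,Y]=0$.

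Summing the three contributions gives $d\lambda(\nu,\zeta)=\nu(d\pi|_z\zeta)$, as claimed.

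Two points need care. The canonical identification of $V_z$ with $T^*_qN$ must be compatible with the chart, which holds because in a cotangent chart the vertical vector $(0,\nu)$ is exactly $\tfrac{d}{dt}(q,p+t\nu)$. Tensoriality of the formula~(\ref{eq:exterior-derivative}) on Banach manifolds can be taken from Lang, as cited. Beyond these two points the computation is routine.
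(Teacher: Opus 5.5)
Your proposal is correct and follows essentially the same route as the paper: localize to a chart $T^*U\cong U\times W^*$, extend $\nu$ and $\zeta$ to constant vector fields so that their bracket vanishes, and evaluate the three terms of~(\ref{eq:exterior-derivative}). The first term gives $\nu(d\pi|_z\zeta)$ and the second vanishes because $\lambda$ annihilates vertical vectors. If you keep a non-constant covector field $\nu'$ in your extension, the bracket $[X,Y]$ need not vanish, but it is vertical, so $\lambda[X,Y]=0$ still holds and the conclusion is unaffected.
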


\begin{proof}
Since the question is local we can assume without loss of generality
that $N$ is an open subset of a vector space $W$.
Then the cotangent bundle $T^*N\to N$ can be trivialized so that
$T^*N=N\times W^*$ and $TT^*N=N\times W^*\times W\times W^*$.

Then we have $z=(q,p)$ and $\nu=(q,p;0,v)$
and $\zeta=(q,p;\xi,\eta)$.
Now we extend these vector fields constantly as follows
$\tilde\nu=(\tilde q,\tilde p;0,v)$
and $\tilde\zeta=(\tilde q, \tilde p;\xi,\eta)$.
Set $\tilde z=(\tilde q,\tilde p)$.
The local flows of the vector fields $\tilde\nu$ and $\tilde\zeta$ are
then given by
$
   \phi^t_{\tilde\nu}(\tilde q,\tilde p)
   =(\tilde q,\tilde p+tv)
$
and
$
   \phi^s_{\tilde\zeta}(\tilde q,\tilde p)
   =(\tilde q+s\xi,\tilde p+s\eta)
   .
$
\\
In particular, the flows of $\tilde\zeta$ and $\tilde\nu$ commute, thus
$[\tilde\zeta,\tilde\nu]=0$. By Definition~\ref{def:exterior-derivative}
$
   d\lambda(\nu,\zeta)|_z
   =
   d(\lambda\tilde\zeta)|_z \nu
   -d(\lambda\tilde\nu)|_z \zeta
   -(\lambda[\tilde\nu,\tilde\zeta])|_z
$.
We calculate each summand.
Summand one already represents the desired result
$$
   d(\lambda \tilde\zeta)|_z\nu
%1
   =
   \left.\tfrac{d}{dr}\right|_{r=0}
   \lambda_{\phi^r_{\nu}(z)}\tilde\zeta_{\phi^r_{\nu}(z)}
%2
    \stackrel{\color{gray}2}{=}
   \left.\tfrac{d}{dr}\right|_{r=0}
   (p+rv)
   d\pi|_{(q,p+rv)} \zeta
%3
   \stackrel{\color{gray}3}{=}
   v\xi
%4
   \stackrel{\color{gray}4}{=}
   \nu(d\pi|_z\zeta)
   .
$$
Equality 2 is by Definition~\ref{def:Liouville-form} of the
canonical $1$-form $\lambda$. It also uses that the extension vector
field $\tilde\zeta\equiv(\xi,\eta)=\zeta$ is constant, hence so is the
result under the linearized projection, namely
$d\pi|_{(q,p+rv)} \zeta\equiv\xi$.
Thus the $r$-derivative only hits the first map and we get
$v\xi$ which is equality~3.
But $v$ is the local representative of $\nu$ and $\xi$
the one of $d\pi|_z\zeta$. This is equality~4.
\newline
Summand two is zero, because
$
   \lambda_{\tilde z}\nu
   =\tilde p (d\pi|_{\tilde z}(0,v))
   =\tilde p (0)
   =0
$.
Summand three is zero since $[\tilde\nu,\tilde\zeta]=0$.
This proves Lemma~\ref{le:dlambda-eta}.
\end{proof}

\begin{remark}[weak symplectic form]\label{rem:reak-symplectic}
A \textbf{weak symplectic form} is a closed $2$-form which
is non-degenerate in the sense that for any
tangent vector there exists another vector in the same tangent space
such that plugging in these two vectors in the $2$-form
has a non-zero value.
In infinite dimension this does \emph{not} imply that a weak 
symplectic form gives rise to an \emph{isomorphism}
between tangent and cotangent space.
In finite dimension a weak symplectic form
is a symplectic form.
\end{remark}

\begin{corollary}\label{cor:omega_sigma}
Let $\sigma$ be a closed $2$-form on $N$.
Then $\omega_\sigma:=d\lambda+\pi ^*\sigma$ is a
weak symplectic form on $T^*N$.
\end{corollary}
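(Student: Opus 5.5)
We have to prove two things: that $\omega_\sigma:=d\lambda+\pi^*\sigma$ is closed, and that it is weakly non-degenerate in the sense of Remark~\ref{rem:reak-symplectic}.

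\emph{Closedness.} By Remark~\ref{rem:dd=0}, $d(d\lambda)=0$. Pull-back commutes with $d$, so $d\pi^*\sigma=\pi^*d\sigma=0$ because $\sigma$ is closed. Since $d$ is linear, $d\omega_\sigma=0$.

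\emph{Non-degeneracy.} Fix $z=(q,p)$ and a non-zero $\zeta\in T_zT^*N$. We look for $\zeta'$ with $\omega_\sigma(\zeta,\zeta')\neq0$, splitting into two cases according to whether $d\pi|_z\zeta$ vanishes. The tool in both cases is Lemma~\ref{le:dlambda-eta}, together with the observation that $\pi^*\sigma$ vanishes whenever one argument is vertical, since $d\pi|_z$ kills vertical vectors.

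\emph{Case 1: $\xi:=d\pi|_z\zeta\neq0$.} By Hahn--Banach there is $\nu\in T_q^*N$ with $\nu(\xi)\neq0$; in the finite-dimensional case this is just linear algebra. View $\nu$ as a vertical vector $\nu\in V_z$. Then $\pi^*\sigma(\nu,\zeta)=\sigma(0,\xi)=0$, and Lemma~\ref{le:dlambda-eta} gives
\[
\omega_\sigma(\nu,\zeta)=\nu(\xi)\neq0 .
\]
So $\zeta'=\nu$ works, up to sign.

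\emph{Case 2: $d\pi|_z\zeta=0$.} Then $\zeta$ is vertical, $\zeta=\nu$ for some non-zero $\nu\in T_q^*N$. Pick $w\in T_qN$ with $\nu(w)\neq0$; this exists since $\nu\neq0$. Lift $w$ to some $\zeta'\in T_zT^*N$ with $d\pi|_z\zeta'=w$, which is possible because $\pi$ is a submersion; in a local trivialization take $\zeta'=(w,0)$. Again the $\pi^*\sigma$-term vanishes because $\zeta$ is vertical, and Lemma~\ref{le:dlambda-eta} yields $\omega_\sigma(\zeta,\zeta')=\nu(w)\neq0$.

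The main obstacle is the Banach setting. There one needs Hahn--Banach in Case 1 and local triviality of $T^*N$ for the lift in Case 2; both are standard. One should also note that only weak non-degeneracy is claimed, so no surjectivity onto the cotangent space is required.
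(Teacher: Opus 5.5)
Your proof is correct and follows essentially the same route as the paper. Closedness comes from $dd=0$ together with $d\pi^*\sigma=\pi^*d\sigma=0$. Non-degeneracy uses the same two cases: Hahn--Banach when $d\pi|_z\zeta\neq0$, and surjectivity of $d\pi|_z$ when $\zeta$ is vertical. In both cases Lemma~\ref{le:dlambda-eta} and the vanishing of $\pi^*\sigma$ on vertical vectors do the work.
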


\begin{proof}
Given that $d\omega_\sigma=dd\lambda+\pi^*d\sigma=0$,
by Remark~\ref{rem:dd=0},
it remains to show non-degeneracy.
Let $z=(q,p)\in T^*N$.
There are two cases.

\smallskip
\noindent
\textbf{Case 1.} Suppose $\zeta\in T_zT^*N$ is such that
$d\pi |_z\zeta\not=0$ is non-zero.

\smallskip
\noindent
To $0\not=d\pi |_z\zeta\in T_q N$ there exists,
by the Hahn-Banach Theorem, see e.g.~\cite[I\,Prop.\,2.3]{lang:2001a},
a dual vector $\nu \in T_q^* N\simeq V_z:=\ker d\pi |_z$
with non-zero pairing
$$
   0\not= \nu(d\pi |_z\zeta)=d\lambda(\nu,\zeta)
   =d\lambda(\nu,\zeta)+\sigma(d\pi |_z\nu,d\pi |_z\zeta)
   =:\omega_\sigma(\nu,\zeta)
$$
where equality one is Lemma~\ref{le:dlambda-eta}
and equality two adds zero since $\nu\in\ker d\pi |_z$.

\smallskip
\noindent
\textbf{Case 2.}
Suppose $\nu\in T_zT^*N$ is non-zero and $d\pi |_z\nu=0$.

\smallskip
\noindent
Since $\nu\in V_z\simeq T_q^*N$ is non-zero
there exists a dual vector $\xi_0 \in T_q N$
with non-zero pairing $\nu(\xi_0)\not= 0$.
Since $d\pi |_z$ is surjective there exists $\zeta\in T_zT^*N$
such that $d\pi |_z\zeta=\xi_0$.
By Lemma~\ref{le:dlambda-eta} and since $d\pi |_z\nu=0$ we obtain
$$
   0
   \not=\nu(\xi_0)
   =\nu(d\pi |_z\zeta)
   =d\lambda(\nu,\zeta)
   =d\lambda(\nu,\zeta) +\sigma(d\pi |_z\nu,d\pi |_z\zeta)
   =:\omega_\sigma(\nu,\zeta) .
$$

This proves Corollary~\ref{cor:omega_sigma}.
\end{proof}

\boldmath
%%%%%%%%%%%%%%%%%%%%%%%%%%%%%%%%%%%
%%%%%%%%%%%%%%%%%%%%%%%%%%%%%%%%%%%
%%%%%%% Section  %%%%%%%%%%%%%%%%%%%%%%
%%%%%%%%%%%%%%%%%%%%%%%%%%%%%%%%%%%
%%%%%%%%%%%%%%%%%%%%%%%%%%%%%%%%%%%
\section[Non-exact magnetic $2$-form $\Sigma$ -- twisted-periodic case]
{Non-exact magnetic $2$-form $\Sigma$}
\label{sec:non-loc-Ham-twisted}
\unboldmath

In the case where a $1$-form family,
say $\vartheta_t=\varsigma^*\theta_t$,
is \emph{twisted-periodic}, and \emph{not periodic}, we do not have
a $1$-form $\Theta$ on loop space
${\color{red}\bar\Ll^\times\Zfrak}$ as in~(\ref{eq:involution-quotients}),
so $d\Theta$ as in~(\ref{eq:dTheta}) is not available
to describe a magnetic $2$-form.
However, the key property of a magnetic $2$-form is closedness.
Therefore we use the right hand side of~(\ref{eq:dTheta})
as definition of a $2$-form $\Sigma$ and then show $d\Sigma=0$
by hand.

\begin{definition}[Magnetic $2$-form of $\vartheta$]
For $z\in \bar\Ll^\times\Zfrak$ and
$X,Y\in T_z \bar\Ll^\times\Zfrak$ set
\begin{equation}\label{eq:Sigma}
\begin{split}
   &\Sigma_z( X, Y)
\\
   :&=
   \int_0^1
   \Bigl(
   d\vartheta_{t_z}|_{z}\left( X, Y\right)
   +\bigl(dt|_z X\bigr)\dot\vartheta_{t_z}|_{z}Y
   -\bigl(dt|_z Y\bigr)\dot\vartheta_{t_z}|_{z}X
   \Bigr)_\tau d\tau
   .
\end{split}
\end{equation}
\end{definition}

Note that $d\vartheta_t$ and $\dot\vartheta_t$ are both periodic in $t$
by~(\ref{eq:df_t}).
Hence the $\SS^1$-output of $t_z\colon\SS^1_\tau\to\SS^1_t$ is well received
and, furthermore, integration $\int_r^{r+1}$ leads to the same value
independent of $r\in\R$.

\begin{theorem}\label{thm:twisted-symplectic}
It holds $d\Sigma=0$ and
$
   \Omega_\Sigma
   :=\Omegacan+ P^*\Sigma
$
is a weak symplectic form on the cotangent bundle
$T^*\bar\Ll^\times\Zfrak$,
said \textbf{twisted weak symplectic}.
\end{theorem}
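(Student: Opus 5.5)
The plan is to reduce the statement to Corollary~\ref{cor:omega_sigma} with $N=\bar\Ll^\times\Zfrak$, base point projection $P=\pi$ and $\sigma=\Sigma$. Non-degeneracy of $\Omega_\Sigma$ then comes for free, so everything rests on the closedness $d\Sigma=0$. I will not verify the six-term cyclic formula~(\ref{eq:exterior-derivative}) for $d\Sigma$ by hand. Instead I will show that $\Sigma$ is exact, namely $\Sigma=d\Theta^\R$ for a $1$-form $\Theta^\R$ that exists even in the twisted-periodic case. Then $d\Sigma=dd\Theta^\R=0$ by Remark~\ref{rem:dd=0}.

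\textbf{Construction of the primitive.} The point is that $t_z$, defined by formula~(\ref{eq:t_z-prime-coll}) for $\tau\in[0,1]$, is a genuine real-valued function with values in $[0,1]$, not merely a circle map. So I set
\[
   \Theta^\R_z\xi:=\int_0^1 \vartheta_{t_z(\tau)}|_{z_\tau}\xi_\tau\, d\tau ,
\]
using the real family $\{\vartheta_t\}_{t\in\R}$. No periodicity in $t$ is needed here, since we integrate over the fixed interval $[0,1]$ and never reduce $t$ modulo~$1$. Well-definedness on the quotient follows because $\vartheta=\varsigma^*\theta$ is invariant under $\mathfrak{i}$: $\vartheta_t|_{-z}(-\xi)=\vartheta_t|_z\xi$, exactly as in~(\ref{eq:problem}), steps~2--3. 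The twisted loops with $z_{\tau+1}=\pm z_\tau$ cause no trouble because only $\tau\in[0,1]$ enters. Smoothness of $\Theta^\R$ in $z$ follows from smoothness of $z\mapsto t_z$, whose linearization is~(\ref{eq:dt(z)}).

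\textbf{Computation of $d\Theta^\R$.} I use~(\ref{eq:exterior-derivative}) with $X,Y$ extended as constant vector fields on the open subset $\bar\Ll^\times\Zfrak$ of a vector space, so $[X,Y]=0$. Differentiating $\Theta^\R_z Y$ in direction $X$ by the chain rule produces two terms. The first is $(dt|_zX)\,\dot\vartheta_{t_z}|_z Y$, coming from the time argument. The second is $\partial_X(\vartheta_{t_z}|_z)Y$, the spatial derivative of the coefficients $\aaa$. Antisymmetrizing, the spatial parts combine to $d\vartheta_{t_z}|_z(X,Y)$; this is the same $(d\aaa)^T-d\aaa=-(\rot\,\aaa)j_0$ step as in the proof of Lemma~\ref{le:gradient-Mm}. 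The time parts give $(dt|_zX)\dot\vartheta Y-(dt|_zY)\dot\vartheta X$. The result is exactly the integrand of~(\ref{eq:Sigma}). Since $d\vartheta_t$ and $\dot\vartheta_t$ are $1$-periodic in $t$ by~(\ref{eq:df_t}), evaluating them at the real number $t_z(\tau)$ or at its class in $\SS^1_t$ gives the same value, so $d\Theta^\R=\Sigma$. No integration by parts in $\tau$ occurs, so no boundary terms (which is where the twist $f$ would enter) appear.

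\textbf{Conclusion and main obstacle.} With $d\Sigma=0$ established, Corollary~\ref{cor:omega_sigma} yields that $\Omegacan+P^*\Sigma=d\Lambda+\pi^*\Sigma$ is weak symplectic; its proof uses only Hahn--Banach in the base and the vertical identification, both valid here. The main obstacle I expect is the rigorous justification of the exterior-derivative computation on loop space. This means checking that $\Theta^\R$ is smooth, in particular differentiating under the integral the composite $\tau\mapsto\vartheta_{t_z(\tau)}|_{z_\tau}$ with the non-local $t_z$, and that the constant-extension argument of Lemma~\ref{le:dlambda-eta} legitimately applies. If the exactness route meets an objection, for instance that $\Theta^\R$ is not invariant under the canonical flow $\nu$, I would remark that this invariance is irrelevant for closedness. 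As a fallback I would verify the cyclic sum for $d\Sigma$ directly, using the symmetry of second derivatives of $t_z$ and closedness of $d\vartheta_t$ on $\Zfrak$.
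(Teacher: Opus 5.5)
Your proof is correct, but it takes a different route from the paper. The paper never produces a primitive. After invoking Corollary~\ref{cor:omega_sigma}, it expands $d\Sigma(X,Y,Z)$ with constant extensions via (\ref{eq:exterior-derivative-simple}) and sorts the resulting terms into groups that cancel for four separate reasons:
\begin{itemize}
\item $dd\vartheta_t=0$ on $\Zfrak$;
\item pairwise cancellation of the $\ddot\vartheta$-terms;
\item $ddt=0$, i.e.\ symmetry of the second derivative of $z\mapsto t_z(\tau)$;
\item a triple-wise cancellation of the remaining terms.
\end{itemize}
You instead observe that the computation (\ref{eq:dTheta}) never uses periodicity of $\vartheta$, provided one works with the real lift $t_z\colon[0,1]\to[0,1]$ normalized by $t_z(0)=0$ and integrates over the fixed interval $[0,1]$. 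Periodicity of $d\vartheta_t$ and $\dot\vartheta_t$ is needed only at the end, to identify $d\Theta^{\R}$ with $\Sigma$.

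This route buys a much shorter argument. A single application of $dd=0$, which is just Schwarz's theorem for the smooth functions $z\mapsto\Theta^{\R}_z W$ along finite-dimensional families, replaces the term-by-term bookkeeping. It also proves more: $\Sigma$ is exact even in the twisted-periodic case, so $\Omega_\Sigma=d(\Lambda+\pi^*\Theta^{\R})$. This runs against the paper's framing of $\Sigma$ as a non-exact substitute for $d\Theta$, as in the appendix title and the footnote saying that all one loses is exactness. Your primitive is also consistent with the paper's own twisted magnetic functional, since (\ref{eq:Mm-BOV}) reads $\Mm(z)=\Theta^{\R}_z z^\prime-F(z_0)$ with $F=f\circ\varsigma$.

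What the paper's route avoids is a non-canonical choice. $\Theta^{\R}$ depends on the integration window: replacing $[0,1]$ by $[r,r+1]$ (same lift $t_z$) changes it by the exact $1$-form $d\bigl(z\mapsto\int_0^r F(z_\sigma)\,d\sigma\bigr)$. This does not affect $d\Theta^{\R}=\Sigma$, but it is presumably why the authors chose to work with $\Sigma$ directly. Your fallback, checking the cyclic sum by hand, is exactly the paper's proof.
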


\begin{proof}
By Corollary~\ref{cor:omega_sigma} it suffices to
prove $d\Sigma=0$.
To ease presentation we change our usual loop notation $z$ to $\zfrak$.
Let $\zfrak\in \bar\Ll^\times\Zfrak$ and
$X,Y,Z\in T_\zfrak \bar\Ll^\times\Zfrak$.
Since our manifold is an open subset of a vector space,
namely $\Zfrak\subset\C$, 
therefore we constantly extend the tangent vectors at $\zfrak$
to tangent vectors at any
$\tilde\zfrak \in \bar\Ll^\times\Zfrak$, 
namely we set time-wise $X_{\tilde\zfrak}(\tau):=X(\tau)$.
In particular, commutators vanish
and the two formulas~(\ref{eq:exterior-derivative}) simplify to
\begin{equation}\label{eq:exterior-derivative-simple}
\begin{split}
   d\lambda(X,Y):
   &=d(\lambda Y) X
   -d(\lambda X) Y
\\
   d\omega(X,Y,Z):
   &=d(\omega(Y,Z)) X
   -d(\omega(X,Z)) Y
   +d(\omega(X,Y)) Z
   .
\end{split}
\end{equation}
To shorten formulas we abbreviate
$x:=(X(\zfrak))_\tau$, $y:=Y_\tau$, and $z:=Z_\tau$. 
To avoid parentheses we shall write $d\vartheta_t(x,y)$
as $d\vartheta_t xy$.
We proceed similar to~(\ref{eq:dTheta}),
but during equality two below we omit the step using the letter $D$.
Notice that $\Sigma$ defined by~(\ref{eq:Sigma})
has three summands which in equality two
are treated one after the other, visualized by three integrals.
Using repeatedly~(\ref{eq:exterior-derivative-simple}) we get
\begin{equation*}
\begin{split}
   &d\Sigma\left(X,Y,Z\right)
%1
   \stackrel{\text{(\ref{eq:exterior-derivative-simple})}}{=}
   d(\Sigma (Y,Z)) X
   -d(\Sigma (X,Z)) Y
   +d(\Sigma (X,Y)) Z
   \\
%2
   &\stackrel{\text{(\ref{eq:Sigma})}}{=}
   \int_0^1\Bigl(
%%%%%%%%%%%%%%%%%%
% Sigma 1 - erste Drei
%%%%%%%%%%%%%%%%%%
{\color{cyan}
   d_x\left(d\vartheta_{t_\zfrak}|_\zfrak yz\right)
}
\qquad\qquad   +d\dot\vartheta_{t_\zfrak}|_\zfrak yz(dt_\zfrak X)
   \\
   &\quad\qquad
{\color{cyan}
   \qquad-d_y\left(d\vartheta_{t_\zfrak}|_\zfrak xz\right)
}
\qquad\quad\;\,   -d\dot\vartheta_{t_\zfrak}|_\zfrak xz(dt_\zfrak Y)
   \\
   &\quad\qquad
{\color{cyan}
   \qquad+d_z\left(d\vartheta_{t_\zfrak}|_\zfrak xy\right)
}
\qquad\quad\;\,   +d\dot\vartheta_{t_\zfrak}|_\zfrak xy(dt_\zfrak Z)
   \Bigr) d\tau
   \\
%   &\quad{\color{white}=}
   &
%%%%%%%%%%%%%%%%%%
% Sigma 2 - erste Drei
%%%%%%%%%%%%%%%%%%
   +\int_0^1\Bigl(
{\color{red}\;
   \dot\vartheta_{t_\zfrak}|_\zfrak z\cdot d_X(dt|_\zfrak Y)
}
   +(dt|_\zfrak Y)\cdot d_x(\dot\vartheta_{t_\zfrak}|_\zfrak z)
{\color{gray}\;
   +\; (dt|_\zfrak Y)\cdot\ddot\vartheta_{t_\zfrak}|_\zfrak z\cdot(dt|_\zfrak X)
}
\\
   &\qquad
{\color{red}\;
   -\;\dot\vartheta_{t_\zfrak}|_\zfrak z\cdot d_Y(dt|_\zfrak X)
}
   -(dt|_\zfrak X)\cdot d_y(\dot\vartheta_{t_\zfrak}|_\zfrak z)
{\color{gray}\;
   -\; (dt|_\zfrak X)\cdot\ddot\vartheta_{t_\zfrak}|_\zfrak z\cdot(dt|_\zfrak Y)
}
   \\
   &\qquad
{\color{red}\;
   +\;\dot\vartheta_{t_\zfrak}|_\zfrak y\cdot d_Z(dt|_\zfrak X)
}
   +(dt|_\zfrak X)\cdot d_z(\dot\vartheta_{t_\zfrak}|_\zfrak y)
{\color{gray}\;
   +\;
\underline{
   (dt|_\zfrak X)\cdot\ddot\vartheta_{t_\zfrak}|_\zfrak y\cdot(dt|_\zfrak Z)
}
}
   \Bigr) d\tau
   \\
%   &\quad{\color{white}=}
   &
%%%%%%%%%%%%%%%%%%
% Sigma 3 - erste Drei
%%%%%%%%%%%%%%%%%%
   +\int_0^1\Bigl(
{\color{red}\;
   -\;\dot\vartheta_{t_\zfrak}|_\zfrak y\cdot d_X(dt|_\zfrak Z)
}
   -(dt|_\zfrak Z)\cdot d_x(\dot\vartheta_{t_\zfrak}|_\zfrak y)
{\color{gray}\;
   -\;
\underline{
   (dt|_\zfrak Z)\cdot\ddot\vartheta_{t_\zfrak}|_\zfrak y\cdot(dt|_\zfrak X)
}
}
\\
   &\qquad
{\color{red}\;
   +\;\dot\vartheta_{t_\zfrak}|_\zfrak x\cdot d_Y(dt|_\zfrak Z)
}
   +(dt|_\zfrak Z)\cdot d_y(\dot\vartheta_{t_\zfrak}|_\zfrak x)
{\color{gray}\;
   +\; (dt|_\zfrak Z)\cdot\ddot\vartheta_{t_\zfrak}|_\zfrak x\cdot(dt|_\zfrak Y)
}
   \\
   &\qquad
{\color{red}\;
   -\;\dot\vartheta_{t_\zfrak}|_\zfrak x\cdot d_Z(dt|_\zfrak Y)
}
   -(dt|_\zfrak Y)\cdot d_z(\dot\vartheta_{t_\zfrak}|_\zfrak x)
{\color{gray}\;
   -\; (dt|_\zfrak Y)\cdot\ddot\vartheta_{t_\zfrak}|_\zfrak x\cdot(dt|_\zfrak Z)
}
   \Bigr) d\tau
\\
%3
   &=0
   .
\end{split}
\end{equation*}
Equality three:
The sum of the cyan terms is
{\color{cyan}$dd\theta_{t_\zfrak}|_\zfrak (x,y,z)$}
by~(\ref{eq:exterior-derivative-simple}), hence it vanishes since 
for the finite dimensional exterior derivative $dd=0$.
The {\color{gray}gray terms} cancel pairwise.
Since the loop space exterior derivative satisfies $dd=0$ by
Remark~\ref{le:dlambda-eta}, the six {\color{red}red terms} cancel
pair-wise. For instance, the two terms with common factor
${\color{red}\dot\vartheta_{t_\zfrak}|_\zfrak z}$ combine,
by~(\ref{eq:exterior-derivative-simple}),
as follows
$
   (d_X(dt|_\zfrak Y)- (d_Y(dt|_\zfrak X)
   =ddt|_\zfrak(X,Y)
   =0
$.
Again by~(\ref{eq:exterior-derivative-simple})
the remaining 9 black terms cancel triple-wise
$
   d\vartheta_{t_\zfrak}|_\zfrak(x,y)
   -d_x\bigl(\dot\vartheta_{t_\zfrak}|_\zfrak y\bigr)
   +d_y\bigl (\dot\vartheta_{t_\zfrak}|_\zfrak x\bigr)
   =0
$.
This proves Theorem~\ref{thm:twisted-symplectic}.
\end{proof}

\newpage
\boldmath
%%%%%%%%%%%%%%%%%%%%%%%%%%%%%%%%%%%
%%%%%%%%%%%%%%%%%%%%%%%%%%%%%%%%%%%
%%%%%%% Section  %%%%%%%%%%%%%%%%%%%%%%
%%%%%%%%%%%%%%%%%%%%%%%%%%%%%%%%%%%
%%%%%%%%%%%%%%%%%%%%%%%%%%%%%%%%%%%
\section{Periodic magnetic $1$-forms on loop spaces}
\label{sec:magn-fctl}
\unboldmath

This section lives in an abstract setting
were $M$ is a manifold of finite dimension, not necessarily a
subset $\Zfrak$ of the plane.
Furthermore, we consider general circle reparametrizations
$T_z$ and $\Tt_z$, not necessarily the Barutello-Ortega-Verzini
reparametrizations $t_z$ and $\tau_z$.
\smallskip\newline
\textbf{Note.} Outside of Appendix~\ref{sec:magn-fctl}
the letters $T$ and $\Tt$ have a different meaning.

\begin{definition}[time reparametrization]
A smooth map from \textbf{loop space}
$\Ll M:=\{z\in C^\infty(\R,M)\mid
\forall t\in\R\colon z(t+1)=z(t)\}$
to the group of orientation preserving circle diffeomorphisms
$$
   T\colon\Ll M\to\Diff_+\SS^1
   ,\quad
   z\mapsto T(z)=:T_z
$$
is called a \textbf{time reparametrization}.
The \textbf{inverse} time reparametrization is the map
$
   \Tt\colon\Ll M\to\Diff_+\SS^1
$,
$
   z\mapsto (T_z)^{-1}=:\Tt_z
$.
\end{definition}

A \textbf{periodic} $1$-form on a manifold $M$,
notation $\vartheta=\{\vartheta_t\}_{t\in\SS^1}$,
is a one-parameter-family of $1$-forms on $M$ such that
$\vartheta_{t+1}=\vartheta_t$ for every $t\in\R$.

\begin{definition}[magnetic $1$-form on loop space]
\label{def:Theta}
A periodic $1$-form $\vartheta$ on a manifold $M$
and a time reparametrization $T$ of loop space $\Ll M$
induce on $\Ll M$ a $1$-form $\Theta$
as follows. At a loop $z$ the
\textbf{magnetic \boldmath$1$-form}
is on a tangent vector $\xi\in T_z\Ll M$,
i.e. a vector field along $z$, defined by
\begin{equation}\label{eq:Theta}
   \Theta_z\xi=\Theta_z^{\vartheta,T}\xi
   :=\int_{\SS^1} \vartheta_{T_z}|_z \xi
   :=\int_0^1 \vartheta_{T_z(\tau)}|_{z_\tau} \xi_\tau\, d\tau
   .
\end{equation}
\end{definition}

To write the second integral we lift $T_z\colon\SS^1_\tau\to\SS^1_t$
to a map, still denoted by $T_z\colon\R\to\R$,
which is equivariant with respect to the $\Z$-action on $\R$
given by $\Z\times\R\to\R$, $(k,\tau)\mapsto \tau+k$
and such that $T_z(0)=0$, in symbols
\begin{equation}\label{eq:equiv}
   T_z(\tau+k)=T_z(\tau)+ k
   ,\qquad
   T_z(0)=0
   .
\end{equation}

Since $\vartheta$ is periodic $\Theta$ is well defined, independent of
the choice of integration interval as long as it covers one period;
cf. Remark~\ref{rem:interval-not-SS^1}.

\begin{remark}[canonical vector field on loop space]
\label{rem:vv-flow}
Along loop space there is a \textbf{canonical vector field}
defined by time derivative  $\nu(z):=z^\prime$,
cf.~(\ref{eq:can-vf-loops}).
The flow on $\Ll M$ of the canonical vector field $\nu=\p_\tau$ is
time shift, in symbols
\begin{equation}\label{eq:shift}
   (\phi^r_\nu  z)_\tau=z_{\tau+r}
   ,\qquad
   \left(d \phi^r_\nu|_z\xi\right)_\tau=\xi_{\tau+r}
   ,
\end{equation}
for every time $\tau\in\R$ and where $z\in\Ll M$ and
$\xi\in T_z\Ll M$; cf.~(\ref{eq:shift-nu}).
\end{remark}

\begin{definition}[magnetic functional]
To a periodic $1$-form $\vartheta$ on $M$
and a time reparametrization $T$ of loop space $\Ll M$
we associate a \textbf{magnetic functional}~by
evaluating the associated magnetic $1$-form $\Theta^{\vartheta,T}$
along the canonical vector field
\begin{equation}\label{eq:Mm}
   \Mm:=i_\nu\Theta^{\vartheta,T}\colon\Ll M\to\R
   ,\quad
   z\mapsto
   \int_{\SS^1} \vartheta_{T_z}|_zz^\prime
   :=\int_0^1\vartheta_{T_z(\tau)}|_{z_\tau} z_\tau^\prime\, d\tau
   .
\end{equation}
The definition is~(\ref{eq:Theta}).
\end{definition}

%%%%%%%%%%%%%%%%%%%%%%%%%%%%%%%%%%%
%%%%%%% Subsubsection  %%%%%%%%%%%%%%%%
%%%%%%%%%%%%%%%%%%%%%%%%%%%%%%%%%%%
\subsubsection*{Differential of magnetic functional}

Motivated by Cartan's formula in finite dimension
the differential of the magnetic functional $\Mm$
at $(z,\xi) \in T\Ll M$ should be given by the difference
\begin{equation*}%\label{eq:dMm-abstract}
\begin{split}
   (d\Mm)_z\xi
   {\color{gray}\,=(di_\nu \Theta)_z\xi\,}
   =(L_\nu \Theta)_z\xi-(i_\nu  d\Theta)_z\xi .
%   =(L_\nu \Theta)_z\xi-(d\Theta)_z(z^\prime,\xi) .
\end{split}
\end{equation*}
We prove this identity in case
of the main player in this article, the magnetic functional
$\Mm$ defined in~(\ref{eq:Mm-BOV})
via the Barutello-Ortega-Verzini
reparametrization $T_z=t_z$ in~(\ref{eq:t_z})
and the pull-back $\vartheta=\varsigma^*\theta$.
To get there we calculate the Lie and exterior derivatives of
the more general magnetic $1$-form~$\Theta$ in~(\ref{eq:Theta}).

%%%%%%%%%%%%%%%%%%%%%%%%%%%%%%%%%%%
%%%%%%% Subsubsection  %%%%%%%%%%%%%%%%
%%%%%%%%%%%%%%%%%%%%%%%%%%%%%%%%%%%
\subsubsection*{Lie derivative -- Euler force}

\begin{lemma}\label{le:Lie-magnetic}
The Lie, or fisherman, derivative is defined and given by
\begin{equation*}
\begin{split}
   L_\nu \Theta |_z :
   =\left.\tfrac{d}{dr}\right|_{r=0} (\phi^r_\nu )^*\Theta |_z
   &
   =
   \int_0^1
{\color{gray}
   \underbrace{
{\color{black}
   \left(
   \bigl(\underline{dT|_z z^\prime}\bigr)_\tau
   -\tfrac{d}{d\tau} T_z(\tau)
   \right)
}
   }_{\stackrel{\text{\rm(\ref{eq:dT-Tprime})}}{=} \left.\tfrac{d}{dr}\right|_{r=0} {T_{\phi^r_\nu (z)}(\tau-r)}\;\in\R}
}
   \;
   \dot\vartheta_{T_z(\tau)}|_{z_\tau}\; d\tau
\end{split}
\end{equation*}
at any loop $z\in\Ll M$.
\end{lemma}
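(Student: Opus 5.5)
We will compute directly from the definitions: evaluate the pulled-back $1$-form $(\phi^r_\nu)^*\Theta$ at $z$ on a tangent vector $\xi$, then differentiate at $r=0$. By definition of pull-back and by (\ref{eq:shift}),
\[
   \bigl((\phi^r_\nu)^*\Theta\bigr)_z\xi
   =\Theta_{\phi^r_\nu z}\, d\phi^r_\nu|_z\xi
   =\int_0^1 \vartheta_{T_{\phi^r_\nu z}(\tau)}|_{z_{\tau+r}}\,\xi_{\tau+r}\, d\tau .
\]
We substitute $\sigma=\tau+r$. Because $\vartheta$ is periodic and $T_{\phi^r_\nu z}$ is $\Z$-equivariant by (\ref{eq:equiv}), the integrand is $1$-periodic in $\sigma$. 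Hence, by Remark~\ref{rem:interval-not-SS^1}, we may integrate over $[0,1]$ again and obtain
\[
   \int_0^1 \vartheta_{T_{\phi^r_\nu z}(\sigma-r)}|_{z_\sigma}\,\xi_\sigma\, d\sigma .
\]
Now the spatial base point $z_\sigma$ and the vector $\xi_\sigma$ no longer depend on $r$. The only $r$-dependence sits in the time argument $s(r,\sigma):=T_{\phi^r_\nu z}(\sigma-r)$.

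Next we differentiate under the integral sign, which is justified since all data are smooth and the domain is compact. The chain rule in the time variable gives
\[
   \left.\tfrac{d}{dr}\right|_{r=0}\vartheta_{s(r,\sigma)}
   =\dot\vartheta_{T_z(\sigma)}\,\left.\p_r\right|_{r=0} s(r,\sigma).
\]
Since $T\colon\Ll M\to\Diff_+\SS^1$ is smooth, the map $r\mapsto T_{\phi^r_\nu z}$ is differentiable with velocity $dT|_z\nu(z)=dT|_z z^\prime$. The product/chain rule then yields
\[
   \left.\p_r\right|_{r=0} T_{\phi^r_\nu z}(\sigma-r)
   =\bigl(dT|_z z^\prime\bigr)_\sigma-\tfrac{d}{d\sigma}T_z(\sigma),
\]
which is identity (\ref{eq:dT-Tprime}) referenced in the statement. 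Inserting this into the integral, and regarding $\dot\vartheta_{T_z(\tau)}|_{z_\tau}$ applied to $\xi_\tau$, gives the claimed formula for $(L_\nu\Theta)_z\xi$.

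The only delicate point is the bookkeeping of the lift. The map $T_z\colon\R\to\R$ is normalized by $T_z(0)=0$, whereas $T_{\phi^r_\nu z}(\cdot-r)$ need not vanish at $0$. We must therefore check that the $r$-derivative is taken of real-valued lifts, consistently chosen and continuous in $r$, so that the difference $(dT|_z z^\prime)_\tau - T_z'(\tau)$ is a genuine real number rather than a value defined only modulo $\Z$. This causes no trouble: the lifts are continuous in $r$, so integer ambiguities are locally constant and disappear under differentiation. Periodicity of $\vartheta$ then makes the value of $\vartheta$ independent of the choice of lift, and periodicity of $\dot\vartheta$ does the same for its derivative.
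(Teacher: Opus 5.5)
Your proof is correct and follows the paper's computation: pull back along the shift, substitute $\tau\mapsto\tau+r$, and differentiate only the time argument $T_{\phi^r_\nu(z)}(\tau-r)$ using (\ref{eq:dT-Tprime}). The only difference is that you use $1$-periodicity of the integrand to move back to $[0,1]$ before differentiating. The paper instead differentiates $\int_r^{r+1}$ with the Leibniz rule and then cancels the boundary terms $\vartheta_1|_{z_0}\xi_0-\vartheta_0|_{z_0}\xi_0$ by periodicity; it notes your shortcut in a footnote and keeps the boundary terms with the twisted-periodic case in mind.
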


\begin{proof}
Let $z\in\Ll M$ and $\xi\in T_z\Ll M$. As a preparation we
compute\footnote{
  Actually, by the footnote to Remark~\ref{rem:interval-not-SS^1}~(iii)
  we'd get
  $\stackrel{{\color{gray}3}}{=}
   \int_0^1\vartheta_{{\color{red}T_{\phi^r_\nu (z)}(\tau-r)}}|_{z_\tau}
   \xi_{\tau}\, d\tau$
  in the periodic case. But formula~(\ref{eq:hjbhjvgh77})
  arising from $\stackrel{{\color{gray}2}}{=}$ could be useful in the
  twisted-periodic case.
  }
\begin{equation}\label{eq:prep}
\begin{split}
   \left({\phi^r_\nu }^*\Theta\right)_z\xi
   :=\Theta_{\phi^r_\nu (z)}\, d\phi^r_\nu |_z\xi
   &\stackrel{{\color{gray}1}}{=}
   \int_0^1 \vartheta_{T_{\phi^r_\nu (z)}(\sigma)}|_{z_{\sigma+r}}
   \xi_{\sigma+r}\, d\sigma
\\
   &\stackrel{{\color{gray}2}}{=}
   \int_{r}^{r+1}\vartheta_{{\color{red}T_{\phi^r_\nu (z)}(\tau-r)}}|_{z_\tau}
   \xi_{\tau}\, d\tau
    .
\end{split}
\end{equation}
Equality~1 uses~(\ref{eq:shift}) in definition~(\ref{eq:Theta}).
Equality~2 is by change of variables $\tau(\sigma):=\sigma+r$.
\\
Since the expression $\left.\tfrac{d}{dr}\right|_{r=0} T_{\phi^r_\nu (z)}$ is
an element of $T_{T_z}\Diff_+\SS^1=C^\infty(\SS^1,\R)$,
evaluated at $\tau\in\SS^1$ it becomes a real number.
For $r\in\R$ Leibniz yields
\begin{equation}\label{eq:dT-Tprime}
\begin{split}
   \left.\tfrac{d}{dr}\right|_{r=0} {\color{red}T_{\phi^r_\nu (z)}(\tau-r)}
   &=\left(\left.\tfrac{d}{dr}\right|_{r=0} T_{\phi^r_\nu (z)}\right)_{\tau}
   +\left.\tfrac{d}{dr}\right|_{r=0} T_z(\tau-r)
\\
   &=\left(dT|_z z^\prime\right)_\tau
   -\tfrac{d}{d\tau} T_z(\tau)\in\R
   .
\end{split}
\end{equation}
Differentiating equality~2 in~(\ref{eq:prep}) we obtain equality 1
in what follows
\begin{equation}\label{eq:hjbhjvgh77}
\begin{split}
   &\left.\tfrac{d}{dr}\right|_{r=0}
   \left({\phi^r_\nu }^*\Theta\right)_z\xi
\\
   &\stackrel{{\color{gray}1}}{=}
   \left.\tfrac{d}{dr}\right|_{r=0}
   \int_{r}^{r+1}\vartheta_{{\color{red}T_{\phi^r_\nu (z)}(\tau-r)}}|_{z(\tau)}
   \xi_{\tau}\, d\tau
\\
   &\stackrel{{\color{gray}2}}{=}
   \vartheta_{T_z(1)}|_{z_1}\xi_1
   -\vartheta_{T_z(0)}|_{z_0}\xi_0
   +\int_0^1\left.\tfrac{d}{dr}\right|_{r=0}
   \vartheta_{{\color{red}T_{\phi^r_\nu (z)}(\tau-r)}}|_{z(\tau)}
   \xi_{\tau}\, d\tau
\\
   &\stackrel{{\color{gray}3}}{=}
   \vartheta_{1}|_{z_0}\xi_0
   -\vartheta_{0}|_{z_0}\xi_0
   +\int_0^1 \dot \vartheta_{T_z(\tau)}|_{z(\tau)}
   \left(
   \left(dT|_z z^\prime\right)_\tau
   -\tfrac{d}{d\tau} T_z(\tau)
   \right)
   \xi_{\tau}\, d\tau
   .
\end{split}
\end{equation}
Equality~2 is by the Leibniz integral rule and~(\ref{eq:dT-Tprime}).
Equality~3 is by~(\ref{eq:equiv}), by periodicity $z_1=z_0$ and
$\xi_1=\xi_0$, and by the chain rule.
Now use periodicity $\vartheta_1-\vartheta_0=0$.
This proves Lemma~\ref{le:Lie-magnetic}.
\end{proof}

\begin{example}[identity reparametrization]
For the constant reparametrization
$T\equiv \id_{\SS^1}\in\Diff_+\SS^1$ the
value of~(\ref{eq:dT-Tprime}) is
$0-\tfrac{d}{d\tau} \tau=-1$.
So by Lemma~\ref{le:Lie-magnetic}
$$
   L_\nu \Theta |_z
   =-\int_0^1 \dot\vartheta_{T_z(\tau)}|_{z_\tau}\; d\tau .
$$
\end{example}

\begin{example}[BOV-reparametrization -- Euler force]
\label{ex:BOV-Lie}
Let $\vartheta_t=\varsigma^*\theta_t$.
Consider the Barutello-Ortega-Verzini
reparametrization $T_z=t_z$ in~(\ref{eq:t_z-prime-coll})
for $z\in {\color{red}\bar\Ll^\times\Zfrak}$, 
cf.~(\ref{eq:involution-quotients}).
Plug formula~(\ref{eq:dt(z)}) for $(dt|_z\xi)_\tau$
and~(\ref{eq:t_z-prime-coll}) for $t_z^\prime$
into~(\ref{eq:dT-Tprime}) to get
\begin{equation*}
\begin{split}
   \left.\tfrac{d}{dr}\right|_{r=0} t_{\phi^r_\nu (z)}(\tau-r)
   &=
   \frac{1}{\norm{z}^2}\int_0^\tau2\INNER{z_\sigma }{z_\sigma^\prime}_0
   d\sigma 
   -\frac{2\INNER{z}{z^\prime}}{\norm{z}^4}
   \int_0^\tau\Abs{z_\sigma }^2 d\sigma 
   -\frac{\Abs{z_\tau}^2}{\norm{z}^2}
\\
   &\stackrel{{\color{gray}2}}{=}
   \frac{\Abs{z_\tau}^2-\Abs{z_0}^2}{\norm{z}^2}
   -\frac{\Abs{z_1}^2-\Abs{z_0}^2}{\norm{z}^4}
   \int_0^\tau\Abs{z_\sigma }^2 d\sigma
   -\frac{\Abs{z_\tau}^2}{\norm{z}^2}
\\
   &=-\frac{\Abs{z_0}^2}{\norm{z}^2} .
\end{split}
\end{equation*}
Equality~2 uses that $\int_0^s 2\INNER{z_\sigma }{z_\sigma ^\prime}ds
=\int_0^s \tfrac{d}{d\sigma}\Abs{z_\sigma}^2 ds
=\Abs{z_s}^2-\Abs{z_0}^2$.
For $s=1$ the difference vanishes since $z_1=\pm z_0$.
Twisted-periodicity of $\theta_t$ with function $f$ implies such
for $\vartheta_t$ with function $F:=\varsigma^* f=f\circ\varsigma$.
Hence Lemma~\ref{le:Lie-magnetic} 
yields
\begin{equation}\label{eq:BOV-Lie}
\small
   L_\nu \Theta |_z
%1
   = %\stackrel{{\color{gray}1}}{=}
   -\tfrac{\Abs{z_0}^2}{\norm{z}^2}
   \int_0^1 \dot\vartheta_{t_z(\tau)}|_{z_\tau}\; d\tau
%2
   \stackrel{{\color{gray}2}}{=}
   \tfrac{1}{4\norm{z}^2}
   \INNER{-4\abs{z_0}^2\dot\vartheta_{t_z(\tau)}|_{z_\tau}}{\cdot}
%3
   \stackrel{{\color{gray}3}}{=}
   \INNER{\Yy_\Theta^2|_{(z,\eta)}}{\cdot}
\end{equation}
$\forall (z,\eta)\in T^*\bar\Ll^\times\Zfrak$.
Step~2 uses that $\R^2\simeq(\R^2)^*$, step~3 is the Euler
force~(\ref{eq:Yy}).
\end{example}

%%%%%%%%%%%%%%%%%%%%%%%%%%%%%%%%%%%
%%%%%%% Subsubsection  %%%%%%%%%%%%%%%%
%%%%%%%%%%%%%%%%%%%%%%%%%%%%%%%%%%%
\subsubsection*{Exterior derivative -- Lorentz force}

Let $\xi,\eta\in T_z\Ll M$.
In order to apply formula~(\ref{eq:exterior-derivative})
involving commutators,
we extend the tangent vectors $\xi$ and $\eta$ to vector fields
defined in a neighborhood of $z$ in $\Ll M$
which we denote by the same letters $\xi$ and $\eta$.
By definition~(\ref{eq:exterior-derivative})
we get equality one
\begin{equation}\label{eq:dTheta}
\small
\begin{split}
   &(d\Theta)_z(\xi,\eta)
\\
  &=
   \int_0^1
   \Bigl(
   D_{\xi}\left(\vartheta_{T_z(\cdot)}|_{z_\cdot}\eta_\cdot\right)_\tau 
   -D_\eta\left(\vartheta_{T_z(\cdot)}|_{z_\cdot} \xi_\cdot\right)_\tau 
   -\vartheta_{T_z(\tau)}|_{z_\tau}[\xi,\eta]_\tau
   \Bigr) d\tau
\\
   &\stackrel{{\color{gray}2}}{=}\int_0^1
   \Bigl(
   d_{\xi}\left(\vartheta_{T_z(\cdot)}|_{z_\cdot}\eta_\cdot\right)_\tau 
   -d_\eta\left(\vartheta_{T_z(\cdot)}|_{z_\cdot} \xi_\cdot\right)_\tau 
   -\vartheta_{T_z(\tau)}|_{z_\tau}[\xi,\eta]_\tau
   \\
   &\qquad
   +\dot\vartheta_{T_z(\tau)}|_{z_\tau} \left(dT|_z \xi\right)_\tau\eta_\tau
   -\dot\vartheta_{T_z(\tau)}|_{z_\tau} \left(dT|_z\eta\right)_\tau \xi_\tau
   \Bigr) d\tau
\\
   &\stackrel{{\color{gray}3}}{=}
   \int_0^1
   \Bigl(
   d\vartheta_{T_z(\tau)}|_{z_\tau}\left(\xi_\tau,\eta_\tau\right)
   +\dot\vartheta_{T_z(\tau)}|_{z_\tau}
   \bigl(
      \bigl(\underline{dT|_z \xi}\bigr)_\tau\eta_\tau
      -\left(dT|_z\eta\right)_\tau \xi_\tau
   \bigr)
   \Bigr) d\tau
   .
\end{split}
\end{equation}
The letter $D$ indicates differentiation with respect to all variables,
spatial $d$ and time $\p_t$.
Equality~2 unpacks $D$.
Equality~3 is by~(\ref{eq:exterior-derivative}) for $d$ and
$\lambda=\vartheta_T$.

\begin{remark}[Lorentz force]\label{rem:Lorentz}
On a finite dimensional manifold $M$
let $g$ be a Riemannian metric and $\sigma_t$ a family of closed
$2$-forms depending on a parameter $t\in\R$,
e.g. a family of exact forms $\sigma_t=d\vartheta_t$.
Then at any point $q\in M$ a $g$-anti-symmetric
linear map $Z_t|_q\colon T_qM\to T_qM$
is defined by the $1$-form identity
$$
   g_q(Z_t|_q v,\cdot)=-\sigma_t|_q(v,\cdot)
$$
equivalently $(Z_t|_q v)^\flat=-i_v\sigma_t|_q$
where $\flat\colon T_qM\to T_q^*M$ is the metric isomorphism
$\xi\mapsto g_q(\xi,\cdot)$ and $\sharp$ the inverse.
This is the \textbf{Lorentz force}, it is given~by
$$
   Z_t|_q v=-(i_v\sigma_t|_q)^\sharp .
$$
\textbf{Euclidean space.} On $M=\R^3$, given a $1$-form $\theta$,
let $\AAA$ be the dual vector field, cf.~(\ref{eq:tw-per-1-form}),
then the magnetic vector field is given by $\BBB=\rot\, \AAA$.
In terms of differential forms this corresponds to $\sigma=d\theta$
and $\sigma$ and $\BBB$ are related via the Hodge $*$-operator
by $\sigma=*\BBB^\sharp$; see e.g.~\cite{Weber:2017b}.
In this context the Lorentz force on a particle of electric charge
$c\in\R$ at a point $\rrr(t)$ at time $t$, namely $cZ|_{r(t)} \dot r(t)$,
is of the familiar cross product form $c \dot r(t)\times \BBB|_{r(t)}$.
This ends Remark~\ref{rem:Lorentz}.
\end{remark}

\begin{example}[identity reparametrization]
For the constant reparametrization
$T\equiv \id_{\SS^1}\in\Diff_+\SS^1$ term $dT|_z=0$ vanishes
in~(\ref{eq:dTheta}).
The identity
\begin{equation*}
\small
\begin{split}
   \int_0^1 \inner{(\Zz_z\xi)_\tau}{\eta_\tau}_0\, d\tau
   &=\inner{\Zz_z\xi}{\eta}
   \stackrel{\text{(\ref{eq:Zz}))}}{=}
   -(d\Theta)_z(\xi,\eta)
   \stackrel{\text{(\ref{eq:dTheta}))}}{=}
   \int_0^1 -d\vartheta_\tau|_{z_\tau}\left(\xi_\tau,\eta_\tau\right) d\tau
\end{split}
\end{equation*}
$\forall \xi,\eta$
determines the non-local Lorentz force $\Zz_z$.
The equality of Integrands
$
   \inner{(\Zz_z\xi)_\tau}{\cdot}_0
   =-d\vartheta_\tau|_{z_\tau}\left(\xi_\tau,\cdot\right)
$,
the latter is
$
   \inner{Z_\tau|_{z_\tau} \xi_\tau}{\cdot}_0
$, tells
$
   (\Zz_z\xi)_\tau=Z_\tau|_{z_\tau} \xi_\tau
$.
\end{example}

\begin{example}[BOV-reparametrization -- Lorentz force]
\label{ex:BOV-Lorentz}
Let $\vartheta_t=\varsigma^*\theta_t$.
For the Barutello-Ortega-Verzini
reparametrization $t_z$ in~(\ref{eq:t_z-prime-coll})
the \textbf{Lorentz force} at $z\in
{\color{red}\bar\Ll^\times\Zfrak}$,
cf.~(\ref{eq:involution-quotients}),
is the linear map $\Zz_z\colon T_z\bar\Ll^\times\Zfrak
\to T_z \bar\Ll^\times\Zfrak$ determined~by
$$
   \inner{\Zz_z\zeta}{\xi}
   =
   -d\Theta_z(\zeta,\xi)
$$
for all $\xi,\zeta\in T_z \bar\Ll^\times\Zfrak$
where $d\Theta$ is given by~(\ref{eq:dTheta}).\footnote{
  \textbf{Note.}
  In the twisted-periodic, and not periodic, case let's take the same
  formula~(\ref{eq:Sigma}) to define a $2$-form, say $\Sigma$.
  In doing so all we loose is exactness. Define $\Zz$ as above.
  }
In particular for $\zeta=z^\prime$ computation~(\ref{eq:dMm-BOV})
below, read backwards, yields equality one in what follows
\begin{equation}\label{eq:Lorentz-BOV}
\begin{split}
   \INNER{\grad\Mm|_z}{\cdot}
   &=(L_\nu \Theta)_z\cdot-(d\Theta)_z(z^\prime,\cdot)
\\
   &\stackrel{{\color{gray}2}}{=}
   \INNER{\Yy_\Theta^2+\Zz_z z^\prime}{\cdot}
\\
   &\stackrel{{\color{gray}3}}{=}
   \INNER{\tfrac{1}{4\norm{z}^2}
   (-4\abs{z_0}^2\dot\vartheta_{t_z(\tau)}|_{z_\tau})+\Zz_z z^\prime}{\cdot}
   .
\end{split}
\end{equation}
Equality~2 uses the relation~(\ref{eq:BOV-Lie})
between Lie derivative and Euler force $\Yy_\Theta$
as well as the above definition of the Lorentz
force $\Zz$.
Equality~3 is by~(\ref{eq:BOV-Lie}) again.
This concludes Example~\ref{ex:BOV-Lorentz}.
\end{example}

\boldmath
%%%%%%%%%%%%%%%%%%%%%%%%%%%%%%%%%%%
%%%%%%% Subsubsection  %%%%%%%%%%%%%%%%
%%%%%%%%%%%%%%%%%%%%%%%%%%%%%%%%%%%
\subsubsection*{Difference of Lie and exterior derivative -- $d\Mm$}
\unboldmath

\begin{proposition}\label{prop:dMm-integral}
For $z\in\Ll M$ and $\xi\in T_z\Ll M$ there is the 
identity
\begin{equation*}%\label{eq:dMm-integral}
\begin{split}
   &(L_\nu \Theta)_z\xi-(d\Theta)_z(z^\prime,\xi)
   =(L_\nu \Theta)_z\xi
   {\color{cyan}\;-(d\Theta)_z(z^\prime,\xi)}
\\
   &=\int_0^1
   \dot\vartheta_{T_z(\tau)}|_{z_\tau}
   \left(
{\color{cyan}\;
   \left(dT|_z\xi\right)_\tau z_\tau^\prime
}
   -\left(\tfrac{d}{d\tau} T_z(\tau)\right) \xi_\tau
   \right)
{\color{cyan}\; 
  -d\vartheta_{T_z(\tau)}|_{z_\tau}\left(z_\tau^\prime,\xi_\tau\right)
}
   \, d\tau
   .
\end{split}
\end{equation*}
\end{proposition}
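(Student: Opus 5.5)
The identity will follow by combining the two formulas already established: Lemma~\ref{le:Lie-magnetic} for the Lie derivative and the computation~(\ref{eq:dTheta}) for the exterior derivative. I evaluate each on the relevant arguments and subtract. No new analysis is needed; the work is bookkeeping.

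\textbf{Step 1: the Lie derivative.} Lemma~\ref{le:Lie-magnetic} gives $L_\nu\Theta|_z$ as a $1$-form acting on $\xi$. Applied to $\xi$ it reads
\[
(L_\nu\Theta)_z\xi=\int_0^1\bigl((dT|_z z^\prime)_\tau-\tfrac{d}{d\tau}T_z(\tau)\bigr)\,\dot\vartheta_{T_z(\tau)}|_{z_\tau}\xi_\tau\,d\tau .
\]
This is read off from equality~3 in~(\ref{eq:hjbhjvgh77}), where $\xi_\tau$ sits inside the integral. The boundary terms there vanish by $\vartheta_1=\vartheta_0$ together with~(\ref{eq:equiv}).

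\textbf{Step 2: the exterior derivative.} In~(\ref{eq:dTheta})$_3$ I take $\xi\mapsto z^\prime$ and $\eta\mapsto\xi$. This gives
\[
(d\Theta)_z(z^\prime,\xi)=\int_0^1\Bigl(d\vartheta_{T_z(\tau)}|_{z_\tau}(z^\prime_\tau,\xi_\tau)+\dot\vartheta_{T_z(\tau)}|_{z_\tau}\bigl((dT|_zz^\prime)_\tau\xi_\tau-(dT|_z\xi)_\tau z^\prime_\tau\bigr)\Bigr)d\tau .
\]

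\textbf{Step 3: subtract.} The terms $(dT|_zz^\prime)_\tau\,\dot\vartheta\,\xi_\tau$ appear in both expressions, so they cancel. What remains is
\[
\int_0^1\dot\vartheta\bigl((dT|_z\xi)_\tau z^\prime_\tau-\tfrac{d}{d\tau}T_z(\tau)\,\xi_\tau\bigr)-d\vartheta(z^\prime_\tau,\xi_\tau)\,d\tau ,
\]
which is exactly the claimed formula.

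\textbf{Main obstacle.} The only real obstacle is justifying~(\ref{eq:dTheta}) when one argument is the specific vector $z^\prime$. The exterior derivative is tensorial, so any local extensions of the tangent vectors may be chosen. In particular I use the constant (time-wise) extensions as in the proof of Theorem~\ref{thm:twisted-symplectic}; their brackets vanish, which removes the $[\xi,\eta]$ term. I also have to check that the order of the arguments in~(\ref{eq:dTheta}) matches the sign convention used in the statement. Once that is fixed, the cancellation in Step~3 is immediate.
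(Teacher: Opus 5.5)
Your proposal is correct and matches the paper's proof: you combine Lemma~\ref{le:Lie-magnetic} with~(\ref{eq:dTheta})$_3$ evaluated at $(z^\prime,\xi)$, and the two terms containing $(dT|_z z^\prime)_\tau$ cancel, which is exactly the paper's remark that ``the two underlined terms add to zero.'' The obstacle you flag is not a real one: (\ref{eq:dTheta})$_3$ depends only on the pointwise values of its two arguments, so it applies to $z^\prime\in T_z\Ll M$ just as to any other tangent vector.
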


\begin{proof}
Lemma~\ref{le:Lie-magnetic}
and~(\ref{eq:dTheta});
the two underlined terms add to zero.
\end{proof}

\begin{corollary}\label{cor:BOV-Cartan-true}
For the BOV-reparametrization $t_z$ in~(\ref{eq:t_z})
and $\vartheta_t=\varsigma^*\theta_t$
Cartan's formula $L_\nu=i_\nu d+di_\nu$
holds for $\Theta$ along loop space $\bar\Ll_+^\times\Zfrak$;
see~(\ref{eq:involution-quotients}).
\end{corollary}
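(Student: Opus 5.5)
The plan is to prove Cartan's formula in the form $d(i_\nu\Theta)=L_\nu\Theta-i_\nu d\Theta$ on $\bar\Ll_+^\times\Zfrak$. Concretely, I would show that for every $z$ and every $\xi\in T_z\bar\Ll_+^\times\Zfrak$,
\[
d\Mm|_z\xi=(L_\nu\Theta)_z\xi-(d\Theta)_z(z^\prime,\xi).
\]
Here $\Mm=i_\nu\Theta$ because $f=0$ in the periodic case. The left side is already known explicitly: the proof of Lemma~\ref{le:gradient-Mm} (line~7 of its principal calculation) gives
\[
d\Mm|_z\xi=\inner{\dot\aaa_{t_z}|_z\,(dt|_z\xi)}{z^\prime}-\inner{\dot\aaa_{t_z}|_z\,t_z^\prime}{\xi}+\inner{\bigl((d\aaa_{t_z}|_z)^T-d\aaa_{t_z}|_z\bigr)z^\prime}{\xi}.
\]
For $\bar\Ll_+^\times$ the boundary terms from the integration by parts vanish trivially, since $z_1=z_0$ and $\xi_1=\xi_0$, so no twist discussion is needed.

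Next I would evaluate the right side with Proposition~\ref{prop:dMm-integral}, taking $T_z=t_z$. That proposition writes the right side as
\[
\int_0^1\Bigl(\dot\vartheta_{t_z}|_z\bigl((dt|_z\xi)\,z^\prime-t_z^\prime\,\xi\bigr)-d\vartheta_{t_z}|_z(z^\prime,\xi)\Bigr)\,d\tau.
\]
I would then match the terms one at a time, using the identification $\vartheta_t\leftrightarrow\aaa_t$ from (\ref{eq:tw-per-1-form-var}).
\begin{itemize}
\item The first summand is exactly $\inner{\dot\aaa_{t_z}|_z\,(dt|_z\xi)}{z^\prime}$.
\item The second summand is $-\inner{\dot\aaa_{t_z}|_z\,t_z^\prime}{\xi}$.
\item For the third, I would use $d\vartheta_t=(\rot\,\aaa_t)\,\omega_0$ with $\omega_0(\cdot,\cdot)=\inner{j_0\cdot}{\cdot}_0$. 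This gives $-d\vartheta(z^\prime,\xi)=-\rot\,\aaa\,\inner{j_0z^\prime}{\xi}_0$. By the matrix computation in the proof of Lemma~\ref{le:gradient-Mm}, the right-hand expression equals $\inner{((d\aaa)^T-d\aaa)z^\prime}{\xi}_0$.
\end{itemize}
The two sides therefore agree summand by summand.

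The only delicate step is justifying the term-by-term identification with $d\Theta$. Formula (\ref{eq:dTheta}) was derived by extending $\xi,\eta$ to vector fields and invoking (\ref{eq:exterior-derivative}), and I need the result to be independent of the extension and of the circle-valued lift of $t_z$. I would handle this by extending tangent vectors constantly, which is possible since loop space is an open subset of a vector space. Then all brackets vanish. The only remaining point is to check that $dt|_z$ from (\ref{eq:dt(z)}) is the derivative of the lift normalized as in (\ref{eq:equiv}), which holds because $t_z(0)=0$ for all $z$. Periodicity of $\vartheta$ also makes the boundary term in (\ref{eq:hjbhjvgh77}) vanish, so Lemma~\ref{le:Lie-magnetic} applies verbatim.

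Combining these, $di_\nu\Theta=L_\nu\Theta-i_\nu d\Theta$, which is Cartan's formula.
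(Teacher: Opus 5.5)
Your proposal is correct and follows essentially the paper's route in Example~\ref{ex:BOV-repara}: you rewrite $L_\nu\Theta-i_\nu d\Theta$ via Proposition~\ref{prop:dMm-integral} with $T_z=t_z$ and identify the result with $d\Mm|_z$ from Lemma~\ref{le:gradient-Mm}. The only difference is that you match term by term against the intermediate line~7 of that lemma's computation rather than its final gradient formula, which spares you the Fubini interchange the paper performs in~(\ref{eq:dMm-BOV}).
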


\begin{example}[BOV-reparametrization and
$\vartheta_t=\varsigma^*\theta_t$]
\label{ex:BOV-repara}
In the setting of the Barutello-Ortega-Verzini
reparametrization in Section~\ref{sec:BOV-blow-up}
replace $\Ll M$ by ${\color{red}\bar\Ll^\times\Zfrak}$,
cf.~(\ref{eq:involution-quotients}),
and $T_z$ by $t_z$ from~(\ref{eq:t_z}).
In this case the magnetic functional $\Mm=i_\nu\Theta$
is given~by~(\ref{eq:Mm-BOV}).
For loops $z\in \bar\Ll^\times\Zfrak$ and tangent vector fields
$\xi\in T_z \bar\Ll^\times\Zfrak$,
we compute by Proposition~(\ref{prop:dMm-integral}) the difference
in equality one
\begin{equation*}
\begin{split}
   &(L_\nu \Theta)_z\xi-(d\Theta)_z(z^\prime,\xi)
{\color{brown}\qquad\qquad\quad
   =\inner{\tfrac{1}{4\norm{z}^2}
   (-4\abs{z_0}^2\dot\vartheta_{t_z(\tau)}|_{z_\tau})+\Zz_z z^\prime}{\xi}
}
\\
%1
   &\stackrel{{\color{gray}1}}{=}
   \int_0^1
   \left(dt|_z\xi\right)_\tau\dot\vartheta_{t_z(\tau)}|_{z_\tau} z_\tau^\prime
   -\left(\tfrac{d}{d\tau} t_z(\tau)\right)\dot\vartheta_{t_z(\tau)}|_{z_\tau}\xi_\tau
   -d\vartheta_{t_z(\tau)}|_{z_\tau}\left(z_\tau^\prime,\xi_\tau\right)
   d\tau
\\
%2
   &\stackrel{{\color{gray}2}}{=}
   \frac{2}{\norm{z}^2}
   \int_0^1
   \underline{\textstyle{\int_{\sigma=0}^\tau\INNER{z_\sigma}{\xi_\sigma} d\sigma}}
   \cdot
   \dot\vartheta_{t_z(\tau)}|_{z_\tau} z_\tau^\prime\, d\tau
   \\
   &\quad
{\color{gray}\;
   -\frac{2\INNER{z}{\xi}}{\norm{z}^4}
   \int_0^1
   \underline{\textstyle{\int_0^\tau\Abs{z_\sigma}^2 d\sigma}}
   \cdot
   \dot\vartheta_{t_z(\tau)}|_{z_\tau} z_\tau^\prime\, d\tau
}
   \\
   &\quad
{\color{gray}\;
   -\frac{1}{\norm{z}^2}\int_0^1\Abs{z_\tau}^2
   \dot\vartheta_{t_z(\tau)}|_{z_\tau} \xi_\tau\, d\tau
   -\int_0^1d\vartheta_{t_z(\tau)}|_{z_\tau}\left(z_\tau^\prime,\xi_\tau\right) d\tau
.}
\\
\end{split}
\end{equation*}
Equality 2 inserts the formulas
for $(dt|_z\xi)_\tau$ and $\tfrac{d}{d\tau} t_z(\tau)$.
We {\color{gray} grayed out} the terms which will not be modified
in equality 3 below in which we actually only modify summand one:
We change the order of integration and then interchange the names of
the variables $\tau$ and $\sigma$ in order to obtain
\begin{equation}\label{eq:dMm-BOV}
\begin{split}
%3
   &\stackrel{{\color{gray}3}}{=}
   \frac{2}{\norm{z}^2}
   \int_{0}^1
   \INNER{
\underline{\textstyle{
   \int_{\sigma=\tau}^1
   \dot\vartheta_{t_z(\sigma)}|_{z_\sigma} z_\sigma^\prime\,
   d\sigma
}}
   \cdot z_\tau}{\xi_\tau}d\tau
   \\
   &\quad
   -\frac{2\INNER{z}{\xi}}{\norm{z}^4}
   \int_0^1
   \underline{\textstyle{\int_0^\tau\Abs{z_\sigma}^2 d\sigma}}
   \cdot
   \dot\vartheta_{t_z(\tau)}|_{z_\tau} z_\tau^\prime\, d\tau
   \\
   &\quad
   -\frac{1}{\norm{z}^2}\int_0^1\Abs{z_\tau}^2
   \dot\vartheta_{t_z(\tau)}|_{z_\tau} \xi_\tau\, d\tau
   -\int_0^1d\vartheta_{t_z(\tau)}|_{z_\tau}\left(z_\tau^\prime,\xi_\tau\right) d\tau
\\
   &\stackrel{{\color{gray}4}}{=}
   (d\Mm)_z\xi
{\color{brown}\qquad\qquad \qquad\qquad\quad
   =\INNER{\grad\Mm|_z}{\xi}
}
   .
\end{split}
\end{equation}
Equality~4 is by Lemma~\ref{le:gradient-Mm}.
This proves Cartan's formula in the setting of the present article.
This concludes Example~\ref{ex:BOV-repara}
and proves Corollary~\ref{cor:BOV-Cartan-true}.
\end{example}

\newpage %
%
%\vspace{1cm}
% *The following mandatory declaration statements should be present 
% before the reference section in the manuscript under a section 
% entitled "Declarations". If any of the below statement is not
% applicable, kindly include the respective subheading 
% and mention ‘Not Applicable’.

% Funding
% Conflicts of interest
% Availability of data and materials

%\subsubsection*{Declarations}
%\noindent
%\textbf{Funding.}
%\\
%Urs Frauenfelder: 
%   \todo{\small actualize!}
%DFG (Deutsche Forschungsgemeinschaft) grant FR 2637/5-1
%\\
%Joa Weber: Not Applicable

%\smallskip
%\noindent
%\textbf{Conflicts of interest.}
%None of the authors does have a conflict of interest.

%\smallskip
%\noindent
%\textbf{Availability of data and materials.}
%   \todo{\small actualize! Funding, Conflicts of interest}
%Not Applicable

%%%%%%%%%%%%%%%%%%%%%%%%%
%%%%%%%%% REFERENCES %%%%%%
%%%%%%%%%%%%%%%%%%%%%%%%
%\renewcommand{\bibname}{References}
%\bibliographystyle{plain}
         %   erzeugt:     [1] Joa Weber
%\bibliographystyle{abbrv}
         %  erzeugt:      [1] J. Weber and 
\bibliographystyle{alpha}
         %  article:    [Web05]  J. Weber
         %  book:      [Web05]  Joa Weber
         % more authors: [HZ87]
%%%%%%%%%%%%%%%%%%%%%%%%%
%% include Bibliography in TOC %%
% en.wikibooks.org/wiki/LaTeX/Bibliography_Management#Using_tocbibind
%%%%%%%%%%%%%%%%%%%%%%%%%
% Using hyperref, one should say:
%\cleardoublepage
%\phantomsection
\addcontentsline{toc}{section}{References}
\small
\bibliography{$HOME/Dropbox/0-Libraries+app-data/Bibdesk-BibFiles/library_math,$HOME/Dropbox/0-Libraries+app-data/Bibdesk-BibFiles/library_math_2020,$HOME/Dropbox/0-Libraries+app-data/Bibdesk-BibFiles/library_physics}{}
%$
%%%%%%%%%%%%%%%%%%%%%%%%%
%%%%%%%%% standard %%%%%%%%%
%%%%%%%%%%%%%%%%%%%%%%%%%
%\begin{thebibliography}{00000}
%\small
%\end{thebibliography}

\end{document}